\documentclass[11pt]{amsart}
\usepackage{amsmath, amssymb, amsthm, amsfonts, mathrsfs, tikz, circuitikz}
\usetikzlibrary {arrows.meta,bending,positioning}
\usepackage{comment}

\usepackage{booktabs}
\usepackage{array}
\usepackage{enumitem}
\usepackage{microtype}

\newtheorem{theorem}{Theorem}[section]
\newtheorem{proposition}[theorem]{Proposition}
\newtheorem{corollary}[theorem]{Corollary}

\newtheorem{lemma}[theorem]{Lemma}

\newtheorem{remark}[theorem]{Remark}

\theoremstyle{definition}
\newtheorem{definition}[theorem]{Definition}
\newtheorem{problem}[theorem]{Problem}

\newcommand{\dom}{{\mbox{\rm dom}}}

\DeclareSymbolFont{AMSb}{U}{msb}{m}{n}
\DeclareMathSymbol{\N}{\mathbin}{AMSb}{"4E}
\DeclareMathSymbol{\Z}{\mathbin}{AMSb}{"5A}
\DeclareMathSymbol{\R}{\mathbin}{AMSb}{"52}
\DeclareMathSymbol{\Q}{\mathbin}{AMSb}{"51}
\DeclareMathSymbol{\I}{\mathbin}{AMSb}{"49}
\DeclareMathSymbol{\C}{\mathbin}{AMSb}{"43}

\theoremstyle{definition}

\numberwithin{equation}{section}

\DeclareMathOperator{\ord}{ord}
\DeclareMathOperator{\Sym}{Sym}
\DeclareMathOperator{\Gal}{Gal}
\DeclareMathOperator{\Aut}{Aut}
\DeclareMathOperator{\GL}{GL}
\DeclareMathOperator{\Ker}{ker}

\newcommand{\Fp}{\mathbb F_p}
\newcommand{\F}{\mathbb F}

\begin{document}
\title{Extension Properties for Partial Permutations}
\author{Mahmood Etedadialiabadi}
\address{1979 Milky Way, Verona, WI 53593, U.S.A.}
\email{mahmood.etedadi@gmail.com}

\author{Su Gao}
\address{School of Mathematical Sciences and LPMC, Nankai University, Tianjin 300071, P.R. China}
\email{sgao@nankai.edu.cn}
\thanks{The second author acknowledges the partial support of his research by the National Natural Science Foundation of China (NSFC) grants 12271263 and 12250710128.}

\date{\today}

\begin{abstract} Motivated by the EPPA problem for finite tournaments, we consider various extension properties for partial permutations. We show that for any set $\Pi$ of prime numbers, the $\Pi$-extension property is equivalent to the $\Pi$-LERF. As a consequence of known results, the odd-extension property is then equivalent to the EPPA for finite tournaments. To study the $\Pi$-extension property, we reformate the property using the concepts of EP problems and $\Pi$-solutions. We then show that the existence of a $\Pi$-solution for an EP problem depends entirely on its fundamental group. In particular, when the fundamental group is trivial or cyclic, the EP problem has a $\Pi$-solution for any $\Pi$. These extend some known results of Huang, Pawliuk, Sabok and Wise \cite{HPSW19}. We give examples of EP problems without nilpotent-solutions; they witness that the $p$-extension property and the $p$-LERF fail for any prime $p$. Then we consider a special kind of EP problems whose fundamental groups have two generators. We show that any $1$-weakly wandering problem has a $\Pi$-solution for any $\Pi$. For $2$-weakly wandering problems, we show that they all have odd-solutions and we completely characterize those without nilpotent-solutions. 
\end{abstract}

\maketitle
\tableofcontents

\section{Introduction}

The research presented in this paper was motivated by the following long-standing open problem.

\begin{problem} [Herwig--Lascar \cite{HL00}]\label{prob:1.1} Does the class of all finite tournaments have the EPPA (or the Hrushovski property)? That is, given a finite tournament $A$, does there always exist a finite tournament $B$ so that $B$ is an extension of $A$ and any partial automorphism of $A$ can be extended to an automorphism of $B$?
\end{problem}

For recent developments on this problem, see \cite{HPSW19} and \cite{HJKS25}. In this paper, we study an equivalent formulation of this problem and some generalizations thereof. Thus the first part of the paper is devoted to proving the equivalence of this problem with our formulation.

Herwig--Lascar \cite[Section 6]{HL00} already showed that Problem~\ref{prob:1.1} is equivalent to a statement about the closedness of certain subgroups in the pro-odd topology of a finitely generated free group.

\begin{theorem}[Herwig--Lascar \cite{HL00}]\label{thm:HL00} The following are equivalent:
\begin{enumerate}
\item[\rm (a)] The class of all finite tournaments has the EPPA.
\item[\rm (b)] For any finitely generated free group $F$ and any finitely generated subgroup $H$ of $F$, if for all $a\in F$, we have $a\in H$ whenever $a^2\in H$, then $H$ is closed in the pro-odd topology of $F$.
\end{enumerate}
\end{theorem}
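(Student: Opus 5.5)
The plan is to route both directions through the Herwig--Lascar ``extension of partial permutations'' criterion for classes given by forbidden substructures. First I reformulate (a) in terms of permutations. A tournament on a set $A$ is an irreflexive relation $R$ such that exactly one of $R(x,y)$, $R(y,x)$ holds for $x\neq y$. An automorphism $g$ of a finite tournament has odd order: if $g$ had even order, some power $h=g^{m}$ would have order $2$ with a $2$-cycle $(x\ y)$, and then $R(x,y)$ and $R(y,x)=R(hx,hy)$ would both hold. Conversely, given a finite set $B$ with a group of permutations, one tries to build an invariant tournament orbit by orbit on unordered pairs. An orbit of pairs $\{x,y\}$ admits a consistent orientation exactly when no group element swaps $x$ and $y$.

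So (a) is equivalent to the following. For every finite $A$ with partial permutations $p_1,\dots,p_n$ preserving a tournament, there is a finite $B\supseteq A$ and permutations $g_i\supseteq p_i$ such that no element of $\langle g_1,\dots,g_n\rangle$ reverses a pair of distinct points. We also need the orbit of $A$'s pairs to carry the given orientation. To reduce to the free-group setting, I would take the free group $F=F(x_1,\dots,x_n)$ acting partially on $A$, and view $A$ as a finite partial $F$-set. This is a finite graph (a Stallings-type automaton) with labelled edges $a\xrightarrow{x_i}p_i(a)$.

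For (b)$\Rightarrow$(a), let $H_a$ be the stabilizer of a basepoint in a connected component of this automaton. It is a finitely generated subgroup of $F$, by Stallings. Odd-order closure is equivalent to the following: for finitely many $w\notin H$ there is a finite quotient of $F$ of odd order, i.e.\ a finite transitive $F$-set with odd-order image group, in which the basepoint is not fixed by $w$. I would choose the finitely many $w$ to be the words witnessing distinctness of vertices and the ``reversal'' conditions. A reversal $w$ maps $x\mapsto y$, $y\mapsto x$, so $w^2$ fixes $x$. The hypothesis $w^2\in H\Rightarrow w\in H$ is exactly what prevents a reversal in the original partial structure from being forced. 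Indeed, if $w^2\in H_x$, then $w\in H_x$, so $wx=x$, a contradiction. Taking a finite odd quotient that separates these words, the action of $F$ on cosets embeds $A$ and extends each $p_i$. An odd-order group has no involutions, so no element reverses a pair, and the tournament extends invariantly. Here one must also separate points of different components and handle the given edge orientations. This is the standard Herwig--Lascar/Ribes--Zalesskii step: a product of finitely many subgroups closed in the pro-odd topology is closed, which gives the needed separation of double cosets.

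For (a)$\Rightarrow$(b), given $H$ finitely generated with the root-closure property and $w\notin H$, I would build the finite Stallings core of $H$ with a path for $w$ attached. The aim is a tournament on its vertex set such that the partial maps $x_i$ are partial automorphisms. Orientations on orbits of pairs under the generated partial groupoid can be chosen consistently precisely because no word sends $(u,v)\mapsto(v,u)$, which the condition $w^2\in H\Rightarrow w\in H$ guarantees after conjugating to the basepoint. EPPA then gives a finite tournament $B$ on which $F$ acts through a group $G$. By the parity observation above, $G$ has odd order. The basepoint $v_0$ is not fixed by $w$, so $H\subseteq\mathrm{Stab}(v_0)$ is an odd-index open subgroup missing $w$.

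The main obstacle is the orientation-consistency step in both directions. One must show that the root-closure hypothesis rules out every potential reversal among all pairs of vertices of the core, not just pairs at the basepoint. This needs conjugating by paths and using root closure for the conjugates $uHu^{-1}$, which inherit the property. One must also check that the double-coset separation genuinely requires the Ribes--Zalesskii product theorem for the pro-odd topology.
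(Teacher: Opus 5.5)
Your direction (a)$\Rightarrow$(b) is sound, and it is essentially the paper's route. It is the composite of the (a)$\Rightarrow$(b) half of Theorem~\ref{thm:EPPAtoOEP} with Theorem~\ref{thm:EPtoLERF}, applied with $\Pi$ the odd primes, since (b) is exactly odd-LERF.
\begin{itemize}
\item The pair classes $R_{x_0,x_1}$ on the folded core can be oriented, because root-closure of $H$, conjugated along paths from $v_0$, forbids any word from swapping two vertices.
\item EPPA then gives a finite tournament $B$, and $\mathrm{Stab}(v_0)$ contains the kernel $N$ of $F\to G$. Here $F/N\cong G$ is odd by Lemma~\ref{lem:aut}, so $w\notin NH$.
\end{itemize}

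The gap is in (b)$\Rightarrow$(a), at the orientation step you yourself flag as the main obstacle; your proposal leaves it open.

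First, a logical slip. To invoke (b) you must prove that $H_x$ is root-closed, and this comes from the tournament, not from (b). If $w^2(x)=x$ and $w(x)=y\neq x$, then $w$, a composite of partial automorphisms, is a partial automorphism of $A$ sending $(x,y)$ to $(y,x)$, which is impossible. You instead use root-closure as if it were given in order to exclude reversals.

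Second, and more seriously, you never show that the $\Gamma$-invariant orientation can be chosen to agree with $A$. That is, you need that no $g\in\Gamma$ carries an edge $(x_0,x_1)$ of $A$ to the reverse $(x_1',x_0')$ of another edge. Odd order of $\Gamma$ only excludes the case $\{x_0,x_1\}=\{x_0',x_1'\}$. Your Ribes--Zalesskii double-coset separation is not carried out. Even to start it, you would first have to prove in $F$ itself that $c_0'^{-1}c_1'\notin S\,c_1^{-1}c_0\,S$, i.e.\ that no element does this in the infinite coset space $F/S$, and you do not address this.

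The missing idea is to take $P$ to be \emph{all} partial automorphisms of $A$, as EPPA requires anyway. Then:
\begin{itemize}
\item One-point maps are partial automorphisms, so the partial action is transitive. There is a single stabilizer and no components to separate. In general, disjoint unions handle components, since a subdirect product of odd groups is odd; this is the reduction in Theorem~\ref{thm:LERF2EP}.
\item For edges $(x_0,x_1)$ and $(x_0',x_1')$, the two-point map $h\colon x_0'\mapsto x_0,\ x_1'\mapsto x_1$ lies in $P$. If $g$ sent $(x_0,x_1)$ to $(x_1',x_0')$, then $\tilde h g\in\Gamma$ would swap $x_0$ and $x_1$. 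This is impossible in an odd-order group, which is quo-odd by Lemma~\ref{lem:fullPi}.
\item Hence all edges of $A$ lie in a single $\Gamma$-orbit disjoint from its reverse. The paper's choice of $Z_0\supseteq R_{x_0,x_1}$ for edges of $A$, in the (b)$\Rightarrow$(a) half of Theorem~\ref{thm:EPPAtoOEP} combined with Theorem~\ref{thm:LERF2EP}, is therefore consistent.
\end{itemize}
With this choice the only words that need separating are those witnessing distinctness of vertices, and no Ribes--Zalesskii theorem is needed.
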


Here the pro-odd topology of $F$ is generated by the basic open sets of the form $gN$, where $g\in F$ and $N$ is a normal subgroup of $F$ of finite odd index.

In a broader context, there is an extensive literature on the EPPA, the profinite topology, and their connections. The EPPA was first proved by Hrushovski \cite{Hr92} for the class of all finite graphs. It has now been established for many other classes of finite structures (see, e.g. \cite{He95, HL00, So05, Co19, HPSW19, EHKN20, Ko20, HKN22, HJKS25}); see \cite{HKN22} and \cite{HJKS25} for good surveys and comprehensive lists of references for this very active topic of research. 

The profinite topology was introduced by M. Hall in \cite{Ha49, Ha50}. He proved in \cite[Theorem 5.1]{Ha49} that a finitely generated subgroup $H$ of any free group $F$ is closed in the profinite topology of $F$. One can clearly see the connection of Hall's theorem with the above statement (b). Hall's theorem was generalized by Ribes--Zalesskii \cite{RZ93}; they showed that if $F$ is any free group and $H_1, \dots, H_n$ are finitely generated subgroups of $F$, then $H_1\cdots H_n$ is closed in the profinite topology of $F$. The Ribes--Zalesskii theorem has inspired a lot of research, from new proofs of the Ribes--Zalesskii theorem \cite{AS05, Zh25} to other topologies of the free group \cite{RZ94, AS04, Au17}, to other groups \cite{Yo97, Co01, Mi06,  RZBook, Ro11, MS24, MM25}. See \cite{ABO26} for a good survey and a comprehensive list of references on this subject.

The equivalence between the EPPA for finite relational structures and the Ribes--Zalesskii theorem for free groups was already noticed by Herwig--Lascar in the proofs of \cite{HL00}. This equivalence has been extended and further explored in different contexts. See \cite{ABO26} for a clearer account of the history and the proofs of these equivalences. Our equivalent formulation of the EPPA for finite tournaments also has its roots in the proof of Theorem~\ref{thm:HL00}.

To state our equivalent formulation, consider a finite set $X$ and a set $P$ of partial permutations of $X$. Let $F(P)$ be the free group with alphabet $P$. For $p\in P$, we let $p^{-1}$ denote both the formal inverse of $p$ in $F(P)$ and the partial permutation which is the inverse of the partial permutation $p$. For $w=p_1\dots p_n\in F(P)$ where $p_1,\dots, p_n\in P\cup P^{-1}$, we regard $w$ both as an element of $F(P)$ and as the partial permutation $p_1\circ\cdots\circ p_n$. Then consider the following statement:
\begin{enumerate}
\item[(c)] \emph{For any finite set $X$ and any set $P$ of partial permutations of $X$, if for any $w\in F(P)$ and $x\in X$, we have $w(x)=x$ whenever $w^2(x)=x$, then there exist a finite set $Y\supseteq X$ and extensions $p\mapsto \tilde{p}$ from $P$ to $\operatorname{Sym}(Y)$ such that the group generated by $\{\tilde{p}\colon p\in P\}$ has odd order. }
\end{enumerate}
We will show that (c) is equivalent to both (a) and (b) in Theorem~\ref{thm:HL00}. We call (c) the \emph{odd-extension property for partial permutations}, or the \emph{odd-EP} for short.

Readers familiar with semigroup theory will readily recognize that the theory of partial permutations are usually axiomatized as \emph{inverse monoids}. The famous Wagner--Preston representation theorem (see, e.g. \cite[Section 1.5]{La98}) states exactly that any inverse monoid can be represented by a set of partial permutations on a set. In this paper we work with concrete constructions instead of algebraic algorithms, and so it is more convenient for us to consider directly sets of partial permutations, instead of thinking of them as inverse monoids.

In fact, we gain a bit more insight of our problem by considering a generalized extension property for partial permutations. Let $\Pi$ be a set of prime numbers. Consider the following statement which we call the \emph{$\Pi$-EP}:
\begin{enumerate}
\item[(d)] \emph{For any finite set $X$ and any set $P$ of partial permutations of $X$, if for any prime $p\not\in \Pi$, $w\in F(P)$ and $x\in X$, we have $w(x)=x$ whenever $w^p(x)=x$, then there exist a finite set $Y\supseteq X$ and extensions $p\mapsto \tilde{p}$ from $P$ to $\operatorname{Sym}(Y)$ such that the group generated by $\{\tilde{p}\colon p\in P\}$ is a $\Pi$-group. }
\end{enumerate}
The odd-EP is exactly the $\Pi$-EP with $\Pi$ being the set of all odd primes. We will show that the $\Pi$-EP is equivalent to a version of Hall's theorem which is called the \emph{$\Pi$-LERF} (LERF stands for \emph{locally extended residually finite}), as well as a version of the Ribes--Zalesskii theorem which is called the \emph{$\Pi$-RZ-property} (RZ stands, obviously, for Ribes--Zalesskii). One crucial ingredient of these proofs is Stallings's proof of Hall's theorem \cite{St83} which deals with multidigraphs of Serre's style \cite{SerreBook}. (See \cite{AS05, Zh25} for a proof of Ribes--Zalesskii theorem in the same style.) For the convenience of the reader we will recall Stallings's proof of Hall's theorem together with all the concepts it uses (\emph{foldings}, \emph{immersions}, \emph{fundamental groups}, etc.), which makes this paper mostly self-contained. 

After proving these equivalences, we will prove some special cases of the $\Pi$-EP in the rest of the paper. Our results extend some results of \cite{HPSW19}, and answer some questions left open there. To formulate our results, it is convenient to use the following terminology. If $X$ is a finite set and $P$ is a set of partial permutations of $X$, we call $(X, P)$ an \emph{EP problem}. A \emph{solution} for an EP problem $(X, P)$ is a pair consisting of another EP problem $(Y, Q)$ and a bijection $j\colon P\to Q$ such that $X\subseteq Y$, $Q\subseteq \operatorname{Sym}(Y)$ and for each $p\in P$, $j(p)$ extends $p$. When the bijection $j$ is clear from the context, we supress mentioning it and regard $(Y, Q)$ as a solution for $(X, P)$. It is trivial that any EP problem has a (finite) solution. If $\Pi$ is a set of primes, then a \emph{$\Pi$-solution} is an EP problem $(Y, Q)$  which is a solution for itself so that the subgroup of $\operatorname{Sym}(Y)$ generated by $Q$ is a $\Pi$-group. An \emph{odd-solution} is therefore a $\Pi$-solution where $\Pi$ is the set of all odd primes. Much of our effort in this paper is to determine whether a particular EP problem has a $\Pi$-solution for various $\Pi$. For instance, we can fix a particular prime $p$ and ask whether certain EP problem has a $p$-solution. Since every finite $p$-group is nilpotent (see, e.g. \cite[5.1.3]{RobinsonBook}), it makes sense for us to consider the notion of \emph{nilpotent-solutions}. By the celebrated Feit-Thompson theorem \cite{FT63}, every odd-group (group of odd order) is solvable; thus we are particularly interested in \emph{solvable-solutions} for EP problems. 

The collections of $p$-groups, nilpotent groups, odd-groups or solvable groups are all examples of pseudovarieties of groups. A \emph{pseudovariety} of groups is a class of finite groups closed under taking finite direct products, subgroups and quotients.  For any pseudovariety $\mathsf{H}$ of groups, one can consider the pro-$\mathsf{H}$ topology of a group $F$, which is generated by basic open sets of the form $gN$, where $g\in F$ and $N$ is a normal subgroup of $F$ of finite index so that $F/N\in \mathsf{H}$. Pro-$\mathsf{H}$ topologies have been studied extensively in the literature (see, e.g. \cite{RZ94, MSW01, AS04, AS05, AS06, RZBook, MST24a, MST24b, MST25, MST26}). In this paper we focus on the pseudovarieties mentioned above. 
We will show that, for any of these pseudovarieties $\mathsf{H}$, the existence of $\mathsf{H}$-solutions for an EP problem $(X, P)$ is correspondent (often equivalent) to the closedness of the fundamental group $\pi_1(X, P)$ of $(X, P)$ (the graph $G(X, P)$ and the fundamental group $\pi_1(X, P)$ are to be defined in Subsection~\ref{subsec:fundamentalgroup}) in the pro-$\mathsf{H}$ topology of the free group $F(P)$. In particular, whether an EP problem has an $\mathsf{H}$-solution depends entirely on its fundamental group. 

The extension problems for partial permutations has been studied by Auinger--Steinberg \cite{AS03}. The research problem they considered, stated in our terminology, is the existence of an $\mathsf{H}$-solution for an EP problem, except that their results are about the decidability of this existence (with the EP problem, which is a finite object and therefore can be coded by a natural number, as input). 

Let $\Pi$ be any set of primes. A necessary condition for an EP problem $(X, P)$ to have a $\Pi$-solution is that $(X, P)$ needs to be \emph{quo-$\Pi$}, i.e., for any $p\not\in \Pi$, $w\in F(P)$ and $x\in X$, we have $w(x)=x$ whenever $w^p(x)=x$. Extending results of \cite{HPSW19}, and more specifically \cite[Theorem 1.5]{HPSW19}, we have the following theorem.

\begin{theorem}\label{thm:basic} Let $\Pi$ be any set of primes. Let $(X, P)$ be an EP problem where $G(X, P)$ is weakly connected.
\begin{enumerate}
\item[\rm (1)] If $\pi_1(X, P)$ is trivial, then $(X, P)$ has a $\Pi$-solution.
\item[\rm (2)] If $\pi_1(X, P)$ is cyclic (i.e. generated by a single element) and $(X, P)$ is quo-$\Pi$, then $(X, P)$ has a $\Pi$-solution.
\end{enumerate}
\end{theorem}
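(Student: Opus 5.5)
The plan is to work directly with the labelled multidigraph $G(X,P)$: vertices $X$, and an edge $x\to p(x)$ labelled $p$ for each $p\in P$ and $x\in\dom(p)$. A $\Pi$-solution amounts to completing this graph to a finite graph $G(Y,Q)$ in which every label is a total permutation, with the generated group a $\Pi$-group. By Stallings's picture, $G(X,P)$ (weakly connected, with a base point $x_0$) is a folded graph whose fundamental group $\pi_1(X,P)\le F(P)$ is the subgroup read off by closed paths at $x_0$. The construction I would use is a coset action. Find a normal subgroup $N\trianglelefteq F(P)$ such that $F(P)/N$ is a finite $\Pi$-group and such that $\pi_1(X,P)N$ meets the set of reduced words read along paths of $G(X,P)$ from $x_0$ exactly in $\pi_1(X,P)$. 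Equivalently, distinct vertices of $X$ land in distinct cosets. Then let $Y$ be the coset space $F(P)/\pi_1(X,P)N$ and take the $\tilde p$ to be the action of the generators by right multiplication. The graph $G(X,P)$ embeds into this Schreier graph by sending $x$ to the coset of any path word from $x_0$ to $x$. The group generated by the $\tilde p$ is a quotient of $F(P)/N$, hence a $\Pi$-group.

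For (1), $\pi_1(X,P)$ is trivial, so $G(X,P)$ is a tree and the words labelling paths from $x_0$ form a finite set $W$ of reduced words. Fix one prime $q\in\Pi$ (if $\Pi$ is empty the only $\Pi$-group is trivial; then the tree must be a single vertex with no edges, and the claim is immediate). Free groups are residually $q$-finite (Magnus, via the embedding into units of $\mathbb{Z}_q\langle\langle P\rangle\rangle$). So there is a normal $N$ of $q$-power index avoiding the finitely many nontrivial elements $u^{-1}v$ with $u\ne v$ in $W$. Then distinct vertices go to distinct cosets of $N$, and $Y=F(P)/N$ works.

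For (2), write $\pi_1(X,P)=\langle c\rangle$ with $c$ cyclically reduced after conjugating by the path to a suitable base point. I would first reduce to the case where $G(X,P)$ is a cycle labelled by $c$ with trees attached, and then handle the cycle.

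\emph{Translating the hypothesis.} Quo-$\Pi$ should mean that $c$ is not a proper $r$-th power for any prime $r\notin\Pi$. Indeed, if $c=d^r$ then following $d$ from $x_0$ gives $d^r(x_0)=x_0$. Quo-$\Pi$ then forces $d(x_0)=x_0$, so $d\in\pi_1=\langle c\rangle$, which is impossible for a proper root. Hence $c=d^m$ with $d$ not a proper power and all prime factors of $m$ in $\Pi$.

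\emph{Building $N$.} I need $N$ with $F(P)/N$ a $\Pi$-group such that the following holds for the finitely many words $u^{-1}v$ with $u,v\in W$ not in the same coset of $\langle c\rangle$: none of them lies in $\langle c\rangle N$. The plan is a two-step quotient. First map $F(P)$ onto a finite $q$-group $Q$ (with $q\in\Pi$) in which $d$ has order a large power $q^k$, via a residually-$q$ quotient where the images of the finite set of obstructions are separated from $\langle d\rangle$. This uses that cyclic subgroups are closed in the pro-$q$ topology of a free group, since free groups are conjugacy/cyclic-subgroup $q$-separable. Then adjust the order of $c=d^m$ by taking a wreath-type or direct product with a cyclic group $\mathbb{Z}/m$ on which $d$ acts as a generator. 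This is a $\Pi$-group because the prime factors of $m$ lie in $\Pi$, and in it $\langle d^m\rangle$ is separated from $d,\dots,d^{m-1}$.

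\emph{Main obstacle.} The main obstacle is exactly this separability: showing that $\langle c\rangle$ is closed in the pro-$\Pi$ topology of $F(P)$ whenever the root condition holds. That the $\Pi$-EP follows from closedness is the routine coset construction above. I would attempt it by citing or reproving that maximal cyclic subgroups $\langle d\rangle$ are pro-$q$ closed. I would then control the index of $\langle d^m\rangle$ in $\langle d\rangle$ using only primes dividing $m$, combining the two quotients by a fibre product so the result stays a $\Pi$-group.
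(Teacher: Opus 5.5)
Your architecture matches the paper's. The coset action on $F(P)/\pi_1(X,P)N$ turns pro-$\Pi$ closedness of $\pi_1(X,P)$ into a $\Pi$-solution (Theorem~\ref{thm:solutionprotopology}(ii)). Quo-$\Pi$ becomes ``$c=d^m$ with $d$ not a proper power and $m$ a $\Pi$-number'' (Lemma~\ref{lem:pfk}). Maximal cyclic subgroups are closed (Lemma~\ref{lem:HPSW}). For (1) you import residual $q$-finiteness of free groups instead of the paper's hands-on {\tt Reroute} induction (Corollary~\ref{cor:trivial}); that is a legitimate alternative.

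The gap is in (2), exactly at the step you flag: getting from $\langle d\rangle$ down to $\langle d^m\rangle$.
\begin{itemize}
\item Your first quotient cannot do this, because some obstructions lie in $\langle d\rangle$. Reading $d$ once from $x_0$ along the cycle ends at a vertex $\neq x_0$, so $d$ itself is an obstruction.
\item The mechanism you spell out for the second quotient does not exist in general. A homomorphism $F(P)\to\mathbb{Z}/m$ sending $d$ to a generator requires the exponent sums of $d$ to have gcd prime to $m$. This fails, e.g., for $d=aba^{-1}b^{-1}$, which is killed by every abelian quotient; a cycle labelled $(aba^{-1}b^{-1})^2$ with $2\in\Pi$ is a legitimate instance.
\item The ``wreath-type'' option can be made to work, but you do not say how, and that is the whole content of the step.
\end{itemize}

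The missing idea is short: you need a $\Pi$-quotient in which the image $\bar d$ has order divisible by $m$. For each prime $r\mid m$ (so $r\in\Pi$), apply residual $r$-finiteness to $d^{m/r}\neq 1$. In the resulting $r$-group the order of $\bar d$ does not divide $m/r$, so its $r$-part is at least that of $m$. Intersecting these kernels gives $N_2$ with $F(P)/N_2$ a $\Pi$-group and $\langle\bar d^m\rangle$ of index $m$ in $\langle\bar d\rangle$. Hence $d^j\notin\langle d^m\rangle N_2$ whenever $m\nmid j$. Taking $N=N_1\cap N_2$, where $N_1$ separates the obstructions outside $\langle d\rangle$ from $\langle d\rangle$, separates every obstruction from $\langle d^m\rangle$. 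The paper does the same thing at the level of EP problems: it solves the cycle labelled $d$ and then applies {\tt Reroute} with parameter $m$ at one edge of that cycle. That is the concrete form of your wreath product with $\mathbb{Z}/m$.

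Three smaller corrections:
\begin{enumerate}
\item ``Cyclic subgroups of a free group are pro-$q$ closed'' is false in general: $d$ lies in the pro-$q$ closure of $\langle d^m\rangle$ when $q\nmid m$. For maximal cyclic $\langle d\rangle$ it follows directly from residual $q$-finiteness. If $g\notin\langle d\rangle$ then $[g,d]\neq 1$, and any $q$-quotient not killing $[g,d]$ separates $g$ from $\langle d\rangle$.
\item Your opening condition, which involves only path words starting at $x_0$, is strictly weaker than injectivity on vertices. The condition on all $u^{-1}v$ that you actually use later is the right one.
\item For $\Pi=\varnothing$ the statement is false rather than trivial: a single edge between distinct vertices is a counterexample. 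So $\Pi\neq\varnothing$ must be assumed, as the paper tacitly does.
\end{enumerate}
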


Thus the $\Pi$-EP holds whenever the fundamental group $\pi_1(X, P)$ is trivial or cyclic. In the remainder of the paper, we focus on \emph{bicyclic} EP problems, i.e. those EP problems whose fundamental groups have 2 generators. We will show that the $\Pi$-EP for such problems is equivalent to the $\Pi$-EP for bicyclic EP problems $(X, P)$ where $P$ has exactly two elements. Among these problems, we will further restrict our attention to EP problems of the form $(X, P)$ where $P=\{a, b\}$ and $\pi_1(X,P)$ has the form $\langle a, w^{-1}aw\rangle$ for some $w\in F(a, b)$ (note here that $w$ can be assumed to both start and end with either $b$ or $b^{-1}$). We refer to such a problem as an \emph{$\langle a, w^{-1}aw\rangle$-problem}. 

We will show that any $\langle a, w^{-1}aw\rangle$-problem is quo-$\Pi$ for any $\Pi$. Thus we focus on determining whether they have $\Pi$-solutions. We introduce some concepts (\emph{wandering}, \emph{weakly wandering}, \emph{$\kappa$-weakly wandering}, etc.) according to how the walk given by the word $w$ behaves in $G(X,P)$. Our main results are as follows.

\begin{theorem}\label{thm:mainwandering} Let $\Pi$ be any set of primes. For any wandering $w$, an $\langle a, w^{-1}aw\rangle$-problem has a $\Pi$-solution. Equivalently, for any wandering $w$, $\langle a, w^{-1}aw\rangle$ is closed in the pro-$\Pi$ topology of $F(a, b)$.
\end{theorem}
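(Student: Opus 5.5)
The idea is to build a $\Pi$-solution directly and then read off the closedness statement from the promised correspondence between $\Pi$-solutions and pro-$\Pi$ closedness of $\pi_1(X,P)$ in $F(P)$. Since $\langle a, w^{-1}aw\rangle$ is the fundamental group, whether a solution exists depends only on this subgroup. So I will work with a convenient model problem and pass to arbitrary problems at the end. In the core graph the loop reading $a$ sits at the base vertex $x_0$, and a second $a$-loop sits at the endpoint $x_1=x_0\cdot w$ of the walk labelled $w$. Under the wandering hypothesis, I take this walk to be a path that does not return to the $a$-cycles in a way creating extra identifications. Concretely: the $b$-edges of $w$ traverse distinct vertices, and the $a$-letters of $w$ occur only away from the two $a$-loops. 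Then $G(X,P)$ is the $a$-cycle at $x_0$, the path of $w$, and the $a$-cycle at $x_1$, with $a$ and $b$ defined only partially along the path.

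Fix a prime $q\in\Pi$; a $q$-group is a $\Pi$-group, so producing a $q$-solution suffices. I will extend $a$ and $b$ to permutations of a finite set $Y\supseteq X$ whose generated group is a $q$-group. First I complete $a$ and $b$ separately, so that every orbit of $\tilde a$ and of $\tilde b$ has length a power of $q$. The two prescribed $a$-loops must become $\tilde a$-cycles of length exactly $1$; more generally, the $a$-cycle lengths forced by $X$ must all be powers of $q$. If they are not, I first replace the problem by a covering: I unfold the $a$-loops to length $q^k$ by taking $a^{q^k}$ in place of $a$. This changes the fundamental group, so the reduction must instead be phrased through the closedness side. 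Namely, I show $\langle a,w^{-1}aw\rangle$ is pro-$q$ closed by separating each $g\notin H$ using a suitable finite quotient. For this I will use a cover of the Stallings graph of $H$ that is a finite $q$-group action in which $g$ fails to be a loop at the base.

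The construction itself proceeds as follows. Given $g\notin H$, take the finite immersion obtained by folding the graph of $H$ together with the path of $g$; the path of $g$ leaves the core. Embed this immersion into a finite graph whose $a$- and $b$-labelled components are all cycles of length a power of $q$. This is possible by the wandering hypothesis: the $b$-cycles are built along the path of $w$, and the $a$-cycles are padded with fresh vertices. The result is a permutation representation of $F(a,b)$, but its image need not be a $q$-group. To force this, I take an iterated wreath product structure. I organize $Y$ as a tower $Y\to Y_1\to\cdots\to\{\ast\}$ in which each fiber carries a $\Z/q$-action, so that the image lies in an iterated wreath product of $\Z/q$, which is a Sylow $q$-subgroup of $\operatorname{Sym}(q^n)$. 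Wandering is exactly what is needed to assign levels of the tower consistently along the walk of $w$: each vertex visited by $w$ is new, so its placement in the tower can be chosen freely, level by level.

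The main obstacle is this tower embedding: showing that the partial maps coming from the wandering walk of $w$ plus the two $a$-loops extend inside an iterated wreath product $C_q\wr\cdots\wr C_q$. I would proceed by induction on the length of $w$. At each new letter, I add a level, or enlarge the current block by a factor of $q$, and define the new coordinate to match the prescribed edge. Wandering guarantees that no previously defined value conflicts. The final step invokes the equivalence, established earlier in the paper, between $\Pi$-solutions of $\langle a,w^{-1}aw\rangle$-problems and pro-$\Pi$ closedness, which transfers the result to every such problem.
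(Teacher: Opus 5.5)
Your proposal has a genuine gap, and it is at the step that carries all the content. The ``tower embedding'' of the partial maps into an iterated wreath product $C_q\wr\cdots\wr C_q$ is asserted rather than proved, and your justification for it cannot be right, because it never uses the actual wandering hypothesis. In the paper, $w=b^{\beta_k}a^{\alpha_{k-1}}\cdots a^{\alpha_1}b^{\beta_1}$ is wandering when its level sequence $\ell_i=\beta_1+\cdots+\beta_i$ satisfies $\{\ell_1,\dots,\ell_{k-1}\}\cap\{\ell_0,\ell_k\}=\varnothing$. What you use instead is that the walk of $w$ in the barbell graph is a simple path whose $a$-edges avoid the two loops, so every vertex it visits is new. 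That holds for \emph{every} $w\in\Sigma$, since $w$ is reduced and begins and ends with $b^{\pm1}$.

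So your argument would apply verbatim to $w=bab^{-1}a^{-1}b$. Its level sequence $(0,1,0,1)$ is not wandering, and your argument would produce a $q$-solution for the problem $P_2$ of Subsection~\ref{subsec:case}. But Subsection~\ref{subsec:nilsolution} shows that $P_2$ has no nilpotent-solution, hence no $q$-solution for any prime $q$. The obstruction there is global (a density argument via mod-$p$ abelianization), not a clash between locally prescribed values. A letter-by-letter induction that only checks that ``no previously defined value conflicts'' therefore cannot establish the embedding. Two side points are also beside the point. Completing orbits to $q$-power lengths says nothing about whether the generated group is a $q$-group. Unfolding $a$-cycles is moot, since the barbell has no $a$-cycles besides the two loops.

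The missing idea is how the level sequence enters. The paper proceeds as follows.
\begin{enumerate}
\item[(1)] Group the blocks $a^{\alpha_i}$ by the value $\ell_i$. The path of length $|\alpha_1|+\cdots+|\alpha_{k-1}|$, with one partial permutation for each value among $\ell_1,\dots,\ell_{k-1}$, is a tree problem. It therefore has a $\Pi$-solution $(X',P')$ by Corollary~\ref{cor:trivial}.
\item[(2)] Take $Z=X'\times\{c,\dots,d\}$, where $c\le\min_i\ell_i$, $d\ge\max_i\ell_i$, and $m=d-c+1$ is a $\Pi$-number. Let $\tilde b$ shift the level cyclically, and let $\tilde a$ act by $p'_i$ on level $\ell_i$ and trivially on every other level.
\item[(3)] The solvent lies in $\langle P'\rangle\wr C_m$, so $(Z,R)$ is a $\Pi$-solution; the paper checks quo-$\Pi$ directly.
\item[(4)] Reading $w$ from $(y_1,0)$ traces a path to $(y_k,\ell_k)$. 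Wandering is used precisely here: the levels $\ell_0$ and $\ell_k$ are not among $\ell_1,\dots,\ell_{k-1}$, so $\tilde a$ is trivial there and fixes both endpoints. This yields an $\langle a,w^{-1}aw\rangle$-subproblem.
\item[(5)] Corollary~\ref{cor:fundgp} and Theorem~\ref{thm:solutionprotopology} then give the statement for every such problem, together with closedness. There is no need to fold in the path of each $g\notin H$.
\end{enumerate}
This two-level structure, with a cycle of $b$-levels as base and a solution of a tree problem as fiber, is the correct form of your wreath-product tower.
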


\begin{theorem}\label{thm:mainweaklywandering} Let $\Pi$ be any set of primes. For any $w$, if either $w$ or $w^{-1}$ is weakly wandering, then an $\langle a, w^{-1}aw\rangle$-problem has a $\Pi$-solution. Equivalently, for any $w$,  if either $w$ or $w^{-1}$ is weakly wandering, then $\langle a, w^{-1}aw\rangle$ is closed in the pro-$\Pi$ topology of $F(a, b)$.
\end{theorem}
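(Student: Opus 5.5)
The plan is to reduce Theorem~\ref{thm:mainweaklywandering} to the wandering case, Theorem~\ref{thm:mainwandering}, by modifying the graph $G(X,P)$ without changing the fundamental group. First, by symmetry we may assume $w$ itself is weakly wandering. Indeed, the subgroups $\langle a, w^{-1}aw\rangle$ and $\langle a, waw^{-1}\rangle$ are conjugate: conjugating the first by $w^{-1}$ gives $\langle waw^{-1}, a\rangle$. Closedness in the pro-$\Pi$ topology is preserved under conjugation, since conjugation is a homeomorphism of $F(a,b)$ in any pro-$\mathsf{H}$ topology. By the paper's equivalence between $\Pi$-solutions and closedness of $\pi_1(X,P)$ (together with the fact that existence of a $\Pi$-solution depends only on the fundamental group), it suffices to treat the case where $w$ is weakly wandering.

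Next, I would take the core graph $G$ of $\langle a, w^{-1}aw\rangle$. It has an $a$-loop at the base point $v_0$, a path labeled $w$ from $v_0$ to a vertex $v_1$, and an $a$-loop at $v_1$, after folding. I would compare weakly wandering with wandering: presumably weakly wandering allows the walk along $w$ to revisit vertices or $a$-edges in a controlled way. The key step is then to embed $G$, by adding vertices and $a$-, $b$-edges (a covering-type enlargement that keeps the graph an immersion), into a finite graph $G'$. In $G'$ the subgroup generated at $v_0$ should become $\langle a, u^{-1}au\rangle$ for some wandering $u$, or at least the problem should reduce to a finite family of wandering or cyclic problems. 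Alternatively, I would look for a finite-index subgroup $K$ of $F(a,b)$ that is normal with $F/K$ a $\Pi$-group. Such a $K$ would exist, for instance, by taking a cyclic cover of order a prime $q\in\Pi$ along $b$ (via Theorem~\ref{thm:basic}(2) applied to a cyclic problem). We would need $K$ to contain $\langle a, w^{-1}aw\rangle$, and inside $K$, rewritten in a free basis, this subgroup should become one generated by a conjugate pair with a wandering conjugator. Closedness in $K$ with respect to its pro-$\Pi$ topology then transfers to $F$, because $K$ is open and $\Pi$-groups are closed under extensions. This uses the standard fact that the pro-$\Pi$ topology of $F$ restricts to that of $K$ when $[F:K]$ is a $\Pi$-number.

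The main obstacle is this unwinding step: checking that after passing to the $\Pi$-cover (or graph enlargement), the relevant walk actually becomes wandering. In particular, the repeated visits allowed by ``weakly'' must be separated by distinct sheets of the cover. If $\Pi$ is empty or the cyclic quotient is unavailable, I would instead attempt a direct construction. This would mimic the proof of Theorem~\ref{thm:mainwandering}: extend $b$ first to a permutation by completing cycles with fresh points, then extend $a$ using the quo-$\Pi$ property (which holds automatically for these problems), and verify the generated group is a $\Pi$-group by exhibiting a normal series with $\Pi$-group factors. The weak wandering condition would be used precisely to guarantee that the orbits of $\tilde b$ through the vertices of the $w$-walk can be chosen of $\Pi$-length without collisions.
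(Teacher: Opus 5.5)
Your reduction to the case where $w$ itself is weakly wandering is correct and matches the paper: conjugation carries $\langle a,waw^{-1}\rangle$ to $\langle a,w^{-1}aw\rangle$, and closedness is conjugation-invariant. After that there is a genuine gap. The step that actually uses the hypothesis is never carried out, and the mechanisms you propose for it do not work.

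\textbf{You never fix what weakly wandering means.} In the paper it is a condition on the basic walk $W_w$ in the base problem $(Y_w,Q_w)$. That problem has a $\Pi$-solution $(Z_w,R_w)$, built from a tree problem (Corollary~\ref{cor:trivial}) by a \texttt{Multiverse}-type construction. The special vertices are the $\tilde a$-fixed points met along $W_w$, and weakly wandering says the terminal special vertex is visited only once.

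\textbf{A normal cyclic $\Pi$-cover along $b$ adds nothing.} Any normal $K$ with $a\in K$ and $F/K$ a $\Pi$-group is the kernel of the $b$-exponent sum mod some $m$. That is exactly the level structure already built into $(Z_w,R_w)$. For a weakly wandering but non-wandering $w$, the lifted walk still passes through $\tilde a$-fixed special vertices in its interior, so nothing has become wandering.

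\textbf{Enlarging the barbell cannot help.} Suppose you enlarge $G$ to a problem that contains it as a subproblem and has fundamental group $\langle a,u^{-1}au\rangle$. The two $a$-loops of $G$ must then be the two loops of the new core. The $w$-path is then the unique reduced path between them in the tree left after deleting the loops, so $u=w$.

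\textbf{Your fallback cannot succeed as described.} Every $\langle a,w^{-1}aw\rangle$-problem is quo-$\Pi$. Yet $w=bab^{-1}a^{-1}b$ gives such a problem ($P_2$ of Subsection~\ref{subsec:case}) with no $p$-solution for any prime $p$. So any construction must use weak wandering in an essential way, and yours does not say how.

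\textbf{The missing idea} is to unwind the interior revisits by non-normal covers along $a$ at specific vertices.
\begin{enumerate}
\item[(1)] Start from $(Z_w,R_w)$ and follow the basic walk.
\item[(2)] Each time the walk reaches a special vertex $z_j$ that is still $\tilde a$-fixed, apply \texttt{Reroute} with parameters $M$ (a $\Pi$-number with $M>2|w|$), $\tilde a$ and $z_j$. Now $\tilde a$ moves $z_j$ around an $M$-cycle, and the walk continues into the sheet indexed by $\alpha_{\lambda_j}$.
\item[(3)] If $z_j$ has already lost its fixed status through an earlier reroute at some $z_\eta$, the walk simply travels along the existing $\tilde a$-cycle, shifting the $\eta$-th coordinate.
\end{enumerate}
Each step preserves being a $\Pi$-solution (Theorem~\ref{thm:reroute}). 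One checks with the auxiliary trees $W_j$ that the lifted walk is a genuine path. Weak wandering is used exactly here: it guarantees that the terminal vertex $z_{m+1}$ is never rerouted, so it stays $\tilde a$-fixed. The path together with its two fixed endpoints is then an $\langle a,w^{-1}aw\rangle$-subproblem of a $\Pi$-solution. When $\kappa\ge 2$ this last point fails, which is why $2$-weakly wandering problems can lack nilpotent-solutions.
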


The above theorems deal with EP problems for which there exist $\Pi$-solutions for any $\Pi$, and in particular they have $p$-solutions for any prime $p$, and therefore they have nilpotent-solutions. It turns out that weakly wandering (which is the same as $1$-weakly wandering) is as far as we can go to get such results. We will consider a $2$-weakly wandering problem with fundamental group $\langle a, w^{-1}aw\rangle$ where $w=bab^{-1}a^{-1}b$. This example is equivalent to some example considered in \cite{HPSW19}, and in particular it was not known whether it has an odd-solution. Using arguments from \cite{MSW01} and \cite{HPSW19}, we will show that it does not have any nilpotent-solutions. In particular, our example witness the failure of the $p$-EP and $p$-LERF for any prime $p$.

\begin{theorem}\label{thm:pEP} There is a $2$-weakly wandering problem without $p$-solutions for any prime $p$. Thus for any prime $p$,  the $p$-{\rm EP} and the $p$-{\rm LERF} fail to hold.
\end{theorem}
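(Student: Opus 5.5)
The example is the $\langle a, w^{-1}aw\rangle$-problem with $w=bab^{-1}a^{-1}b$, i.e. $H=\langle a, w^{-1}aw\rangle\le F(a,b)$. By the equivalence announced in the introduction (existence of $\Pi$-solutions corresponds to closedness of $\pi_1(X,P)$ in the pro-$\Pi$ topology), it is enough to fix a prime $p$ and find an element $g\notin H$ that lies in the pro-$p$ closure of $H$. Equivalently, $g$ should lie in $HN$ for every normal subgroup $N$ of $p$-power index.

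Concretely, I would argue with permutation groups. Suppose $(Y,Q)$ were a $p$-solution, with $\tilde a,\tilde b$ generating a finite $p$-group $G\le\Sym(Y)$. Let $x_0\in X$ be the base vertex, so that $\pi_1(X,P)=H$ is the set of words fixing $x_0$ along closed walks in $G(X,P)$. Since $\tilde a,\tilde b$ extend $a,b$, every $h\in H$ fixes $x_0$ in $Y$. Some word outside $H$ is defined at $x_0$ in $X$ and moves it to a different point, and I want to force that word to fix $x_0$ in $Y$.

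The key tool is the nilpotency argument of \cite{MSW01}, as adapted in \cite{HPSW19}. If $G$ is a finite $p$-group and $u\in G$ commutes with a conjugate of itself in a suitable way, then certain commutator identities hold modulo deeper terms of the lower central series. Iterating these down the (finite) series makes the identities exact in $G$. In our setting we know that $\tilde a$ and $\tilde w^{-1}\tilde a\tilde w$ both fix $x_0$. I would exploit the shape $w=b\,(ab^{-1}a^{-1})\,b$, in which the middle part is a conjugate of $b^{-1}$ by $a$, to show that the stabilizer of $x_0$ must also contain $b a b^{-1}$ or $b^{-1}ab$, or some commutator such as $[a,b]$ conjugated appropriately. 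Then I would check in the Stallings graph $G(X,P)$ that this element does not lie in $H$: follow its walk from $x_0$, confirm it is defined in $X$, and see that it fails to return. That gives the contradiction.

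The main obstacle is proving the membership in the closure. One route is to show that $c=b^{-1}ab$, say, satisfies $c\in H\gamma_n(F)$ for all $n$, up to $p$-power corrections. I would do this by induction on $n$, working in the free nilpotent quotients $F/\gamma_n(F)$ combined with the $p$-power structure, using the Zassenhaus/Jennings filtration $D_n$, which is cofinal among normal subgroups of $p$-power index. I expect the inductive step to need an explicit identity expressing $c$ modulo $D_{n+1}$ through $a$, $w^{-1}aw$ and elements of $D_n\cap$ (closure). Finding that identity is the step I would attack first, by computing with small $n$ and $p$ to guess the pattern.

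Finally, I would translate the result into the stated failures. Theorem~\ref{thm:basic}'s setting and the quo-$\Pi$ property of every $\langle a,w^{-1}aw\rangle$-problem show that the hypothesis of the $p$-EP holds for this problem, yet no $p$-solution exists, so the $p$-EP fails. Via the equivalence with $\Pi$-LERF, $H$ is a finitely generated subgroup that is not pro-$p$ closed, so the $p$-LERF fails. The $2$-weakly wandering property of $w$ is a direct check on the walk of $w$ in $G(X,P)$.
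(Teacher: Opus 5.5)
There is a genuine gap at the step you yourself flag as the main obstacle: nothing in your proposal shows that any element of $F(a,b)\setminus H$ lies in the pro-$p$ closure of $H$. Your ``key tool'' (an element commuting with a conjugate of itself, commutator identities pushed down the lower central series) is not an argument that actually applies to $H$. You also leave the inductive identity modulo the Zassenhaus filtration to be found by experimentation, so the theorem remains unproved.

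The missing idea is structural, and it removes the need for any induction. Put $v=ab^{-1}$ and let ${}^*$ be the endomorphism $a\mapsto a$, $b\mapsto b^{-1}ab$. Then $w=bv^*$ and $w^{-1}aw=(v^{-1}bv)^*$, so $H=K^*$ for $K=\langle a,v^{-1}bv\rangle=\langle a,ba^{-1}bab^{-1}\rangle$; this is the \texttt{Multiverse} correspondence. The subgroup $K$ maps onto $F/F^p[F,F]\cong(\mathbb{Z}/p\mathbb{Z})^2$. By the Burnside basis theorem, $\psi(K)=\psi(F)$ for every homomorphism $\psi$ from $F$ to a finite $p$-group; this is the easy direction of Theorem~\ref{thm:MSW}. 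Take an arbitrary $\varphi\colon F\to G$ with $G$ a finite $p$-group and set $\psi=\varphi\circ{}^*$. This gives $\varphi(H)=\psi(K)=\psi(F)=\langle\varphi(a),\varphi(b^{-1}ab)\rangle$, so $b^{-1}ab$ lies in the pro-$p$ closure of $H$ for every prime $p$.

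The paper phrases the same content differently. The $K$-problem $P_1$ has no nilpotent-solutions (Theorems~\ref{thm:MSW2} and~\ref{thm:nonil}). A solution of an $H$-problem with solvent $\Lambda$ yields one for $P_1$ with solvent a subgroup of $\Lambda$ (Theorem~\ref{thm:multiverse}(ii); cf.\ Proposition~\ref{prop:stationary}).

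So your candidate $c=b^{-1}ab$ is the right element ($bab^{-1}$ is not), but your planned verification that $c\notin H$ is misdescribed. In the barbell, $b(x_0)=x_1$, $a(x_2)=x_1$, $b(x_3)=x_2$, $a(x_3)=x_4$, $b(x_4)=x_5$, with $a$-loops at $x_0$ and $x_5$. The walk of $b^{-1}ab$ from $x_0$ is \emph{not} defined: it stops at $x_1$, which has no outgoing $a$-edge. That already gives $c\notin H$, and Theorem~\ref{thm:solutionprotopology}(ii) finishes.

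Alternatively, argue directly with a putative $p$-solution with solvent $\Lambda$. Apply the Burnside argument inside $\langle\tilde a,\tilde b^{-1}\tilde a\tilde b\rangle\le\Lambda$: this subgroup is generated by $\tilde a$ and $w_\Lambda^{-1}\tilde a\,w_\Lambda$, hence fixes $x_0$. Thus $\tilde a(x_1)=x_1=\tilde a(x_2)$ with $x_1\neq x_2$, contradicting injectivity.

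The rest of your outline is fine: the quo-$\Pi$ property of all $\langle a,w^{-1}aw\rangle$-problems, the passage to $p$-LERF via the equivalence, and the $2$-weakly wandering check.
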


In fact, we completely characterize those $2$-weakly wandering problems without $p$-solutions for any prime $p$. In view of the existence of these examples, it takes a new construction to find an odd-solution for a general $2$-weakly wandering problem. We give such a construction.

\begin{theorem}\label{thm:main2weaklywandering} Any $2$-weakly wandering problem has an odd-solution.
\end{theorem}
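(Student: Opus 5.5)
We need to sketch a proof that every $2$-weakly wandering $\langle a, w^{-1}aw\rangle$-problem has an odd-solution. The notion of $\kappa$-weakly wandering has not been formally defined yet. I will assume it means the following: the walk of $w$ in $G(X,P)$ starting at the base vertex revisits the $a$-cycle through the base vertex (or its own earlier portion) in a controlled way, with at most $\kappa$ "returns".

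The plan is to reduce to a canonical finite model and then build an explicit extension. By the correspondence between solutions and closedness, it suffices to show that $H=\langle a, w^{-1}aw\rangle$ is closed in the pro-odd topology of $F(a,b)$. Equivalently, for each $g\notin H$ we need an odd-order finite quotient, acting on a set containing the Stallings core graph of $H$, that separates $g$ from $H$. The direct route is to work with the Stallings graph $G(X,P)$ itself. This graph consists of two $a$-cycles (possibly degenerate) joined by the $b$-labelled path spelling $w$, after folding.

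First, I would complete the $b$-partial permutation on $X$ to a permutation of odd order while keeping $a$ fixed. This handles the part of the graph where $w$ wanders, exactly as in the proof of Theorem~\ref{thm:mainweaklywandering}.

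Second, I would extend $a$. The key step is to embed $X$ into $Y=X\times \mathbb{Z}_m$ or into a wreath-product-type set $X\times\mathbb{Z}_3^k$, with $m$ odd. The point of this construction is that each $a$-cycle of odd length can be closed up directly, while even-length open $a$-paths are handled by adjoining a copy of length $3$ (or another odd length) and an odd number of extra points. This makes each generator a product of odd-length cycles.

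The group generated then sits inside an iterated wreath product of odd cyclic groups, and such a product has odd order. For the one-return (weakly wandering) case this is essentially the previous theorem's construction. The new case is two returns. Here the walk of $w$ passes twice through the first $a$-cycle, and this is exactly where the $p$-group obstruction of Theorem~\ref{thm:pEP} lives. The reason is the example $w=bab^{-1}a^{-1}b$: the commutator-like pattern forces any $p$-solution to collapse.

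To overcome this, I would use a group that is not nilpotent, namely a semidirect product $\mathbb{Z}_q^r\rtimes\mathbb{Z}_3$ (a Frobenius-type group of odd order, with $3\mid q-1$, e.g. $q=7$). The $b$-steps in between the two returns are realized by the $\mathbb{Z}_3$ action, which permutes coordinates, and the $a$-steps by translations. I would then check that the relation pattern of the two returns is consistent with an odd solvable but non-nilpotent action, so that the fixed-point conditions forced by $H$ do not force $g\in H$.

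The main obstacle I expect is this last verification: showing that the two-return constraint system can always be satisfied in a metabelian odd group, uniformly over all $2$-weakly wandering $w$. I would first try to parametrize the constraints as a linear system over $\mathbb{F}_7$ twisted by a cube root of unity, and show it has a solution whenever the graph is folded, that is, whenever the problem is a genuine immersion. Finally, I would splice this local construction with the odd completion of the remaining wandering portions, using disjoint copies and the closure of odd groups under the relevant subdirect products.
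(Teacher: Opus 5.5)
Your proposal does not yet contain a proof. You leave the decisive step open yourself, and the framework around it has three problems.

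\textbf{The structural reduction is missing.} Being $2$-weakly wandering means the \emph{ending} special vertex of the basic walk is visited exactly twice. The paper splits on this into two cases.
\begin{enumerate}
\item[(1)] If $w$ is not stationary, the {\tt Reroute} constructions of Theorem~\ref{thm:2ww} give a $p$-solution for every prime $p$. This already yields an odd solution, and no non-nilpotent group is needed.
\item[(2)] If $w$ is stationary, then $w=\phi_u(ba^{\alpha}b^{-1}a^{\beta}b)$ with $u$ weakly wandering. The {\tt Wreath} map (Theorem~\ref{thm:stationary}) together with rescaling reduces everything to the single family $w_0=ba^{\alpha}b^{-1}a^{\beta}b$ with $\alpha+\beta>0>\alpha$.
\end{enumerate}
Without this reduction, your ``two-return constraint system, uniformly over all $w$'' has no concrete form.

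Your first two steps also do not produce an odd solvent. Completing $a$ and $b$ separately to products of odd-length cycles says nothing about the group they generate: the $3$-cycles $(1\,2\,3)$ and $(2\,3\,4)$ generate $A_4$. In the paper, oddness comes from the specific {\tt Multiverse}/{\tt Reroute}/{\tt Wreath} structure.

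\textbf{The group-theoretic input you suggest would fail.} What is needed is an odd group $\Gamma=\langle a_\Gamma,b_\Gamma\rangle$ with $b_\Gamma\notin\langle a_\Gamma\rangle$ but $(w_0)_\Gamma\in\langle a_\Gamma\rangle$.
\begin{enumerate}
\item[(a)] Your roles are reversed. $\tilde a$ must fix both ends of the barbell, so it cannot be a nonzero translation, which is fixed-point-free. Algebraically, if $a_\Gamma$ lies in the abelian normal subgroup while $b_\Gamma$ maps to a generator of the $\mathbb{Z}_3$ quotient, then $(w_0)_\Gamma\notin\langle a_\Gamma\rangle$, because the $b$-exponent sum of $w_0$ is $1$.
\item[(b)] Even with the roles corrected, a fixed $\mathbb{F}_7$ with a cube root of unity is not uniform. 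If $a_\Gamma$ has order $3$ (as every element outside $\mathbb{Z}_7$ does in the Frobenius group of order $21$) and $3\mid\beta$, then $(w_0)_\Gamma=b_\Gamma a_\Gamma^{\alpha}\notin\langle a_\Gamma\rangle$. Rescaling $(\alpha,\beta)\mapsto(m\alpha,n\beta)$ cannot cure this.
\end{enumerate}

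\textbf{What the paper does instead.} It lets the field depend on $(\alpha,\beta)$. Take $a_\Gamma=\mathrm{diag}(v,1)$ and $b_\Gamma$ the lower unitriangular matrix over $\mathbb{F}_q$. Then $(w_0)_\Gamma=a_\Gamma^{\alpha+\beta}$ exactly when $v^{\beta}-v^{\alpha+\beta}-1=0$. Lemma~\ref{lem:4q} supplies $v=4$ and a prime $q\equiv 3\pmod 4$ dividing $4^{\beta}-4^{\alpha+\beta}-1$, so that $4$ has odd order modulo $q$.

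The group must then still be turned into an EP solution. The {\tt Pin} map lets $\tilde a$ fix $\langle a_\Gamma\rangle$ pointwise and act by left multiplication elsewhere. Its solvent embeds in $\langle a_\Gamma\rangle\wr\Gamma$ and is therefore odd. {\tt Reroute} is applied afterwards when $|\alpha|$ or $\beta$ exceeds the order of $a_\Gamma$.

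None of these three ingredients (the reduction, the $(\alpha,\beta)$-dependent odd group, and the conversion to an EP solution) appears in your sketch.
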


For general $\kappa$-weakly wandering problems, the question whether they have odd-solutions boils down to a special subclass of problems known as \emph{$(\kappa-1)$-strongly stationary} ones. We can prove that all strongly stationary problems do not have nilpotent-solutions, but we do not yet know if they all have odd-solutions.

\section{Preliminaries}

\subsection{Tournaments}

\begin{definition} A {\em tournament} is an irreflexive directed graph $G=(V, E)$ where for every pair of distinct vertices $x, y\in V$, exactly one of $(x,y)$ and $(y,x)$ is an element of $E$.
\end{definition}

\begin{lemma}\label{lem:aut} If $G$ is a tournament, then no elements of $\Aut(G)$ have finite even order. In particular, if $G$ is a finite tournament, then $|\Aut(G)|$ is odd.
\end{lemma}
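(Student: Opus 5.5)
The plan is to argue by contradiction. Suppose $\sigma\in\Aut(G)$ has finite even order $2k$ for some $k\ge 1$. Then $\tau=\sigma^k$ is an automorphism of order exactly $2$, so $\tau\neq\mathrm{id}$ and $\tau^2=\mathrm{id}$. Since $\tau$ is not the identity, we can pick a vertex $x\in V$ with $y=\tau(x)\neq x$. Because $\tau^2=\mathrm{id}$, we also have $\tau(y)=x$, so $\tau$ swaps the distinct vertices $x$ and $y$.

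Now use the tournament axiom. Exactly one of $(x,y)$ and $(y,x)$ lies in $E$; say $(x,y)\in E$, the other case being symmetric. Since $\tau$ is an automorphism, it preserves edges, so $(\tau(x),\tau(y))=(y,x)\in E$. Thus both $(x,y)$ and $(y,x)$ are edges, which contradicts the definition of a tournament. Hence no element of $\Aut(G)$ has finite even order.

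For the finite case, $\Aut(G)$ is a finite group. If $|\Aut(G)|$ were even, Cauchy's theorem would give an element of order $2$, contradicting what we just proved. So $|\Aut(G)|$ is odd.

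There is no real obstacle. The only step needing care is passing from an element of even order to an involution via the power $\sigma^k$. One should also note that the chosen $x$ must satisfy $\tau(x)\neq x$, because irreflexivity means the tournament condition only applies to distinct vertices.
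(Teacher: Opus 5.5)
Your proof is correct and follows essentially the same argument as the paper: pass to an involution by taking a suitable power, note that it swaps two distinct vertices, and observe that it would then reverse the unique edge between them. The only difference is that you explicitly invoke Cauchy's theorem for the finite case, which the paper leaves implicit in its ``in particular.''
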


\begin{proof} Toward a contradiction, assume $\lambda\in \Aut(G)$ has even order. Then some power of $\lambda$ has order $2$. Without loss of generality, we may assume $\lambda$ has order $2$. That is, $\lambda\neq \mbox{id}$, but $\lambda^2=\mbox{id}$. Since $\lambda\neq\mbox{id}$, there is some $x\in G$ such that $\lambda(x)\neq x$. Let $y=\lambda(x)\neq x$. Then $\lambda(y)=x$. However, there is exactly one of $(x, y)$ and $(y,x)$ as an edge in $G$. Say $(x,y)\in G$. Then $\lambda(x,y)=(y,x)\in G$ since $\lambda$ is an automorphism. But this is a contradiction to $G$ being a tournament. 
\end{proof}

\begin{definition} Given an tournament $G$, a {\em partial automorphism} of $G$ is a bijection $f\colon A\to B$ where $A, B\subseteq G$ such that for any $x, y\in A$, $(x, y)$ is an edge in $G$ if and only if $(f(x), f(y))$ is an edge in $G$.
\end{definition}

\begin{definition} The {\em EPPA} (standing for {\em Extension Property for Partial Automorphisms}) or the {\em Hrushovski property} for the the class of all finite tournaments is the statement:
\begin{quote}
 Given any finite tournament $A$, there exists a finite tournament $B$ so that $B$ is an extension of $A$ and any partial automorphism of $A$ can be extended to an automorphism of $B$.
 \end{quote}
 \end{definition}

\subsection{Psuedovarieties of finite groups}
Notation in this subsection follows \cite{RZBook}. Unexplained basic concepts and notation in group theory follow \cite{RobinsonBook}.

\begin{definition}\label{def:formation} A {\em formation} of finite groups is a nonempty class of finite groups $\mathcal{C}$ that satisfies the following:
\begin{enumerate}
\item[(1)] $\mathcal{C}$ is isomorphism-invariant, that is, if $A\in \mathcal{C}$ and $A\cong B$, then $B\in\mathcal{C}$;
\item[(2)] $\mathcal{C}$ is closed under taking quotients, that is, if $A\in\mathcal{C}$ and $N\unlhd A$, then $A/N\in\mathcal{C}$;
\item[(3)] $\mathcal{C}$ is closed under taking finite subdirect products, that is, if $A_i\in \mathcal{C}$, $i=1,\dots, n$, and $B$ is a subdirect product of $A_1,\dots, A_n$, then $B\in \mathcal{C}$.
\end{enumerate}
\end{definition}

Here a group $B$ is a {\em subdirect product} of $A_1, \dots, A_n$ if there are normal subgroups $N_1,\dots, N_n$ of $B$ such that $\bigcap_i N_i=\{1_B\}$ and $B/N_i\cong A_i$ for all $i=1,\dots, n$. It is equivalent to $B$ being isomorphic to a subgroup of the direct product $\prod_i A_i$. Condition (3) above is equivalent to: if $A$ is a finite group with normal subgroups $N_1$ and $N_2$ such that $A/N_1, A/N_2\in\mathcal{C}$, then $A/(N_1\cap N_2)\in\mathcal{C}$. 

Examples of formations include finite groups, finite $p$-groups (for any fixed prime $p$), finite abelian groups, finite nilpotent groups and finite solvable groups. The class of all finite cyclic groups is not a formation. 

\begin{definition} Let $\Pi$ be a set of prime numbers, let $k$ be a postive integer and let $\Gamma$ be a finite group. The integer $k$ is a {\em $\Pi$-number} if all prime factors of $k$ are in $\Pi$. The group $\Gamma$ is a {\em $\Pi$-group} if $|\Gamma|$ is a $\Pi$-number. $\Gamma$ is an {\em odd-group} if $|\Gamma|$ is odd.
\end{definition}

Obviously, a finite group is an odd-group if and only if it is a $\Pi$-group, where $\Pi$ is the set of all odd prime numbers. For any $\Pi$, the class of all finite $\Pi$-groups is a formation. In particular, the class of all finite odd-groups is a formation.

\begin{definition} A formation of finite groups is a {\em psuedovariety} if it is closed under subgroups and finite direct products.
\end{definition}

All formations mentioned in this subsection are in fact pseudovarieties.

\subsection{The pro-$\mathcal{C}$ topology}

\begin{definition} Let $\Gamma$ be a group and let $\mathcal{N}$ be a nonempty collection of normal subgroups of finite index of $\Gamma$. We say that $\mathcal{N}$ is {\em filtered from below} if whenever $N_1, N_2\in\mathcal{N}$, there exists $N\in\mathcal{N}$ such that $N\leq N_1\cap N_2$.
\end{definition}

\begin{lemma}\label{lem:filteredbelow} Let $\Gamma$ be a group and let $\mathcal{C}$ be a formation of finite groups. Define
$$ \mathcal{N}_{\mathcal{C}}(\Gamma)=\{N\unlhd \Gamma\colon \Gamma/N\in\mathcal{C}\}. $$
Then $\mathcal{N}_{\mathcal{C}}(\Gamma)$ is filtered from below.
\end{lemma}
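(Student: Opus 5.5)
Nonemptiness comes first: by Definition~\ref{def:formation} the class $\mathcal{C}$ is nonempty and closed under quotients, so it contains the trivial group. Indeed, if $A\in\mathcal{C}$ then $A/A\in\mathcal{C}$, and then isomorphism invariance puts every trivial group in $\mathcal{C}$. Hence $\Gamma/\Gamma\in\mathcal{C}$, so $\Gamma\in\mathcal{N}_{\mathcal{C}}(\Gamma)$. Every member of $\mathcal{N}_{\mathcal{C}}(\Gamma)$ also has finite index, since $\mathcal{C}$ consists of finite groups. So $\mathcal{N}_{\mathcal{C}}(\Gamma)$ is a nonempty collection of finite-index normal subgroups, as the definition of ``filtered from below'' requires.

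For the main step, take $N_1,N_2\in\mathcal{N}_{\mathcal{C}}(\Gamma)$ and show directly that $N=N_1\cap N_2$ lies in $\mathcal{N}_{\mathcal{C}}(\Gamma)$; then $N\le N_1\cap N_2$ trivially. Clearly $N\unlhd\Gamma$. It has finite index because $\Gamma/N$ embeds into $\Gamma/N_1\times\Gamma/N_2$ via $gN\mapsto(gN_1,gN_2)$, and this map is injective precisely because its kernel is $N_1\cap N_2=N$.

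To see $\Gamma/N\in\mathcal{C}$, apply condition (3) of Definition~\ref{def:formation}, i.e.\ closure under finite subdirect products, in the equivalent form stated after that definition. Set $A=\Gamma/N$, which is finite, and consider its normal subgroups $M_i=N_i/N$ for $i=1,2$. By the third isomorphism theorem, $A/M_i\cong\Gamma/N_i\in\mathcal{C}$, and $M_1\cap M_2=(N_1\cap N_2)/N$ is trivial. Thus $A$ is a subdirect product of $\Gamma/N_1$ and $\Gamma/N_2$, and so $A\in\mathcal{C}$, using isomorphism invariance to pass from $A/M_i$ to $\Gamma/N_i$.

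I expect no real obstacle. The one point needing care is that condition (3) is phrased for a finite group $A$, so we must pass to the finite quotient $\Gamma/N$ rather than work in $\Gamma$ itself, which may be infinite. That is why finiteness of the index of $N$ is established first.
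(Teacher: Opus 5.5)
Your proof is correct and follows the same route as the paper. Both show $N_1\cap N_2\in\mathcal{N}_{\mathcal{C}}(\Gamma)$ by exhibiting $\Gamma/(N_1\cap N_2)$ as a subdirect product of $\Gamma/N_1$ and $\Gamma/N_2$ (the paper via the injective map $\gamma(N_1\cap N_2)\mapsto(\gamma N_1,\gamma N_2)$, you via the normal subgroups $N_i/N$) and then invoking condition (3); your extra check that $\mathcal{N}_{\mathcal{C}}(\Gamma)$ is nonempty (it contains $\Gamma$) is a detail the paper leaves implicit.
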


\begin{proof} We show that $\mathcal{N}_{\mathcal{C}}(\Gamma)$ has the stronger property that if $N_1, N_2\in \mathcal{N}_{\mathcal{C}}(\Gamma)$ then $N_1\cap N_2\in\mathcal{N}_{\mathcal{C}}(\Gamma)$. This is equivalent to that for any $N_1, N_2\unlhd \Gamma$, if $\Gamma/N_1, \Gamma/N_2\in\mathcal{C}$ then $\Gamma/(N_1\cap N_2)\in \mathcal{C}$. To prove this, consider the following map 
$$\begin{array}{rrcl} \theta\colon & \Gamma/(N_1\cap N_2)& \to & \Gamma/N_1\times\Gamma/N_2 \\
& \gamma(N_1\cap N_2) & \mapsto & (\gamma N_1, \gamma N_2) 
\end{array}
$$
It is straightforward to check that $\theta$ is a well-defined injective homomorphism. Thus $\Gamma/(N_1\cap N_2)$ is a subdirect product of $\Gamma/N_1$ and $\Gamma/N_2$. By condition (3) of Definition~\ref{def:formation}, $\Gamma/(N_1\cap N_2)\in \mathcal{C}$. 
\end{proof}

\begin{definition} Let $\Gamma$ be a group and let $\mathcal{C}$ be a formation of finite groups. The {\em pro-$\mathcal{C}$ topology} on $\Gamma$ is the topological group topology with
$$ \{ N\unlhd \Gamma\colon \Gamma/N\in\mathcal{C}\} $$
as the neighborhood base of $1_\Gamma$.
\end{definition}

More explicitly, a topological base of the pro-$\mathcal{C}$ topology on $\Gamma$ is
$$ \{ \gamma N\colon \gamma\in \Gamma, N\unlhd G, G/N\in\mathcal{C}\}. $$
Abstractly, the pro-$\mathcal{C}$ topology on $\Gamma$ is the coarsest topology on $\Gamma$ that makes all the homomorphisms from $\Gamma$ to a $\mathcal{C}$-group continuous. 

This definition covers the usual notions of profinite topology as well as the less familiar notions of pro-$p$, pro-$\Pi$, pro-abelian, pro-nilpotent, pro-solvable, and pro-odd topologies.

\begin{definition} Let $\mathcal{C}$ be a formation of finite groups. A group $\Gamma$ is {\em residually $\mathcal{C}$} if 
$$ \bigcap_{N\in\mathcal{N}_{\mathcal{C}}(\Gamma)}N=\bigcap\{N\unlhd \Gamma\colon G/N\in \mathcal{C}\}=\{1_\Gamma\}. $$
\end{definition}

The following fact is immediate from the definitions.

\begin{lemma} Let $\mathcal{C}$ be a formation of finite groups. A group $\Gamma$ is residually $\mathcal{C}$ if and only if the pro-$\mathcal{C}$ topology on $\Gamma$ is Hausdorff.
\end{lemma}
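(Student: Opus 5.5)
We prove the two implications separately, using only the definitions of the pro-$\mathcal{C}$ topology and of residually $\mathcal{C}$. The one standard fact needed is that in any topological group, the closure of $\{1_\Gamma\}$ is the intersection of all neighborhoods of $1_\Gamma$. Since the sets $N \in \mathcal{N}_{\mathcal{C}}(\Gamma)$ form a neighborhood base at $1_\Gamma$, this closure is exactly
$$\bigcap_{N\in\mathcal{N}_{\mathcal{C}}(\Gamma)} N.$$

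Suppose first that $\Gamma$ is residually $\mathcal{C}$, and let $\gamma\neq\delta$ in $\Gamma$. Then $\gamma^{-1}\delta\neq 1_\Gamma$, so some $N\in\mathcal{N}_{\mathcal{C}}(\Gamma)$ does not contain $\gamma^{-1}\delta$. Hence the basic open sets $\gamma N$ and $\delta N$ are distinct cosets of $N$. Distinct cosets are disjoint, so they separate $\gamma$ and $\delta$, and the topology is Hausdorff.

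Conversely, suppose the topology is Hausdorff and let $\gamma\neq 1_\Gamma$. Then there are disjoint open sets $U\ni\gamma$ and $V\ni 1_\Gamma$. Shrinking $V$ to a basic neighborhood of $1_\Gamma$, we get some $N\in\mathcal{N}_{\mathcal{C}}(\Gamma)$ with $N\subseteq V$. Because $\gamma\in U$ and $U\cap V=\emptyset$, we have $\gamma\notin V$, hence $\gamma\notin N$. Therefore $\gamma\notin\bigcap_{N\in\mathcal{N}_{\mathcal{C}}(\Gamma)}N$. As $\gamma\neq 1_\Gamma$ was arbitrary, this intersection is $\{1_\Gamma\}$, so $\Gamma$ is residually $\mathcal{C}$.

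There is no real obstacle. The only care needed is that the sets $\gamma N$ genuinely form a basis of a group topology, and this is exactly what Lemma~\ref{lem:filteredbelow} provides: it shows $\mathcal{N}_{\mathcal{C}}(\Gamma)$ is closed under finite intersections. That lemma also makes the argument independent of which basic neighborhood is chosen.
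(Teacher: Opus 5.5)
Your proof is correct and is exactly the definitional unpacking the paper has in mind; it states the lemma as ``immediate from the definitions'' and gives no proof. Distinct cosets $\gamma N\neq\delta N$ separate points in one direction, and any open neighborhood of $1_\Gamma$ missing $\gamma$ contains some $N\in\mathcal{N}_{\mathcal{C}}(\Gamma)$ in the other. The closure-of-$\{1_\Gamma\}$ fact in your opening paragraph is never actually used, so it can be dropped.
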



\subsection{LERF and the RZ-property}

\begin{definition} Let $\Gamma$ be a group.
\begin{enumerate}
\item[(i)] $\Gamma$ is {\em LERF} ({\em locally extended residually finite}) if any finitely generated subgroup $H$ of $\Gamma$ is closed in the profinite topology of $\Gamma$.
\item[(ii)] $\Gamma$ has the {\em RZ-property} ({\em Ribes--Zalesskii property}) if for any finitely generated subgroups $H_1,\dots, H_n\leq \Gamma$, $H_1\cdots H_n$ is closed in the profinite topology of $\Gamma$.
\end{enumerate}
\end{definition}

\begin{theorem}[Hall \cite{Ha49}]\label{thm:Hall} Any free group is LERF.
\end{theorem}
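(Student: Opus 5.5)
We follow Stallings's approach. Let $F=F(A)$ be free on a set $A$, let $H\le F$ be finitely generated, and let $g\in F\setminus H$. We want a finite-index subgroup $K\le F$ with $H\le K$ and $g\notin K$. Then $K$ contains a normal subgroup $N$ of finite index (its normal core), and $gN$ is an open neighborhood of $g$ disjoint from $H$, since $HN\subseteq K$. So the complement of $H$ is open and $H$ is closed. Since $H$ and $g$ together involve only finitely many letters, we may first work in the finitely generated free factor $F(A_0)$, where $A_0\subseteq A$ is finite. A finite-index subgroup $K_0$ of $F(A_0)$ then extends to the finite-index subgroup $K=\phi^{-1}(K_0)$ of $F$, where $\phi\colon F\to F(A_0)$ kills the letters outside $A_0$. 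So we reduce to $A$ finite.

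\textbf{Step 1 (the Stallings graph of $H$).} Take reduced words for generators $h_1,\dots,h_k$ of $H$, written as closed loops at a base vertex $v_0$, with edges labelled by $A$ and oriented. Fold repeatedly: whenever two edges with the same label and the same orientation leave a common vertex, identify them. The process terminates in a finite graph $\Gamma_H$ in which each label gives a \emph{partial injection} of the vertex set. Folding preserves the subgroup read by closed reduced paths at $v_0$, so that subgroup is exactly $H$. In the language of the paper, $\Gamma_H$ is an EP problem $(X,P)$ with $P$ indexed by $A$ and fundamental group $H$.

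\textbf{Step 2 (attach $g$).} Read the reduced word $g$ from $v_0$ in $\Gamma_H$. If the path closes up at $v_0$, then $g\in H$, which is excluded. So either the path ends at a vertex $v\neq v_0$, or at some point it cannot be continued. In the second case, attach a new path for the remaining suffix. This gives a finite immersed graph $\Gamma'$, still with each label a partial permutation. Now reading $g$ from $v_0$ ends at a vertex $v\neq v_0$, and the closed reduced paths at $v_0$ still give exactly $H$. The point is that attaching a tree-like path to an immersion does not create new loops at $v_0$.

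\textbf{Step 3 (complete to permutations).} Each partial injection of the finite vertex set $V(\Gamma')$ extends to a full permutation of $V(\Gamma')$, because the complement of its domain and the complement of its image have the same size. Adding these edges gives a finite covering graph, i.e. a finite permutation action of $F$ on $V(\Gamma')$. Let $K$ be the stabilizer of $v_0$. Then $K$ has finite index and contains $H$, since the loops of $H$ are still present. Also $g\cdot v_0=v\neq v_0$, so $g\notin K$.

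The main obstacle is Step 1, specifically the claim that folding does not change the subgroup read at $v_0$ and that in a folded graph the reduced words labelling closed paths are exactly $H$. We would prove this by noting that a fold changes the labels of paths only by inserting or cancelling $aa^{-1}$. Conversely, in an immersion every reduced word has at most one path from a given vertex, which is what makes the test ``does $g$ close up at $v_0$'' decisive.
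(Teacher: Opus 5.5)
Your proof is correct and follows the same Stallings argument the paper uses. As there, you fold the wedge of generator loops to an immersion, complete each label's partial injection to a permutation to get a finite covering, take the stabilizer of the base vertex, and pass to its normal core. The differences are minor: you attach the path for $g$ after folding instead of folding it in with the loops, and you add the retraction $F(A)\to F(A_0)$ to reduce arbitrary free groups to finitely generated ones, a step the paper leaves implicit since its Theorem~\ref{thm:Hall2} is stated only for finitely generated $F$.
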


We will give a self-contained proof of Theorem~\ref{thm:Hall}, which was originally given by Stallings \cite{St83}, in Subsection~\ref{subsec:Stallings}.

\begin{theorem}[Ribes--Zalesskii \cite{RZ93}] Any free group has the RZ-property.
\end{theorem}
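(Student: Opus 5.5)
The plan is to prove the Ribes--Zalesskii theorem for a finitely generated free group $F$ first. The case of an arbitrary free group will then be deduced by a standard reduction. Given finitely generated $H_1,\dots,H_n\le F$ and $g\notin H_1\cdots H_n$, we may pass to the free factor $F'$ generated by the finitely many basis letters occurring in $g$ and in generators of the $H_i$. There is a retraction $F\to F'$ killing the other basis elements. This retraction fixes each $H_i$, and it sends $H_1\cdots H_n$ into $H_1\cdots H_n$, which lies inside $F'$. Since $g\in F'$, we get $g\notin H_1\cdots H_n$ inside $F'$. Pulling back a profinite separation through the retraction then gives a separation in $F$. So it suffices to treat $F=F(P)$ with $P$ finite.

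For the finitely generated case I would argue in the Stallings style, as the paper promises for Theorem~\ref{thm:Hall}. Represent each $H_i$ by its finite folded core graph $\Gamma_i$, a $P$-labelled immersed graph with base vertex $v_i$, so that $H_i=\pi_1(\Gamma_i,v_i)$. Equivalently, $\Gamma_i$ is an EP problem $(X_i,P)$ whose partial permutations are read off the labelled edges. Then build a single finite graph $\Delta$ by wedging: identify the endpoint $v_i$ of $\Gamma_i$ with the start point $v_{i+1}$ of $\Gamma_{i+1}$. Folding is not applied between different pieces; the pieces are kept disjoint except at these join points. The product set $H_1\cdots H_n$ is exactly the set of reduced forms of words readable along paths that traverse $\Gamma_1$, then $\Gamma_2$, and so on.

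The core of the argument is the standard "completion with control of paths" step. Given $g\notin H_1\cdots H_n$, one wants a finite permutation completion $Y_i\supseteq X_i$ of each $\Gamma_i$. These completions should be chosen compatibly, so that all the $\Gamma_i$ embed in a single finite $P$-set $Y$. On $Y$, every $p\in P$ must act as a permutation extending all the partial maps, and the $\tilde p$-orbit path of $g$ must not realise a factorisation $h_1\cdots h_n$. The action of $F$ on $Y$ factors through a finite group $G$. Let $N$ be the kernel of this action. Then $gN$ is open and misses $H_1\cdots H_n$, which is exactly closedness.

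The main obstacle is this step. Naively completing each $\Gamma_i$ to a permutation action and then joining them creates new closed paths across pieces. These are exactly the paths that could read $g$ as an element of the product. Ribes--Zalesskii handle this with profinite Bass--Serre theory. The combinatorial proofs of Auinger--Steinberg \cite{AS05} and Zhang \cite{Zh25} use an inverse-automaton argument instead: first replace each $\Gamma_i$ by a finite cover in which the finitely many relevant paths of length at most $|g|$ become injective and tree-like, and then complete. I would follow that route. Concretely, I would take $k=|g|$ and pass to finite covers of each $\Gamma_i$ in which every $\Gamma_i$ has injectivity radius exceeding $k$ around the base points; such covers exist by LERF, Theorem~\ref{thm:Hall}. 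I would then glue the completions along a tree pattern so that any path spelling $g$ stays within a tree-like region where the factorisation can be read off. If this gluing becomes too delicate, the fallback is to cite \cite{RZ93} directly, since the statement is quoted from the literature.
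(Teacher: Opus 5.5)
The paper gives no proof of this statement; it only quotes it from \cite{RZ93}. So your fallback of citing \cite{RZ93} is exactly what the paper does, and your reduction to a finitely generated free factor via the retraction is correct. As a self-contained argument, however, the proposal has a genuine gap at its only nontrivial step. You name the obstacle but leave the ``compatible completions'' and the ``gluing along a tree pattern'' undefined, and the tool you propose for them cannot do the job, because what has to be controlled is global. For $n=2$, separating $g$ from $H_1H_2$ amounts to finding a finite $F$-set $Y$ and a point $y_0$ fixed by $H_2$ with $g(y_0)\notin H_1y_0$. Here $H_1y_0$ is the set of all $y$ such that $(v_1,y)$ lies in the component of $(v_1,y_0)$ in the fibre product of $\Gamma_1$ with $Y$. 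Equivalently, in the finite quotient $\varphi\colon F\to G$ you must rule out $\varphi(g)=\varphi(h_1)\cdots\varphi(h_n)$, where the $h_i\in H_i$ have unbounded length and cancel heavily. These correspond to walks of arbitrary length running through the edges added by the completion. An injectivity radius larger than $|g|$ around the base points constrains only walks of length at most $|g|$ and says nothing about them.

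You can see concretely that local control of the individual completions is the wrong kind of information. Closedness of $H_1\cdots H_n$ is equivalent to the following: for each $g\notin H_1\cdots H_n$ there are finite-index $S_i\geq H_i$ with $g\notin S_1\cdots S_n$ (take $S_i=H_iN$). Now let $F=F(a,b)$, $H_1=\langle a\rangle$, $H_2=\langle b\rangle$ and $g=ba\notin H_1H_2$. Truncate the Schreier graph of each $H_i$ at a large radius, attach a pendant path of suitable length, and complete as in Theorem~\ref{thm:immersion2covering}. This gives finite-index $S_i\geq H_i$ whose Schreier graphs agree with that of $H_i$ on any prescribed ball around the base point, with $[F:S_1]$ and $[F:S_2]$ coprime. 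Then $[F:S_1\cap S_2]=[F:S_1][F:S_2]$, so $S_1S_2=F\ni g$. All the content therefore lies in how the pieces are coupled, which is exactly what your sketch does not specify. Moreover, two complete permutation actions cannot be glued at a vertex without identifying whole orbits. So any gluing must happen before completion, and then the completion edges reintroduce the uncontrolled long circuits across pieces. Closing the gap needs a genuinely global device, such as the profinite Bass--Serre theory of \cite{RZ93}, the Herwig--Lascar route through EPPA for classes defined by forbidden homomorphisms \cite{HL00}, or the constructions of \cite{AS05, Zh25}. (Also, a finite cover of $\Gamma_i$ represents a finite-index subgroup of $H_i$, not an overgroup; you presumably mean completions to coverings of the one-vertex graph.)
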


\subsection{Quo-$\Pi$ subgroups}

\begin{definition} Let $\Pi$ be a set of prime numbers. Let $\Gamma$ be a group and $H\leq G$ is a subgroup of $G$. We say that $H$ is {\em quofinite} if $|G/H|$ is finite, equivalently, $H$ has finite index in $G$.
$H$ is a {\em quo-$\Pi$} subgroup of $\Gamma$ if for any $\gamma\in \Gamma$ and prime number $p\not\in \Pi$, if $\gamma^p\in H$ then $\gamma\in H$. If $H$ is quo-$\Pi$ and $\Pi$ is the set of all odd prime numbers, then we say that $H$ is {\em quo-odd}.
\end{definition}

It is important to note that we will deal with quo-$\Pi$ subgroups which are not quofinite.

The following is a classical result in group theory.

\begin{lemma}\label{lem:quofinitenormal} Let $\Gamma$ be a group and $H\leq \Gamma$ a quofinite subgroup of $\Gamma$. Then the group 
$N=\bigcap_{g\in \Gamma}gHg^{-1}$ is a quofinite normal subgroup of $\Gamma$ with $N\leq H$.
\end{lemma}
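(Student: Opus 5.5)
The plan is to realize $N$ as the kernel of the action of $\Gamma$ on the finite coset space $\Gamma/H$. This gives normality, finite index and containment in $H$ at the same time, so no separate argument for each property is needed.

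First, let $\Omega=\{gH\colon g\in\Gamma\}$ be the set of left cosets. Since $H$ is quofinite, $n=|\Omega|$ is finite. The group $\Gamma$ acts on $\Omega$ by left multiplication, $\gamma\cdot gH=\gamma gH$. This action yields a homomorphism $\rho\colon \Gamma\to\operatorname{Sym}(\Omega)$, and $\operatorname{Sym}(\Omega)$ is finite of order $n!$.

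Next, compute the kernel. We have $\gamma\in\ker\rho$ iff $\gamma gH=gH$ for every $g\in\Gamma$. This holds iff $g^{-1}\gamma g\in H$ for all $g$, that is, iff $\gamma\in gHg^{-1}$ for all $g$. Hence $\ker\rho=\bigcap_{g\in\Gamma}gHg^{-1}=N$.

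It follows that $N$ is normal, being a kernel. By the first isomorphism theorem, $\Gamma/N$ embeds into $\operatorname{Sym}(\Omega)$, so $[\Gamma:N]\le n!$ is finite and $N$ is quofinite. Finally, taking $g=1_\Gamma$ shows that $N\le H$.

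There is no real obstacle. The only care needed is in the kernel computation: one must check that the condition $\gamma gH=gH$ translates exactly into $\gamma\in gHg^{-1}$, with the conjugation on the correct side.
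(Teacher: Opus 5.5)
Your proposal is correct and follows essentially the same route as the paper: both identify $N$ as the kernel of the left-multiplication action of $\Gamma$ on the finite coset space $\Gamma/H$, which gives normality and finite index through the embedding $\Gamma/N\hookrightarrow\operatorname{Sym}(\Gamma/H)$. Your kernel computation is a chain of equivalences where the paper argues the two inclusions separately, and you also state $N\le H$ explicitly (via $g=1_\Gamma$), a step the paper leaves implicit.
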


\begin{proof} Let $X=\Gamma/H=\{\gamma H\colon \gamma\in \Gamma\}$. Consider the action of $G$ on $X$ by left multiplication. The action induces a homomorphism $\lambda\colon \Gamma\to \mbox{Sym}(X)$ where 
$$\lambda(g)(aH)=gaH. $$
Next we show
$$ \Ker\lambda=\bigcap_{g\in \Gamma} gHg^{-1}=N. $$
First let $y\in N$. Write $y=ah_aa^{-1}$ for $a\in \Gamma$ and $h_a\in H$. Then for any $a\in \Gamma$, 
$$ \lambda(y)(aH)=ah_aa^{-1}aH=aH. $$
Thus $y\in \Ker\lambda$. Conversely, suppose $y\in \Ker\lambda$. Then for any $a\in \Gamma$, $yaH=aH$, and thus $y\in aHa^{-1}$. Thus $y\in N$. In particular, $N$ is normal.

Now we have
$$ \Gamma/\Ker\lambda=\Gamma/N\cong \lambda(\Gamma)\leq \mbox{Sym}(X). $$
Since $X$ is finite, we conclude that $N$ is of finite index in $\Gamma$.
\end{proof}

The group $N$ in this lemma is known as the {\em normal core} of $H$ in $\Gamma$.

\begin{lemma}\label{lem:quoPi} Let $\Pi$ be a set of prime numbers and let $\Gamma$ be a group. 
\begin{enumerate}
\item[(1)] If $H$ is a quo-$\Pi$ subgroup of $\Gamma$, then so is any conjugate of $H$ in $\Gamma$.
\item[(2)] If $\{H_i\}_{i\in I}$ is a family of quo-$\Pi$ subgroups of $\Gamma$, then $\bigcap_{i\in I}H_i$ is a quo-$\Pi$ subgroup of $\Gamma$.
\item[(3)] If $N\unlhd \Gamma$ is quofinite, then $N$ is a quo-$\Pi$ subgroup of $\Gamma$ if and only if $\Gamma/N$ is a $\Pi$-group.
\item[(4)] If $H\leq K\leq \Gamma$ and $H$ is quofinite and quo-$\Pi$ subgroup of $\Gamma$, then so is $K$.
\item[(5)] If $H$ is a quofinite and quo-$\Pi$ subgroup of $\Gamma$, then any prime factor of $|\Gamma/H|$ is in $\Pi$.
\end{enumerate}
\end{lemma}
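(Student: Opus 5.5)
We prove the five items in turn; each is elementary, and only (4) and (5) need any care.

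For (1), let $H$ be quo-$\Pi$, $g\in\Gamma$, $p\notin\Pi$ prime, and suppose $\gamma^p\in gHg^{-1}$. Then $(g^{-1}\gamma g)^p=g^{-1}\gamma^p g\in H$, so $g^{-1}\gamma g\in H$, i.e. $\gamma\in gHg^{-1}$.

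For (2), if $\gamma^p\in\bigcap_i H_i$ with $p\notin\Pi$, then $\gamma^p\in H_i$ for each $i$. Hence $\gamma\in H_i$ for each $i$, and so $\gamma\in\bigcap_i H_i$.

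For (3), work in the finite quotient $Q=\Gamma/N$, where $N$ is quo-$\Pi$ exactly when no nontrivial element of $Q$ has $p$-th power trivial for any prime $p\notin\Pi$.
\begin{itemize}
\item If $Q$ is a $\Pi$-group and $\bar\gamma^p=1$ with $p\notin\Pi$, then the order of $\bar\gamma$ divides both $p$ and $|Q|$. Since $p\nmid |Q|$, the order is $1$, so $\gamma\in N$.
\item Conversely, if some prime $p\notin\Pi$ divides $|Q|$, Cauchy's theorem gives $\bar\gamma\in Q$ of order $p$. Then $\gamma^p\in N$ but $\gamma\notin N$, so $N$ is not quo-$\Pi$.
\end{itemize}

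For (4), quofiniteness of $K$ is immediate, since $|\Gamma:K|\le|\Gamma:H|$. For quo-$\Pi$, the plan is to reduce to the normal case via the normal core. Let $N=\bigcap_g gHg^{-1}$. By Lemma~\ref{lem:quofinitenormal}, $N$ is a quofinite normal subgroup; by (1) and (2), $N$ is quo-$\Pi$. By (3), $\Gamma/N$ is a $\Pi$-group.

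Now suppose $\gamma^p\in K$ with $p\notin\Pi$. Let $m$ be the order of $\gamma$ modulo $N$, which is a $\Pi$-number and hence coprime to $p$. Choose integers $u,v$ with $up+vm=1$. Then
\[
\gamma=(\gamma^p)^u(\gamma^m)^v\in K\cdot N\subseteq K,
\]
since $N\le H\le K$. So $K$ is quo-$\Pi$.

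For (5), again take $N$ the normal core of $H$. As in (4), $\Gamma/N$ is a $\Pi$-group. Since $|\Gamma/H|$ divides $|\Gamma/N|=|\Gamma/H|\cdot|H/N|$, every prime factor of $|\Gamma/H|$ lies in $\Pi$.

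The only slightly nontrivial step is (4). It needs the normal-core reduction and the Bézout trick, rather than an attempt to argue directly with the subgroup $K$, which need not be normal.
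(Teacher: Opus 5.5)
Your proof is correct. Items (1), (2), (3) and (5) are argued essentially as in the paper, but your proof of (4) takes a different route.

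The paper argues directly with powers of a single element. Assuming $g^p\in K$ and $g\notin K$, it first notes that $g^i\notin K$ for $1\le i<p$. It then takes the least $n>0$ with $g^n\in H$, which exists because $|\Gamma:H|<\infty$, and deduces $p\mid n$ from $H\le K$. Writing $n=kp$, it uses that $H$ is quo-$\Pi$ to get $g^k\in H$, contradicting the minimality of $n$.

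You instead pass to the normal core $N$ of $H$. Using (1), (2), (3) and Lemma~\ref{lem:quofinitenormal}, you see that $\Gamma/N$ is a $\Pi$-group, and you finish with B\'ezout between $p$ and the order of $\gamma N$.

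Each route buys something different.
\begin{itemize}
\item Your version is shorter and brings forward the core reduction that the paper uses only in (5). It also proves a cleaner statement: any subgroup containing a normal $N$ with $\Gamma/N$ a $\Pi$-group (that is, any subgroup open in the pro-$\Pi$ topology) is quo-$\Pi$. The only role of $H$ is to supply such an $N$.
\item The paper's version avoids the normal core, Cauchy's theorem and item (3) altogether. It uses finite index only to guarantee that every $g$ has some positive power in $H$. So it works under that weaker hypothesis, for instance for $H$ of infinite index in a torsion group, where your reduction to a finite quotient is not available.
\end{itemize}
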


\begin{proof} (1) and (2) are obvious. For (3), let $N\unlhd\Gamma$. First suppose $N$ is quo-$\Pi$ subgroup of $\Gamma$. If $\Gamma/N$ is not a $\Pi$-group, then there is a prime number $p\not\in \Pi$ so that $p$ is a factor of $|\Gamma/N|$. Let $gN$, $g\in \Gamma$, be a nonidentity element of order $p$ in $\Gamma/N$. Then $gN\neq N$ and $g^pN=N$. Thus $g^p\in N$ and $g\not\in N$. This contradicts our assumption. Next suppose $N$ is a $\Pi$-group. Let $p\not\in \Pi$ be a prime number. Assume $g\in \Gamma$ is such that $g^p\in N$. Then $g^pN=N$ and so $gN$ has order $p$ in the group $\Gamma/N$. This implies that $gN=N$ or $g\in N$. Thus $N$ is a quo-$\Pi$ subgroup of $\Gamma$. 

For (4), it is obvious that $K$ is quofinite. Assume $p\not\in \Pi$ is a prime number and $g\in \Gamma$ is such that $g^p\in K$ and $g\not\in K$. We claim that for any $1\leq i<p$, $g^i\not\in K$. To see this, suppose $g^i\in K$ for some $1\leq i<p$. Since $i$ and $p$ are coprime, there exist integers $a, b\in \mathbb{Z}$ such that $ai+bp=1$. This implies that $g=g^{ai+bp}\in K$, a contradiction. We also claim that there is a positive integer $n$ such that $g^n\in H$. To see this, assume $g^n\not\in H$ for any positive integer $n$. Then $H, gH, g^2H, \dots$ are all distinct cosets of $H$, contradicting the assumption that $H$ is quofinite in $\Gamma$. Now let $n$ be the least positive integer such that $g^n\in H$. Since $H\leq K$, this $n$ is a multiple of $p$, say $n=kp$. Now  $g^n=(g^k)^p\in H$. Since $H$ is pro-$\Pi$, we have $g^k\in H$. This contradicts the minimality of $n$. Thus $K$ is quo-$\Pi$.

For (5), suppose $H$ is a quofinite and quo-$\Pi$ subgroup of $\Gamma$. Let $N=\bigcap_{g\in \Gamma}gHg^{-1}$. Then by (1) and (2), $N$ is a quo-$\Pi$ subgroup of $\Gamma$. By Lemma~\ref{lem:quofinitenormal}, $N$ is a quofinite normal subgroup of $\Gamma$. By (3), $\Gamma/N$ is a $\Pi$-group. Since $|\Gamma/H|$ is a factor of $|\Gamma/N|$, we conclude that any prime factor of $|\Gamma/H|$ is in $\Pi$.
\end{proof}

\begin{lemma}[Herwig--Lascar \cite{HL00}] Let $\Gamma$ be a group and $H\leq G$ be a subgroup of $\Gamma$. Suppose $H$ is closed in the pro-odd topology of $\Gamma$. Then $H$ is a quo-odd subgroup of $\Gamma$.
\end{lemma}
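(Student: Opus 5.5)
We prove the contrapositive: if $H$ is not quo-odd, we find a neighbourhood of a point outside $H$ that misses $H$, so $H$ is not closed. Suppose $\gamma\in\Gamma$ satisfies $\gamma\notin H$ but $\gamma^p\in H$ for some odd prime $p$. We want to show that $\gamma$ lies in the closure of $H$ in the pro-odd topology. That is, for every normal subgroup $N\unlhd\Gamma$ with $\Gamma/N$ of odd order, the basic open set $\gamma N$ meets $H$. This contradicts closedness, since $\Gamma\setminus H$ is an open set containing $\gamma$ and would therefore contain some $\gamma N$.

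Fix such an $N$ and let $m=|\Gamma/N|$, which is odd. The image $\gamma N$ has order $d$ dividing $m$, so $d$ is odd. The key step is to find an integer $k$ with
\[
\gamma^{pk}\in\gamma N,
\]
because $\gamma^{pk}=(\gamma^p)^k\in H$. This requires $pk\equiv 1 \pmod d$, i.e. that $p$ is invertible modulo $d$.

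Here the statement as written needs care. Since $d$ is odd, $p$ is invertible modulo $d$ whenever $p=2$, but not necessarily when $p$ is an odd prime dividing $d$. Indeed, the notion matching the pro-odd topology (and Herwig--Lascar's condition (b), which uses $a^2\in H$) is the quo-$\Pi$ notion with $\Pi$ the set of odd primes. Under the paper's definition, that means the primes $p\notin\Pi$, i.e. $p=2$. So the relevant hypothesis is $\gamma^2\in H$ with $\gamma\notin H$.

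With $p=2$: since $d$ is odd, there is an integer $k$ with $2k\equiv1\pmod d$, namely $k=(d+1)/2$. Then
\[
\gamma^{2k}=\gamma\cdot\gamma^{d}\in\gamma N,
\]
because $\gamma^d\in N$. Also $\gamma^{2k}=(\gamma^2)^k\in H$. Hence $\gamma N\cap H\neq\emptyset$ for every such $N$, so $\gamma$ lies in the closure of $H$. Since $H$ is closed, $\gamma\in H$, a contradiction.

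The same argument works for a general set of primes $\Pi$: if $H$ is closed in the pro-$\Pi$ topology, then $H$ is quo-$\Pi$. For a prime $q\notin\Pi$, the order $d$ of $\gamma N$ is a $\Pi$-number, so $q$ is invertible modulo $d$, and the argument above goes through with $q$ in place of $2$.

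The only real obstacle is interpreting "quo-odd" correctly as "$\gamma^2\in H\Rightarrow\gamma\in H$." After that, the argument is the modular-inverse computation above.
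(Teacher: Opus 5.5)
Your proof is correct, but it takes a different route from the paper's.

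The paper works in a finite quotient. Closedness yields an epimorphism $\varphi\colon\Gamma\to A$ onto an odd-order group with $a=\varphi(\gamma)\notin B=\varphi[H]$ but $a^2\in B$. So $a$ interchanges the cosets $B$ and $aB$. Hence the pointwise stabilizer of $\{B,aB\}$ has index $2$ in the setwise stabilizer, which forces $|A|$ to be even.

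You instead show directly that every basic neighbourhood $\gamma N$ meets $H$, via $\gamma^{d+1}=(\gamma^2)^{(d+1)/2}\in\gamma N\cap H$. Read in the quotient, your key fact is just that an element of odd order is a power of its square, so $a\in\langle a^2\rangle\subseteq B$. This is shorter than the stabilizer count. As you observe, it also proves ``closed in the pro-$\Pi$ topology implies quo-$\Pi$'' for every $\Pi$ with no change; the paper's argument would need an $\langle a\rangle$-orbit of cosets of length $q$ in place of the swapped pair. The paper's version has the advantage of mirroring the involution argument of Lemma~\ref{lem:aut} for tournaments.

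Two slips in your write-up should be fixed. First, the statement has no ambiguity. By definition quo-odd means quo-$\Pi$ for $\Pi$ the set of odd primes, so the only prime to test is $2$, and you should assume $\gamma^2\in H$ from the outset rather than $\gamma^p\in H$ for an odd $p$. Second, your opening sentence inverts the logic. To show $H$ is not closed you need a point outside $H$ all of whose neighbourhoods meet $H$, which is exactly what the body of your argument establishes.
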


\begin{proof} Suppose $\gamma\in \Gamma\setminus H$ and $\gamma^2\in H$. Since $H$ is closed in the pro-odd topology of $\Gamma$, there is a continuous homomorphism $\varphi$ from $\Gamma$ onto a finite odd-group $A$ such that, letting $B=\varphi[H]$ and $a=\varphi(\gamma)$, then $a^2\in B$ but $a\not\in B$. Now consider
$$ G_1=\{g\in A\colon gB=B, gaB=aB\} $$
and
$$ G_2=\{g\in A\colon \{gB, gaB\}=\{B, aB\}\}. $$
Then $G_1, G_2$ are both subgroups of $A$. Note that $a\in G_2\setminus G_1$, so we have $|G_2\colon G_1|=2$. This contradicts the assumption that $|A|$ is odd.
\end{proof}

\begin{remark}{\em Even in finite groups, two subgroups of odd index can have an intersection of even index. For example, if $\Gamma=S_3$ and $H_1, H_2$ are distinct subgroups of order $2$, then $|\Gamma\colon H|=|\Gamma\colon K|=3$ but $|\Gamma\colon H\cap K|=6$. More generally, if $\Gamma$ is a group of order $p^ak$ with $\mbox{gcd}(p,k)=1$, $k>1$, and $\Gamma$ has two distinct Sylow $p$-subgroups $H$ and $K$, then $|G:H|\, |G:K|=k^2$, but $|G:H\cap K|$ is divisible by $p$. }
\end{remark}

\subsection{$\Pi$-LERF and the $\Pi$-RZ-property}

\begin{definition} Let $\Pi$ be a set of prime numbers and let $\Gamma$ be a group.
\begin{enumerate}
\item[(i)] $\Gamma$ is {\em $\Pi$-LERF} if any finitely generated quo-$\Pi$ subgroup of $\Gamma$ is closed in the pro-$\Pi$ topology of $\Gamma$.
\item[(ii)] $\Gamma$ has the {\em $\Pi$-RZ-property} if for any finite generated quo-$\Pi$ subgroups $H_1, \dots, H_n$ of $\Gamma$, $H_1\cdots H_n$ is closed in the pro-$\Pi$ topology of $\Gamma$.
\end{enumerate}
\end{definition}

The following theorem of Ribes--Zalesskii shows that the $\Pi$-LERF and the $\Pi$-RZ-property are equivalent in a finitely generated free group.

\begin{theorem}[Ribes--Zalesskii {\cite[Theorem 5.1]{RZ94}}] Let $F$ be a finitely generated free group. Let $\mathcal{C}$ be an extension-closed variety of finite groups. Suppose $H_1, \dots, H_n$ are finitely generated subgroups of $F$ which are closed in the pro-$\mathcal{C}$ topology on $F$. Then $H_1\cdots H_n$ is closed in the pro-$\mathcal{C}$ topology on $F$. Consequently, the following are equivalent for any $\Pi$:
\begin{enumerate}
\item[(a)] Any finitely generated free group is $\Pi$-LERF.
\item[(b)] Any finitely generated free group has the $\Pi$-RZ-property.
\end{enumerate}
\end{theorem}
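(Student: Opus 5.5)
The plan is to treat the two parts of the statement separately. The substantial part is the product theorem: if each $H_i$ is finitely generated and closed in the pro-$\mathcal{C}$ topology, then $H_1\cdots H_n$ is closed. The ``consequently'' clause should then follow from it together with a standard fact relating quo-$\Pi$ and pro-$\Pi$-closedness.

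\textbf{The equivalence (a)$\Leftrightarrow$(b).} Here $\mathcal{C}$ is the class of finite $\Pi$-groups, which is an extension-closed variety of finite groups. For (b)$\Rightarrow$(a), take $n=1$; that is immediate. For (a)$\Rightarrow$(b), let $H_1,\dots,H_n$ be finitely generated quo-$\Pi$ subgroups of $F$. By (a), each $H_i$ is closed in the pro-$\Pi$ topology. The first part then gives that $H_1\cdots H_n$ is closed, which is (b). So the whole equivalence reduces to the product theorem.

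\textbf{The product theorem.} I would follow the Stallings-graph method that the paper announces, in the style of Auinger--Steinberg, rather than Ribes--Zalesskii's profinite-tree argument.

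1. Represent each $H_i$ by its finite Stallings core graph $\Gamma_i$ (an immersion with base point). Closedness of $H_i$ in the pro-$\mathcal{C}$ topology is equivalent to the following: $\Gamma_i$ embeds in the Cayley graph of some finite $\mathcal{C}$-group quotient, so that reading $H_i$ labels at the base point reproduces exactly $H_i$ up to pro-$\mathcal{C}$ separation.

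2. Given $g\notin H_1\cdots H_n$, form the finite graph obtained by joining $\Gamma_1,\dots,\Gamma_n$ along paths. Concretely, $g\in H_1\cdots H_n$ iff the path labeled $g$ can be broken into loops read at the successive base points.

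3. Find a finite $\mathcal{C}$-group $A$ and a homomorphism $\varphi\colon F\to A$ such that $\varphi(g)\notin \varphi(H_1)\cdots\varphi(H_n)$.

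I expect step 3 to be the main obstacle. The difficulty is to do it by iterated extensions. Start with a $\mathcal{C}$-quotient that separates the individual $H_i$. Then kill the finitely many ``spurious'' products by passing to a wreath product $C\wr A$, which stays in $\mathcal{C}$ because $\mathcal{C}$ is extension-closed. Use it to lift the Cayley graph to an abelian cover of the appropriate $\mathcal{C}$-exponent, in which the spurious cycles become non-closed.

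One point needs care: the nilpotent/$p$ case requires that the covering module be a $C_p$-module. This is exactly where extension-closure is used.

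If that inductive covering argument resists, my fallback is to cite Ribes--Zalesskii \cite[Theorem 5.1]{RZ94} directly for the first statement, since it is a published theorem. I would then give only the deduction of the equivalence (a)$\Leftrightarrow$(b) above in full.
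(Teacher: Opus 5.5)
Your derivation of (a)$\Leftrightarrow$(b) is correct, and it is exactly what the paper relies on. The direction (b)$\Rightarrow$(a) is the case $n=1$. The direction (a)$\Rightarrow$(b) applies the product theorem with $\mathcal{C}$ the class of finite $\Pi$-groups, which is a variety closed under extensions since $|G|=|N|\,|G/N|$. The paper does not prove the product theorem; it imports it from \cite{RZ94}, and points to \cite{AS05} for a Stallings-style proof. So your fallback is precisely the paper's route, and with that citation your proposal is complete.

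Your self-contained sketch, however, is not a proof. Step 3 is the whole theorem, and you do not show that the mechanism you describe works.

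\begin{itemize}
\item \emph{What one cover does.} Given $\varphi\colon F\to A$, the obstructions are the finitely many factorizations $\varphi(g)=c_1\cdots c_n$ with $c_i\in\varphi(H_i)$. Take an abelian cover $\pi\colon\tilde A\to A$ with $\psi\colon F\to\tilde A$, $\pi\psi=\varphi$, and elementary abelian kernel $M$; this is the kind of cover your $C\wr A$ produces. The lifts of $c_i$ lying in $\psi(H_i)$ form a coset of $K_i=\psi(H_i)\cap M$. A factorization survives exactly when one specific element of $M$ lies in a sum of conjugates of the $K_i$. That element is the mod-$p$ homology class of the closed walk formed by the $g$-path and the factorization.
\item \emph{One step can fail.} Nothing in your outline forces this membership to fail. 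Take $H_1=\langle a\rangle$, $H_2=\langle b\rangle$, $g=ba$. The trivial quotient already carries both one-vertex Stallings graphs. Yet in its homology cover $(\mathbb{Z}/p)^2$ one still has $\psi(g)=\psi(a)\psi(b)$, because the relevant closed walk is a commutator.
\item \emph{What is missing.} You need a proof that some finite stage of the iteration kills every bad factorization. In the pro-$p$ case the iterated mod-$p$ homology covers are cofinal among all finite $p$-quotients, so this claim is equivalent to the theorem itself. Ribes--Zalesskii obtain it via groups acting on profinite trees, and the graph-theoretic proof in \cite{AS05} needs a substantial extra combinatorial argument. Your sketch offers no substitute.
\end{itemize}

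A few smaller corrections:
\begin{itemize}
\item In Step 1, the correct criterion is that $\Gamma_i$ embed in a finite permutation automaton, i.e.\ a Schreier graph of a $\mathcal{C}$-group acting on a finite set. This holds for extension-closed $\mathcal{C}$; for $\Pi$-groups it is Theorem~\ref{thm:solutionprotopology}(ii). Embedding in a Cayley graph would force $H_i\le\ker\varphi$.
\item In Step 2, cancellation between consecutive factors means the reduced word $g$ need not read as a concatenation of loops at successive base points. This is exactly why, unlike in Hall's theorem, the glued graph cannot simply be folded.
\item Finite nilpotent groups are not extension-closed ($S_3$ is an extension of $C_3$ by $C_2$), so the theorem says nothing about the pro-nilpotent case. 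Extension-closure is what keeps $C_p\wr A$ inside $\mathcal{C}$; for $\Pi$-groups this needs $p\in\Pi$.
\end{itemize}
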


A more general version of the above theorem has beem proved by Auinger and Steinberg \cite[Theorem 3.5]{AS05}.

The following theorem of Herwig--Lascar connects the odd-LERF and the EPPA for the class of all finite tournaments.

\begin{theorem}[Herwig--Lascar \cite{HL00}]\label{thm:HL} The following are equivalent:
\begin{enumerate}
\item[(a)] The class of all finite tournaments has the EPPA.
\item[(b)] Any finitely generated free group is odd-LERF.
\end{enumerate}
\end{theorem}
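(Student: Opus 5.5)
The plan is to prove the two directions separately, passing through the intermediate statement (b) of Theorem~\ref{thm:HL00}. Direction (a)$\Rightarrow$(b) follows Herwig--Lascar: assume finite tournaments have the EPPA, and let $H\le F$ be a finitely generated quo-odd subgroup of a finitely generated free group $F=F(P)$. We must separate any $g\notin H$ from $H$ by a homomorphism to a finite odd-group. Following Stallings (Subsection~\ref{subsec:Stallings}), represent $H$ by its finite folded core graph, based at $v_0$. Enlarge this graph so that the walk reading $g$ from $v_0$ is included; each letter of $P$ then acts as a partial permutation of the finite vertex set $X$. Because $H$ is quo-odd, this enlargement can be arranged so that no word $w$ satisfies $w^2(x)=x\neq w(x)$ at vertices in $X$. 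The walk for $g$ ends at a vertex $x_g\neq v_0$.

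Next, encode $(X,P)$ as a finite tournament $A$ whose partial automorphisms include, for each $p\in P$, a map extending the partial permutation $p$.
\begin{itemize}
\item[\textbullet] I would use gadget vertices attached to each $x\in X$ to rigidify the labelling.
\item[\textbullet] The tournament orientation between distinct points of $X$ has to be chosen so that every $p$ preserves it.
\end{itemize}
Here the quo-odd hypothesis should be exactly what is needed. Each $p$ is a partial permutation, so we need a ``$P$-invariant'' orientation on pairs. This is possible precisely when no word swaps two points $x\ne y$ (that is, $w(x)=y$ and $w(y)=x$), which is the condition $w^2(x)=x\Rightarrow w(x)=x$. Concretely, orient pairs orbit-by-orbit under the partial action on $2$-subsets: choose a representative orientation for each orbit and transport it along words. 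Consistency fails only if some word maps $(x,y)$ to $(y,x)$.

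Then apply the EPPA to get a finite tournament $B\supseteq A$ in which each such partial automorphism extends to some $\tilde p\in\Aut(B)$. The group $\langle \tilde p\rangle\le\Aut(B)$ is an odd-group by Lemma~\ref{lem:aut}. The homomorphism $F\to\langle\tilde p\rangle$ sends $H$ into the stabilizer of $v_0$ but moves $v_0$ under $g$, since $g(v_0)=x_g\ne v_0$. Taking the normal core of this stabilizer inside the odd-group gives an odd-index normal subgroup whose coset neighbourhood of $g$ misses $H$. Hence $H$ is closed.

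For (b)$\Rightarrow$(a), start from a finite tournament $A$. Let $P$ be the set of all its partial automorphisms, and consider the action of $F(P)$ on $A$ by partial maps, together with the point stabilizers $H_x$ and the sets of words sending $x$ to $y$. Each $H_x$ is the fundamental group of the component of $x$ in the finite graph $G(A,P)$, so it is finitely generated. It is quo-odd, since a word $w$ with $w^2(x)=x\ne w(x)$ would swap the edge $\{x,w(x)\}$, which is impossible for a partial automorphism of a tournament. By (b), together with the Ribes--Zalesskii theorem for cosets (the equivalence of $\Pi$-LERF and the $\Pi$-RZ-property for the odd primes), all the finitely many relevant cosets and double products are closed in the pro-odd topology. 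This yields a finite odd quotient $F(P)\to Q$ that separates them all. The permutation action of $Q$ on cosets then gives a finite set $Y\supseteq A$ with extensions $\tilde p$ generating an odd-group.

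Finally, define a tournament on $Y$ invariant under this odd-group. An odd-order group acting on $Y$ has no element swapping two points, since such an element would force even order. So orbits of ordered pairs never contain both $(x,y)$ and $(y,x)$, and we can orient orbit-by-orbit, agreeing with $A$ on pairs inside $A$. The separation property is what makes this agreement consistent: distinct pairs of $A$ lie in the same orbit only if some partial automorphism already relates them. The $\tilde p$ are then automorphisms extending each $p$.

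The main obstacle is the encoding step in (a)$\Rightarrow$(b). We have to build from the folded graph of $H$ and $g$ an honest tournament whose partial automorphisms realize exactly the labelled partial permutations, without introducing extra symmetries that fail quo-oddness. The hard point is showing that the consistent orientation exists and rigidifying the labels. I would first try attaching, for each $p\in P$ and each vertex, a distinct rigid odd-size tournament gadget, so that any automorphism extending the encoding must respect the labels.
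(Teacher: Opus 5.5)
Your proof is correct. For (a)$\Rightarrow$(b) it is the paper's argument with the odd-extension property inlined. Theorem~\ref{thm:EPPAtoOEP} (a)$\Rightarrow$(b) orients ordered pairs of $X$ orbit-by-orbit exactly as you do. Theorem~\ref{thm:EPtoLERF} supplies the Stallings graph of $H$ with the walk of $g$ folded in, the quo-oddness of the labels, and the separation. Quo-oddness is obtained by conjugating a circuit at $x$ by a walk from $v_0$, so nothing has to be ``arranged''.

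The step you single out as the main obstacle is not one, and the gadgets should be dropped. The orbit-wise orientation of $X$ itself, which you already describe and justify, makes every $p$ a partial automorphism of a tournament on $X$. EPPA then extends each $p$ to some $\tilde p\in\Aut(B)$, and $\langle\tilde p\rangle\le\Aut(B)$ is odd by Lemma~\ref{lem:aut}, whatever further partial automorphisms $A$ or $B$ have. Extra symmetries can never ``fail quo-oddness''. The separation step ($\tilde h(v_0)=v_0$ for $h\in H$, $\tilde g(v_0)=x_g\neq v_0$) uses only that $\tilde p$ extends $p$, so nothing needs to respect labels.

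For (b)$\Rightarrow$(a) your route is genuinely heavier. The paper (Theorem~\ref{thm:LERF2EP}, then Theorem~\ref{thm:EPPAtoOEP} (b)$\Rightarrow$(a)) uses only LERF. Closedness of the one stabilizer $S=H_x$ gives $N$ with $F/N$ odd and an equivariant embedding of $A$ into $F/NS$; then pairs of $Y$ are oriented orbit-by-orbit.

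You additionally invoke Ribes--Zalesskii to separate double cosets $H_xcH_x$. Then any $g\in\Gamma$ carrying one ordered pair of $A$ to another is matched by a word doing so inside $A$. That does make ``agree with $A$'' consistent, and it is the type of argument that generalizes beyond tournaments. RZ is available here because odd groups are extension-closed.

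For tournaments this is unnecessary. $P$ contains every two-point partial isomorphism $q=\{x_0\mapsto x_0',\,x_1\mapsto x_1'\}$ between edges. So if some $g\in\Gamma$ sent an edge $(x_0,x_1)$ to the reverse $(x_1',x_0')$ of an edge, then $g\tilde q^{-1}$ would swap $x_0'$ and $x_1'$, which is impossible in an odd group. This observation tacitly justifies the paper's decomposition $Z_0\sqcup Z_1$. It lets you replace ``all relevant cosets and double products'' by LERF for a single subgroup. If you keep your version, state explicitly which elements are separated from which double cosets, since that is where your consistency claim is actually proved.
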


\subsection{The $\Pi$-extension property}

Let $X$ be a set and $P$ be a set of partial permutations of $X$. Let $F(P)$ be the free group with alphabet $P$. Without loss of generality, we assume $P=P^{-1}$, where for $p\in P$, $p^{-1}$ is both the formal inverse of $p$ and the partial permutation inverse to $p$. For $w=p_1\dots p_n\in F(P)$ and $x\in X$, 
$$ w(x)=(p_1\circ \cdots \circ p_n)(x). $$
Thus $w(x)$ is only defined whenever all of $$p_n(x), (p_{n-1}\circ p_n)(x), \dots, (p_1\circ \cdots \circ p_n)(x)$$ are defined.

\begin{definition} Let $X$ be a set and $P$ be a set of partial permutations of $X$. Let $\Pi$ be a set of prime numbers. $P$ is {\em quo-$\Pi$} if for any $w\in F(P)$, $x\in X$ and prime number $p\not\in\Pi$, whenever $w^p(x)=x$, we have $w(x)=x$. 
\end{definition}

\begin{definition} Let $\Pi$ be a set of prime numbers. The {\em $\Pi$-extension property} ({\em for partial permutations}), or {\em $\Pi$-EP} for short, is the statement:
\begin{quote}
For any finite set $X$ and a set $P$ of partial permutations of $X$ which is quo-$\Pi$, there is a finite set $Y\supseteq X$ and extensions $p\mapsto \tilde{p}$ from $P$ to $\mbox{Sym}(Y)$ such that the group generated by $\{\tilde{p}\colon p\in P\}$ is a $\Pi$-group.
\end{quote}
\end{definition}

As usual, we are interested in the {\em odd-extension property}, or {\em odd-EP} for short, which is the $\Pi$-extension property where $\Pi$ is the set of all odd prime numbers. 

\begin{lemma}\label{lem:fullPi} Let $X$ be a set and $P$ be a set of permutations of $X$, i.e., $P\subseteq\mbox{\rm Sym}(X)$. Then $P$ is quo-$\Pi$ if and only if the group generated by $P$ is a $\Pi$-group.
\end{lemma}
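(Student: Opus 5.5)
The plan is to translate the condition on $P$ into a condition on group elements and then apply Lemma~\ref{lem:quoPi}(3) to the kernel of the evaluation homomorphism. Let $\Gamma=\langle P\rangle\le \mathrm{Sym}(X)$. When $P\subseteq \mathrm{Sym}(X)$, every word $w\in F(P)$ defines a total permutation of $X$, and the evaluation map $\varphi\colon F(P)\to\Gamma$ is a surjective homomorphism. With this notation, $P$ is quo-$\Pi$ exactly when, for every prime $p\notin\Pi$ and every $g\in\Gamma$, the equation $g^p(x)=x$ implies $g(x)=x$ for each $x\in X$. (We assume $X$ is finite, as in the EP setting, so that $\Gamma$ is finite. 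Otherwise \lq\lq $\Pi$-group\rq\rq\ should be read as \lq\lq every element has $\Pi$-number order\rq\rq, and the same argument applies.)

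For the direction \lq\lq$\Gamma$ is a $\Pi$-group $\Rightarrow$ $P$ quo-$\Pi$\rq\rq, take $g\in\Gamma$ and a prime $p\notin\Pi$, and suppose $g^p(x)=x$. The order $n$ of $g$ is a $\Pi$-number, so $\gcd(p,n)=1$. Choose integers $a,b$ with $ap+bn=1$. Then
$$g=g^{ap+bn}=(g^p)^a,$$
and since $g^p$ fixes $x$, so does $(g^p)^a=g$. Hence $g(x)=x$.

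For the converse, suppose $P$ is quo-$\Pi$ and, toward a contradiction, that some prime $p\notin\Pi$ divides $|\Gamma|$. By Cauchy's theorem there is $g\in\Gamma$ of order $p$. Write $g=\varphi(w)$ for some word $w$. Since $g\ne \mathrm{id}$, there is $x$ with $g(x)\ne x$, while $g^p=\mathrm{id}$ gives $w^p(x)=x$. This contradicts the quo-$\Pi$ hypothesis. Alternatively, phrased through the earlier lemma: the pointwise condition says that $\ker\varphi$ is a quo-$\Pi$ subgroup of $F(P)$, because $w^p\in\ker\varphi$ means $w^p$ fixes every $x$, so $w$ fixes every $x$. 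Moreover $\ker\varphi$ is normal of finite index, so Lemma~\ref{lem:quoPi}(3) gives that $F(P)/\ker\varphi\cong\Gamma$ is a $\Pi$-group. I would present this Lemma~\ref{lem:quoPi} version, since it matches the paper's style.

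The only subtle point is that the quo-$\Pi$ condition is stated pointwise, which is a priori stronger than the group-level statement \lq\lq$g^p=\mathrm{id}\Rightarrow g=\mathrm{id}$\rq\rq. So the forward direction genuinely needs the B\'ezout argument at each point $x$; it does not follow from kernel membership alone. Even so, this step is routine.
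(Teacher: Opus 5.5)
Your proof is correct and matches the paper's: B\'ezout with the order of $w$ gives ``$\Pi$-group $\Rightarrow$ quo-$\Pi$'', and for the converse the key observation is that no nontrivial $w$ can satisfy $w^q=\mathrm{id}$ with $q\notin\Pi$ (the paper leaves the Cauchy step implicit, and your $\ker\varphi$ plus Lemma~\ref{lem:quoPi}(3) phrasing is the same argument repackaged). One correction to your parenthetical: the same argument does \emph{not} extend to infinite $X$ under your reading, since finiteness is genuinely needed for ``quo-$\Pi\Rightarrow\Pi$-group'' --- the shift $x\mapsto x+1$ on $\mathbb{Z}$ is quo-$\Pi$ for every $\Pi$ but generates $\mathbb{Z}$, whose nontrivial elements have infinite order.
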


\begin{proof} First suppose $P$ is quo-$\Pi$. To see that the group generated by $P$ is a $\Pi$-group, assume $q\not\in \Pi$ and $w\in F(P)$ is such that $w^q$ is the identity as a permutation of $X$. In particular, for any $x\in X$, $w^q(x)=x$. This implies that $w(x)=x$. Thus $w$ is the identity as a permutation of $X$. 

Conversely, suppose the group generated by $P$ is a $\Pi$-group. Assume $q\not\in\Pi$, $x\in X$, $w\in F(P)$ and $w^q(x)=x$. Suppose the order of $w$ is $m$. Then all prime factors of $m$ are in $\Pi$. Thus $\mbox{gcd}(q, m)=1$. Then for some $a, b\in \mathbb{Z}$, we have $aq+bm=1$. This implies that $w(x)=w^{aq+bm}(x)=(w^q)^a(x)=x$. Thus $P$ is quo-$\Pi$.
\end{proof}

We can also consider the {\em extension property}, which is the following statement:
\begin{quote}
For any finite set $X$ and a set $P$ of partial permutations of $X$, there are extensions $p\mapsto \tilde{p}$ from $P$ to $\mbox{Sym}(X)$ such that the group generated by $\{\tilde{p}\colon p\in P\}$ is a finite group.
\end{quote}
But this statement is trivially true since every partial permutation of $X$ can be extended to a full permutation of $X$, and $\mbox{Sym}(X)$ is finite.

\section{Equivalents of the $\Pi$-EP}
Our main goal in this section is to show that the EPPA for finite tournaments is equivalent to the odd-extension property. However, along the way we will show some other equivalences, for instance, the equivalence of the $\Pi$-EP with the $\Pi$-LERF. We will make heavy use of the concepts and methods in Stallings' proof \cite[Corollary 6.3]{St83} of Hall's theorem \cite{Ha49} (Theorem~\ref{thm:Hall}). Thus in the first two subsections we recall the concepts of multidigraphs, fundamental groups, immersions and foldings. In Subsection~\ref{subsec:Stallings} we give Stallings' proof of Hall's theorem. Then in the rest of the subsections we prove the equivalence of the $\Pi$-EP with the $\Pi$-LERF, and with the $\Pi$-EP with two partial permutations. These lead to a proof of the equivalence of the EPPA for finite tournaments with the odd-extension property. 


\subsection{The fundamental group of a multidigraph\label{subsec:multidigraph}} Following \cite{St83}, we will work with multidigraphs of Serre's style \cite{SerreBook}. Let $V$ be a set. An {\em edge} over $V$ is an object with a unique {\em source} $\mathsf{s}(e)\in V$ and a unique {\em target} $\mathsf{t}(e)\in V$. We view an edge $e$ over $V$ as an {\em arrow} from $\mathsf{s}(e)$ to $\mathsf{t}(e)$. Let $G=(V,E)$ be a {\em multidigraph}, i.e., $V$ is a set of {\em vertices} and $E$ is a set of {\em edges} over $V$.  For each $e\in E$, let $e^{-1}$ be a unique edge with $\mathsf{s}(e^{-1})=\mathsf{t}(e)$ and $\mathsf{t}(e^{-1})=\mathsf{s}(e)$. Let $E^{-1}$ be the set of all edges $e^{-1}$ for $e\in E$. We always assume $E\cap E^{-1}=\varnothing$ and formally define, for each $e\in E$, that $(e^{-1})^{-1}=e$. A multidigraph can be viewed as a digraph with multiple edges with labels (each edge is labelled by itself).

Suppose $G$ is {\em weakly connected}, i.e., $G$ is connected as an undirected multigraph. Let $v\in V$. We define the {\em fundamental group} of $G$ with base point $v$, denoted $\pi_1(G, v)$, as follows. 

An edge $e\in E(G)$ with $\mathsf{s}(e)=\mathsf{t}(e)$ is called a {\em loop}. A {\em walk} in $G$ is a sequence $w=e_0e_1\cdots e_k$ of edges in $E\cup E^{-1}$ such that for all $0\leq i<k$, $\mathsf{t}(e_i)=\mathsf{s}(e_{i+1})$. The {\em length} of a walk $w=e_0\cdots e_k$ is $k+1$, i.e., the number of edges appearing in $w$. A walk $w=e_0 \cdots e_k$ is a {\em circuit} if $\mathsf{t}(e_k)=\mathsf{s}(e_0)$. The {\em base point} of a circuit $w=e_0\cdots e_k$ is $\mathsf{s}(e_0)$. For each $v\in V$, there is a unique walk of length $0$ with base point $v$, which is the empty walk (sequence), which we also consider a circuit with base point $v$. Two walks $w_1, w_2$ are {\em elementarily homotopy equivalent} if one of them is obtained from the other by removing two adjacent terms of the form $e, e^{-1}\in E\cup E^{-1}$. A {\em reduced walk} is a walk without adjacent terms of the form $e, e^{-1}$. The reflexive and transitive closure of the elementary homotopy equivalence relation is called the {\em homotopy equivalence}. For each walk $w$ in $G$, the homotopy equivalence class of $w$ is denoted $[w]$. A reduced walk is the shortest walk in its homotopy equivalence class. Let $\pi_1(G, v)$ be the set of all homotopy equivalence classes of  circuits in $G$ with base point $v$. For circuits $w, w_1, w_2$ with base point $v$, define
$$ [w]^{-1}=[w^{-1}] \mbox{ and } [w_1]\cdot[w_2]=[w_1w_2]. $$
These operations make $\pi_1(G, v)$ a group, which we call the {\em fundamental group} of $G$ with base point $v$.

Let $G$ and $H$ be multidigraphs. A {\em homomorphism} $f\colon G\to H$ consists of a function $f_V\colon V(G)\to V(H)$ and a function $f_E\colon E(G)\to E(H)$ such that for any $e\in E(G)$, 
$$ f_V(\mathsf{s}(e))=\mathsf{s}(f_E(e)) \mbox{ and } f_V(\mathsf{t}(e))=\mathsf{t}(f_E(e)). $$
If $f\colon G\to H$ is a homomorphism, then for any walk $w=e_0\cdots e_k$ in $G$, $f(w)=f_E(e_0)\cdots f_E(e_k)$ is a walk in $H$; if $w$ is a circuit with base point $v$, then $f(w)$ is a circuit in $H$ with base point $f_V(v)$. In this sense $f$ induces a homomorphism from $\pi_1(G, v)$ to $\pi_1(H, f_V(v))$, which we still denote by $f$. We will drop the subscripts in $f_E$ and $f_V$ when there is no danger of confusion.

We consider a special multidigraph $G$ with only one vertex $v$. In this case there is no need to specify the base point when we speak of circuits in $G$, and we denote the fundamental group by $\pi_1(G)$. It is clear that $\pi_1(G)$ is the free group generated by the set $E(G)$. When $E(G)=\varnothing$, $\pi_1(G)$ is the trivial group.

\subsection{Immersions and foldings} 

Let $G$ be a multidigraph. For any $v\in V(G)$, the {\em star} of $v$ is the set
$$ \mbox{St}(v, G)=\{e\in E(G)\cup E(G)^{-1}\colon \mathsf{s}(e)=v\}. $$
Let $G$ and $H$ be multidigraphs and $f\colon G\to H$ be a homomorphism. Then for each $v\in V(G)$, 
$$ f(\mbox{St}(v, G)\cap E(G))\subseteq \mbox{St}(f(v), H)\cap E(H)$$
and $$ f(\mbox{St}(v, G)\cap E(G)^{-1})\subseteq \mbox{St}(f(v), H)\cap E(H)^{-1}. $$
Thus, $f$ gives rise to a function $f_v$ from $\mbox{St}(v, G)$ to $\mbox{St}(f(v), H)$. $f$ is an {\em immersion} if for each $v\in V(G)$, $f_v$ is injective; $f$ is a {\em covering} if for each $v\in V(G)$, $f_v$ is bijective.

A pair of edges $(e_1, e_2)$ in $G$ is {\em admissible} if either $\mathsf{s}(e_1)=\mathsf{s}(e_2)$ or $\mathsf{t}(e_1)=\mathsf{t}(e_2)$ (or both). If $(e_1,e_2)$ is an admissible pair in $G$, define $G/[e_1=e_2]$ to be the result of identifying $e_1$ and $e_2$ in $G$. The operation is called {\em folding} $e_1$ and $e_2$. The {\em folding map} $f_{e_1=e_2}\colon G\to G/[e_1=e_2]$ is naturally defined and is a homomorphism. 

The following fact is immediate from the definitions.

\begin{lemma}[{\cite[3.3]{St83}}] Let $G, H$ be multidigraphs and let $f\colon G\to H$ be a homomorphism. Suppose $(e_1, e_2)$ is a pair of edges in $G$ such that $f(e_1)=f(e_2)$ and either $\mathsf{s}(e_1)=\mathsf{s}(e_2)$ or $\mathsf{t}(e_1)=\mathsf{t}(e_2)$. Then $(e_1, e_2)$ is an admissible pair in $G$. Moreover, there is a unique homomorphism $g\colon G/[e_1=e_2]\to H$ such that $f=g\circ f_{e_1=e_2}$. 
\end{lemma}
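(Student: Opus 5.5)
The plan is to first check that $(e_1,e_2)$ is admissible, and then build $g$ by hand on vertices and edges of $G/[e_1=e_2]$. Admissibility is immediate: the definition of an admissible pair only asks that $\mathsf{s}(e_1)=\mathsf{s}(e_2)$ or $\mathsf{t}(e_1)=\mathsf{t}(e_2)$, and this is part of the hypothesis.

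The main work is to make the quotient $G/[e_1=e_2]$ explicit. Identifying $e_1$ with $e_2$ forces their endpoints together. So on vertices, $G/[e_1=e_2]$ is $V(G)$ modulo the equivalence relation generated by $\mathsf{s}(e_1)\sim\mathsf{s}(e_2)$ and $\mathsf{t}(e_1)\sim\mathsf{t}(e_2)$. On edges, it is $E(G)$ modulo $e_1\sim e_2$, with source and target read off from the vertex classes. In the orientation-mixed case, where for instance $e_2$ is really $e'^{-1}$ for some $e'\in E$, I would first replace $e_2$ by $e_2^{-1}$ so that both edges lie in $E(G)$. The folding map $f_{e_1=e_2}$ is then the quotient map on each component, and it is surjective on both vertices and edges.

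Next I would define $g$ by $g([v])=f(v)$ and $g([e])=f(e)$. To see that $g$ is well defined on edges, note that the only nontrivial identification is $e_1\sim e_2$, and $f(e_1)=f(e_2)$ by hypothesis. For vertices, the generating identifications are respected because
\[
f(\mathsf{s}(e_1))=\mathsf{s}(f(e_1))=\mathsf{s}(f(e_2))=f(\mathsf{s}(e_2)),
\]
and the same computation works for targets. Since $f$ agrees on generating pairs, it is constant on the generated equivalence classes. The homomorphism condition for $g$ is inherited directly from that of $f$. By construction $f=g\circ f_{e_1=e_2}$.

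Uniqueness follows from surjectivity of $f_{e_1=e_2}$ on vertices and on edges. Any $g'$ satisfying $f=g'\circ f_{e_1=e_2}$ must send $[v]$ to $f(v)$ and $[e]$ to $f(e)$, so $g'=g$.

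The only real obstacle is bookkeeping: fixing the precise meaning of "identifying $e_1$ and $e_2$", including the implied vertex identifications and the inverse-edge convention. Once that is set, every verification is one line.
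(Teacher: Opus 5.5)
Your proposal is correct and is exactly the routine unpacking of the definitions that the paper intends; the paper gives no proof and calls the lemma immediate from the definitions, citing Stallings. One small correction: the ``orientation-mixed'' case you mention cannot occur, because $f(e_1)=f(e_2)$ together with $E(H)\cap E(H)^{-1}=\varnothing$ forces $e_1$ and $e_2$ to have the same orientation, and replacing only $e_2$ by $e_2^{-1}$ would break the hypothesis $f(e_1)=f(e_2)$ anyway, so drop that aside.
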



In the situation of the above lemma, we say that $f$ {\em folds} $e_1$ and $e_2$, and that $f$ {\em factors} through $f_{e_1=e_2}$ (or $G/[e_1=e_2]$).

\begin{lemma}\label{lem:folds} Let $G$ be a finite multidigraph, let $H$ be a multidigraph, and let $f\colon G\to H$ be a homomorphism. Then there exist folding maps $f_i\colon G_i\to G_{i+1}$, $i=0,\dots, k$, where $G_0=G$ and an immersion $g\colon G_{k+1}\to H$ such that $g\circ f_k\circ \dots \circ f_0=f$. Moreover, $G_{k+1}$ and $g$ are unique.
\end{lemma}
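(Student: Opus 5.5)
Existence comes from folding greedily until no fold is available; uniqueness comes from a universal property of the resulting immersion, proved by induction on the number of folds.

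\emph{Existence.} Set $G_0=G$ and $f^{(0)}=f$. Suppose $f^{(i)}\colon G_i\to H$ has been constructed with $G_i$ finite. If $f^{(i)}$ is an immersion, stop and put $k+1=i$, $g=f^{(i)}$. Otherwise some vertex $v$ of $G_i$ has two distinct edges $e_1\neq e_2$ in $\mathrm{St}(v,G_i)$ with the same image. If one of them lies in $E^{-1}$, then by the earlier remark that $f$ maps $\mathrm{St}(v)\cap E$ into $E(H)$ and $\mathrm{St}(v)\cap E^{-1}$ into $E(H)^{-1}$, both do. Replacing them by their inverses if necessary, we get $e_1,e_2\in E(G_i)$ with $f^{(i)}(e_1)=f^{(i)}(e_2)$ and a common source or a common target. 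By the lemma cited from \cite[3.3]{St83}, the pair is admissible and $f^{(i)}$ factors as $f^{(i+1)}\circ f_{e_1=e_2}$ with $G_{i+1}=G_i/[e_1=e_2]$. Each fold lowers $|E(G_i)|$ by one, since $e_1\neq e_2$ are identified. As $G$ is finite, the process stops after at most $|E(G)|$ steps. Composing the factorizations gives $g\circ f_k\circ\cdots\circ f_0=f$ with $g$ an immersion. (If $f$ is already an immersion there are no folds; we read the statement as allowing this degenerate case.)

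\emph{Uniqueness.} The statement "$G_{k+1}$ and $g$ are unique" should be read as uniqueness up to a unique isomorphism over $H$ and under $G$. The plan is to show that $q:=f_k\circ\cdots\circ f_0\colon G\to G_{k+1}$ is surjective on vertices and on edges and has a universal property.

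\begin{quote}\emph{Claim.} For any factorization $f=h\circ\phi$, where $\phi\colon G\to K$ is a homomorphism and $h\colon K\to H$ is an immersion, there is a unique homomorphism $\psi\colon G_{k+1}\to K$ with $\psi\circ q=\phi$ (and then $h\circ\psi=g$).\end{quote}

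We prove the claim by induction along the folding sequence. Suppose $\phi$ factors through $G_i$ via $\phi_i$. Let $e_1,e_2$ be the next pair folded. They share a source or a target and satisfy $h\phi_i(e_1)=h\phi_i(e_2)$. Because $h$ is an immersion, it is injective on the star of the common endpoint, so $\phi_i(e_1)=\phi_i(e_2)$. By the lemma from \cite[3.3]{St83}, $\phi_i$ then factors through $G_i/[e_1=e_2]$. Uniqueness of each $\psi$ holds because every folding map, and hence $q$, is surjective on vertices and on edges.

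Now take two such constructions $(G',g',q')$ and $(G'',g'',q'')$. Applying the claim in both directions gives maps $G'\to G''$ and $G''\to G'$ compatible with $q'$ and $q''$. Their composites fix $q'$ and $q''$ respectively. By surjectivity they are the identities, so the maps are mutually inverse isomorphisms commuting with $g'$ and $g''$.

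\emph{Main obstacle.} The only delicate point is the induction step in the claim. There one must check that an immersion forces $\phi_i(e_1)=\phi_i(e_2)$, which is why the two folded edges must lie in a common star. One must also take care with the orientation bookkeeping between $E$ and $E^{-1}$.
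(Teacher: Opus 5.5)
Your existence argument is the paper's: fold a non-injective pair in some star, using that $f$ sends $\mathrm{St}(v,G)\cap E(G)$ into $E(H)$ and $\mathrm{St}(v,G)\cap E(G)^{-1}$ into $E(H)^{-1}$, and stop because each fold removes an edge. Your uniqueness argument is correct but takes a different route.

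The paper works at the level of the folding procedure itself. The pairs with equal $f$-image that are admissible in $G$ stay admissible until they are identified, and their folds commute with all other folds. So any folding sequence can be reordered to start with them, and iterating this is declared to give a unique result. In effect this is an informal confluence (diamond-lemma) argument. It has to keep track of the new foldable pairs created each time two vertices are identified.

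You replace that bookkeeping by a universal property: the factorization $f=g\circ q$ is initial among all factorizations of $f$ through an immersion. This needs only two facts. An immersion is injective on stars, so $h\phi_i(e_1)=h\phi_i(e_2)$ forces $\phi_i(e_1)=\phi_i(e_2)$. And folds are surjective. This gives order-independence for free. It also pins down what ``unique'' means: unique up to a unique isomorphism compatible with $q$ and $g$, or equivalently, the identifications $q$ makes on vertices and edges are determined by $f$ alone.

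Two points are worth making explicit. First, your induction should carry the invariant $h\circ\phi_i=g\circ f_k\circ\cdots\circ f_i$, which follows from $h\circ\phi=f$ by surjectivity of the folds. Second, your claim applies to an arbitrary decomposition $g'\circ f'_{k'}\circ\cdots\circ f'_0=f$, not just the greedy one. This is because the pair folded by $f'_i$ automatically has equal images under $g'\circ f'_{k'}\circ\cdots\circ f'_i$.
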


\begin{proof} If $f$ is an immersion then the lemma holds trivially. Otherwise, $f$ is not an immersion, and there is $v\in V(G)$ such that $f_v$ is not injective on $\mbox{St}(v, G)$. This means that there exist $e_1, e_2\in \mbox{St}(v, G)$ such that $f(e_1)=f(e_2)$. If $e_1, e_2\in E(G)$ then $(e_1, e_2)$ is an admissible pair in $G$ by the preceding lemma. If $e_1, e_2\in E(G)^{-1}$ then $(e_1^{-1}, e_2^{-1})$ is an admissible pair in $G$. Note that the number of edges in $G/[e_1=e_2]$ or $G/[e_1^{-1}= e_2^{-1}]$ is less than that of $G$. This implies that the folding procedure will terminate after finitely many steps. At the end of this iterative procedure of folding, the result is an immersion.

To prove the uniqueness of $G_{k+1}$ and $g$, we need to analyze the folding procedure in more detail. First, for $G=G_0$, we consider the set
$$ \{(e_1, e_2)\in E(G)^2\colon f(e_1)=f(e_2) \mbox{ and } (\mathsf{s}(e_1)=\mathsf{s}(e_2) \mbox{ or } \mathsf{t}(e_1)=\mathsf{t}(e_2)) \} $$
of all admissible pairs given by $f$. In any sequence of foldings, these pairs remain admissible until they are folded at a particular step. Moreover, each of the foldings of these pairs is commutative with any other folding performed on $G$ or $G_i$ for $i>0$. Thus, we may assume that the sequence of foldings $f_0, \dots, f_k$ begins with the foldings of these pairs (in any order). Now it is obvious that such an iterative procedure of folding is unique and results in a unique $G_{k+1}$.
\end{proof}

The following facts are theorems of \cite{St83}. For the convenience of the reader we give their proofs here.

\begin{lemma}[{\cite[Corollary 4.4]{St83}}] \label{lem:surj} Let $G$ be a multidigraph, $v\in V(G)$ and let $(e_1,e_2)$ be an admissible pair in $G$. Then the folding map $f=f_{e_1=e_2}$ induces a surjective homomorphism from $\pi_1(G, v)$ to $\pi_1(G/[e_1=e_2], f(v))$. 
\end{lemma}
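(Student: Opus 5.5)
The folding map $f=f_{e_1=e_2}$ already gives a homomorphism $\pi_1(G,v)\to\pi_1(G/[e_1=e_2],f(v))$: it sends circuits to circuits and preserves elementary homotopies. So the only thing to prove is surjectivity. The strategy is to lift an arbitrary circuit at $f(v)$ back to a circuit at $v$ with the same image up to homotopy.

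\textbf{Reduction.} By symmetry (replacing $e_1,e_2$ by $e_1^{-1},e_2^{-1}$) we may assume $\mathsf{s}(e_1)=\mathsf{s}(e_2)=u$. Put $u_1=\mathsf{t}(e_1)$ and $u_2=\mathsf{t}(e_2)$. The quotient $G'=G/[e_1=e_2]$ is then described as follows:
\begin{enumerate}
\item[(i)] if $u_1\neq u_2$, the vertices $u_1$ and $u_2$ are identified into a single vertex $\bar u$;
\item[(ii)] if $u_1=u_2$, no vertices are identified.
\end{enumerate}
In either case the edges $e_1,e_2$ become one edge $\bar e$, and every other edge of $G'$ is the image of exactly one edge of $G$.

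\textbf{Lifting a circuit.} Let $w'=e'_0\cdots e'_k$ be a circuit in $G'$ based at $f(v)$. Each $e'_i$ has a preimage edge in $G$; for $\bar e$ (or $\bar e^{-1}$) we choose $e_1$ (or $e_1^{-1}$). Consecutive preimages may fail to match up, but only at a vertex of $G'$ with more than one preimage, i.e.\ at $\bar u$ in case (i). The same applies to the base point: if $f(v)=\bar u$, the first and last preimages may start or end at the wrong one of $u_1,u_2$.

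\textbf{Repairing the gaps.} Whenever the lifted walk arrives at $u_i$ but must continue from $u_j$, we insert the connecting walk $e_i^{-1}e_j$ from $u_i$ to $u_j$. Its image in $G'$ is $\bar e^{-1}\bar e$, which is null-homotopic. Likewise, if $v=u_i$ and the lift starts or ends at $u_j$, we prepend or append $e_i^{-1}e_j$ or its inverse, again with null-homotopic image. The repaired lift is a circuit at $v$ whose image is homotopic to $w'$, which proves surjectivity.

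The only real obstacle is this bookkeeping of mismatched endpoints in case (i), including the base-point adjustment. Case (ii) involves no vertex identification, so the lift there is automatically a circuit at $v$.
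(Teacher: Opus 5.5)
Your proof is correct, and it departs from the paper's argument at exactly the point that matters.

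The paper works with a reduced circuit $W$ and claims an \emph{exact} lift. Each maximal block $f(e_1)^{\pm\ell}$ is lifted to $e_1$ or $e_2$ according to the edge that follows it, and the paper asserts a circuit $w$ with $f(w)=W$. You instead fix the choice $e_1$ and lift only up to homotopy. You patch every mismatch at $\bar u$, including at the base point when $v\in\{u_1,u_2\}$, with the backtrack $e_i^{-1}e_j$, whose image $\bar e^{-1}\bar e$ is null-homotopic.

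An exact, length-preserving lift would be a stronger conclusion. It exists in your case (ii), but not in general. Take $G$ to be the $4$-cycle with edges $c\colon v\to u_1$, $e_1\colon u\to u_1$, $e_2\colon u\to u_2$, $d\colon u_2\to v$. In $G/[e_1=e_2]$ the walk $cd$ is a reduced circuit at $v$. However, $f$ preserves length, and $c,d$ have unique preimages with $\mathsf{t}(c)=u_1\neq u_2=\mathsf{s}(d)$, so no circuit $w$ in $G$ satisfies $f(w)=cd$. The class $[cd]$ is hit by $c\,e_1^{-1}e_2\,d$, which is precisely your repaired lift.

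So lifting up to homotopy, as you do, is what makes the surjectivity argument work. It fills a gap in the paper's version of the proof.
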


\begin{proof} Without loss of generality assume $\mathsf{s}(e_1)=\mathsf{s}(e_2)$. The case $\mathsf{t}(e_1)=\mathsf{t}(e_2)$ is similar. Suppose $W$ is a reduced circuit in $G/[e_1=e_2]$ with base point $f(v)$. First consider the case $\mathsf{t}(e_1)=\mathsf{t}(e_2)$. In this case let $w$ be the result obtained from $W$ by replacing all occurrences of $f(e_1)=f(e_2)$ by $e_1$ (and replacing all $f(e_1)^{-1}$ by $e_1^{-1}$). Then $w$ is a circuit in $G$ with base point $v$, and $f(w)=W$. The lemma is proved in this case. So assume $\mathsf{t}(e_1)\neq \mathsf{t}(e_2)$. In this case, we note that any maximal block (subwalk) of $W$ of the form $f(e_1)^\ell (\ell>0)$ followed by an edge $e\in E(G)\cup E(G)^{-1}$ can be uniquely lifted to a walk in $E(G)$ depending on $\mathsf{s}(e)$, and similarly for any maximal subwalk of $W$ of the form $f(e_1)^{-\ell} (\ell>0)$. In case $f(e_1)^2$ or $f(e_1)^{-2}$ occurs in $W$, it follows that exactly one of $e_1$ and $e_2$ is a loop. In all cases there is a unique lifting of $W$ to a circuit $w$ in $G$ with $f(w)=W$.
\end{proof}

\begin{lemma}[{\cite[Proposition 5.3]{St83}}] \label{lem:im2inj} Let $f\colon G\to H$ be an immersion and $v\in V(G)$. Then the group homomorphism $f\colon \pi_1(G, v)\to \pi_1(H, f(v))$ is injective.
\end{lemma}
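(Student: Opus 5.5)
The plan is to show that an immersion carries reduced circuits to reduced circuits, and then to use the fact that a nontrivial element of a fundamental group is represented by a nonempty reduced circuit. Injectivity of $f\colon \pi_1(G,v)\to\pi_1(H,f(v))$ then follows quickly.

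First I record the facts about reduced walks that the argument uses. Every homotopy class $[w]$ contains a unique reduced walk. Existence comes from repeatedly cancelling adjacent pairs $e,e^{-1}$. Uniqueness is the standard confluence argument for free reduction: two elementary reductions applied to the same walk can always be completed to a common walk. Consequently a circuit $w$ at $v$ satisfies $[w]=1$ in $\pi_1(G,v)$ exactly when its reduced form is the empty walk. So it suffices to show the following: if $w$ is a nonempty reduced circuit in $G$ at $v$, then $f(w)$ is a nonempty reduced circuit in $H$. Then $f(w)$ is its own reduced form, so it is not homotopic to the empty walk, and $f([w])\neq 1$.

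Now let $w=e_0e_1\cdots e_k$ be reduced, and suppose some adjacent pair in $f(w)$ cancels, say $f(e_{i+1})=f(e_i)^{-1}=f(e_i^{-1})$. Here I use that $f$ extends to inverse edges by $f(e^{-1})=f(e)^{-1}$, and that this extension is compatible with sources and targets. Both $e_{i+1}$ and $e_i^{-1}$ have source $\mathsf{t}(e_i)$, so they lie in $\mbox{St}(\mathsf{t}(e_i),G)$. They have the same image under $f_{\mathsf{t}(e_i)}$. Since $f$ is an immersion, this map is injective, so $e_{i+1}=e_i^{-1}$, contradicting that $w$ is reduced. Hence $f(w)$ is reduced. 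It has the same length as $w$, so it is nonempty, and this completes the argument.

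The step needing the most care is the reduced-form uniqueness, i.e.\ that a nonempty reduced circuit is not homotopic to the empty walk. The paper's definitions give this only implicitly, in the remark that a reduced walk is the shortest in its class. I would prove it by the usual diamond lemma: if $w$ admits two elementary reductions, they are either disjoint, in which case they commute, or they overlap in a segment $e\,e^{-1}e$, in which case both reductions produce the same walk. Confluence then gives a unique reduced form in each homotopy class. A secondary point is checking that the star map $f_v$ respects inverse edges, which the definition of $f_v$ on $\mbox{St}(v,G)\cap E(G)^{-1}$ already provides.
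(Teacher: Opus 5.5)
Your proposal is correct and follows the same route as the paper: a nontrivial element of $\pi_1(G,v)$ is represented by a nonempty reduced circuit, the immersion sends it to a nonempty reduced circuit, and so its image is nontrivial. You also supply two details the paper leaves implicit: the star-injectivity argument showing that $f(w)$ is reduced, and the uniqueness of reduced forms, which the paper takes from its remark that a reduced walk is the shortest walk in its homotopy class.
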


\begin{proof} Let $\gamma\in \pi_1(G, v)$ be a nonidentity element. Then $\gamma=[w]$ for some reduced circuit $w$ of length $>0$ with base point $v$. Then $f(\gamma)=[f(w)]$ and $f(w)$ is a reduced circuit with base point $f(v)$. Moreover, $f(w)$ has length $>0$, and therefore $[f(w)]$ is not the identity. So $f(\gamma)$ is not the identity.
\end{proof}

\begin{lemma}[{\cite[5.4]{St83}}] \label{lem:rep} Let $H$ be a multidigraph, $u\in V(H)$ and let $S$ be a finitely generated subgroup of $\pi_1(H, u)$. Then there exist a finite multigraph $G$, $v\in V(G)$ and an immersion $f\colon G\to H$ so that $S=f(\pi_1(G, v))$.
\end{lemma}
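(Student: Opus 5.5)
Write $S=\langle [c_1],\dots,[c_n]\rangle$, where each $c_i$ is a circuit in $H$ based at $u$; since $S$ is finitely generated we may take finitely many generators, and we may assume each $c_i$ is reduced. The plan is to realise these generators as a finite bouquet of loops, map the bouquet into $H$, and fold it down to an immersion using Lemma~\ref{lem:folds}.

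First build the finite multidigraph $G_0$. For each $i$, take a cycle graph $C_i$ whose length equals the length of $c_i$; if $c_i$ is empty, use a single vertex with no edges. Glue all the $C_i$ together at one common vertex $v_0$. Orient and label the edges of $C_i$ so that going around $C_i$ from $v_0$ spells out $c_i$: the $j$-th edge of $C_i$ maps to the $j$-th edge of $c_i$. An edge of $c_i$ lying in $E(H)^{-1}$ is handled by orienting the corresponding edge of $C_i$ backwards and mapping it to the inverse edge. This gives a homomorphism $f_0\colon G_0\to H$ with $f_0(v_0)=u$. Let $\tilde c_i$ be the circuit around $C_i$ based at $v_0$; then $f_0(\tilde c_i)=c_i$.

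Next, $G_0$ is finite, so Lemma~\ref{lem:folds} gives foldings $G_0\to G_1\to\cdots\to G_{k+1}=:G$ and an immersion $g\colon G\to H$ with $g\circ f_k\circ\cdots\circ f_0=f_0$. Put $v$ equal to the image of $v_0$ in $G$ and take $f=g$. We then have two containments to check.

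\begin{enumerate}
\item[(i)] $S\subseteq g(\pi_1(G,v))$. Each $[c_i]=f_0([\tilde c_i])$, and $f_0=g\circ(\text{foldings})$, so $[c_i]$ lies in the image of $g$. Since that image is a subgroup, it contains $S$.
\item[(ii)] $g(\pi_1(G,v))\subseteq S$. It suffices to show $f_0(\pi_1(G_0,v_0))\subseteq S$, because by Lemma~\ref{lem:surj} each folding map is surjective on fundamental groups. Hence the composite of the foldings maps $\pi_1(G_0,v_0)$ onto $\pi_1(G,v)$, and therefore $g(\pi_1(G,v))=f_0(\pi_1(G_0,v_0))$.
\end{enumerate}

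The step I expect to need the most care is showing that $\pi_1(G_0,v_0)$ is generated by the classes $[\tilde c_i]$. For this, note that any reduced circuit in the wedge of cycles at $v_0$ decomposes at its returns to $v_0$ into pieces. Each piece travels within a single $C_i$ and returns to $v_0$, so after reduction it is a power of $\tilde c_i$; the reduction works because $C_i$ minus $v_0$ is a path, so any excursion that does not go all the way round backtracks. Thus $\pi_1(G_0,v_0)$ is generated by the $[\tilde c_i]$, and $f_0(\pi_1(G_0,v_0))=\langle [c_i]\rangle=S$.

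Combining (i) and (ii) gives $S=g(\pi_1(G,v))$ with $G$ finite and $g$ an immersion, which is the statement. By Lemma~\ref{lem:im2inj}, $g$ then in fact restricts to an isomorphism of $\pi_1(G,v)$ onto $S$, although this is not needed.
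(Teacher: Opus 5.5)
Your proposal is correct and follows the paper's proof essentially step for step: build the wedge of cycles spelling out the generators, fold it to an immersion via Lemma~\ref{lem:folds}, and use Lemma~\ref{lem:surj} to see that the image subgroup is unchanged by the foldings. Your explicit argument that $\pi_1(G_0,v_0)$ is generated by the classes $[\tilde c_i]$ fills in a step the paper simply states as an observation.
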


\begin{proof} Suppose $S$ is generated by $\alpha_1, \dots, \alpha_n\in \pi_1(H, u)$. For each $1\leq i\leq n$, let $w_i$ be a circuit with base point $u$ such that $\alpha_i=[w_i]$. Let $G_0$ be a multidigraph constructed as the {\em wedge sum} of $n$ disjoint cycles of length $|w_1|, \dots, |w_n|$, respectively, with a base point $v$. Let $f_0\colon G_0\to H$ be the homomorphism of multidigraphs which sends the $i$-th cycle to the circuit $w_i$ for all $1\leq i\leq n$. Observe that $f_0(\pi_1(G_0, v))=S$. 

By Lemma~\ref{lem:folds}, there exist foldings $g_i\colon G_i\to G_{i+1}$, $i=0,\dots, k$, and an immersion $f\colon G_{k+1}\to H$ such that $f_0=f\circ g_k\circ \dots \circ g_0$. Let $G=G_{k+1}$. 

Now each group homomorphism $g_i\colon \pi_1(G_i, v)\to \pi_1(G_{i+1}, v)$ is surjective by Lemma~\ref{lem:surj}. It follows that $f(\pi_1(G, v))=S$. 
\end{proof}

\subsection{Stallings' proof of Hall's theorem\label{subsec:Stallings}}
In this subsection we recall Stallings' proof of Hall's theorem (Theorem~\ref{thm:Hall}) using the concepts in the previous two subsections.

\begin{theorem}[{\cite[Theorem 6.1]{St83}}] \label{thm:immersion2covering} Let $f\colon G\to H$ be an immersion of multidigraphs. Suppose that $H$ has only one vertex and $G$ has finitely many vertices. Then there exists a graph $G'$ extending $G$, with $G'-G$ consisting only of edges, and there exists a homomorphism $f'\colon G'\to H$ extending $f$, such that $f'$ is a covering. 
\end{theorem}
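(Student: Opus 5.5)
The plan is to handle each edge of $H$ separately. Since $H$ has a single vertex $u$, every edge of $H$ is a loop at $u$, and $f_V$ sends every vertex of $G$ to $u$. For a fixed edge $\varepsilon\in E(H)$, let $E_\varepsilon=\{e\in E(G)\colon f(e)=\varepsilon\}$. Each $e\in E_\varepsilon$ determines the pair $(\mathsf{s}(e),\mathsf{t}(e))$, and I will read $E_\varepsilon$ as a partial function $\sigma_\varepsilon\colon V(G)\rightharpoonup V(G)$ given by $\sigma_\varepsilon(\mathsf{s}(e))=\mathsf{t}(e)$.

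The first step is to check that $\sigma_\varepsilon$ is a well-defined partial permutation, and this is exactly where the immersion hypothesis enters. If $e_1\neq e_2\in E_\varepsilon$ had $\mathsf{s}(e_1)=\mathsf{s}(e_2)=v$, then both would lie in $\mbox{St}(v,G)\cap E(G)$ and map to $\varepsilon$, so $f_v$ would not be injective. Similarly, if $\mathsf{t}(e_1)=\mathsf{t}(e_2)=v$, then $e_1^{-1}$ and $e_2^{-1}$ lie in $\mbox{St}(v,G)$ and both map to $\varepsilon^{-1}$. So $\sigma_\varepsilon$ is injective with a well-defined domain.

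The second step uses that $V(G)$ is finite: every partial permutation of a finite set extends to a full permutation $\tilde\sigma_\varepsilon\in\operatorname{Sym}(V(G))$. Concretely, $V(G)\setminus\dom(\sigma_\varepsilon)$ and $V(G)\setminus\operatorname{ran}(\sigma_\varepsilon)$ have the same cardinality, so I fix any bijection between them. For each $x$ in $V(G)\setminus\dom(\sigma_\varepsilon)$, I add a new edge $e_{\varepsilon,x}$ with $\mathsf{s}=x$, $\mathsf{t}=\tilde\sigma_\varepsilon(x)$, and set $f'(e_{\varepsilon,x})=\varepsilon$. Let $G'$ be $G$ together with all these new edges, over all $\varepsilon\in E(H)$. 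Then $G'-G$ consists only of edges, and $f'$ extends $f$ and is a homomorphism, since every vertex maps to $u$.

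The third step verifies that $f'$ is a covering. Fix $v\in V(G')=V(G)$. For each $\varepsilon\in E(H)$, the edges of $G'$ over $\varepsilon$ with source $v$ correspond to the single value $\tilde\sigma_\varepsilon(v)$, so there is exactly one. The edges over $\varepsilon$ with target $v$ correspond to $\tilde\sigma_\varepsilon^{-1}(v)$, so again there is exactly one, and this gives exactly one element of $\mbox{St}(v,G')$ over $\varepsilon^{-1}$. Since $\mbox{St}(u,H)=E(H)\cup E(H)^{-1}$, the map $f'_v$ is bijective.

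There is no serious obstacle. The only point requiring care is the translation between stars and injectivity of $\sigma_\varepsilon$ in the first step, in particular handling inverse edges correctly. If $E(H)$ is infinite the construction still works edge by edge, because finiteness is needed only for $V(G)$.
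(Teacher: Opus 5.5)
Your proposal is correct and follows the paper's proof: the paper likewise reads the edges over each $e\in E(H)$ as a partial bijection $R_e$ of $V(G)$, which is injective because $f$ is an immersion, extends it to a full bijection $S_e$ using finiteness of $V(G)$, and reads off $G'$ and $f'$ from these bijections. The only cosmetic difference is that the paper rebuilds $E(G')$ as triples $(u,v,e)$ with $(u,v)\in S_e$, implicitly identifying each original edge with its triple, whereas you keep the original edges and add only the missing ones, which makes $G\subseteq G'$ literal.
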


\begin{proof} For each $e\in E(H)$, let
$$ R_e=\{(u, v)\in V(G)^2\colon \exists e_1\in E(G)\ (f(e_1)=e\mbox{ and } \mathsf{s}(e_1)=u, \mathsf{t}(e_1)=v)\}. $$
Since $f$ is an immersion, the projections of $R_e$ to both coordinates are injective. It follows that $R_e$ is a bijection between two subsets of $V(G)$. Since $V(G)$ is finite, $R_e$ can be extended to a full bijection $S_e$ of $V(G)$ to $V(G)$.

Now define $G'$ by letting $V(G')=V(G)$ and 
$$ E(G')=\{(u,v, e)\colon V(G)^2\times E(H)\colon (u, v)\in S_e\}. $$
For each $(u,v, e)\in E(G')$, $\mathsf{s}(u,v,e)=u$ and $\mathsf{t}(u,v,e)=v$; $(u,v,e)^{-1}=(v,u,e^{-1})$.

Define $f'\colon G'\to H$ by letting $f'(u,v,e)=e$. Then $G'$ extends $G$ and $f'$ extends $f$. It is easy to verify that $f'$ is a covering.
\end{proof}

\begin{theorem}[Hall \cite{Ha49}]\label{thm:Hall2} Let $F$ be a finitely generated free group and $S\leq F$ be a finitely generated subgroup of $F$. Let $\gamma\in F\setminus S$. Then there exists a subgroup $S'$ of finite index in $F$ such that $S\leq S'$ and $\gamma\not\in S'$.
\end{theorem}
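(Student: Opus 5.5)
We follow Stallings' approach: we realize $S$ as the image of the fundamental group of a finite graph immersing into the one-vertex rose, complete that immersion to a finite covering using Theorem~\ref{thm:immersion2covering}, and take $S'$ to be the fundamental group of the covering.

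Let $F$ be free on the finite set $A$. Let $H$ be the multidigraph with a single vertex $u$ and edge set $E(H)=A$, so that $\pi_1(H)=F$. Since $S$ is finitely generated, Lemma~\ref{lem:rep} gives a finite multidigraph $G$, a vertex $v\in V(G)$ and an immersion $f\colon G\to H$ with $f(\pi_1(G,v))=S$. We may assume $G$ is weakly connected by discarding components not containing $v$.

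Next we enlarge $G$ so that the walk determined by $\gamma$ cannot close up at $v$. Write $\gamma$ as a reduced word $e_1\cdots e_k$ in $A\cup A^{-1}$, and read it from $v$ in $G$ as far as possible. Since $f$ is an immersion, at each step there is at most one edge of $G$ leaving the current vertex with the prescribed label, so this reading is deterministic. Two cases arise.
\begin{enumerate}
\item[(i)] The whole word can be read. The resulting walk must end at a vertex other than $v$: otherwise it would be a reduced circuit at $v$ mapping to $\gamma$, which would put $\gamma$ in $f(\pi_1(G,v))=S$.
\item[(ii)] The reading stops after some prefix $e_1\cdots e_j$ with $j<k$. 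In this case we attach to the current vertex a new path of fresh vertices labelled $e_{j+1}\cdots e_k$.
\end{enumerate}

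Call the resulting finite graph $G_1$. We must check that the map $G_1\to H$ is still an immersion. At the attachment vertex, no edge with the label $e_{j+1}$ was present, by the choice of $j$. Along the new path, consecutive labels are never mutually inverse because the word is reduced. Hence all stars map injectively. Moreover, $\pi_1(G_1,v)$ still maps onto $S$, since attaching a tree adds no new circuits. In addition, the reading of $\gamma$ from $v$ now ends at a vertex distinct from $v$, namely a new vertex in case (ii).

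Apply Theorem~\ref{thm:immersion2covering} to $G_1\to H$. This produces a covering $f'\colon G'\to H$ with $V(G')=V(G_1)$ finite, extending the map on $G_1$. Set $S'=f'(\pi_1(G',v))$. Then $S\le S'$, because $G_1\subseteq G'$.

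The main step is to show that $S'$ consists exactly of those words whose unique lift from $v$ ends at $v$, and that $S'$ has finite index. Because $f'$ is a covering, every edge label in $A\cup A^{-1}$ lifts uniquely from each vertex. So $F$ acts on the finite set $V(G')$: a word acts by reading its lift starting at a vertex. Under this action, $S'$ is the stabilizer of $v$, since a word lies in $S'$ exactly when its lift from $v$ is a circuit, and that circuit freely reduces to the reduced form of the word. A stabilizer for an action on a finite set has finite index, so $[F:S']\le|V(G')|$. Finally, the lift of $\gamma$ from $v$ agrees with the path already found in $G_1$, which does not end at $v$. Therefore $\gamma\notin S'$.
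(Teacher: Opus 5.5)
Your proof is correct and is essentially the paper's (Stallings') argument. The paper folds the wedge of $S$-cycles together with a walk for $\gamma$ all at once, which produces exactly your graph $G_1$ (read $\gamma$ from $v$ as far as possible, then hang the unread tail), and then completes to a covering via Theorem~\ref{thm:immersion2covering}; your orbit--stabilizer argument for finite index and for $\gamma\notin S'$ just spells out what the paper asserts via the vertex--coset correspondence and uniqueness of lifts.
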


\begin{proof} (Stallings \cite{St83}) Let $H$ be a multidigraph with only one vertex and with $m$ edges, where $m$ is the number of  free generators of $F$. Then $\pi_1(H)=F$. Let $\alpha_1,\dots, \alpha_n$ be a set of generators for $S$. Let $G_0$ be the wedge sum of $n$ disjoint cycles $w_1, \dots, w_n$ of lengths $|\alpha_1|, \dots, |\alpha_n|$, respectively, and a walk $z$ of length $|\gamma|$ with base point $v$. Let $f_0\colon G_0\to H$ be the homomorphism which sends $\alpha_1, \dots, \alpha_n, \gamma$ to the corresponding circuits and walks in $H$. Then $f_0(\pi_1(G_0, v))=S$.

By Lemma~\ref{lem:folds}, there exist foldings $g_i\colon G_i\to G_{i+1}$, $i=0,\dots, k$, and an immersion $f\colon G_{k+1}\to H$ such that $f_0=f\circ g_k\circ \dots\circ g_0$. Let $G=G_{k+1}$. Then similar to Lemma~\ref{lem:rep}, we have $f(\pi_1(G, v))=S$. 

Let $g=g_k\circ \dots\circ g_0$. Then images of $w_1, \dots, w_n$ under $g$ are still circuits in $G$. Since $\gamma\not\in S$, the image of $z$ under $g$ is not a circuit in $G$. 

By Theorem~\ref{thm:immersion2covering}, there exist an extension $G'$ of $G$ and an extension $f'\colon G'\to H$ of $f$ so that $f'$ is a covering. Let $S'=f'(\pi_1(G',v))$. The extension map from $G$ to $G'$ is an immersion, and by Lemma~\ref{lem:im2inj}, $S\leq S'$. Since $f'$ is a covering, each vertex in $G'$ corresponds to a right coset of $S'$ in $F$. This implies that $S'$ has finite index in $F$. Since $z$ is not a circuit in $G$, we have $\gamma\not\in S'$. 
\end{proof}

To see that Theorem~\ref{thm:Hall} follows from the above theorem, let $N$ be the normal core of $S'$ in $F$. Then $N$ still has finite index in $F$ by Lemma~\ref{lem:quofinitenormal}. Since $NS\leq S'$, we have $\gamma\not\in NS$, and therefore $\gamma N\cap S=\varnothing$. This proves that $S$ is closed in the profinite topology of $F$.

\subsection{The equivalence of $\Pi$-EP and $\Pi$-LERF}

\begin{theorem}\label{thm:EPtoLERF} Let $\Pi$ be a set of prime numbers. Suppose the $\Pi$-extension property holds. Then any finitely generated free group is $\Pi$-LERF.
\end{theorem}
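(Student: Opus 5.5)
Fix a finitely generated free group $F$, a finitely generated quo-$\Pi$ subgroup $S\leq F$, and $\gamma\in F\setminus S$. The goal is a normal subgroup $N\unlhd F$ with $F/N$ a $\Pi$-group and $\gamma\notin NS$; this gives $\gamma N\cap S=\varnothing$, so $S$ is closed in the pro-$\Pi$ topology. The plan follows Stallings' proof of Theorem~\ref{thm:Hall2}, with the $\Pi$-EP replacing the arbitrary completion in Theorem~\ref{thm:immersion2covering}.

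\textbf{Construction.} Let $H$ be the one-vertex multidigraph with one edge per free generator, so $\pi_1(H)=F$. As in the proof of Theorem~\ref{thm:Hall2}, wedge the circuits for the generators of $S$ with the walk $z$ for $\gamma$ at a base point $v$. Fold, via Lemma~\ref{lem:folds}, to an immersion $f\colon G\to H$ with $G$ finite and $f(\pi_1(G,v))=S$. Since $\gamma\notin S$, the image of $z$ is a walk from $v$ ending at a vertex $u\neq v$. Set $X=V(G)$. For each generator $e\in E(H)$, the relation $R_e$ from Theorem~\ref{thm:immersion2covering} is a partial permutation $p_e$ of $X$ (injectivity is exactly the immersion property). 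Let $P=\{p_e\}$. A reduced word $w\in F(P)=F$ is defined at $x$ and sends it to $y$ exactly when there is a walk in $G$ labelled by $w$ (read in the correct order convention) from $x$ to $y$, and such a walk is unique by immersion.

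\textbf{Key step: $(X,P)$ is quo-$\Pi$.} Suppose $q\notin\Pi$ is prime and $w^q(x)=x$. Choose a walk $c$ from $v$ to $x$ in $G$, which exists since $G$ is connected. Then $c\,w^q c^{-1}$ is a circuit at $v$, so the element $g=\mathrm{lab}(c)\,w\,\mathrm{lab}(c)^{-1}$ satisfies $g^q\in S$. Because $S$ is quo-$\Pi$, $g\in S$, so $g$ is represented by a circuit at $v$ in $G$. Now use uniqueness of lifts: in an immersion, a word whose reduced form is readable from $v$ as a closed walk must be readable after any free reduction. Hence the label $w$, started at $x$ (obtained by following $c$), returns to $x$, that is, $w(x)=x$. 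This step needs the most care. Reduced forms of $w$ versus $w^q$ and the possible cancellation with $c$ must be handled. I would first replace $w$ by a cyclically reduced conjugate, adjusting $x$ along the walk, and then argue via reduced walks in $G$ and Lemma~\ref{lem:im2inj}.

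\textbf{Applying the $\Pi$-EP and conclusion.} The $\Pi$-EP gives a finite $Y\supseteq X$ and extensions $\tilde p_e\in\mathrm{Sym}(Y)$ generating a $\Pi$-group $A$. Let $\varphi\colon F\to A$ send each generator $e$ to $\tilde p_e$, with the convention chosen so that $\varphi(w)$ extends the partial map $w$, and put $N=\ker\varphi$. Then $F/N\cong A$ is a $\Pi$-group. Every $s\in S$ labels a circuit at $v$, so $\varphi(s)(v)=v$. The word $\gamma$ labels $z$ from $v$ to $u$, so $\varphi(\gamma)(v)=u\neq v$. If $\gamma=ns$ with $n\in N$, $s\in S$, then $\varphi(\gamma)(v)=\varphi(s)(v)=v$, a contradiction. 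Hence $\gamma\notin NS$, and $S$ is closed in the pro-$\Pi$ topology. The only real obstacle is the quo-$\Pi$ verification; the rest is Stallings' argument verbatim.
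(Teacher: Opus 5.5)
Your proposal is correct and follows the paper's proof. It uses the same folded Stallings graph with partial permutations $p_e$, the same conjugation-by-a-walk argument (using that $S$ is quo-$\Pi$) to show $(X,P)$ is quo-$\Pi$, and the $\Pi$-EP in place of the arbitrary completion of Theorem~\ref{thm:immersion2covering}. The differences are minor. You take $N=\ker\varphi$ and evaluate at $v$ directly, rather than building the covering graph $G'$ and using the normal core of $S'=f'(\pi_1(G',v))$; this also spares you the paper's tacit use of connectedness of $G'$ for $F/N\cong Q$. The step you flag goes through as you sketch it: cyclically reduce $w$, then use that in a folded graph the endpoint of a readable walk depends only on the free reduction of its label. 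That fact is exactly what the paper's one-line ``and therefore $Z^{-1}WZ$ is a circuit'' relies on.
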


\begin{proof} Let $F$ be a free group with $m$ generators. Let $H$ be a multidigraph with only one vertex and with $m$ edges. Then $\pi_1(H)=F$. Let $S$ be a finitely generated subgroup of $F$ which is quo-$\Pi$. Let $\alpha_1,\dots, \alpha_n$ be a set of generators of $S$. Let $\gamma\in F\setminus S$. Let $G_0$ be the wedge sum of $n$ disjoint cycles $w_1, \dots, w_n$ of lengths $|\alpha_1|, \dots, |\alpha_n|$, respectively, and a walk $z$ of length $|\gamma|$, with base point $v$. Let $f_0\colon G_0\to H$ be a homomorphism that sends $w_1,\dots, w_n, z$ to their corresponding circuits or walk in $H$. Then $f_0(\pi_1(G_0,v))=S$. 

Apply Lemma~\ref{lem:folds} to obtain a sequence of foldings and an immersion. Let $G$ be the unique multidigraph, $g\colon G_0\to G$ be the composition of the foldings and $f\colon G\to H$ be the immersion so that $f_0=f\circ g$. Then $f(\pi_1(G, v))=S$. Note that $g(w_1), \dots, g(w_n)$ remain circuits in $G$ with base point $v$, and $g(z)$ is not a circuit in $G$ since $\gamma\not\in S$. 

For each $e\in E(H)$, let
$$ p_e=\{(u,v)\in V(G)^2\colon \exists e_1\in E(G)\ (f(e_1)=e \mbox{ and } \mathsf{s}(e_1)=u, \mathsf{t}(e_1)=v)\}. $$
Since $f$ is an immersion, the projections of $p_e$ to both coordinates are injective. Thus $p_e$ is a partial permutation of $V(G)$. We claim that $P=\{p_e\colon e\in E(H)\}$ is quo-$\Pi$. For this, suppose $w\in F(P)$, $q\not\in \Pi$ and $x\in V(G)$ are such that $w^q(x)=x$. This gives rise to a circuit $W^q$ with base point $x$. Note that $G$ is weakly connected, and therefore there is a walk $Z$ from $v$ to $x$. The walk $(Z^{-1}WZ)^q$ is a circuit with base point $v$. Hence $[(Z^{-1}WZ)^q]\in S$. Since $S$ is quo-$\Pi$ and $[(Z^{-1}WZ)^q]=[Z^{-1}WZ]^q$, we conclude that $[Z^{-1}WZ]\in S$ and therefore $Z^{-1}WZ$ is a circuit with base point $v$. Thus $W$ is a circuit with base point $x$, and so $w(x)=x$. This shows that $P$ is quo-$\Pi$.

Now by the $\Pi$-extension property, one can extend all elements of $P$ to full permutations $\tilde{P}$ on a finite set $X\supseteq V(G)$ so that the permutation group generated by $\tilde{P}$, denoted by $Q$, is a $\Pi$-group. Define a multidigraph $G'$ with $V(G')=X$ and
$$ E(G')=\{(u,v,\tilde{p}_e)\colon \tilde{p}_e\in \tilde{P}, \tilde{p}_e(u)=v\}. $$
Then $G'$ extends $G$. Since the extension map from $G$ to $G'$ is an immersion, we have $\pi_1(G, v)\leq \pi_1(G',v)$. Let $f'\colon G'\to H$ be the homomorphism such that $f'(u,v,\tilde{p}_e)=e$. Then $f'$ extends $f$, $f'$ is a covering, and $f'(\pi_1(G',v))\geq S$. 

Let $S'=f'(\pi_1(G', v))$. Then every vertex in $G'$ corresponds to a right coset of $S'$ in $F$. Consider the right action of $F$ on $F/S'$ by 
$$ \eta\cdot S'\beta=S'\beta\eta. $$
This induces a group homomorphism from $F$ to the permutation group of $F/S'$. Note that the kernel of this homomorphism is exactly the normal core of $S'$, which we denote by $N$. 
On the other hand, the above action corresponds to the following action of $F(P)$ on $X$: for each $e\in E(H)$, 
$$ p_e\cdot x=\tilde{p}_e(x). $$
Correspondingly, this induces a group homomorphism of $F(P)$ to the permutation group of $X$, with the image of the homomorphism exactly $Q$. It thus follows that $F/N$ is isomorphic to $Q$ as a group. Since $Q$ is a $\Pi$-group, $N$ is thus open in the pro-$\Pi$ topology of $F$. 

Now, since $g(z)$ is not a circuit in $G$, it is not a circuit in $G'$, and it follows that $\gamma\not\in S'$. Since $NS\leq S'$, we have $\gamma\not\in NS$, and therefore $\gamma N\cap S=\varnothing$. This shows that $S$ is closed in the pro-$\Pi$ topology of $F$. Hence $F$ is $\Pi$-LERF.
\end{proof}

The method of proof of the following theorem is essentially from \cite{HL00}.

\begin{theorem}\label{thm:LERF2EP} Let $\Pi$ be a set of prime numbers. Suppose that any finitely generated free group is $\Pi$-LERF. Then the $\Pi$-extension property holds.
\end{theorem}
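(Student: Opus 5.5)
The plan is to reverse the construction in the proof of Theorem~\ref{thm:EPtoLERF}. Let $(X,P)$ be a quo-$\Pi$ EP problem with $P=\{p_1,\dots,p_m\}$ (closed under inverses in the formal sense). Let $F=F(P)$ be free on $m$ generators, realized as $\pi_1(H)$ for the one-vertex multidigraph $H$ with edges $e_1,\dots,e_m$. Let $G$ be the multidigraph with $V(G)=X$ and an edge $(u,v,p_i)$ whenever $p_i(u)=v$. Then $f\colon G\to H$, $(u,v,p_i)\mapsto e_i$, is an immersion, since each $p_i$ is a partial injection. We treat the weakly connected components $C_1,\dots,C_r$ of $G$ separately. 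For each component we choose a base point $x_j$ and set $S_j=f(\pi_1(C_j,x_j))\le F$. Thus $S_j=\{w\in F\colon w(x_j)=x_j\}$, where words are read as partial permutations, consistent with the convention $w(x)=(p_1\circ\cdots\circ p_n)(x)$. Each $S_j$ is finitely generated, because $C_j$ is finite and its fundamental group is free of finite rank.

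Each $S_j$ is quo-$\Pi$. If $w^q\in S_j$ with $q\notin\Pi$, then $w^q(x_j)=x_j$, and we may assume $w$ is reduced; cyclic reduction needs care, but the partial-permutation statement is exactly about $w^q(x)$ being defined and equal to $x$. The hypothesis on $P$ then gives $w(x_j)=x_j$, so $w\in S_j$. By the assumed $\Pi$-LERF, $S_j$ is closed in the pro-$\Pi$ topology. For each of the finitely many vertices $y\in C_j$ and each $w$ that is a reduced walk label from $x_j$ not defined in $C_j$ (leaving the graph), we need separation. It suffices to handle finitely many ``failure'' words: for each vertex $y$ and generator $p_i$ with $p_i(y)$ undefined, take $\gamma=p_iz_y$, where $z_y$ is a fixed path label from $x_j$ to $y$. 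Also take the words $z_yz_{y'}^{-1}$ for $y\neq y'$, which lie outside $S_j$ and guarantee that distinct vertices land in distinct cosets. Closedness provides $N_{j,\gamma}\unlhd F$ with $F/N_{j,\gamma}$ a $\Pi$-group and $\gamma\notin N_{j,\gamma}S_j$. Intersecting, Lemma~\ref{lem:filteredbelow} yields a single $N\unlhd F$ with $F/N$ a $\Pi$-group and $\gamma\notin NS_j$ for all these finitely many $\gamma$ and all $j$.

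Let $Y_j=F/NS_j$, a finite coset space on which $F$ acts through $F/N$; we use the action matching the composition convention, $w\cdot(\beta NS_j)=w\beta NS_j$. We embed $C_j$ into $Y_j$ by $y\mapsto z_yNS_j$, which is injective because $z_yz_{y'}^{-1}\notin NS_j$. The key check is that if $p_i(y)=y'$ then $p_iz_y NS_j=z_{y'}NS_j$. This holds since $z_{y'}^{-1}p_iz_y\in S_j$, being a circuit at $x_j$. Hence the action of $p_i$ on $Y_j$ extends the partial permutation $p_i$ on $C_j$. Set $Y=\bigsqcup_j Y_j\supseteq X$, and let $\tilde p_i$ act diagonally. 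The generated group is a quotient of $F/N$, since $N$ acts trivially on every $Y_j$, so it is a $\Pi$-group.

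I expect the main obstacle to be bookkeeping rather than depth. The delicate points are getting the left/right convention consistent between words and cosets, and verifying that the embedding is injective and equivariant. With the separating words chosen as above, equivariance needs only $S_j\le NS_j$ and injectivity needs only the finitely many separations. The words $p_iz_y$ are in fact unnecessary for extension (any extension is fine), so the essential separations are just $z_yz_{y'}^{-1}\notin NS_j$.
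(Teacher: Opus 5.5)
Your route is the paper's: split into weakly connected components (the paper's orbits $X_x$), take the stabilizer $S$ of a base point, use closedness of $S$ to get one $N$ with $F/N$ a $\Pi$-group, let $F$ act on $F/NS$ by left multiplication, and embed $y\mapsto z_yNS$. However, the injectivity step, which you yourself flagged as the delicate point, puts the word on the wrong side. With left cosets, $z_yNS_j=z_{y'}NS_j$ holds if and only if $z_{y'}^{-1}z_y\in NS_j$, so separating $z_yz_{y'}^{-1}$ from $NS_j$ does not give injectivity. Worse, $z_yz_{y'}^{-1}$ need not lie outside $S_j$, and then it cannot be separated by any $N$.

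For instance, take $X=\{t,x_0,y,y'\}$ with $a=\{(t,x_0),(x_0,y)\}$, $b=\{(t,x_0),(x_0,y')\}$ and base point $x_0$. Here $S=\langle ba^{-1}\rangle$ is a maximal cyclic subgroup, so the problem is quo-$\Pi$ for every $\Pi$. With $z_y=a$ and $z_{y'}=b$ you get $ab^{-1}(x_0)=a(t)=x_0$, so $z_yz_{y'}^{-1}\in S$.

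The repair is to separate $z_{y'}^{-1}z_y$ instead. It is never in $S_j$ for $y\neq y'$: $z_{y'}^{-1}$ is injective with $z_{y'}^{-1}(y')=x_j$, so $z_{y'}^{-1}(y)$ is either undefined or different from $x_j$. This is exactly the paper's requirement $u^{-1}vN\cap S=\varnothing$.

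With that change the rest of your argument is correct and matches the paper: $S_j$ is quo-$\Pi$, a common $N$ comes from intersecting, equivariance follows from $z_{y'}^{-1}p_iz_y\in S_j$, and $N$ acts trivially on each $F/NS_j$. Using one $N$ for all components, so the generated group is directly a quotient of $F/N$, is a slightly tidier substitute for the paper's subdirect-product reduction to a single orbit.
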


\begin{proof} Suppose $X$ is a finite set and $P$ is a set of partial permutations of $X$. Assume that $P$ is quo-$\Pi$. For each $x\in X$, let
$$ X_x=\{y\in X\colon \exists w\in F(P)\ y=w(x)\}. $$
Then $P\upharpoonright X_x$ is a set of partial permutations of $X_x$ and $P\upharpoonright X_x$ is still quo-$\Pi$. Suppose there exists $X'_x$ extending $X_x$ and $\tilde{P}_x$ a set of extensions of $P\upharpoonright X_x$ that are permutations of $X'_x$ so that the permutation group $\Gamma_x$ of $X'_x$ generated by $\tilde{P}_x$ is a $\Pi$-group. Then $\{X_x\colon x\in X\}$ is a partition of $X$ into equivalence classes. Let $Y$ be a transversal of this equivalence relation. Let 
$$ X'=\bigsqcup_{x\in Y} X'_x. $$
For each $p\in P$, let 
$$ \tilde{p}=\bigsqcup_{x\in Y} \tilde{p}_x. $$
Then the group generated by $\{\tilde{p}\colon p\in P\}$ is a subdirect product of $\Gamma_x$, $x\in Y$. Since each $\Gamma_x$ is a $\Pi$-group, so is the group generated by $\{\tilde{p}\colon p\in P\}$. This argument shows that we may assume without loss of generality that $X=X_x$ for some (any) $x\in X$.

For the rest of the proof fix a particular $x\in X$. Let
$$ S=\{w\in F(P)\colon w(x)=x\}. $$
Then $S$ is a finitely generated subgroup of $F(P)$. We claim that $S$ is quo-$\Pi$. Suppose $q\not\in \Pi$ and $w\in F(P)$ with $w^q\in S$. Then $w^q(x)=x$. Since $P$ is quo-$\Pi$, we have $w(x)=x$ and therefore $w\in S$. This shows that $S$ is quo-$\Pi$. 

It follows from the assumption that $F(P)$ is $\Pi$-LERF that $S$ is closed in the pro-$\Pi$ topology of $F(P)$. Thus there is a normal subgroup $N$ of $F(P)$ such that $F(P)/N$ is a $\Pi$-group satisfying 
\begin{quote}
for each distinct pair $y, z\in X$, letting $u, v\in F(P)$ be arbitrary such that $y=u(x)$ and $z=v(x)$, we have that $u^{-1}vN\cap S=\varnothing$.
\end{quote} 
Note that for $u, v$ such that $u(x)\neq v(x)$, we have $u^{-1}v\not\in S$; only finitely many such pairs of words are considered in obtaining $N$. Now define an map $\phi\colon X\to F(P)/NS$ by $\phi(y)=wNS$ where $w\in F(P)$ is any such that $w(x)=y$. Then it is easy to check that $\phi$ is well defined and injective. Moreover, for each $p\in P$, the action $p\cdot wNS=pwNS$ extends the action of $p$ on $X$, where $X$ is identified with $\phi(X)$. 

Now define an action of $F(P)/N$ on $F(P)/NS$ by
$$ uN\cdot wNS=uwNS. $$
Define $\psi\colon P\to F(P)/N$ by $\psi(p)=pN$. We claim that $\psi$ is an embedding from $P$ to $F(P)/N$ as a set of permutations of $F(P)/NS$. To see this, let $p, q\in P$ be such that $p\neq q$. Then there is $y\in X$ such that $p(y)\neq q(y)$. Let $w\in F(P)$ be such that $y=w(x)$. Then $pw(x)\neq qw(x)$. Thus $(qw)^{-1}pwN\cap S=\varnothing$. This implies $pwNS\neq qwNS$, and so $\psi(p)\neq \psi(q)$. 

Thus we have that the action of $F(P)/N$ on $F(P)/NS$ extends the action of $P$ on $X$. Thus the $\Pi$-extension property holds.
\end{proof}

As a corollary of the previous two theorems and the Herwig--Lascar theorem (Theorem~\ref{thm:HL}), we obtain the equivalence between the EPPA for finite tournaments and the odd-extension property. In the following we give a direct argument for equivalence between the odd-extension property and the EPPA for finite tournaments.

\begin{theorem}\label{thm:EPPAtoOEP} The following are equivalent:
\begin{enumerate}
\item[(a)] The EPPA for finite tournaments. 
\item[(b)] The odd-extension property.
\end{enumerate}
\end{theorem}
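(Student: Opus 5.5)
The quick route is to combine Theorem~\ref{thm:EPtoLERF}, Theorem~\ref{thm:LERF2EP} (with $\Pi$ the set of odd primes) and Theorem~\ref{thm:HL}. The paper, however, promises a direct argument, so I will prove both implications directly, with Lemma~\ref{lem:aut} and Lemma~\ref{lem:fullPi} as the two main tools.

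\textbf{From (a) to (b).} Let $(X,P)$ be quo-odd. First I reduce, as in the proof of Theorem~\ref{thm:LERF2EP}, to the case where $X$ is a single orbit under $F(P)$. Then I encode $(X,P)$ into a finite tournament $A$, in the spirit of Herwig--Lascar. The vertex set of $A$ is $X$ together with gadget vertices, which force every extension of each $p\in P$ to respect a labelling, so that extensions of the $p$'s to automorphisms of a larger tournament induce permutations of a set $Y\supseteq X$.

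The orientation on pairs inside $X$ must be chosen so that each $p\in P$ is a partial automorphism of $A$. I need a tournament on $X$ such that, for every $p\in P$ and every $x,y\in\dom(p)$, the edge $(x,y)$ is present if and only if $(p(x),p(y))$ is present. This is possible exactly when no word $w\in F(P)$ maps an unordered pair $\{x,y\}$ to itself with the two endpoints swapped. Such a swap happens if and only if $w(x)=y$ and $w(y)=x$, which gives $w^2(x)=x$ but $w(x)\ne x$, contradicting quo-odd. So I orient one representative pair in each $F(P)$-orbit of unordered pairs and transport the orientation along the partial maps; the quo-odd condition makes this well defined.

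Applying the EPPA gives a finite tournament $B\supseteq A$ in which each $p$ extends to some $\tilde p\in\Aut(B)$. By Lemma~\ref{lem:aut}, $\langle\tilde p\colon p\in P\rangle\le\Aut(B)$ has odd order. Restricting to $V(B)$ already gives an odd-solution with $Y=V(B)$, so the gadgets are not actually needed for this direction.

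\textbf{From (b) to (a).} Let $A$ be a finite tournament and $P$ the set of its partial automorphisms. I check that $(V(A),P)$ is quo-odd. Suppose $w^q(x)=x$ for an odd prime $q$. The partial map $w$ is a partial automorphism whose orbit through $x$ is a cycle of length $d\mid q$. If $d=q$, the cycle $x,w(x),\dots$ has odd length, and a partial automorphism of a tournament may cycle vertices along odd cycles. So the quo-odd condition can fail for $P$, and I must be careful about which condition (b) actually requires.

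The required condition is $w(x)=x$ whenever $w^{q}(x)=x$ for every prime $q\notin\Pi$; for odd $\Pi$ the only such prime is $q=2$. So it suffices to check that $w^2(x)=x$ implies $w(x)=x$. If $w(x)=y\ne x$ and $w(y)=x$, then $w$ maps the edge between $x$ and $y$ to its reverse, which is impossible for a partial automorphism. Hence $(V(A),P)$ is quo-odd.

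By (b), there are a finite $Y\supseteq V(A)$ and extensions $\tilde p\in\Sym(Y)$ generating a group $\Gamma$ of odd order. Now I build a tournament $B$ on $Y$ with $\Gamma\le\Aut(B)$ and $B\restriction V(A)=A$. For this I orient each $\Gamma$-orbit of unordered pairs on $Y$. On orbits meeting $A$, I copy the edges of $A$; this is consistent because each $\tilde p$ preserves $A$-edges wherever it maps a pair inside $A$ to a pair inside $A$. On the remaining orbits I choose orientations arbitrarily.

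\textbf{Main obstacle.} Two things must be checked. First, transporting orientations along an orbit must be well defined: an element $g\in\Gamma$ swapping $\{x,y\}$ would have even order, which is impossible since $|\Gamma|$ is odd. Second, the $A$-orientation must be consistent within a $\Gamma$-orbit that meets $A$ in several pairs. This second point is the real difficulty, because a general element of $\Gamma$ is not a word of partial automorphisms of $A$ restricted to $A$. I would handle it by applying (b) to an enlarged problem whose points are the ordered pairs of distinct vertices of $A$, with $P$ acting diagonally and with added "reversal-tag" points. The idea is that the group action cannot identify an edge with a non-edge. If this becomes too messy, I would fall back on the chain Theorem~\ref{thm:EPtoLERF} and Theorem~\ref{thm:HL}.
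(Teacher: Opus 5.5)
Your (a)$\Rightarrow$(b) is the paper's argument. You orient each orbit of ordered pairs of distinct points under words of $F(P)$, use quo-odd to rule out swaps, apply the EPPA, and conclude with Lemma~\ref{lem:aut}. The reduction to a single orbit and the gadget vertices are superfluous, as you note yourself. Your opening ``quick route'' is also a valid proof of (b)$\Rightarrow$(a): combine Theorem~\ref{thm:EPtoLERF} (with $\Pi$ the odd primes) and Theorem~\ref{thm:HL}.

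Your \emph{direct} proof of (b)$\Rightarrow$(a), however, is not finished. The step you call the main obstacle is left open, and the ``enlarged problem with reversal tags'' is never specified. The reason you give for consistency is also not justified: you say each $\tilde p$ preserves $A$-edges wherever it maps a pair inside $A$ to a pair inside $A$, but this does not follow from $\tilde p$ extending $p$. If $x\in\dom(p)$ and $y\in V(A)\setminus\dom(p)$, nothing prevents $\tilde p(y)\in V(A)$ with $(\tilde p(x),\tilde p(y))$ a non-edge while $(x,y)$ is an edge.

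The missing idea is short and uses that $P$ consists of \emph{all} partial automorphisms of $A$, in particular all two-point ones. Suppose $g\in\Gamma$ sends an edge $(x_0,x_1)$ of $A$ to a pair $(u,v)$ with $u,v\in V(A)$ and $(v,u)\in A$. Then $r=\{(x_0,v),(x_1,u)\}$ is a partial automorphism of $A$, so $\tilde r\in\Gamma$. The element $g\tilde r^{-1}$ sends $v\mapsto u$ and $u\mapsto v$, so it contains the transposition $(u\ v)$ as a cycle and has even order. This contradicts $|\Gamma|$ odd.

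Hence no element of $\Gamma$ carries an edge of $A$ to a non-edge of $A$. In fact, all edges of $A$ lie in a single $\Gamma$-orbit of ordered pairs, and this orbit is disjoint from the orbit of reversed edges. So only one orbit has its orientation dictated by $A$, and no consistency issue remains. This is exactly the fact that makes conditions (i) and (ii) of the paper's decomposition $[Y]^2=Z_0\sqcup Z_1$ compatible; the paper asserts that decomposition without comment. With this two-line argument inserted, your construction of $B$ coincides with the paper's.
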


\begin{proof} We first show (a)$\Rightarrow$(b). Let $X$ be a finite set and let $P$ be a set of partial permutations of $X$. Suppose $P$ is quo-odd, i.e., for any $w\in F(P)$ and $x\in X$, if $w^2(x)=x$ then $w(x)=x$. We define a tournament on $X$ by specifying the set $E_X$ of edges as follows.

First we note that for any pair of distinct elements $(x_0,x_1)\in X^2$, the set
$$ R_{x_0, x_1}=\{(w(x_0), w(x_1))\colon w\in F(P)\} $$
does not contain both $(x,y)$ and $(y,x)$ for any $x,y\in X$. That is, there do not exist $w, w'\in F(P)$ such that $(w(x_0), w(x_1))=(w'(x_1), w'(x_0))$. To see this, suppose $w(x_0)=w'(x_1)$ and $w(x_1)=w'(x_0)$.Then $w'^{-1}ww'^{-1}w(x_0)=x_0$. Since $P$ is quo-odd, we have $w'^{-1}w(x_0)=x_0$, i.e., $w(x_0)=w'(x_0)$. This implies that $w(x_0)=w(x_1)$ or $x_0=x_1$, a contradiction. 

Now $\{R_{x_0, x_1}\colon x_0\neq x_1\}$ is a partition of $[X]^2$ into equivalence classes. Let $Y$ be a complete set of representatives for the partition with the property that if $(x_0,x_1)\in Y$ then $(x_1, x_0)\in Y$. Then $Y=Y_0\sqcup Y_1$ where  $Y_0$ and $Y_1$ satisfy that $(x_0,x_1)\in Y_0$ if and only if $(x_1,x_0)\in Y_1$. Finally, define 
$$ E_X=\bigcup\{ R_{x_0,x_1}\colon (x_0, x_1)\in Y_0\}. $$
Then $E_X$ is a tournament on $X$.

By the EPPA, there exist a finite $Y\supseteq X$ and a tournament $E_Y\supseteq E_X$ on $Y$ so that any partial isomorphism of $E_X$ extends to an automorphism of $E_Y$. We note that each $p\in P$ is a partial isomorphism of $E_X$. That is, if $p(x_0)=y_0$ and $p(x_1)=y_1$, then $(x_0, x_1)\in E_X$ if and only if $(y_0, y_1)\in E_X$. This is because $(y_0, y_1)\in R_{x_0, x_1}$. Now let $\tilde{p}$ be an extension of $p$ to an automorphism of $E_Y$. Then $\tilde{p}$ is a permutation of $Y$. Let $\Gamma$ be the permutation group generated by $\{\tilde{p}\colon p\in P\}$. Then $\Gamma$ is an odd group. This is because, otherwise there is $g\in \Gamma$ such that $g\neq \mbox{id}$ but $g^2=\mbox{id}$. Thus there is $y\in Y$ such that $g(y)\neq y$ and $g^2(y)=y$. Without loss of generality, assume $(y, g(y))\in E_Y$. Then $g(y, g(y))=(g(y), y)\in E_Y$, contradicting to the assumption that $E_Y$ is a tournament. 

Next we show (b)$\Rightarrow$(a). For this let $T$ be a finite tournament. Let $P$ be the set of all partial isomorphisms of $T$. Then each element of $P$ is a partial permutation of $X=V(T)$. We claim that $P$ is quo-odd. To see this, let $w\in F(P)$ and $x\in X$ with $w^2(x)=x$. Let $y=w(x)$. Then $w$ as a partial isomorphism of $T$ satisfies $w(x)=y$ and $w(y)=x$, contradicting the definition of a tournament. Thus $P$ is quo-odd. 

By the odd-extension property, there is $Y\supseteq X$ and permutations $\tilde{p}\in \mbox{Sym}(Y)$ extending $p$ for all $p\in P$ such that the permutation group $\Gamma\leq \mbox{Sym}(Y)$ generated by $\{\tilde{p}\colon p\in P\}$ is odd. For each distinct pair of elements $y_0, y_1\in Y$, consider
$$ R_{y_0,y_1}=\{(g(y_0), g(y_1))\colon g\in \Gamma\}. $$
We claim that for no such pairs $(y_0, y_1)$, $(y_1, y_0)\in R_{y_0,y_1}$. To see this, suppose $g\in \Gamma$, $g(y_0)=y_1$ and $g(y_1)=y_0$. Then we have $g^2(y_0)=y_0$. By Lemma~\ref{lem:fullPi}, $\Gamma$ is quo-odd. Thus $g(y_0)=y_0$, contradicting our assumption that $g(y_0)=y_1\neq y_0$.

Now $\{R_{y_0, y_1}\colon y_0\neq y_1\}$ is a partition of $[Y]^2$ into equivalence classes. Decompose $[Y]^2$ into $Z_0\sqcup Z_1$ so that 
\begin{enumerate}
\item[(i)] for any $(x_0,x_1)\in T$, $R_{x_0,x_1}\subseteq Z_0$;
\item[(ii)] for any $(y_0, y_1)\in[Y]^2$, $(y_0, y_1)\in Z_0$ if and only if $(y_1,y_0)\in Z_1$.
\end{enumerate}
Then define a tournament $T'\supseteq T$ by letting $(y_0, y_1)\in T'$ if and only if $R_{y_0,y_1}\in Z_0$. By (ii), $T'$ is well defined. By (i), $T'$ is an extension of $T$. Now for each $p\in P$, $\tilde{p}$ is an automorphism of $T'$ extending $p$. This proves the EPPA.
\end{proof}

\subsection{$\Pi$-EP with $2$ partial permutations}

Throughout this subsection let $\Pi$ be a set of prime numbers.

\begin{definition} The {\em $\Pi$-extension property with $k$ partial permutations}, denoted $\Pi$-EP$(k)$, is the statement:
\begin{quote}
For any finite set $X$ and a set $P$ of partial permutations of $X$ with $P$ quo-$\Pi$ and $|P|\leq k$, there is a finte set $Y\supseteq X$ and extensions $p\mapsto \tilde{p}$ from $P$ to $\mbox{Sym}(Y)$ such that the generated by $\{\tilde{p}\colon p\in P\}$ is a $\Pi$-group.
\end{quote}
\end{definition}

\begin{lemma} $\Pi$-{\rm EP}$(1)$ always holds.
\end{lemma}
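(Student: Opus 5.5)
We need to show that a single quo-$\Pi$ partial permutation $p$ of a finite set $X$ extends to a permutation $\tilde p$ of some finite $Y\supseteq X$ with $\langle\tilde p\rangle$ a $\Pi$-group, i.e.\ with the order of $\tilde p$ a $\Pi$-number. If $\Pi=\varnothing$ the only $\Pi$-group is trivial. In that case quo-$\Pi$ applied with $w=p$ and any prime $q$ shows that $p$ fixes every point of every cycle it has. I will treat this case separately at the end and first handle the main case, where $\Pi$ contains some prime $r$.

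The first step is to decompose $X$ under the partial permutation $p$. The orbit structure of a partial permutation on a finite set consists of disjoint \emph{cycles} $x, p(x),\dots,p^{n-1}(x)=p^{-1}(x)$ with $p^n(x)=x$, together with \emph{paths} $x_0\mapsto x_1\mapsto\cdots\mapsto x_k$, where $x_0\notin \mathrm{ran}(p)$ and $x_k\notin\dom(p)$; a path may have $k=0$, an isolated point. The second step is to constrain the cycle lengths using quo-$\Pi$. For a cycle of length $n$ and any prime $q\notin\Pi$ dividing $n$, set $w=p^{n/q}$. Then $w^q(x)=p^n(x)=x$, so quo-$\Pi$ gives $p^{n/q}(x)=x$, contradicting minimality of $n$ unless $n/q$ is a multiple of $n$, which is impossible. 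Hence every cycle length is a $\Pi$-number.

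The third step handles the paths. Each path of length $k$, with $k+1$ points, is closed into a cycle of $\Pi$-number length. Add $r^m-(k+1)$ new points, where $r^m\ge k+1$, and let $\tilde p$ send $x_k$ through the new points back to $x_0$. The new points are disjoint for different paths. Let $Y$ be $X$ together with all the new points. Then $\tilde p\in\Sym(Y)$ extends $p$, and its order is the lcm of its cycle lengths, all of which are $\Pi$-numbers. So the order of $\tilde p$ is a $\Pi$-number, and $\langle \tilde p\rangle$ is a cyclic $\Pi$-group. I could equally cite Lemma~\ref{lem:fullPi}, though the lcm argument is direct.

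The degenerate case $\Pi=\varnothing$ remains. Here a path of length $k\ge1$ cannot be closed into a cycle of length $1$, so the construction above fails. Using $Y=X$ works only if $p$ is a partial identity. For a genuine path with $k\ge1$, no extension to a trivial group exists, so the statement as written implicitly assumes $\Pi\neq\varnothing$, or else holds vacuously for the empty set of primes under whatever convention the paper adopts. I would note this convention explicitly. The only real content of the lemma, and the step that uses the hypothesis, is the second step; everything else is bookkeeping.
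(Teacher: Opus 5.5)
Your proof is correct and follows essentially the same route as the paper. Both decompose the graph of $p$ into cycles and paths, use quo-$\Pi$ to force every cycle length to be a $\Pi$-number, and close each path into a cycle of $\Pi$-number length; your explicit $w=p^{n/q}$ argument supplies a justification the paper leaves implicit.

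Your caveat about $\Pi=\varnothing$ is right, but it is a genuine counterexample rather than a case that holds vacuously. For example, $p=\{(x_0,x_1)\}$ with $x_0\neq x_1$ is quo-$\varnothing$, yet no extension of it generates the trivial group. The paper's own proof tacitly assumes $\Pi\neq\varnothing$ in the same way.
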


\begin{proof} Let $X$ be a finite set and let $p$ be a partial permutation of $X$ so that $\{p\}$ is quo-$\Pi$. Consider a digraph $G=(X, E)$ where $(x,y)\in E$ if and only if $p(x)=y$. A {\em weakly connected component} of $G$ is a connected component of $G$ as an undirected graph. Any weakly connected component of $G$ is either a directed path, a directed cycle, a loop or a singleton without edges. Since $\{p\}$ is quo-$\Pi$, any weakly connected component of $G$ which is a directed cycle must have its length $\ell$ a $\Pi$-number. Now extend $p$ to $\tilde{p}$ by the following procedure: for each directed path $\alpha$, add vertices if necessary, and extend $p\upharpoonright \alpha$ to a directed cycle of length $\ell_\alpha$ so that $\ell_\alpha$ is a $\Pi$-number; for each singleton without edges, extend $p$ to a loop. The resulting $\tilde{p}$ is a permutation of an extended set $Y\supseteq X$ such that the group generated by $\tilde{p}$ is a subdirect product of cyclic $\Pi$-groups, which is a $\Pi$-group. 
\end{proof}

\begin{theorem}\label{thm:EP2} $\Pi$-{\rm EP}$(2)$ is equivalent to the $\Pi$-extension property.
\end{theorem}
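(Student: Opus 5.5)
The direction ``$\Pi$-EP implies $\Pi$-EP$(2)$'' is trivial. For the converse, I would go through the LERF side using Theorem~\ref{thm:LERF2EP}. It suffices to show that $\Pi$-EP$(2)$ implies that every finitely generated free group is $\Pi$-LERF.

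First I reduce to $F_2$. Every finitely generated free group $F_m$ embeds in $F_2=F(a,b)$ as a subgroup of finite index whose index is a $\Pi$-number, provided $\Pi\neq\varnothing$. For instance, take the kernel $K$ of a map $F_2\to\Z/q$ with $q\in\Pi$. This kernel is free of rank $q+1$, and it contains free factors of every smaller rank. (The case $\Pi=\varnothing$ is degenerate and handled directly: every quo-$\varnothing$ problem is trivial.) Using the proof of Theorem~\ref{thm:EPtoLERF} with $F_2$ and two partial permutations, $\Pi$-EP$(2)$ gives that $F_2$ is $\Pi$-LERF.

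Then I transfer closedness. Let $S\leq F_m$ be finitely generated and quo-$\Pi$. First I would check that $S$ is quo-$\Pi$ inside $F_m$ viewed as a free factor of $K$, and then inside $K$ and inside $F_2$. Quo-$\Pi$ passes to overgroups $L\supseteq F_m$ where $F_m$ is a free factor of $L$, or where $F_m$ is quo-$\Pi$ in $L$. A free factor $A$ of $L$ is quo-$\Pi$ in $L$, since $g^p\in A$ forces $g\in A$ in a free product. The subgroup $K$ is quo-$\Pi$ in $F_2$ by Lemma~\ref{lem:quoPi}(3). The quo-$\Pi$ property is transitive: if $g^p\in S$, then $g^p\in F_m$, so $g\in F_m$, so $g\in S$. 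Hence $S$ is closed in the pro-$\Pi$ topology of $F_2$.

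It remains to show that the pro-$\Pi$ topology of $F_2$ restricts to a topology on $F_m$ no finer than its own pro-$\Pi$ topology, so that $S=S\cap F_m$ is closed there. If $N\unlhd F_2$ has $\Pi$-quotient, then $N\cap F_m\unlhd F_m$, and $F_m/(N\cap F_m)$ embeds in $F_2/N$, so this quotient is a $\Pi$-group. Thus $F_m$ is $\Pi$-LERF, and Theorem~\ref{thm:LERF2EP} finishes the proof.

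The main obstacle is making sure Theorem~\ref{thm:EPtoLERF} only uses $|P|=2$ when $F=F_2$. This holds because there $H$ has two edges, so $P=\{p_a,p_b\}$, and only $\Pi$-EP$(2)$ is invoked.
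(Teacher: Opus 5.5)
Your argument is correct in substance, but it follows a different route from the paper.

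\textbf{The paper's route.} The paper stays entirely with partial permutations. It encodes $P=\{p_1,\dots,p_k\}$ on $X$ as two partial permutations of $X\times\{1,\dots,k\}$: $a$ acts as $p_i$ on the $i$-th layer and $b$ shifts layers. It verifies quo-$\Pi$ directly, using the fact that the total $b$-exponent of $w$ must vanish. It then applies $\Pi$-EP$(2)$ and reads off $\tilde p_i=\tilde b^{-i+1}\tilde a\tilde b^{i-1}$.

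\textbf{Your route.} You instead go through $\Pi$-LERF: Theorem~\ref{thm:EPtoLERF} restricted to $F_2$, then Theorem~\ref{thm:LERF2EP}. You transfer $\Pi$-LERF from $F_2$ to $F_m$ by realizing $F_m$ as a free factor of a normal subgroup of $\Pi$-power index. The transfer uses root-closedness of free factors, Lemma~\ref{lem:quoPi}(3), transitivity of quo-$\Pi$, and restriction of the pro-$\Pi$ topology, and all of these steps check out.

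\textbf{Comparison.} The two proofs are the same idea at different levels. The free basis $\{b^{-i}ab^{i}\colon 0\leq i<n\}\cup\{b^n\}$ of $\ker(F_2\to\Z/n)$ is exactly what the paper's layer construction realizes combinatorially. The paper's version is elementary, needs none of the Stallings machinery, and works uniformly in $\Pi$. It also keeps the new solvent isomorphic to a subgroup of $\langle\tilde a,\tilde b\rangle$, a fact reused in Theorem~\ref{thm:multiverse}(ii) and later to rule out nilpotent-solutions. Your version gives, as a by-product, the group-theoretic statement that $\Pi$-LERF for $F_2$ implies $\Pi$-LERF for every finitely generated free group. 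The price is the LERF detour and only indirect control of the solvent.

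\textbf{Two small repairs.}
\begin{enumerate}
\item The kernel of $F_2\to\Z/q$ has rank $q+1$, so it only has free factors of rank at most $q+1$. For larger $m$, use $F_2\to\Z/q^j$ with $q^j+1\geq m$. Your opening sentence should also read ``free factor of a normal subgroup of $\Pi$-index'', not ``subgroup of $\Pi$-index''.
\item Your remark on $\Pi=\varnothing$ is false as stated. The problem $p=\{(x,y)\}$ on $\{x,y\}$ is quo-$\varnothing$, yet it has no $\varnothing$-solution, since the only $\varnothing$-group is trivial. So both $\varnothing$-EP$(2)$ and $\varnothing$-EP fail, and the equivalence holds vacuously. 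The paper tacitly assumes $\Pi\neq\varnothing$ anyway, e.g.\ in the lemma that $\Pi$-EP$(1)$ always holds.
\end{enumerate}
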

\begin{proof} We only need to show that $\Pi$-EP$(2)$ implies the $\Pi$-extension property. Let $X$ be a finite set and let $p_1,\dots, p_k$ be partial permutations of $X$ so that $P=\{p_1,\dots, p_k\}$ is quo-$\Pi$. Let $Y=X\times\{1,\dots, k\}$. Define two partial permutations $a, b$ of $Y$ as follows. For all $1\leq i\leq k$ and $x\in \mbox{dom}(p_i)$, let
$$ a(x, i)=(p_i(x), i). $$
For all $x\in X$ and $1\leq i<k$, let
$$ b(x,i)=(x,i+1). $$
We claim that $\{a, b\}$ is quo-$\Pi$. To see this, suppose $q\not\in \Pi$, $y\in Y$ and $w\in F(a,b)$ such that $w^q(y)=y$. Suppose $y=(x,i)$ and 
$$ w=b^{\beta_n}a^{\alpha_n}\cdots b^{\beta_1}a^{\alpha_1}b^{\beta_0} $$
for $\alpha_1,\dots, \alpha_n, \beta_0, \beta_1,\dots, \beta_n\in \mathbb{Z}$. We have that $q(\beta_0+\beta_1+\cdots+\beta_n)=0$. Hence $\beta_0+\beta_1+\cdots+\beta_n=0$.  Let 
$$j_0=i+\beta_0, j_1=j_0+\beta_1, \dots, j_{n-1}=j_{n-2}+\beta_{n-1}. $$ Then $x\in\dom(p_{j_0})$ and
$$ (p_{j_{n-1}}^{\alpha_n}\cdots p_{j_1}^{\alpha_2}p_{j_0}^{\alpha_1})^q(x)=x. $$
Since $P$ is quo-$\Pi$, we have
$$ p_{j_{n-1}}^{\alpha_n}\cdots p_{j_1}^{\alpha_2}p_{j_0}^{\alpha_1}(x)=x. $$
Hence $w(y)=y$. 

By $\Pi$-EP$(2)$, there are $Z\supseteq Y$ and $\tilde{a}, \tilde{b}\in \mbox{Sym}(Z)$ such that $\tilde{a}$ extends $a$,  $\tilde{b}$ extends $b$, and the group generated by $\{\tilde{a}, \tilde{b}\}$ is a $\Pi$-group. Identify $X$ with $X\times \{1\}\subseteq Y\subseteq Z$. For each $1\leq i\leq k$, let
$$ \tilde{p}_i=\tilde{b}^{-i+1}\tilde{a}\tilde{b}^{i-1}. $$
Then $\tilde{p}_i\in \mbox{Sym}(Z)$ and the group generated by $\{\tilde{p}_1, \dots, \tilde{p}_k\}$ is a subgroup of the group generated by $\{\tilde{a}, \tilde{b}\}$, and hence is a $\Pi$-group.

It remains to verify that each $\tilde{p}_i$ extends $p_i$. For this, let $x\in \dom(p_i)$. Then
$$ \tilde{p}_i(x, 1)=\tilde{b}^{-i+1}\tilde{a}\tilde{b}^{i-1}(x, 1)=\tilde{b}^{-i+1}\tilde{a}(x, i)=\tilde{b}^{-i+1}(p_i(x), i)=(p_i(x), 1). $$
The proof of the theorem is complete.
\end{proof}

\section{EP Problems and EP Maps}

Starting from this section we study the $\Pi$-EP property for partial permutations. For this we introduce the concepts of EP problems, solutions and EP maps,  and study the property of some specific EP maps. Our main result of this section is that the existence of solutions of an EP problem depends solely on its fundamental group. This leads to a generalization of a theorem of \cite{HPSW19} about EP problems with cyclic fundamental groups. We also perform a case study of an EP problem which answers a question left open by \cite{HPSW19}. In particular, we find examples which witness the failure of $p$-EP and $p$-LERF for any prime $p$.

\subsection{Definitions\label{subsec:EPproblem}}

\begin{definition}[EP problem]\label{def:EPproblem} Let $X$ be a finite set and $P$ be a set of partial permutations of $X$.
\begin{enumerate}
\item[(1)] We say that $P$ is {\em irreducible} if there do not exist distinct $p, q\in P$ such that $p\cup q$ is still a partial permutation of $X$.
\item[(2)] We say that $(X, P)$ is an {\em EP problem} if $P$ is a set of partial permutations of $X$. An EP problem $(X, P)$ is {\em irreducible} if $P$ is irreducible.
\end{enumerate}
\end{definition}

There is an obvious reduction process to turn any set of partial permutations into an irreducible one by repeatedly combining partial permutations that can be combined. 

\begin{definition}[Subproblem]\label{def:subproblem} Let $(X, P)$ and $(Y, Q)$ be EP problems.
\begin{enumerate}
\item[(1)] An {\em embedding} from $(X, P)$ to $(Y, Q)$ is a pair of maps $(i, j)$ where $i\colon X\to Y$ is an injective map and $j\colon P\to Q$ is a bijection such that for any $p\in P$ and $x\in \dom(p)$, $j(p)(i(x))=i(p(x))$.
\item[(2)] We say that $(Y, Q)$ is an {\em extension} of $(X, P)$ if there is an embedding from $(X, P)$ to $(Y, Q)$.
\item[(3)] We say that $(X, P)$ is a {\em subproblem} of $(Y, Q)$ if $X\subseteq Y$ and there is an embedding $(i,j)$ from $(X, P)$ to $(Y, Q)$ where $i\colon X\to Y$ is the inclusion map.
\end{enumerate}
\end{definition}

The more restrictive notion of embedding by requiring the map $j$ to be bijective will facilitate our presentations of proofs in the sequel. Whenever there is a bijection $j$ between $P$ and $Q$, we use the usual convention to extend the definition of $j$ to $P^{-1}$, i.e., for $p\in P$ we let $j(p^{-1})=j(p)^{-1}$. This can be further extended to all elements of $F(P)$, i.e., for $w=p_1\cdots p_k$ where $p_1, \dots, p_k\in P\cup P^{-1}$, let 
$$ j(w)=j(p_1)\cdots j(p_k). $$ In fact, this extended $j$ is an isomorphism between the free groups $F(P)$ and $F(Q)$. Furthermore, if $\Gamma$ is a group with a set $S$ of generators and there is a bijection $\theta\colon P\to S$, then for any $w\in F(P)$ we can define $w_{\theta, \Gamma}$ (or $w_\Gamma$ when the bijection $\theta$ is understood) as follows: if $w=p_1\cdots p_k$ where $p_1,\dots, p_k\in P\cup P^{-1}$, then
$$ w_\Gamma=\theta(p_1)\cdots \theta(p_k)\in \Gamma. $$

\begin{definition}[Solution]\label{def:solution} Let $(X, P)$ and $(Y, Q)$ be EP problems.
\begin{enumerate}
\item[(1)] We say that $(X, P)$ is an {\em EP solution} (or simply {\em solution}) if $P$ is a set of full permutations of $X$, i.e., $P\subseteq \mbox{\rm Sym}(X)$. 
\item[(2)] If $(X, P)$ is a solution, then the {\em solvent} of $(X, P)$ is the subgroup $\langle P\rangle$ of $\mbox{\rm Sym}(X)$ generated by $P$.
\item[(3)] We say that $(X, P)$ is a {\em solution for} $(Y, Q)$ if $(X, P)$ is a solution and an extension of $(Y, Q)$.
\end{enumerate}
\end{definition}

If $(X, P)$ is a solution with solvent $\Gamma$ and $w=p_1\dots p_k\in F(P)$ where $p_1, \dots, p_k\in P\cup P^{-1}$, then $w_\Gamma$ is given by 
$$ p_1\circ \cdots \circ p_k,$$
which is an element of $\Gamma$. Furthermore, if $(X, P)$ is a soluton for $(Y, Q)$ with solvent $\Gamma$ and $w\in F(Q)$, we also write $w_\Gamma$ for the element $j(w)_\Gamma$, where $j$ is the bijection from $Q$ to $P$.

\begin{definition}[$\mathsf{H}$-solution] Let $(X, P)$ be an EP problem. Let $\mathsf{H}$ be a pseudovariety of  finite groups.
We say that $(X, P)$ is an {\em $\mathsf{H}$-solution} if $(X, P)$ is a solution and the solvent of $(X, P)$ is in $\mathsf{H}$.
\end{definition}

In this paper we tacitly assume that $\mathsf{H}$ is one of the following pseudovarieties: 
finite $p$-groups ($p$ a prime number), finite $\Pi$-groups ($\Pi$ a set of prime numbers), finite odd-groups, finite nilpotent-groups.

With the terminology defined in this subsection, the odd-extension property is the statement that every quo-odd EP problem has an odd-solution.

\begin{definition}[EP map]\label{def:EPmap} An {\em EP map} $\Phi$ is a function $\Phi\colon \mathcal{C}\times S\to \mathcal{D}$, where $\mathcal{C}$ and $\mathcal{D}$ are collections of EP problems, and $S$ is a set of parameters.
\end{definition}

The concept of EP map is defined to capture several constructions of EP problems from other given EP problems. For us the most important collection of EP problems consists of quo-odd ones, but we will also consider quo-$p$ EP problems and nilpontent-solutions.

\subsection{The EP map {\tt Multiverse}}

Throughout this subsection let $\Pi$ be a set of prime numbers. We define an EP map based on the construction given in proof of Theorem~\ref{thm:EP2}.

\begin{definition}\label{def:multiverse} Given any EP problem $(X, P)$ where $P=\{p_1,\dots, p_k\}$, define ${\tt Multiverse}(X, P)$ (or simply $\mbox{\tt Multiverse}(P)$ if there is no danger of confusion) to be the EP problem $(Y, Q)$, where $Y=X\times\{1,\dots, k\}$ and $Q=\{a, b\}$
with
$$ \begin{array}{rcl} a(x,i)&=& (p_i(x),i) \mbox{ for $1\leq i\leq k$ and $x\in\dom(p_i)$;} \\ 
b(x,i)&=& (x, i+1), \mbox{ for $1\leq i<k$ and $x\in X$.}
\end{array}
$$
\end{definition}

We also consider an EP map with parameters as follows.

\begin{definition}\label{def:multiverse2} Given any EP problem $(X, P)$ where $P=\{p_1,\dots, p_k\}$ and a $\Pi$-number $m\geq k$, define 
$${\tt Multiverse}(X, P; m)$$ (or simply $\mbox{\tt Multiverse}(P; m)$ if there is no danger of confusion) to be the EP problem $(Y, Q)$, where $Y=X\times\{1,\dots, m\}$ and $Q=\{a, b\}$
with
$$ a(x,i)= \left\{\begin{array}{ll} (p_i(x),i), &\mbox{ for $1\leq i\leq k$ and $x\in\dom(p_i)$, } \\
(x, i), & \mbox{ if $k<i\leq m$;} \end{array}\right. 
$$
and 
$$ 
b(x,i)=\left\{\begin{array}{ll} (x, i+1), & \mbox{ if $1\leq i<m$,} \\ (x, 1), & \mbox{ if $i=m$.}\end{array}\right.
$$
\end{definition}

\begin{theorem}\label{thm:multiverse2} Let $(X, P)$ be an EP problem and let $m$ be a sufficiently large $\Pi$-number. Let
$$ (Y, Q)={\tt Multiverse}(X, P; m). $$
Then the following hold:
\begin{enumerate}
\item[\rm (i)] If $(X, P)$ is quo-$\Pi$, then so is $(Y, Q)$;
\item[\rm (ii)] If $(X, P)$ is a solution, then so is $(Y, Q)$. In particular, if $(X, P)$ is a $\Pi$-solution, then so is $(Y, Q)$.
\end{enumerate}
\end{theorem}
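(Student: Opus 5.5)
The plan is to treat $Y=X\times\{1,\dots,m\}$ as $m$ stacked copies ("layers") of $X$, indexed by $\mathbb{Z}/m\mathbb{Z}$. Here $b$ rotates the layers cyclically and is defined everywhere, while $a$ acts inside each layer: as $p_i$ on layer $i\le k$ and as the identity on layers $i>k$. The only requirement on $m$ is that $m\ge k$ and $m$ is a $\Pi$-number, so that the definition makes sense. Both parts reduce to this layer bookkeeping, in the spirit of the proof of Theorem~\ref{thm:EP2}.

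For (i), suppose $(X,P)$ is quo-$\Pi$. Let $q\notin\Pi$ be prime, $y=(x,i)\in Y$, and $w\in F(a,b)$ with $w^q(y)=y$. Write
$$ w=b^{\beta_n}a^{\alpha_n}\cdots b^{\beta_1}a^{\alpha_1}b^{\beta_0} $$
and set $\beta=\sum_j\beta_j$. Since $b$ is total and $a$ preserves layers, one application of $w$ moves the layer index by $\beta$ mod $m$. Returning to layer $i$ after $q$ applications forces $q\beta\equiv 0 \pmod m$. Because $m$ is a $\Pi$-number and $q\notin\Pi$, we have $\gcd(q,m)=1$, so $\beta\equiv 0\pmod m$ and $w$ maps layer $i$ back into layer $i$.

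Next, read off the intermediate layers $j_0=i+\beta_0$, $j_1=j_0+\beta_1,\dots$ (mod $m$). Define $u\in F(P)$ to be the word $p_{j_{n-1}}^{\alpha_n}\cdots p_{j_0}^{\alpha_1}$, with every factor whose layer exceeds $k$ deleted (on such layers $a$ is the identity). Then for every $z\in X$, $w(z,i)$ is defined iff $u(z)$ is defined, and in that case $w(z,i)=(u(z),i)$. Since $w$ returns to layer $i$, the same $u$ governs every iterate, so $w^q(x,i)=(u^q(x),i)$ with matching domains of definition. Hence $u^q(x)=x$, quo-$\Pi$ of $P$ gives $u(x)=x$, and so $w(y)=y$. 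The only delicate point, and the main thing to verify carefully, is this wraparound bookkeeping mod $m$: the partial words along the way must be defined exactly when the corresponding $p$-words are.

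For (ii), if every $p_i$ is a full permutation of $X$, then $a$ is a full permutation of $Y$, and $b$ is the full permutation $(x,i)\mapsto(x,i+1 \bmod m)$. Thus $(Y,Q)$ is a solution, and it extends $(X,P)$ via $x\mapsto(x,1)$ and $p_i\mapsto b^{-(i-1)}ab^{i-1}$, as in Theorem~\ref{thm:EP2}. Now suppose the solvent $\Gamma=\langle P\rangle$ is a $\Pi$-group. Let $N$ be the subgroup generated by the conjugates $a_j=b^{-j}ab^{j}$. Each $a_j$ preserves every layer and acts on layer $i$ by $p_{i+j}$ (or the identity when $i+j>k$). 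Therefore $N$ embeds in $\Gamma^m$, acting coordinatewise on the layers, and is a $\Pi$-group. Moreover, $N$ is normalized by $b$ (which permutes the $a_j$) and contains $a$, so $N\trianglelefteq\langle a,b\rangle$ and $\langle a,b\rangle=N\langle b\rangle$. It follows that $|\langle a,b\rangle|$ divides $|N|\cdot m$, which is a $\Pi$-number. Hence $(Y,Q)$ is a $\Pi$-solution.
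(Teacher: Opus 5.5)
Your proof is correct. Part (i) is essentially the paper's argument. The paper also tracks the layer index modulo $m$, uses $\gcd(q,m)=1$ to get $\beta_0+\cdots+\beta_n\equiv 0\pmod m$, collects the $a$-blocks that fall in layers $\le k$ into a single word over $P$, and applies quo-$\Pi$ of $P$.

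For the $\Pi$-clause of (ii) you take a different route. The paper gets it in one line from Lemma~\ref{lem:fullPi} and part (i). A $\Pi$-solution $P$ is quo-$\Pi$ by Lemma~\ref{lem:fullPi}, so $Q$ is quo-$\Pi$ by (i). Since $Q\subseteq\Sym(Y)$, Lemma~\ref{lem:fullPi} then makes $\langle Q\rangle$ a $\Pi$-group. You instead bound the solvent directly, using the layer-preserving normal subgroup $N=\langle b^{-j}ab^{j}\rangle\le\Gamma^m$ and the factorization $\langle a,b\rangle=N\langle b\rangle$.

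The paper's route is shorter and makes (ii) a corollary of (i), but it depends on the quo-$\Pi$ characterization of $\Pi$-groups. Yours does not use (i), and it shows the solvent lies in $\Gamma\wr\mathbb{Z}/m\mathbb{Z}$. So it transfers to any class of finite groups closed under subgroups, finite direct products, and extensions by cyclic groups of order dividing $m$ (e.g.\ solvable groups). This is in the same spirit as Lemma~\ref{lem:basicwr} and Proposition~\ref{prop:wrps} later in the paper.

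A minor aside: your remark that $(Y,Q)$ extends $(X,P)$ via $p_i\mapsto b^{-(i-1)}ab^{i-1}$ is not needed, since (ii) only asks that $Q$ consist of full permutations. It is also not an extension in the sense of Definition~\ref{def:subproblem}, which requires a bijection $P\to Q$, whereas $|P|=k$ and $|Q|=2$.
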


\begin{proof} The proof of (i) is similar to the proof of Theorem~\ref{thm:EP2}. In fact, suppose $P=\{p_1, \dots, p_k\}$ is quo-$\Pi$ and let $m\geq k$ be a $\Pi$-number. We verify that $Q=\{a, b\}$ is quo-$\Pi$. To see this, suppose $q\not\in \Pi$, $y\in Y$ and $w\in F(a,b)$ such that $w^q(y)=y$. Suppose $y=(x,i)$ and 
$$ w=b^{\beta_n}a^{\alpha_n}\cdots b^{\beta_1}a^{\alpha_1}b^{\beta_0} $$
for $\alpha_1,\dots, \alpha_n, \beta_0, \beta_1,\dots, \beta_n\in \mathbb{Z}$. We have that $q(\beta_0+\beta_1+\cdots+\beta_n)\equiv 0\pmod m$. Hence $\beta_0+\beta_1+\cdots+\beta_n\equiv 0\pmod{m}$.  Let 
$$\begin{array}{l} j_0=i-1+\beta_0\pmod{m},\\  j_1=j_0+\beta_1\pmod{m}, \\
\dots\dots\\  j_{n-1}=j_{n-2}+\beta_{n-1}\pmod{m}. 
\end{array}$$ 
Let 
$$ A=\big\{a\in\{0,\dots,n-1\}\colon j_a<k\big\}. $$
Let $a_0, \dots, a_{\ell}$ enumerate all elements of $A$ in increasing order. Then we have that $x\in \dom(p_{j_{a_0}})$ and
$$ (p_{1+j_{a_\ell}}^{\alpha_{a_\ell}}\cdots p_{1+j_{a_1}}^{\alpha_{a_1}}p_{1+j_{a_0}}^{\alpha_{a_0}})^q(x)=x. $$
Since $P$ is quo-$\Pi$, we have
$$ p_{1+j_{a_\ell}}^{\alpha_{a_\ell}}\cdots p_{1+j_{a_1}}^{\alpha_{a_1}}p_{1+j_{a_0}}^{\alpha_{a_0}}(x)=x. $$
It follows that $w(y)=y$.

(ii) follows from the definition and Lemma~\ref{lem:fullPi}. 
\end{proof}

\begin{theorem}\label{thm:multiverse} Let $(X, P)$ be an EP problem and let 
$$(Y, Q)={\tt Multiverse}(X, P).$$ Then the following hold.
\begin{enumerate}
\item[\rm (i)] If $(X, P)$ is quo-$\Pi$, then so is $(Y, Q)$;
\item[\rm (ii)] If $(Y, Q)$ has a solution with solvent $\Lambda$, then $(X, P)$ has a solution with solvent $\Gamma$, where $\Gamma$ is isomorphic to a subgroup of $\Lambda$;
\item[\rm (iii)] $(X, P)$ has a $\Pi$-solution if and only if $(Y, Q)$ has a $\Pi$-solution.
\end{enumerate}
\end{theorem}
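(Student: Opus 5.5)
Part (i) is exactly the computation already carried out in the proof of Theorem~\ref{thm:EP2}, so I will simply refer to it. Fix a prime $q\notin\Pi$, a point $y=(x,i)\in Y$ and a word $w=b^{\beta_n}a^{\alpha_n}\cdots a^{\alpha_1}b^{\beta_0}$ with $w^q(y)=y$. Since $b$ only shifts the second coordinate and there is no wraparound in ${\tt Multiverse}(X,P)$, we get $q\sum\beta_j=0$, hence $\sum\beta_j=0$. Then $w$ acts at $y$ as the word $p_{j_{n-1}}^{\alpha_n}\cdots p_{j_0}^{\alpha_1}$ on $x$. Applying the quo-$\Pi$ property of $P$ to this word gives $w(y)=y$.

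For (ii), let $(Z,\{\tilde a,\tilde b\})$ be a solution for $(Y,Q)$ with solvent $\Lambda$, with $j(a)=\tilde a$ and $j(b)=\tilde b$. I will identify $X$ with $X\times\{1\}\subseteq Z$ and set $\tilde p_i=\tilde b^{-(i-1)}\tilde a\,\tilde b^{\,i-1}\in\mathrm{Sym}(Z)$. The computation at the end of the proof of Theorem~\ref{thm:EP2} shows $\tilde p_i(x,1)=(p_i(x),1)$ for $x\in\dom(p_i)$. This uses only that $\tilde b$ extends $b$ on $X\times\{1,\dots,k-1\}$ and that $\tilde a$ extends $a$.

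Then $(Z,\{\tilde p_1,\dots,\tilde p_k\})$ is an extension of $(X,P)$ via $x\mapsto(x,1)$, and its solvent $\Gamma=\langle\tilde p_1,\dots,\tilde p_k\rangle$ is a subgroup of $\Lambda$. One technical point is that the definition of embedding requires $j\colon P\to Q$ to be a bijection, so the $\tilde p_i$ must be pairwise distinct. If some coincide, say $\tilde p_i=\tilde p_{i'}$, then $p_i\cup p_{i'}$ is a partial permutation. I would handle this by adjoining a disjoint extra orbit on which the $\tilde p_i$ act differently; alternatively, I would note that the paper's convention lets us regard the $\tilde p_i$ as indexed by $P$. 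If a new orbit is used, it must be chosen so that the solvent only changes to a subgroup of $\Lambda$ (or of $\Lambda\times\Lambda$). The cleanest choice is to take the disjoint union of copies $Z\times\{1,\dots,k\}$, letting $\tilde p_i$ act on each copy through the basepoint $X\times\{r\}$. The resulting group is then a subdirect product of copies of $\Gamma$, which is isomorphic to $\Gamma$ itself since all copies are the same action conjugated by $\tilde b$. I expect this distinctness bookkeeping to be the only delicate part; the rest is routine.

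For (iii), the direction from $(Y,Q)$ to $(X,P)$ follows from (ii), because subgroups of $\Pi$-groups are $\Pi$-groups. For the converse, let $(X',P')$ be a $\Pi$-solution for $(X,P)$. Choose a $\Pi$-number $m\ge k$ (for instance a power of a prime in $\Pi$; if $\Pi$ is empty the statement is trivial). By Theorem~\ref{thm:multiverse2}(ii), ${\tt Multiverse}(X',P';m)$ is a $\Pi$-solution. It contains $(Y,Q)$ as a subproblem through $X\times\{1,\dots,k\}\subseteq X'\times\{1,\dots,m\}$: the new $a$ extends the old $a$ on layers $1,\dots,k$, and the new $b$ extends $b(x,i)=(x,i+1)$ for $i<k$. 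Hence $(Y,Q)$ has a $\Pi$-solution.
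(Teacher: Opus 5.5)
Your proof follows the paper's. Parts (i) and (ii) are read off from the argument for Theorem~\ref{thm:EP2}, with $\tilde p_i=\tilde b^{-(i-1)}\tilde a\,\tilde b^{\,i-1}$ and $X$ identified with $X\times\{1\}$. The converse half of (iii) embeds $(Y,Q)$ as a subproblem of ${\tt Multiverse}(X',P';m)$ and applies Theorem~\ref{thm:multiverse2}. Read with the extensions indexed by $P$, the argument is correct, but two of your side remarks are wrong.

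\emph{The distinctness patch cannot work.} Suppose every copy carries ``the same action conjugated by $\tilde b$''. Then on the $r$-th copy the $i$-th generator acts by $c_r\tilde p_i c_r^{-1}$, where the conjugator $c_r$ (a power of $\tilde b$) does not depend on $i$. So any equality $\tilde p_i=\tilde p_{i'}$ survives on every copy. Put differently, if the group you build is isomorphic to $\Gamma$ via restriction to the first copy, its generators coincide exactly where the $\tilde p_i$ do. Falling back on a subgroup of $\Lambda\times\Lambda$ would not give (ii) as stated either. The paper's own proof of (ii), in Theorem~\ref{thm:EP2}, ignores distinctness and implicitly treats the extensions as a family indexed by $P$. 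Adopt that reading, which is your stated alternative, and drop the patch.

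\emph{The case $\Pi=\varnothing$ is not trivial.} Under that reading it is actually false. For $\Pi=\varnothing$ only the trivial group is a $\Pi$-group. Take $k\ge 2$ and every $p_i$ a partial identity. Then $(X,P)$ has a solution with trivial solvent, but $(Y,Q)$ has none, because $b$ moves points. The theorem tacitly needs a $\Pi$-number $m\ge k$ to exist, i.e.\ $\Pi\neq\varnothing$, which is exactly what the paper's proof assumes when it picks one.
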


\begin{proof} (i) and (ii) were shown in the proof of Theorem~\ref{thm:EP2}. We only show (iii). If $(Y, Q)$ has a $\Pi$-solution with solvent $\Lambda$, which is a $\Pi$-group, then by (ii), $(X, Q)$ has a solution with solvent $\Gamma$, where $\Gamma$ is isomorphic to a subgroup of $\Lambda$. Hence $\Gamma$ is also a $\Pi$-group. 

Conversely, suppose $(X, P)$ has a $\Pi$-solution $(X', P')$ with solvent $\Gamma=\langle P'\rangle\leq \mbox{\rm Sym}(X')$. Let $P'=\{p_1,\dots, p_k\}$ and $Q=\{a, b\}$. Let $m\geq k$ be a $\Pi$-number. Let 
$$ (Y',Q')={\tt Multiverse}(X', P'; m) $$
where $Y'=X'\times\{1,\dots, m\}$ and $Q'=\{a',b'\}$. Then $(Y', Q')$ is a $\Pi$-solution by Theorem~\ref{thm:multiverse2}. It is clear from the definitions that $(Y, Q)$ is a subproblem of $(Y', Q')$. Hence $(Y', Q')$ is a $\Pi$-solution for $(Y, Q)$.
\end{proof}

\subsection{The EP map {\tt Reroute}} Throughout this subsection let $\Pi$ be a set of prime numbers. We define an EP map with parameters.

\begin{definition}\label{def:covering} Let $(X, P)$ and $(Y, Q)$ be EP problems. We say that $(X, P)$ is a {\em covering} of $(Y, Q)$ if there are functions $f_E\colon P\cup P^{-1}\to Q\cup Q^{-1}$ and $f_V\colon X\to Y$ such that 
\begin{enumerate}
\item[(i)] for each $p\in P$, $f_E(p)\in Q$ and $f_E(p^{-1})=f_E(p)^{-1}$;
\item[(ii)] for each $p\in P\cup P^{-1}$ and $x\in \dom(p)$, $f_V(x)\in \dom(f_E(p))$ and 
$$f_E(p)(f_V(x))=f_V(p(x));$$
\item[(iii)] for each $x\in X$, letting the {\em star} of $x$ be 
$$\mbox{\rm St}(x)=\{p\in P\cup P^{-1}\colon x\in\dom(p)\},$$ then $f_E$ is bijective between $\mbox{\rm St}(x)$ and $\mbox{\rm St}(f_V(x))$.
\end{enumerate}
\end{definition}

\begin{lemma}\label{lem:cov} Let $(X, P)$ and $(Y, Q)$ be EP problems. Suppose $(X, P)$ is a covering of $(Y,Q)$. If $(Y,Q)$ is a solution, then so is $(X, P)$.
\end{lemma}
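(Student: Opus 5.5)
We must show that every $p\in P$ is a full permutation of $X$, i.e.\ $\dom(p)=X$ for each $p\in P\cup P^{-1}$. The plan is a purely local counting argument at each vertex, using condition (iii) of Definition~\ref{def:covering} and the fact that in a solution every star is as large as possible.

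Fix $x\in X$. Since $(Y,Q)$ is a solution, every $q\in Q$ is a permutation of $Y$, so $f_V(x)\in\dom(q)\cap\dom(q^{-1})$. Hence $\mbox{St}(f_V(x))=Q\cup Q^{-1}$. By (iii), $f_E$ restricts to a bijection from $\mbox{St}(x)$ onto $Q\cup Q^{-1}$.

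Next we use (i) to see that $f_E$ is injective on all of $P\cup P^{-1}$ and maps it onto $Q\cup Q^{-1}$. By (i), $f_E$ maps $P$ into $Q$ and $P^{-1}$ into $Q^{-1}$ compatibly with inversion, so it suffices to check that $f_E|_P\colon P\to Q$ is injective. This is where a subtlety arises. A priori, two distinct $p,p'\in P$ might have the same image while having disjoint domains, and then neither lies in the same star. I would handle this by showing directly that $\mbox{St}(x)=P\cup P^{-1}$ for every $x$.

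Suppose $p\in P$ and $x\notin\dom(p)$. Put $q=f_E(p)\in Q$. Since $f_E$ maps $\mbox{St}(x)$ onto $Q\cup Q^{-1}$, there is some $p'\in\mbox{St}(x)$ with $f_E(p')=q$, and $p'\neq p$ because $x\notin\dom(p)$. If $f_E|_P$ fails to be injective in this way, the argument as set up does not yet rule it out. In that case I would read Definition~\ref{def:covering} together with the intended setting of \texttt{Reroute}, where $f_E$ is a bijection between the label sets, matching the convention in Definition~\ref{def:subproblem} that label maps are bijections. Under that reading, $p'=p$, a contradiction, so $x\in\dom(p)$. The same reasoning with $p^{-1}$ gives $x\in\dom(p^{-1})$. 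Thus $\dom(p)=\mbox{ran}(p)=X$.

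Once every domain is all of $X$, each $p\in P$ is an injective map from the finite set $X$ to itself, hence a permutation. So $P\subseteq\mbox{Sym}(X)$ and $(X,P)$ is a solution.

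The main obstacle is exactly the injectivity of $f_E$ on $P$. If the intended definition does not make $f_E$ a bijection, the statement genuinely needs it. For example, take $Y$ a single point with $Q=\{\mathrm{id}\}$, and let $P$ consist of two partial identities on complementary halves of $X$, both mapped to $\mathrm{id}$. This satisfies (i)–(iii) as literally written, yet $(X,P)$ is not a solution. So I would make this assumption explicit at the start of the proof, after which the counting argument above is routine.
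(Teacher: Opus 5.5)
Your argument is the paper's: it too fixes $x$, uses that $f_E(p)\in\mbox{\rm St}(f_V(x))=Q\cup Q^{-1}$ because $(Y,Q)$ is a solution, and concludes $x\in\dom(p)$ from the claim that $p\in\mbox{\rm St}(x)$ if and only if $f_E(p)\in\mbox{\rm St}(f_V(x))$; the ``if'' direction of that claim silently uses exactly the injectivity of $f_E$ on $P$ that you flag. Your counterexample is valid, so this hypothesis must be added to Definition~\ref{def:covering} or to the lemma. It does hold in the lemma's only application, Theorem~\ref{thm:reroute}(ii), where the label map $\hat{p}\mapsto p$ is a bijection. With the hypothesis added, your proof is complete once you delete the earlier sentence asserting that (i) already yields injectivity.
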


\begin{proof} Since $(X,P)$ is a covering of $(Y, Q)$, we have that for any $x\in X$ and $p\in P$, $p\in \mbox{\rm St}(x)$ if and only if $f_E(p)\in \mbox{\rm St}(f_V(x))$. Since $(Y, Q)$ is a solution, we always have $f_V(x)\in \dom(f_E(p))$. Thus $x\in \dom(p)$. This shows that $(X, P)$ is a solution.
\end{proof}

\begin{definition} \label{def:reroute}
Let $(X,P)$ be an EP problem. Let $k$ be a $\Pi$-number. Let $p_0\in P$ and $x_0\in X$ be such that $x_0\in\dom(p_0)$ and $p_0(x_0)\neq x_0$. Define an EP problem $\mbox{\tt Reroute}(X, P; k, p_0, x_0)=(Y, Q)$, where $Y=X\times\{0,\dots, k-1\}$ and $Q=\{\hat{p}\colon p\in P\}$ with
$$ \hat{p}(x, i)=\left\{\begin{array}{ll} (p(x), i), & \mbox{ if $p\neq p_0$ and $x\in \dom(p)$,} \\
& \mbox{ or if $p=p_0$ and $x\in \dom(p)\setminus\{x_0\}$, } \\
\big(p(x), i+1\!\!\!\!\mod{k}\big), & \mbox{ if $p=p_0$ and $x=x_0$.}
\end{array}\right.
$$
For $p_0\in P^{-1}$ and $x_0\in X$ such that $x_0\in \dom(p_0)$ and $p_0(x_0)=x_0$, similarly define $\mbox{\tt Reroute}(X, P; k, p_0, x_0)$.
\end{definition}

\begin{lemma}\label{lem:inverse} Let $(X, P)$ be an EP problem, let $k$ be a $\Pi$-number, and let $p_0\in P$, $x_0\in X$ be such that $x_0\in \dom(p_0)$ and $p_0(x_0)\neq x_0$. Then the EP problems 
$$\mbox{\tt Reroute}(X, P; k, p_0, x_0) \mbox{ and } \mbox{\tt Reroute}(X, P; k, p_0^{-1}, p_0(x_0))$$ are isomorphic.
\end{lemma}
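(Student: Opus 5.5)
We will exhibit an explicit isomorphism of EP problems, i.e.\ a bijection $\sigma\colon Y\to Y$ together with the natural bijection $\hat p\mapsto \hat p'$ between the two sets of rerouted partial permutations, such that $\hat p'(\sigma(y))=\sigma(\hat p(y))$ for all $p\in P$ and $y\in\dom(\hat p)$. Write
$$(Y,Q)=\mbox{\tt Reroute}(X,P;k,p_0,x_0),\qquad (Y,Q')=\mbox{\tt Reroute}(X,P;k,p_0^{-1},p_0(x_0)),$$
with $Y=X\times\{0,\dots,k-1\}$ in both cases. Set $y_0=p_0(x_0)$, so $y_0\neq x_0$.

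First we unwind the second construction. Rerouting $p_0^{-1}$ at $y_0$ means that the inverse of the rerouted generator sends $(y_0,i)$ to $(x_0,i+1)$, and acts as $(p_0^{-1}(x),i)$ elsewhere on $\dom(p_0^{-1})$. Equivalently, the rerouted $p_0$ in $Q'$ is
$$\hat p_0'(x_0,i)=(y_0,i-1 \bmod k),\qquad \hat p_0'(x,i)=(p_0(x),i)\ \text{ for } x\in\dom(p_0)\setminus\{x_0\}.$$
All other $\hat p'=\hat p$ agree with the $Q$ versions. In $Q$, by contrast, $\hat p_0(x_0,i)=(y_0,i+1)$.

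The map we will use is $\sigma(x,i)=(x,-i \bmod k)$. It fixes the $X$-coordinate, so it commutes with every generator that preserves the level $i$. The only generator that changes the level is the rerouted edge at $x_0$, and there we check
$$\sigma(\hat p_0(x_0,i))=(y_0,-i-1)=\hat p_0'(x_0,-i)=\hat p_0'(\sigma(x_0,i)).$$
So $(\sigma,\hat p\mapsto\hat p')$ is an isomorphism. Alternatively, the identity map on $Y$ works if we relabel the rerouted generator as the inverse, matching the convention in the definition. Either way, the verification is a routine case check.

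The main obstacle is bookkeeping rather than mathematics. The delicate step is interpreting the ``similarly define'' clause for $p_0\in P^{-1}$: we must confirm that it means the rerouted element is $\widehat{p_0^{-1}}$, viewed as the inverse of a rerouted element of $P$. Once that convention is fixed, it is transparent whether the shift is $+1$ or $-1$, and the reflection $\sigma$ absorbs the sign in either case.
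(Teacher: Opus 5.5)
Your proposal is correct and is exactly the paper's proof. The paper gives the isomorphism $(x,i)\mapsto(x,-i \bmod k)$ and leaves the check to the reader, and you carry out that check with the right reading of the ``similarly define'' clause, under which the rerouted $p_0$ in the second problem sends $(x_0,i)$ to $(p_0(x_0),i-1)$. One caution: your closing claim that $\sigma$ absorbs the sign ``in either case'' is inaccurate, since under the alternative $+1$ reading the two problems coincide, the identity is the isomorphism, and $\sigma$ generally is not; this does not affect your main argument.
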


\begin{proof} An isomorphism between the two problems is given by $(x,i)\mapsto (x, -i \mbox{ mod } k)$.
\end{proof}

\begin{theorem}\label{thm:reroute} Let $(X, P)$ be an EP problem, let $k$ be a $\Pi$-number, and let 
$$(Y, Q)=\mbox{\tt Reroute}(X, P; k, p_0, x_0)$$ with parameters $k, p_0, x_0$. Then the following hold.
\begin{enumerate}
\item[\rm (i)] If $(X, P)$ is quo-$\Pi$, then so is $(Y, Q)$;
\item[\rm (ii)] $(Y, Q)$ is a covering of $(X, P)$. Consequently, if $(X, P)$ is a solution, then so is $(Y, Q)$. In particular, if $(X,P)$ is a $\Pi$-solution, then so is $(Y, Q)$.
\item[\rm (iii)] If $(X, P)$ has a $\Pi$-solution, then so does $(Y,Q)$.
\end{enumerate}
\end{theorem}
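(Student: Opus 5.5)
We prove the three parts in the order (ii), (i), (iii); (ii) is the structural fact the other two use.

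For (ii), take $f_V(x,i)=x$ and $f_E(\hat p)=p$, extended to inverses by $f_E(\hat p^{-1})=p^{-1}$. By definition $(x,i)\in\dom(\hat p)$ if and only if $x\in\dom(p)$. Also $f_V(\hat p(x,i))=p(x)$, because $\hat p$ changes only the second coordinate, and only at $x_0$ when $p=p_0$. So $f_E$ maps $\mathrm{St}((x,i))$ bijectively onto $\mathrm{St}(x)$ for every $(x,i)$. This is a covering in the sense of Definition~\ref{def:covering}, and Lemma~\ref{lem:cov} shows that $(Y,Q)$ is a solution whenever $(X,P)$ is one. For the $\Pi$-group claim, define $\sigma\colon X\to\mathbb{Z}/k$ by $\sigma=\delta_{x_0}$ on $p_0$-steps and $\sigma=0$ otherwise. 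Then $\hat p(x,i)=(p(x),\,i+\sigma_p(x))$, so $\langle Q\rangle$ embeds in the wreath product $(\mathbb{Z}/k)^X\rtimes\langle P\rangle$. This group has order $k^{|X|}\,|\langle P\rangle|$, which is a $\Pi$-number because $k$ is a $\Pi$-number. Lemma~\ref{lem:fullPi} is not needed for this step.

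For (i), let $q\notin\Pi$ and suppose $w^q(x,i)=(x,i)$ for some $w\in F(Q)$. Projecting via the covering gives $\bar w^q(x)=x$, where $\bar w=f_E(w)\in F(P)$. Since $(X,P)$ is quo-$\Pi$, $\bar w(x)=x$. Because the second coordinate only records shifts, $w(x,i)$ is defined and equals $(x,\,i+s)$. Here $s\in\mathbb{Z}/k$ is the net number of times the walk $\bar w$ from $x$ traverses the edge $x_0\to p_0(x_0)$, counted with sign. The walk for $w^q$ is the $q$-fold concatenation of this circuit, so the shift after $w^q$ is $qs$. From $w^q(x,i)=(x,i)$ we get $qs\equiv 0\pmod k$. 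Since $\gcd(q,k)=1$, it follows that $s\equiv0$ and $w(x,i)=(x,i)$. The step needing care is checking that $\bar w^q(x)$ being defined forces $w^q(x,i)$ to be defined with the shift additive over the repetitions. This holds because domains lift exactly, as shown in (ii).

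For (iii), let $(X',P')$ be a $\Pi$-solution for $(X,P)$, with $X\subseteq X'$ and $p_0$ corresponding to $p_0'$. Form $\mathtt{Reroute}(X',P';k,p_0',x_0)$, which is defined because $p_0'(x_0)=p_0(x_0)\neq x_0$. By (ii) it is a $\Pi$-solution. It contains $(Y,Q)$ as a subproblem via $(x,i)\mapsto(x,i)$: for $x\in X$ the rerouted edge occurs at the same point $x_0$, and $\hat p$ restricted to $X\times\{0,\dots,k-1\}$ agrees with the $\hat p'$ defined from $P'$. Hence it is a $\Pi$-solution for $(Y,Q)$.

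I expect the main obstacle to be the bookkeeping in (i): making precise that the shift $s$ is a well-defined homomorphism-like quantity along concatenated walks. I would handle it with the cocycle formulation $\hat p(x,i)=(p(x),\,i+\sigma_p(x))$, under which shifts add along any walk.
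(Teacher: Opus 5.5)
Your proof is correct. For (i), (iii) and the covering claim in (ii) it is the paper's argument. In (i) you project to $X$, use that $P$ is quo-$\Pi$ to get $\bar w(x)=x$, and kill the net shift using $qs\equiv 0\pmod k$ with $\gcd(q,k)=1$. In (iii) you reroute the given $\Pi$-solution at the same $x_0$ and find $(Y,Q)$ inside it as a subproblem. Your signed count $s$, via the cocycle $\sigma$, is more careful than the paper's count of visits to $x_0$, which does not explicitly handle occurrences of $\hat p_0^{-1}$.

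The one genuine difference is the last clause of (ii). The paper derives it from (i):
\begin{itemize}
\item by Lemma~\ref{lem:fullPi}, a $\Pi$-solution is quo-$\Pi$;
\item so $Q$ is quo-$\Pi$ by (i);
\item since $Q$ consists of full permutations, Lemma~\ref{lem:fullPi} again makes $\langle Q\rangle$ a $\Pi$-group.
\end{itemize}
You instead embed $\langle Q\rangle$ in $\mathbb{Z}/k\wr\langle P\rangle$, which has order $k^{|X|}\,|\langle P\rangle|$.

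The paper's route is shorter once (i) is available. However, it relies on the fact that being a $\Pi$-group is detected by the quo-$\Pi$ condition. Your route is independent of (i). It also shows that rerouting keeps the solvent inside any extension-closed pseudovariety containing $\mathbb{Z}/k$, for example solvable groups. This is the same wreath-product mechanism the paper later uses for \texttt{Wreath} and \texttt{Pin}.
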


\begin{proof} Note that $p\mapsto \hat{p}$ is a bijection from $P$ to $Q$, which induces a bijection $w\mapsto \hat{w}$ from $F(P)$ to $F(Q)$.  

(i) Suppose $(X, P)$ is quo-$\Pi$. We verify that $(Y, Q)$ is quo-$\Pi$. For this, let $\hat{w}\in F(Q)$, $y=(x,i)\in Y$ and $q\not\in \Pi$ be such that $\hat{w}^q(y)=y$. If $\hat{w}$ does not involve $\hat{p}_0$ then there is $w\in F(P)$ such that $\hat{w}^q(x,i)=(w^q(x),i)=(x,i)$, and so $w(x)=x$ and $\hat{w}(y)=y$. 

Assume 
$$ \hat{w}=\hat{u}_m \hat{p}_0 \hat{u}_{m-1} \cdots\hat{p}_0\hat{u}_0 $$
for some $\hat{u}_0,\dots, \hat{u}_m\in F(Q)$ which do not involve $\hat{p}_0$. Let
$$ w=u_mp_0u_{m-1}\cdots p_0u_0 $$
be the corresponding element of $F(P)$. Let $\pi\colon Y\to X$ be the projection map. Then we have $\pi(\hat{w}(y))=w(x)$ and $x=\pi(y)=\pi(\hat{w}^q(y))=w^q(x)$. Since $P$ is quo-$\Pi$, we have $w(x)=x$. 

Now let
$$y_0=y,\ y_1=\hat{u}_0(y_0),\ \dots,\ y_m=\hat{u}_{m-1}\hat{p}_0(y_{m-1}).$$
Let $\alpha_1, \dots, \alpha_\ell$ list all indices $\alpha\in\{1, \dots, m\}$ such that $\pi(y_\alpha)=x_0$. Since $\hat{w}^q(y)=y$, we have that $q\ell\equiv 0\pmod{k}$. Since $\mbox{gcd}(q,k)=1$, we have $\ell\equiv 0\pmod{k}$. This implies that $\hat{w}(y)=y$. Thus $(Y, Q)$ is quo-$\Pi$.

(ii) It is easy to see that $(Y,Q)$ is a covering of $(X, P)$. Thus by Lemma~\ref{lem:cov}, if $(X, P)$ is a solution, then $(Y, Q)$ is a solution. The last conclusion follows from (i) and Lemma~\ref{lem:fullPi}.

(iii) Suppose $(X, P)$ has a $\Pi$-solution $(X', P')$, with $p\mapsto p'$ being the bijection from $P$ to $P'$. Let $(Y', Q')=\mbox{\tt Reroute}(X', P'; k, p_0', x_0)$. Then it is easy to see that $(Y, Q)$ is a subproblem of $(Y', Q')$. By (ii), $(Y', Q')$ is a $\Pi$-solution. Thus $(Y', Q')$ is a $\Pi$-solution for $(Y, Q)$.
\end{proof}

\subsection{Fundamental groups of EP problems\label{subsec:fundamentalgroup}} Throughout this subsection let $\Pi$ be a set of prime numbers.

Any EP problem $(X, P)$ gives rise naturally to a labelled multidigraph on $X$, which we denote by $G(X, P)$. Let $G^*(X, P)$ be the corresponding unlabelled, undirected multigraph. Recall that $G(X, P)$ is weakly connected if and only if $G^*(X, P)$ is connected. 

Decompose $G(X, P)$ into weakly connected components $Y_1, \dots, Y_k$. For each $1\leq i\leq k$, we obtain a subproblem $(Y_i, P_i)$, where $P_i=P\cap (Y_i\times Y_i)$. If $(Y'_i, P'_i)$ is a $\Pi$-solution for $(Y_i, P_i)$, then the disjoint union of $(Y'_i, P'_i)$ for $1\leq i\leq k$ is a $\Pi$-solution for $(X, P)$. Thus, we have that $(X, P)$ has a $\Pi$-solution if and only if all of $(Y_i, P_i)$ have $\Pi$-solutions. In view of this observation, we may consider, without loss of generality, only EP problems $(X, P)$ where $G(X, P)$ is weakly connected. 

\begin{definition} Let $(X, P)$ be an EP problem where $G(X, P)$ is weakly connected. Let $x_0\in X$. The {\em fundamental group} of $(X, P)$ with base point $x_0$ is defined as
$$ \pi_1(X, P; x_0)=\{w\in F(P)\colon w(x_0)=x_0\}. $$
We write $\pi_1(X, P)$ or even $\pi_1(P)$ when there is no danger of confusion.
\end{definition}

Since $X$ is finite, $\pi_1(P)$ is finitely generated. 

Recall that the notions of walk, circuit, loop, etc. for multidigraphs have been defined in Subsection~\ref{subsec:multidigraph}.  These notions can be defined for labelled multidigraphs verbatim. Here we need to define more terminology. Let $G$ be a labelled multidigraph, and let $G^*$ be the corresponding unlabelled, undirected multigraph. We define a walk $w=e_0e_1\dots e_k$ in $G$ to be a {\em path} if the corresponding walk in $G^*$ is a path, i.e., in the sequence $\mathsf{s}(e_0), \mathsf{s}(e_1), \dots, \mathsf{s}(e_k), \mathsf{t}(e_k)$, no vertex occurs more than once. An edge $e$ has two {\em endpoints} $\mathsf{s}(e)$ and $\mathsf{t}(e)$; a path $w=e_0e_1\dots e_k$ also has two {\em endpoints} $\mathsf{s}(e_0)$ and $\mathsf{t}(e_k)$. A walk $w=e_0e_1\dots e_k$ in $G$ is a {\em cycle} if either $k=0$ and $w$ is a loop, i.e., $w=e_0$ with $\mathsf{s}(e_0)=\mathsf{t}(e_0)$, or $k>0$ and the corresponding walk in $G^*$ is a cycle, i.e., $\mathsf{s}(e_0)=\mathsf{t}(e_k)$ and $e_1\dots e_k$ is a path. 

\begin{lemma}\label{lem:singleedgeextension} Let $(X, P)$ and $(Y, Q)$ be EP problems satisfying
\begin{enumerate}
\item[\rm (a)] $(X, P)$ is a subproblem of $(Y, Q)$; 
\item[\rm (b)] both $G(X, P)$ and $G(Y, Q)$ are weakly connected;
\item[\rm (c)]  $G(Y, Q)-G(X, P)$ consists of exactly one edge, one of whose endpoints is not in $X$.
\end{enumerate}
Then $(X, P)$ has a $\Pi$-solution if and only if $(Y, Q)$ has a $\Pi$-solution.
\end{lemma}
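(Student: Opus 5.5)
The direction ``$(Y,Q)$ has a $\Pi$-solution $\Rightarrow$ $(X,P)$ has one'' is immediate. By (a), $(X,P)$ is a subproblem of $(Y,Q)$, and any solution for $(Y,Q)$ is an extension of $(Y,Q)$. Composing the inclusion embedding $(X,P)\to(Y,Q)$ with that embedding shows the same solution is a solution for $(X,P)$, with the same solvent. Hence it is a $\Pi$-solution for $(X,P)$.

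For the converse, write the new edge as $e$ with label $q\in Q$, corresponding to $p\in P$ under the bijection $j$. By (c), $e$ has an endpoint $y_1\notin X$. The other endpoint $y_0$ must lie in $X$: otherwise $G(Y,Q)$ would contain a component disjoint from $X$, contradicting weak connectivity in (b), unless $X$ is empty, which is trivial. Replacing $q$ by $q^{-1}$ if necessary, we may assume $q(y_0)=y_1$. Then $Y=X\cup\{y_1\}$, $q=p\cup\{(y_0,y_1)\}$, and every other partial permutation of $Q$ agrees with the corresponding one of $P$. So $y_0\notin\dom(p)$, and $y_1$ is a new vertex touched by no other edge. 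Now let $(X',P')$ be a $\Pi$-solution for $(X,P)$ via an embedding $(i,j')$, and write $\Gamma$ for its solvent. The plan is to place $y_1$ at the point $z:=p'(i(y_0))\in X'$, where $p'=j'(p)$ is the full permutation extending $p$.

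The main check is that sending $y_1\mapsto z$ and $x\mapsto i(x)$ defines an injective map $Y\to X'$. Injectivity can only fail if $z\in i(X)$, say $z=i(x)$ with $x\in X$. In that case we cannot reuse $(X',P')$ directly. Instead we take two disjoint copies of $(X',P')$ and apply a twisting operation in the spirit of {\tt Reroute}, so that the $p'$-edge out of $i(y_0)$ lands at a point outside the image of $X$. Concretely, use $\mbox{\tt Reroute}(X',P';k,p',i(y_0))$ for a $\Pi$-number $k\geq 2$. This is a $\Pi$-solution by Theorem~\ref{thm:reroute}(ii). In it, $(X,P)$ embeds as $X\times\{0\}$, which is legitimate because $y_0\notin\dom(p)$, so the rerouted edge is not one of the edges of $(X,P)$. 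The rerouted edge goes from $(i(y_0),0)$ to $(z,1)$, and $(z,1)$ lies outside $X\times\{0\}$.

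Sending $y_1$ to $(z,1)$ then gives an embedding of $(Y,Q)$ into this $\Pi$-solution. The edge relations for $q$ hold by construction, and the edge relations for every other element of $Q$ are inherited from the embedding of $(X,P)$. Therefore $(Y,Q)$ has a $\Pi$-solution, which completes the converse.

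The step I expect to need the most care is the use of {\tt Reroute}: specifically, verifying that $i(y_0)\in\dom(p')$ and that $p'(i(y_0))\neq i(y_0)$. The first holds because $p'$ is a full permutation. For the second, if $p'$ fixes $i(y_0)$, we can still reroute (the fixed point becomes a $k$-cycle across the copies), or equivalently apply the construction of Definition~\ref{def:reroute} directly.
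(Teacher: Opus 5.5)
Your proof is correct and is essentially the paper's argument. Like the paper, you take the trivial direction by restriction, reduce to a single new vertex $y_1=q(y_0)$ with $y_0\notin\dom(p)$, and embed $(Y,Q)$ into $\texttt{Reroute}(X',P';k,p',i(y_0))$ by sending $X$ to $X\times\{0\}$ and $y_1$ to $(p'(i(y_0)),1)$; the paper treats the opposite edge orientation via Lemma~\ref{lem:inverse}, which is equivalent to your replacement of $q$ by $q^{-1}$. Your care about taking $k\geq 2$, and about the case $p'(i(y_0))=i(y_0)$ (formally excluded by the definition of \texttt{Reroute}, though the construction and its proof go through unchanged), fills in details the paper leaves implicit; note that a $\Pi$-number $k\geq 2$ exists only if $\Pi\neq\varnothing$, an assumption both you and the paper make silently, and which is genuinely needed since the lemma fails for $\Pi=\varnothing$.
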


\begin{proof} Since $(X, P)$ is a subproblem of $(Y, Q)$, it follows immediately that if $(Y, Q)$ has a $\Pi$-solution $(Y', Q')$, then $(Y', Q')$ is also a $\Pi$-solution for $(X, P)$. 

Let $p\mapsto \tilde{p}$ be the bijection from $P$ to $Q$ such that $\tilde{p}$ extends $p$ for each $p\in P$. Suppose the only edge $e$ in $G(Y, Q)-G(X, P)$ is labeled by $\tilde{p}_0\in Q$. Since $G(Y, Q)$ is weakly connected, one of the endpoints of $e$ is in $X$, which we denote as $x_0\in X$. By assumption (c), the other endpoint of $e$ is not in $X$. 

First suppose $x_0\in \mbox{\rm dom}(\tilde{p}_0)$. Let $y_0=\tilde{p}_0(x_0)\in Y\setminus X$. Then $x_0\not\in \dom(p_0)$.  Since $G(Y, Q)$ is weakly connected, we must have $Y=X\cup\{y_0\}$. Moreover, we have that $\tilde{p}_0=p_0\cup\{(x_0, y_0)\}$ and for any $p\in P\setminus\{p_0\}$, $\tilde{p}=p$.

Now suppose $(X, P)$ has a $\Pi$-solution $(X', P')$. Let $p\mapsto p'$ be the bijection from $P$ to $P'$ such that $p'$ extends $p$ for each $p\in P$. Let $k$ be a $\Pi$-number. Now consider the EP problem 
$$ (Y', Q')=\mbox{\tt Reroute}(X', P'; k, p'_0, x_0). $$
By Theorem~\ref{thm:reroute} (ii), $(Y', Q')$ is a $\Pi$-solution. To see that $(Y', Q')$ is a $\Pi$-solution for $(Y, Q)$, we only need to verify that $(Y, Q)$ is a subproblem of $(Y', Q')$. To see this, consider the map $f_V\colon Y\to Y'$ defined by
$$ f_V(y)=\left\{\begin{array}{ll} (y, 0), & \mbox{ if $y\in X$,} \\ (p'_0(x_0), 1), & \mbox{ if $y=y_0$.}\end{array}\right. $$
Then $f_V$ is injective and the map $\tilde{p}\mapsto \widehat{p'}$ (in the notation of Definition~\ref{def:reroute}) witnesses that $(Y, Q)$ is a subproblem of $(Y', Q')$.

The case $x_0\in \dom((\tilde{p}_0)^{-1})$ now follows from Lemma~\ref{lem:inverse} and a similar argument.
\end{proof}

\begin{lemma}\label{lem:singlepathextension} Let $(X, P)$ and $(Y, Q)$ be EP problems satisfying
\begin{enumerate}
\item[\rm (a)] $(X, P)$ is a subproblem of $(Y, Q)$; 
\item[\rm (b)] both $G(X, P)$ and $G(Y, Q)$ are weakly connected;
\item[\rm (c)] $G(Y, Q)-G(X, P)$ consists of a single path, all of whose vertices except for one endpoint are not in $X$.
\end{enumerate}
Then $(X, P)$ has a $\Pi$-solution if and only if $(Y, Q)$ has a $\Pi$-solution.
\end{lemma}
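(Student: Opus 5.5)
We plan to prove Lemma~\ref{lem:singlepathextension} by induction on the length $n$ of the path $G(Y,Q)-G(X,P)$, reducing each step to Lemma~\ref{lem:singleedgeextension}. As there, one direction is immediate: a $\Pi$-solution for $(Y,Q)$ is also a $\Pi$-solution for its subproblem $(X,P)$. For the converse we build a chain of intermediate EP problems
$$ (X,P)=(X_0,P_0)\subseteq (X_1,P_1)\subseteq\cdots\subseteq (X_n,P_n)=(Y,Q), $$
each obtained from the previous one by adding a single edge of the path.

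Concretely, let the path be $e_1e_2\cdots e_n$ with vertices $x_0,y_1,\dots,y_n$, oriented so that $x_0\in X$ is the endpoint lying in $X$ and $y_1,\dots,y_n\notin X$. Each $e_i$ joins $y_{i-1}$ and $y_i$ (with $y_0=x_0$) and is labelled by some $\tilde{p}\in Q$, traversed forwards or backwards. Set $X_i=X\cup\{y_1,\dots,y_i\}$. For each $p\in P$, let the $i$-th stage version of $p$ be the restriction of $\tilde{p}$ to those pairs $(u,v)$ that are either edges of $G(X,P)$ or among the edges $e_1,\dots,e_i$. Let $P_i$ be the resulting set, with the same index bijection to $P$ and $Q$. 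Restrictions of partial permutations are partial permutations, so each $(X_i,P_i)$ is an EP problem. Moreover $(X_{i-1},P_{i-1})$ is a subproblem of $(X_i,P_i)$, and $(X_n,P_n)=(Y,Q)$, because by hypothesis (c) every edge of $G(Y,Q)$ is either in $G(X,P)$ or on the path.

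Next we check the hypotheses of Lemma~\ref{lem:singleedgeextension} for each consecutive pair.
\begin{itemize}
\item[(a)] $(X_{i-1},P_{i-1})$ is a subproblem of $(X_i,P_i)$, as noted above.
\item[(b)] $G(X_i,P_i)$ is weakly connected, since $G(X,P)$ is and the added vertices form a path hanging off $x_0$.
\item[(c)] $G(X_i,P_i)-G(X_{i-1},P_{i-1})$ is exactly the edge $e_i$, and its endpoint $y_i$ is not in $X_{i-1}$. This uses that the path has no repeated vertices.
\end{itemize}
Lemma~\ref{lem:singleedgeextension} then gives that $(X_{i-1},P_{i-1})$ has a $\Pi$-solution if and only if $(X_i,P_i)$ does. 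Chaining these equivalences from $i=1$ to $n$ finishes the proof.

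The main obstacle is a bookkeeping point rather than a conceptual one. Lemma~\ref{lem:singleedgeextension} requires the bijection between label sets of the smaller and larger problem to be fixed, so the label sets must not change along the chain. We will therefore keep every label $p\in P$ in every $P_i$, even when its stage-$i$ version has no new edges. We must also check that $e_i$ is genuinely a new edge whose label is defined at $y_{i-1}$ or $y_i$ consistently with injectivity. This holds automatically because the stage-$i$ version is a restriction of the partial permutation $\tilde{p}$. We also note the degenerate case: if the path has length $0$, then $X=Y$ and $P=Q$, so there is nothing to prove.
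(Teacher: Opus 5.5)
Your proposal is correct and is essentially the paper's proof. The paper likewise builds a chain $(X_0,P_0),\dots,(X_\ell,P_\ell)$ of weakly connected problems, each a one-edge extension of the previous one whose new endpoint lies outside the smaller vertex set, applies Lemma~\ref{lem:singleedgeextension} to each consecutive pair, and concludes by transitivity; your extra bookkeeping (a fixed label bijection, and $y_i\notin X_{i-1}$ because path vertices are distinct) just makes explicit what the paper leaves implicit.
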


\begin{proof} Define a finite sequence of EP problems $(X_0, P_0), (X_1, P_1), \dots, (X_\ell, P_\ell)$ so that the following hold:
\begin{enumerate}
\item[\rm (i)] $(X_0, P_0)=(X, P)$ and $(X_\ell, P_\ell)=(Y, Q)$;
\item[\rm (ii)] for all $0\leq i\leq \ell$, $G(X_i, P_i)$ is weakly connected;
\item[\rm (iii)] for each $0\leq i<\ell$, $(X_i, P_i)$ is a subproblem of $(X_{i+1}, P_{i+1})$ so that $G(X_{i+1}, P_{i+1})-G(X_i, P_i)$ consists of a single edge, one of whose endpoints is not in $X_i$.
\end{enumerate}
 Here $\ell$ is the length of the path that constitutes $G(Y, Q)-G(X, Q)$. Now applying Lemma~\ref{lem:singleedgeextension} to each pair $(X_i, P_i)$ and $(X_{i+1}, P_{i+1})$ for $0\leq i<\ell$, we conclude that for each $0\leq i<\ell$, $(X_i, P_i)$ has a $\Pi$-solution if and only if $(X_{i+1}, P_{i+1})$ has a $\Pi$-solution. By transitivity, the conclusion of the lemma holds.
\end{proof}

\begin{corollary}\label{cor:trivial} Let $(X, P)$ be an EP problem where $G(X, P)$ is weakly connected. Suppose $\pi_1(X, P)$ is trivial. Then $(X, P)$ has a $\Pi$-solution.
\end{corollary}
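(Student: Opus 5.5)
The plan is to reduce to a tree and then build the tree up one edge at a time from a single vertex, using Lemma~\ref{lem:singleedgeextension} at each step. First I would translate the hypothesis on $\pi_1(X,P)$ into a statement about the graph. Since $G(X,P)$ is weakly connected, $\pi_1(X,P;x_0)$ is the image of the graph fundamental group $\pi_1(G(X,P),x_0)$ under the labelling map. Circuits at $x_0$ in $G(X,P)$ correspond to words $w\in F(P)$ with $w(x_0)=x_0$. The labelling is an immersion from $G(X,P)$ to the one-vertex graph with edge set $P$, because each $p\in P$ is a partial permutation, so at most one $p$-edge leaves or enters any vertex. By Lemma~\ref{lem:im2inj} this immersion is injective on fundamental groups. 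Hence $\pi_1(X,P)$ trivial forces $\pi_1(G(X,P),x_0)$ trivial, so $G^*(X,P)$ contains no cycle.

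The one case to check separately is a loop, an edge $p(x)=x$. Such a loop would give the nontrivial element $p\in\pi_1$ after conjugating by a walk to $x_0$, so it cannot occur. Likewise two parallel edges would form a cycle giving a nontrivial reduced circuit. So $G^*(X,P)$ is a finite tree.

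Next I would order the vertices $x_0,x_1,\dots,x_n$ so that each $x_{i}$ ($i\ge1$) is adjacent in the tree to exactly one earlier vertex; a breadth-first enumeration does this. Let $X_i=\{x_0,\dots,x_i\}$ and let $P_i$ consist of the restrictions of the elements of $P$ to the edges of the induced subtree on $X_i$. I would keep the index set of $P_i$ equal to $P$, allowing empty partial permutations, so that the bijection required in the definition of subproblem exists. Then each $(X_i,P_i)$ is a subproblem of $(X_{i+1},P_{i+1})$, both graphs are weakly connected, and the difference is one edge with one endpoint $x_{i+1}\notin X_i$. This is exactly the hypothesis of Lemma~\ref{lem:singleedgeextension}, so $(X_i,P_i)$ has a $\Pi$-solution if and only if $(X_{i+1},P_{i+1})$ does. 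Equivalently, Lemma~\ref{lem:singlepathextension} can be applied once per branch.

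The base case is $(X_0,P_0)=(\{x_0\},\{\varnothing,\dots\})$. It has the $\Pi$-solution in which every $p$ is the identity on $\{x_0\}$; its solvent is trivial and hence a $\Pi$-group. Induction on $i$ then gives a $\Pi$-solution for $(X_n,P_n)=(X,P)$.

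The main obstacle is bookkeeping rather than mathematics. One point is that the definition of EP problem and subproblem involves a fixed bijection between the sets of partial permutations, which is why the empty partial permutations must be allowed. The other is the translation from triviality of the word group $\pi_1(X,P)$ to acyclicity of the graph, which rests on the immersion argument above.
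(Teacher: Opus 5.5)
Your proposal is correct and follows the paper's proof. The paper likewise notes that $G^*(X,P)$ is a tree and builds it up from a single vertex, whose singleton problem trivially has a $\Pi$-solution, by repeated applications of Lemma~\ref{lem:singlepathextension}; your edge-by-edge use of Lemma~\ref{lem:singleedgeextension} is the same induction, and your immersion argument via Lemma~\ref{lem:im2inj} and the indexed-family bookkeeping simply make explicit what the paper leaves implicit.
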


\begin{proof} Since $G(X, P)$ is weakly connected and $\pi_1(X, P)$ is trivial, the undirected multigraph corresponding to $G(X, P)$ is a tree (and in fact a graph). The corollary is now obtained from inductively applying Lemma~\ref{lem:singlepathextension}, where in the base case just noting that the singleton problem (with no edge) has a $\Pi$-solution.
\end{proof}

\begin{theorem}\label{thm:solutionprotopology} Let $(X, P)$ be an EP problem where $G(X, P)$ is weakly connected. Then the following hold.
\begin{enumerate}
\item[\rm (i)] $(X, P)$ is quo-$\Pi$ if and only if $\pi_1(X, P)$ is a quo-$\Pi$ subgroup of $F(P)$.
\item[\rm (ii)] $(X, P)$ has a $\Pi$-solution if and only if $\pi_1(X, P)$ is closed in the pro-$\Pi$ topology of $F(P)$.
\end{enumerate}
\end{theorem}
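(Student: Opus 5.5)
For (i) I would argue directly from the definitions. Fix the base point $x_0$ and write $S=\pi_1(X,P;x_0)$. If $(X,P)$ is quo-$\Pi$ and $w^q\in S$ with $q\notin\Pi$, then $w^q(x_0)=x_0$, so $w(x_0)=x_0$ and $w\in S$. Conversely, suppose $S$ is quo-$\Pi$ and $w^q(x)=x$ for some $x\in X$. By weak connectivity there is a word $z\in F(P)$ with $z(x_0)=x$; here we use the walk in $G(X,P)$, which is legitimate since $P=P^{-1}$. Then $(z^{-1}wz)^q=z^{-1}w^qz$ fixes $x_0$, so $z^{-1}wz\in S$, and therefore $w(x)=x$. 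This is the same argument as in the proof of Theorem~\ref{thm:EPtoLERF}.

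For (ii), direction ($\Leftarrow$), I would rerun the proof of Theorem~\ref{thm:LERF2EP} for this single problem. Assume $S$ is closed in the pro-$\Pi$ topology. For each of the finitely many pairs $u,v$ of reduced words corresponding to walks from $x_0$ to distinct vertices, we have $u^{-1}v\notin S$. Closedness gives, for each such pair, a normal subgroup of $\Pi$-index avoiding the corresponding coset, and Lemma~\ref{lem:filteredbelow} lets us take their intersection $N$, still of $\Pi$-index, with $u^{-1}vN\cap S=\varnothing$ for every pair. Then $F(P)/N$ acts on the finite set $F(P)/NS$ through a $\Pi$-group. The map $y\mapsto wNS$, where $w(x_0)=y$, embeds $X$, and the action of each $p$ extends the partial permutation $p$. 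Taking the generators $pN$ gives the required bijection, and injectivity on $P$ holds as in the cited proof. The solvent is a quotient of $F(P)/N$, hence a $\Pi$-group.

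For (ii), direction ($\Rightarrow$), let $(X',P')$ be a $\Pi$-solution with solvent $\Gamma$ and the homomorphism $\varphi\colon F(P)\to\Gamma$, $w\mapsto w_\Gamma$. Put $N=\ker\varphi$, which is open in the pro-$\Pi$ topology. Set $S'=\{w\colon w_\Gamma(x_0)=x_0\}$; it contains $N$, so it is open and hence closed. Since $P'$ extends $P$, we have $S\subseteq S'$. To get closedness of $S$ I need more than this. Given $\gamma\notin S$, I would use Stallings folding, as in Theorem~\ref{thm:Hall2}. If the walk of $\gamma$ from $x_0$ stays defined in $G(X,P)$ and ends at $y\ne x_0$, then $\gamma_\Gamma(x_0)=y\neq x_0$, so $\gamma N\cap S\subseteq\gamma N\cap S'=\varnothing$.

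\textbf{The main obstacle} is the case where the walk of $\gamma$ leaves $X$, i.e.\ becomes undefined partway. Then $\gamma_\Gamma(x_0)$ may equal $x_0$ in the given solution. My plan is to attach the unfinished part of the walk as a new path, hanging off the last vertex reached, with new vertices and ending at a new vertex $z\notin X$. This yields a problem $(Y,Q)$ with $(X,P)$ as a subproblem, and its added part is a single path whose vertices, except one endpoint, lie outside $X$. (If $\gamma$ is reduced, folding is unnecessary, since the added path continues with letters not defined at the current vertex.) By Lemma~\ref{lem:singlepathextension}, $(Y,Q)$ also has a $\Pi$-solution, with kernel $N'$ say. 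In that solution $\gamma$ sends $x_0$ to $z\neq x_0$, so $\gamma N'\cap S=\varnothing$. Hence $S$ is closed. One detail to check is that the bijection $P\to Q$ restricts properly to $P$ inside the new solution, so that $S$ is still contained in the stabilizer of $x_0$ there; this follows because $(X,P)$ is a subproblem of $(Y,Q)$.
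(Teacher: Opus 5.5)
Your proposal is correct and follows essentially the same route as the paper: (i) by conjugating with a word $z$ satisfying $z(x_0)=x$; (ii)~($\Leftarrow$) by rerunning the coset construction $F(P)/NS$ from the proof of Theorem~\ref{thm:LERF2EP}; and (ii)~($\Rightarrow$) by attaching the unfinished part of the reduced walk of $\gamma$ as a hanging path, applying Lemma~\ref{lem:singlepathextension}, and separating $\gamma$ from $S$ via the kernel of the resulting solvent. You anchor the new path at the last vertex reached, $u_0(x_0)$, which is the correct reading of the paper's construction (where ``$y_0=x_0$'' should read $y_0=u_0(x_0)$), and your remark that reducedness of $\gamma$ makes folding unnecessary is exactly what ensures the added edges still give partial permutations.
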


\begin{proof} (i) It is immediate that if $(X, P)$ is quo-$\Pi$ then $\pi_1(X, P)$ is a quo-$\Pi$ subgroup of $F(P)$. Conversely, suppose $\pi_1(X, P)$ is a quo-$\Pi$ subgroup of $F(P)$. Suppose $w\in F(P)$, $x\in X$ and $q\not\in \Pi$ such that $w^q(x)=x$. Let $u\in F(P)$ be such that $u(x_0)=x$. Such $u$ exists since $G(X, P)$ is weakly connected. Then $u^{-1}w^qu(x_0)=x_0$. This means that $(u^{-1}wu)^q\in \pi_1(X, P)$. Since $\pi_1(X, P)$ is a quo-$\Pi$ subgroup of $F(P)$, we have $u^{-1}wu\in \pi_1(X, P)$. Thus $w(x)=x$. 

(ii) First suppose $S=\pi_1(X, P)$ is closed in the pro-$\Pi$ topology of $F(P)$. Then the second part of the proof of Theorem~\ref{thm:LERF2EP} gives an solution of the form $X'=F(P)/NS$ for some quo-$\Pi$ normal subgroup $N$ of $F(P)$, where the extension $P'$ of $P$ comes from the action of $F(P)/N$ on $X'$ by left multiplication. This gives a solvent isomorphic to $F(P)/N$, which is a $\Pi$-group.

Conversely, suppose $(X, P)$ has a $\Pi$-solution. Assume $w_0\not\in \pi_1(X, P)$.  We define an extension $(Y, Q)$ of $(X, P)$ so that they satisfy the assumptions of Lemma~\ref{lem:singlepathextension} and, letting $p\mapsto \hat{p}$ be the bijection from $P$ to $Q$, we have $\hat{w}_0(x_0)$ is defined and $\hat{w}(x_0)\neq x_0$. If $w_0(x_0)$ is defined in $(X, P)$ and $w_0(x_0)\neq x_0$, then we just let $(Y,Q)=(X, P)$ and $p\mapsto \hat{p}$ be the identity. Otherwise, there are $u_0\in F(P)$ and $p_0,p_1,\dots, p_m\in P\cup P^{-1}$ such that 
$$w_0=p_mp_{m-1}\dots p_1p_0u_0, $$
$u_0(x_0)$ is defined in $(X, P)$, and $u_0(x_0)\not\in\mbox{\rm dom}(p_0)$. Define $y_0=x_0$, $Y=X\cup \{y_1,\dots, y_{m+1}\}$ and $Q=\{\hat{p}\colon p\in P\}$, where for each $p\in P$,  
$$ \hat{p}=p\cup\left\{(y_i, y_{i+1})\colon p_i=p\right\}\cup \left\{(y_{i+1}, y_i)\colon p_i=p^{-1}\right\}. $$
Then $(Y, Q)$ is an extension of $(X, P)$, and $\hat{w}_0(x_0)=y_{m+1}\neq x_0$ in $(Y, Q)$. 

By Lemma~\ref{lem:singlepathextension}, $(Y, Q)$ has a $\Pi$-solution $(Y', Q')$. Let $\hat{p}\mapsto \hat{p}'$ be the bijection from $Q$ to $Q'$ witnessing that $(Y, Q)$ is a subproblem of $(Y', Q')$.  Let $\Gamma=\langle Q'\rangle\leq \mbox{\rm Sym}(Y')$ be the solvent. $\Gamma$ is a $\Pi$-group. Consider $\varphi\colon F(P)\to \Gamma$ defined by
$$ \varphi(w)=\varphi(q_1\dots q_n)=\hat{q}'_1\dots \hat{q}'_n=w_\Gamma $$
where $q_1, \dots, q_n\in P\cup P^{-1}$. Then $\varphi$ is a group homomorphism. Let $N=\mbox{\rm ker}\,\varphi$. Since $F(Q)/N\cong \Gamma$, $N$ is a quo-$\Pi$ normal subgroup of $F(Q)$. Thus $N$ is open in the pro-$\Pi$ topology of $F(Q)$.

Now let $S'=\{w\in F(P)\colon \hat{w}'(x_0)=x_0\}$. Then $N\pi_1(X,P)\leq S'$. Since $\hat{w}'_0(x_0)\neq x_0$, we have $w_0\not\in S'$, and hence $w_0\not\in N\pi_1(X, P)$ and $w_0N\cap \pi_1(X, P)=\varnothing$. This shows that $\pi_1(X, P)$ is closed in the pro-$\Pi$ topology of $F(P)$.
\end{proof}

\begin{corollary}\label{cor:fundgp} Let $(X, P)$ and $(Y, Q)$ be EP problems where $G(X, P)$ and $G(Y, Q)$ are weakly connected. Suppose there is a bijection $j$ between $P$ and $Q$. If $j(\pi_1(X, P))=\pi_1(Y, Q)$, then $(X, P)$ has a $\Pi$-solution if and only if $(Y, Q)$ has a $\Pi$-solution.
\end{corollary}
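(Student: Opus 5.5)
This is a direct consequence of Theorem~\ref{thm:solutionprotopology}(ii), transported along the bijection $j$. The plan is first to observe that $j\colon P\to Q$, extended to $P^{-1}$ by $j(p^{-1})=j(p)^{-1}$ and then to words, is an isomorphism of free groups $F(P)\to F(Q)$, as noted after Definition~\ref{def:subproblem}. Next I will check that any group isomorphism between free groups is a homeomorphism for the pro-$\Pi$ topologies. Indeed, $N\unlhd F(P)$ with $F(P)/N$ a $\Pi$-group corresponds exactly to $j(N)\unlhd F(Q)$ with $F(Q)/j(N)\cong F(P)/N$. So $j$ carries a neighborhood base of $1$ to a neighborhood base of $1$, and carries basic open cosets $gN$ to basic open cosets $j(g)j(N)$. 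Hence a subgroup $S\le F(P)$ is closed in the pro-$\Pi$ topology of $F(P)$ if and only if $j(S)$ is closed in the pro-$\Pi$ topology of $F(Q)$.

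Then I will chain the equivalences:
$(X,P)$ has a $\Pi$-solution $\iff$ $\pi_1(X,P)$ is pro-$\Pi$ closed in $F(P)$ (by Theorem~\ref{thm:solutionprotopology}(ii), using weak connectivity of $G(X,P)$) $\iff$ $j(\pi_1(X,P))=\pi_1(Y,Q)$ is pro-$\Pi$ closed in $F(Q)$ $\iff$ $(Y,Q)$ has a $\Pi$-solution (by Theorem~\ref{thm:solutionprotopology}(ii) again, using weak connectivity of $G(Y,Q)$).

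The only delicate point is the base-point dependence of $\pi_1$. The hypothesis $j(\pi_1(X,P))=\pi_1(Y,Q)$ is read with respect to fixed base points $x_0$ and $y_0$. Theorem~\ref{thm:solutionprotopology}(ii) holds for any base point: the proof fixes an arbitrary $x_0$, and in any case changing the base point conjugates $\pi_1$, while conjugation is a homeomorphism. So applying the theorem at those particular base points causes no difficulty. There is no real obstacle here; the argument is essentially formal.
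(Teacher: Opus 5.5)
Your proposal is correct and takes the same route as the paper, which simply cites Theorem~\ref{thm:solutionprotopology}(ii). You additionally spell out the details the paper leaves implicit: the isomorphism $F(P)\to F(Q)$ induced by $j$ is a homeomorphism for the pro-$\Pi$ topologies, and the choice of base point is immaterial.
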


\begin{proof} This follows immediately from Theorem~\ref{thm:solutionprotopology} (ii).
\end{proof}

Theorem~\ref{thm:solutionprotopology} can be viewed as a strengthening of Theorems~\ref{thm:EPtoLERF} and \ref{thm:LERF2EP}.
Corollary~\ref{cor:trivial} and Theorem~\ref{thm:solutionprotopology} (ii) also give a direct proof that a finitely generated free group $F$ is {\em residually $\Pi$}, i.e., for any $w\neq 1_F$, there exists a quo-$\Pi$ normal subgroup $N$ of $F$ such that $w\not\in N$, or equivalently, there exists a finite $\Pi$-group $\Gamma$ and an epimorphism $\varphi\colon F\to \Gamma$ such that $\varphi(w)\neq 1_{\Gamma}$.

\subsection{Cyclic fundamental groups} Throughout this subsection let $\Pi$ be a set of prime numbers.

In this subsection we consider the EP problems whose fundamental groups are cyclic, i.e., generated by one element. We show that the $\Pi$-EP holds in this case. This case was essentially treated in \cite{HPSW19}.
Here we give an alternative proof of their main result \cite[Theorem 1.5]{HPSW19}.

\begin{lemma} [{\cite[Lemma 9.1]{HPSW19}}]\label{lem:HPSW} Let $F$ be a finitely generated free group and let $C$ be a maximal cyclic subgroup of $F$. Then $C$ is closed in the pro-$\Pi$ topology of $F$. 
\end{lemma}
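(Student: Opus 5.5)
To prove that a maximal cyclic subgroup $C=\langle c\rangle$ of $F$ is closed in the pro-$\Pi$ topology, I will pass to the EP-problem language and use Theorem~\ref{thm:solutionprotopology}(ii). The plan is to realize $C$, up to conjugation, as the fundamental group of an EP problem consisting of a single cycle, and then to build a $\Pi$-solution for it directly.

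\textbf{Reduction to a cyclically reduced word.} Since $F$ is free on a finite set $P$, conjugation is a homeomorphism of the pro-$\Pi$ topology. So we may replace $c$ by a cyclically reduced conjugate; write $c=p_n\cdots p_1$ with $p_i\in P\cup P^{-1}$, reduced and with $p_n\neq p_1^{-1}$. Maximality of $C$ means that $c$ is not a proper power.

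\textbf{The cycle problem.} Let $X=\{x_0,\dots,x_{n-1}\}$ and let $G(X,P)$ be the cycle in which the $i$-th edge goes from $x_{i-1}$ to $x_i$ (indices mod $n$) and is labelled $p_i$, with orientation chosen according to whether $p_i\in P$ or $p_i\in P^{-1}$. Because $c$ is cyclically reduced, this labelled graph is folded, i.e.\ an immersion, so it does define partial permutations. By Lemma~\ref{lem:im2inj} its fundamental group at $x_0$ is exactly $\langle c\rangle=C$.

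\textbf{Constructing a $\Pi$-solution.} Pick a prime $r\in\Pi$; the case $\Pi=\varnothing$ is trivial. Take $k=r^N$ copies $X\times\mathbb{Z}/k$ of the cycle and apply {\tt Reroute} at the edge $(x_{n-1}\to x_0)$ (Definition~\ref{def:reroute}). The result is one long cycle reading $c^k$, still an immersion, with fundamental group $\langle c^k\rangle$. The real task is to show that $C$ is closed, i.e.\ that for every $w\notin C$ there is a $\Pi$-solution of the cycle problem in which $w(x_0)\neq x_0$. Following the proof of Theorem~\ref{thm:solutionprotopology}(ii), it suffices to handle, for each such $w$, the extended problem obtained by attaching the path read by $w$ (Lemma~\ref{lem:singlepathextension}).

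I would instead argue via the pro-$r$ topology, which is coarser than the pro-$\Pi$ topology, so closedness there implies closedness here. Free groups are residually $r$, and I would use the standard fact that in a residually $r$-finite free group the subgroup $C$ is $r$-separable iff it is isolated with respect to $r$-th roots. Concretely, I would show that the pro-$r$ closure $\overline C$ of $C$ is contained in the isolator $\{g\colon g^m\in C \text{ for some } m\}$, which equals $C$ by maximality. To do this, embed $F$ into a pro-$r$ completion. There the centralizer of $c$ is procyclic, $\mathbb{Z}_r$-like, and it contains $\overline C$; its intersection with $F$ is the centralizer of $c$ in $F$, namely $C$ itself.

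\textbf{Main obstacle.} The delicate step is justifying that centralizers in the pro-$r$ completion of $F$ intersect $F$ in the abstract centralizers, equivalently that $\overline C\cap F$ consists of elements commuting with $c$. I would try to get this by hand: given $w\notin C$, either $w$ does not commute with $c$, and then residual $r$-ness applied to $[w,c]\neq1$ separates $w$ from $\overline C$ once one shows $\overline C$ is abelian; or $w$ commutes with $c$, which forces $w\in C$.

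This argument is not yet complete. The missing piece is the claim that a non-commuting $w$ lies outside the closure of $C$, since $\overline C$ being abelian does not by itself separate $w$ from $C$. To close this gap I expect to need an explicit finite $r$-group quotient, built from the long rerouted cycle plus the path of $w$. As a fallback, I would cite \cite{HPSW19} for this step.
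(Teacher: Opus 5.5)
\textbf{There is a genuine gap, but it is one your own ingredients already close.}

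Your proposal never reaches a proof. You declare the separation of a non-commuting $w$ from $C$ to be missing, and your fallback is to cite the very lemma you were asked to prove. The obstacle is not real, and your reason for it is a logical slip.

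If $\overline C$ is abelian and $c\in C\subseteq\overline C$, then every element of $\overline C$ commutes with $c$. Hence $\overline C\subseteq C_F(c)=C$. So abelianness of the closure \emph{does} separate every non-commuting $w$ from $C$.

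Likewise, the ``delicate step'' about centralizers in the pro-$r$ completion is a tautology: $C_{\widehat F}(c)\cap F=C_F(c)$ as sets. All you need is that centralizers are closed in a Hausdorff topological group. The completion, procyclicity of centralizers, the cycle problem and {\tt Reroute} are therefore all superfluous.

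The complete argument is short:
\begin{itemize}
\item For $\Pi\neq\varnothing$, $F$ is residually $\Pi$, so the pro-$\Pi$ topology is a Hausdorff group topology.
\item Hence $C_F(c)$, the preimage of the closed set $\{1\}$ under the continuous map $g\mapsto[g,c]$, is closed.
\item $C_F(c)=C$ because $C$ is maximal cyclic in a free group.
\end{itemize}

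The paper's proof is exactly this, unwound into an explicit separating coset. Given $w\notin C$, one has $[w,c]\neq 1$, so choose $N\unlhd F$ with $F/N$ a $\Pi$-group and $[w,c]\notin N$. If $w\in NC$, say $wN=c^\ell N$, then $wN$ commutes with $cN$ in $F/N$, forcing $[w,c]\in N$. Hence $wN\cap C=\varnothing$.

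There are two smaller errors in your write-up.
\begin{itemize}
\item The case $\Pi=\varnothing$ is not trivial but false. The pro-$\varnothing$ topology is indiscrete, so $C$ is closed only if $C=F$. The lemma tacitly needs $\Pi\neq\varnothing$, as does the residual-$\Pi$ step.
\item The ``standard fact'' you quote has the roots the wrong way round. Pro-$r$ closedness of a cyclic subgroup corresponds to closure under $q$-th roots for primes $q\neq r$, not under $r$-th roots. For instance, $\langle c^r\rangle$ is pro-$r$ closed but does not contain $c$, the $r$-th root of $c^r$.
\end{itemize}
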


\begin{proof} Let $C=\langle u\rangle$. Then the maximality of $C$ means that $u$ cannot be written as $u_0^k$ for any $u_0$ and $k>1$. Suppose $w\not\in C$. Then $wuw^{-1}u^{-1}\neq 1_F$. Since $F$ is residually $\Pi$, there exists a quo-$\Pi$ normal subgroup $N$ of $F$ such that $wuw^{-1}u^{-1}\not\in N$. It follows that $wN\not\subseteq NC$, since otherwise $wN=u^\ell N$ for some $\ell$, and $wuw^{-1}u^{-1}N$ would be $N$. Thus $w\not\in NC$ and $wN\cap C=\varnothing$. 
\end{proof}

\begin{lemma}\label{lem:pfk} Let $(X, P)$ be an EP problem. Suppose $G(X, P)$ is a cycle and $\pi_1(X, P)=\langle w\rangle$. Let $w=w_0^k$  where $k\geq 1$ is the largest such. Then $P$ is quo-$\Pi$ if and only if $k$ is a $\Pi$-number.
\end{lemma}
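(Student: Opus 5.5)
The plan is to work directly with the cycle. Since $G(X,P)$ is a cycle, write its vertices as $x_0, x_1, \dots, x_{n-1}$ in cyclic order, with base point $x_0$. Reading the labels around the cycle (with the appropriate orientation signs) gives a word $c = q_{n-1}\cdots q_1 q_0$, where $q_i\in P\cup P^{-1}$ and $q_i(x_i)=x_{i+1 \bmod n}$. Any reduced circuit at $x_0$ simply winds around the cycle some integer number of times. Hence $\pi_1(X,P;x_0)=\langle c\rangle$, and as an element of $F(P)$ we may take $w=c$ or $w=c^{-1}$.

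For a general base point $x_i$, the stabilizer is generated by the cyclic rotation $c_i$ of $c$ starting at $x_i$. Now suppose $v\in F(P)$ and $v(x)$ is defined. Then the walk determined by $v$ from $x$ stays on the cycle. So $v(x)=x$ if and only if the reduced form of $v$, after conjugating by the arc from $x_0$ to $x$, is a power of $c$. This lets us apply Theorem~\ref{thm:solutionprotopology}(i) and reduce the problem to group theory: $P$ is quo-$\Pi$ if and only if $\langle w\rangle$ is a quo-$\Pi$ subgroup of $F(P)$.

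For the forward direction, suppose $k$ is not a $\Pi$-number, and choose a prime $q\notin\Pi$ dividing $k$. Put $u=w_0^{k/q}$. Then $u^q=w\in\langle w\rangle$. However, $u\notin\langle w\rangle$: in a free group, $u=w^m$ would give $w_0^{k/q}=w_0^{km}$, which is impossible since $w_0\neq 1$ and $0<k/q<k$. So $\langle w\rangle$ is not quo-$\Pi$.

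For the converse, suppose $k$ is a $\Pi$-number, and let $u^q\in\langle w\rangle$ with $q\notin\Pi$ prime. Say $u^q=w^m=w_0^{km}$. In a free group, centralizers of nontrivial elements are cyclic, and $\langle w_0\rangle$ is maximal cyclic because $w_0$ is not a proper power; this maximality follows from the choice of $k$ as largest. If $m=0$ then $u=1$ and we are done. Otherwise $u$ commutes with $w_0^{km}$, so $u$ lies in the maximal cyclic subgroup containing $w_0$, that is, $u=w_0^{r}$. Then $qr=km$. Since $\gcd(q,k)=1$, $k$ divides $r$, so $u\in\langle w_0^k\rangle=\langle w\rangle$.

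The main obstacle is the identification of $\pi_1$ with $\langle c\rangle$ together with the reduction via Theorem~\ref{thm:solutionprotopology}(i). In particular, one must handle the case where $c$ as a word is not cyclically reduced or is itself a proper power. That case is harmless, since $k$ is defined from $w$ abstractly. The remaining steps are the standard free-group facts about roots and centralizers.
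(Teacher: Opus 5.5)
Your proof is correct and follows the paper's route. You reduce via Theorem~\ref{thm:solutionprotopology}(i) to asking whether $\langle w\rangle$ is a quo-$\Pi$ subgroup, use $w_0^{k/q}$ as the witness when a prime $q\notin\Pi$ divides $k$, and for the converse write $u=w_0^r$ and use $\gcd(q,k)=1$. The differences are cosmetic: you rule out $w_0^{k/q}\in\langle w\rangle$ by torsion-freeness instead of the paper's cycle-length argument, you justify $u=w_0^r$ via centralizers and treat $m=0$ separately (both left implicit in the paper), and your preliminary identification $\pi_1=\langle c\rangle$ is not actually needed.
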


\begin{proof} First suppose $P$ is quo-$\Pi$. Let $x_0\in X$ be the base point for the computation of $\pi_1(X,P)$. Then $w(x_0)=x_0$ and the length of the cycle $G(X,P)$ is $|w|$. Let $p$ be a prime factor of $k$ and write $k=pr$. Then $w(x_0)=w_0^{pr}(x_0)=x_0$. If $p\not\in \Pi$ then we have $w_0^r(x_0)=x_0$ since $P$ is quo-$\Pi$, but this means that $G(X,P)$ is a cycle of length $r|w_0|<|w|$, a contradiction.

Conversely, suppose every prime factor of $k$ is in $\Pi$. By Theorem~\ref{thm:solutionprotopology} (i), we only need to show that $\pi_1(X, P)$ is a quo-$\Pi$ subgroup of $F(P)$. For this, assume $q\not\in\Pi$ and $u^q\in \pi_1(X, P)$. Then there is $\ell$ such that $u^q=w^\ell=w_0^{k\ell}$. By the assumption on the maximality of $k$, we must have $u=w_0^r$ for some $r$. This implies $qr=k\ell$. Since $q\not\in \Pi$, $q$ is not a factor of $k$. Thus $q$ is a factor of $\ell$. Write $\ell=qt$. Then $r=kt$
and $u=w_0^r=(w_0^k)^t=w^t\in \pi_1(X, P)$ as required.
\end{proof}

\begin{theorem}\label{thm:cyclic} Let $(X, P)$ be an EP problem where $G(X, P)$ is weakly connected. Suppose $\pi_1(X, P)$ is cyclic. If $(X, P)$ is quo-$\Pi$ then $(X, P)$ has a $\Pi$-solution.
\end{theorem}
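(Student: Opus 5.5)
The plan is to reduce, via Corollary~\ref{cor:fundgp} and Theorem~\ref{thm:solutionprotopology}, to a statement about closedness of a cyclic subgroup in the pro-$\Pi$ topology of $F(P)$. If $\pi_1(X,P)$ is trivial, Corollary~\ref{cor:trivial} applies. So assume $\pi_1(X,P)=\langle w\rangle$ with $w\neq 1$. Since $F(P)$ is free, the centralizer of $w$ is a maximal cyclic subgroup $C=\langle w_0\rangle$, and $w=w_0^k$ for the largest possible $k\geq 1$. By Theorem~\ref{thm:solutionprotopology} (i), $\langle w\rangle$ is a quo-$\Pi$ subgroup of $F(P)$. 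The argument of Lemma~\ref{lem:pfk} then shows $k$ is a $\Pi$-number: if a prime $q\notin\Pi$ divided $k$, then $(w_0^{k/q})^q=w\in\langle w\rangle$ would force $w_0^{k/q}\in\langle w\rangle=\langle w_0^k\rangle$, which is impossible since $w_0$ has infinite order. That argument is purely group-theoretic, so it needs no assumption that $G(X,P)$ is a cycle.

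By Theorem~\ref{thm:solutionprotopology} (ii), it suffices to show that $\langle w_0^k\rangle$ is closed in the pro-$\Pi$ topology of $F(P)$. Take $\gamma\notin\langle w_0^k\rangle$ and split into two cases.

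\emph{Case $\gamma\notin C$.} By Lemma~\ref{lem:HPSW} there is an open normal $N$ with $F(P)/N$ a $\Pi$-group and $\gamma N\cap C=\varnothing$. Then also $\gamma N\cap\langle w_0^k\rangle=\varnothing$, since $\langle w_0^k\rangle\subseteq C$.

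\emph{Case $\gamma=w_0^r$ with $k\nmid r$.} Here I need an open normal $M$ with $F(P)/M$ a $\Pi$-group in which the image of $w_0$ has order divisible by $k$ but not dividing $r$. The plan is to build a $\Pi$-solution directly: take the EP problem consisting of a single reduced cycle reading the cyclically reduced form of $w_0^{k}$, suitably conjugated. Concretely, write $w_0=u^{-1}cu$ with $c$ cyclically reduced. Form the problem whose graph is a cycle of length $k|c|$ reading $c^k$, with a tail reading $u$ attached. Its fundamental group at the tail's endpoint is exactly $\langle w_0^k\rangle$, using maximality of $k$ and the fact that $c$ is not a proper power. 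By Lemma~\ref{lem:singlepathextension}, the tail can be discarded. So it remains to give a $\Pi$-solution for the pure cycle problem reading $c^k$.

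For that cycle problem, the strategy is to start from a solution in which $c$ acts trivially at the base point and then apply {\tt Reroute} with parameter $k$. By Corollary~\ref{cor:trivial}, the path problem reading $c$ once, with distinct endpoints, has a $\Pi$-solution. I will reroute along an edge of the cycle reading $c$ itself, which means working with the cycle of length $|c|$; call it problem $Z$. The rerouted problem ${\tt Reroute}(Z;k,p_0,x_0)$ is exactly the cycle of length $k|c|$ reading $c^k$, since $c$ is not a proper power. Theorem~\ref{thm:reroute} (iii) transfers a $\Pi$-solution of $Z$ to one of the rerouted problem.

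The main obstacle is that $Z$ itself has cyclic fundamental group $\langle c\rangle$, so its solvability is the case $k=1$ of the theorem. I would handle this by running the same two-case argument for $Z$. For $\gamma\notin\langle c\rangle$, Lemma~\ref{lem:HPSW} applies, because $\langle c\rangle$ is maximal cyclic. Since $C=\langle c\rangle$, the second case is vacuous when $k=1$. Hence $\langle c\rangle$ is closed, and Theorem~\ref{thm:solutionprotopology} (ii) gives $Z$ a $\Pi$-solution. This closes the induction and proves the theorem.
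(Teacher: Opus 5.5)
Your proposal is correct and follows essentially the paper's route: get a $\Pi$-solution for the cycle reading the non-power root $c$ via Lemma~\ref{lem:HPSW} and Theorem~\ref{thm:solutionprotopology}~(ii), then apply {\tt Reroute} with parameter $k$, which is a $\Pi$-number by the quo-$\Pi$ hypothesis exactly as in Lemma~\ref{lem:pfk}. Your Case~1/Case~2 split is redundant, because the $\Pi$-solution you build inside Case~2 already gives the theorem outright; on the other hand, your separate treatment of the trivial case and your tail-plus-Lemma~\ref{lem:singlepathextension} handling of the conjugation are somewhat more careful than the paper's ``we may assume $G(X,P)$ is a cycle.''
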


\begin{proof} By Corollary~\ref{cor:fundgp} we may assume $G(X, P)$ is a cycle and $\pi(X, P)=\langle w\rangle$. Let $w=w_0^k$ where $k\geq 1$ is the largest such. 

If $k=1$ then by Lemma~\ref{lem:HPSW} and Theorem~\ref{thm:solutionprotopology} (ii) we get that $(X,P)$ has a $\Pi$-solution.

Assume $k>1$ and suppose $(X, P)$ is quo-$\Pi$. Then by Lemma~\ref{lem:pfk} every prime factor of $k$ is in $\Pi$. We define an EP problem $(Y, Q)$ so that $G(Y, Q)$ is a cycle of length $m=|w_0|$. Write $w_0=p_{m-1}\cdots p_0$ with $p_0,\dots, p_{m-1}\in P\cup P^{-1}$. Let $p\mapsto \hat{p}$ be a bijection from $P$ to $Q$. Let $Y=\{y_0, y_1, \dots, y_{m-1}\}$, let $y_m=y_0$, and let $Q=\{\hat{p}\colon p\in P\}$ where, for each $p\in P$, 
$$ \hat{p}=\{(y_i, y_{i+1})\colon p_i=p\}\cup \{(y_{i+1}, y_i)\colon p_i=p^{-1}\}. $$
Then $\pi_1(Y, Q)=\langle \hat{w}_0\rangle$ with base point $y_0$. By Lemma~\ref{lem:HPSW} and Theorem~\ref{thm:solutionprotopology} (ii), $(Y, Q)$ has a $\Pi$-solution $(Y', Q')$. Let $\hat{p}\mapsto p'$ be the bijection from $Q$ to $Q'$. Let 
$$ (Z, R)=\mbox{\tt Reroute}(Y', Q'; k, p'_0, y_0). $$
Then $(Z, R)$ is a $\Pi$-solution by Theorem~\ref{thm:reroute} (ii). To see that $(Z, R)$ is a $\Pi$-solution for $(X, P)$, we only need to verify that $(X, P)$ is a subproblem of $(Z, R)$. The bijection from $P$ to $R$ is $p\mapsto \widehat{p'}$ (in the notation of Definition~\ref{def:reroute}). We define an injective map $f\colon X\to Z$ as follows. For any initial segment $u$ of $w_0^k=w$, define
$$ f(u(x_0))=\widehat{u'}(y_0). $$
It is straightforward to check that $f$ and $p\mapsto \widehat{p'}$ witness that $(Z, R)$ is an extension of $(X, P)$. Thus $(Z, R)$ is a $\Pi$-solution of $(X, P)$.
\end{proof}

\begin{corollary}[{\cite[Theorem 1.5]{HPSW19}}] Let $F$ be a finitely generated subgroup and let $C=\langle w\rangle$ be a cyclic subgroup of $F$. Let $w=w_0^k$ where $k\geq 1$ is the largest such. Let $\Pi$ be a set of prime numbers containing all prime factors of $k$. Then $C$ is closed in the pro-$\Pi$ topology of $F$.
\end{corollary}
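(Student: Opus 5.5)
The plan is to realize $C$ as the fundamental group of an EP problem, and then combine Theorem~\ref{thm:cyclic} with Theorem~\ref{thm:solutionprotopology}. (Presumably $F$ is meant to be a finitely generated free group.) If $w=1$, then $C$ is trivial. In that case $C$ is the fundamental group of the one-point problem with all partial permutations empty, which is weakly connected and has trivial $\pi_1$. Corollary~\ref{cor:trivial} and Theorem~\ref{thm:solutionprotopology}~(ii) then give closedness. So assume $w\neq 1$.

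Fix a free basis $P$ of $F$, so $F=F(P)$. Replacing $w$ by a conjugate is harmless: conjugation by a fixed element is a homeomorphism of the pro-$\Pi$ topology, and it preserves the exponent $k$. So we may assume $w$ is cyclically reduced. Write $w=p_{n-1}\cdots p_0$ with $p_i\in P\cup P^{-1}$, and build the cycle problem $(X,P)$ exactly as in the proof of Theorem~\ref{thm:cyclic}. Take $X=\{x_0,\dots,x_{n-1}\}$ with $x_n=x_0$, and for each $p\in P$ let
$$p=\{(x_i,x_{i+1})\colon p_i=p\}\cup\{(x_{i+1},x_i)\colon p_i=p^{-1}\}.$$
Because $w$ is reduced and cyclically reduced, each of these relations is a partial permutation. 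Then $G(X,P)$ is a cycle, hence weakly connected, and $\pi_1(X,P;x_0)=\langle w\rangle=C$.

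The step needing care is checking that $\pi_1(X,P;x_0)$ is exactly $\langle w\rangle$. The underlying graph is a single cycle, and its labelled walk from $x_0$ reads $w$. Hence the reduced circuits at $x_0$ are exactly the powers of this cycle, and they read $w^\ell$.

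By Lemma~\ref{lem:pfk}, $(X,P)$ is quo-$\Pi$ precisely because every prime factor of $k$ lies in $\Pi$. Theorem~\ref{thm:cyclic} then provides a $\Pi$-solution. By Theorem~\ref{thm:solutionprotopology}~(ii), $C=\pi_1(X,P)$ is closed in the pro-$\Pi$ topology of $F(P)=F$, after undoing the conjugation.
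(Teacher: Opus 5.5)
Your proposal is correct and follows essentially the same route as the paper, whose proof simply cites Lemma~\ref{lem:pfk}, Theorem~\ref{thm:cyclic} and Theorem~\ref{thm:solutionprotopology}: realize $C$ as the fundamental group of a cycle EP problem, get quo-$\Pi$ from Lemma~\ref{lem:pfk}, a $\Pi$-solution from Theorem~\ref{thm:cyclic}, and closedness from Theorem~\ref{thm:solutionprotopology}~(ii). The details you add are exactly what that one-line proof leaves implicit: the trivial case $w=1$, passing to a cyclically reduced conjugate, and checking that the edge relations are genuine partial permutations with $\pi_1=\langle w\rangle$.
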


\begin{proof} This follows from Theorem~\ref{thm:cyclic}, Lemma~\ref{lem:pfk}, and Theorem~\ref{thm:solutionprotopology}.
\end{proof}

\subsection{A case study\label{subsec:case}}

In this subsection we consider the following simple EP problem $P_1$.

\begin{figure}[hb]
\centering
\begin{tikzpicture}
  \node [circle,draw] (A)              {0};
  \node [circle,draw] (B) [right=of A] {1};
  \node [circle,draw] (C) [right=of B] {2};

\draw [draw = black, thick, arrows={-stealth} ]
         
	(B)  edge [blue]  (A)
    (B) edge  (C)
    (C) edge [blue, in=-30, out=30,looseness=8] (C)
    (A) edge [in=210, out=150,looseness=8] (A) ;
    
    \node[right] at (-1.5,0) {$a$};
    \node[right] at (2.3,0.5) {$a$};
    \node[right] at (4.5,0) {$\textcolor{blue}{b}$};
    \node[right] at (0.6,0.5) {$\textcolor{blue}{b}$};
\end{tikzpicture}
\caption{\label{fig:P1} An odd-EP(2) problem $P_1$.}
\end{figure}

There are two partial permutations of the set $X=\{0,1,2\}$:
$$\begin{array}{rl}
a\colon & 0\mapsto 0, 1\mapsto 2 \\
b\colon & 1\mapsto 0, 2\mapsto 2
\end{array}
$$
In Figure~\ref{fig:P1} $a$ is represented by black arrows and $b$ is represented by blue arrows. It is straightforward to check that $\{a,b\}$ is quo-odd.

We first present an odd-solution for $P_1$ in Figure~\ref{fig:P1solution}. The extension set has $7$ elements. We still represent the extension of $a$, $\tilde{a}$, by black arrows, and the extension of $b$, $\tilde{b}$, by blue arrows. 

\begin{figure}[htbp]
\centering

\begin{tikzpicture}
  \node [circle,draw] (A)              {0};
   \node[circle, draw] (D) [above right= of A] {3};
   \node [circle,draw] (B) [below right=of D] {1};
  \node[circle, draw] (E) [above right = of B] {4}; 
  \node [circle,draw] (C) [below right=of E] {2};
  \node[circle, draw] (F) [above= of B] {5};
  \node[circle, draw] (G) [above= of F] {6};

\draw [draw = black, thick, arrows={-stealth} ]
         
         
	(B)  edge [blue]  (A)
    (B) edge (C)
    (C) edge (E)
    (E) edge (B)
    (D) edge (G)
    (G) edge [bend left=-20] (F)
    (F) edge (D)
    (A) edge [blue] (D)
    (D) edge [blue] (B)
    (E) edge [blue] (F)
    (G) edge [blue] (E)
    (F) edge [blue, bend left=-20] (G)
    (C) edge [blue, in=-30, out=30,looseness=8] (C)
    (A) edge [in=210, out=150,looseness=8] (A) ;
    
\end{tikzpicture}
\caption{\label{fig:P1solution} An odd-solution $S_1$ for $P_1$.}
\end{figure}

Each of $\tilde{a}$ and $\tilde{b}$ has order $3$. But the group $\Gamma$ generated by $\{\tilde{a}, \tilde{b}\}$ has order $21$. In fact, it is known to be the Frobenius group of order $21$ and it is denoted $F_{21}$ in the literature. To see this, let $u=\tilde{a}\tilde{b}^2$. Then $u$ has order $7$ and we have $\tilde{a}u\tilde{a}^{-1}=u^2$. This means that $\langle u\rangle$ is a normal subgroup of $\Gamma$ of order $7$, and then it is easy to check that $\tilde{b}=u^{-1}\tilde{a}\in\langle \tilde{a}, u\rangle$ and hence $\Gamma/\langle u\rangle$ has order $3$. In summary, $\Gamma$ is a meta-abelian group isomorphic to a semi-direct product $\mathbb{Z}/3\mathbb{Z}\ltimes \mathbb{Z}/7\mathbb{Z}$.

In the following we will unravel how the solution is obtained by our techniques. We will also show in Subsection~\ref{subsec:nilsolution} that the problem does not have a nilpotent-solution. 

First we apply the EP map {\tt Multiverse} to $P_1$ and obtain the following EP problem $P_2$ (see Figure~\ref{fig:P2}). In drawing $P_2$, we relabel the input partial permutations by $x$ (which represents both $a$ and $b$) and the new partial permutation (between the multiverses) by $y$. 

\begin{figure}[htbp]
\centering
\begin{tikzpicture}
  \node [circle,draw] (A)              {\tiny $(0,0)$};
  \node [circle,draw] (B) [right=of A] {\tiny $(1,0)$};
  \node [circle,draw] (C) [right=of B] {\tiny $(2,0)$};
   \node [circle,draw] (D) [above =of A]             {\tiny $(0,1)$};
  \node [circle,draw] (E) [above=of B] {\tiny $(1,1)$};
  \node [circle,draw] (F) [above=of C] {\tiny $(2,1)$};
 
\draw [draw = black, thick, arrows={-stealth} ]
         
	(E)  edge [blue]  (D)
    (B) edge  (C)
    (F) edge [blue, in=-30, out=30,looseness=8] (F)
    (A) edge [in=210, out=150,looseness=8] (A) 
    (A) edge [red] (D)
    (B) edge [red] (E)
    (C) edge [red] (F);
    
    \node[right] at (-2,0) {$x$};
    \node[right] at (2.8,0.5) {$x$};
    \node[right] at (5.7,2) {$\textcolor{blue}{x}$};
    \node[right] at (0.8,2.5) {$\textcolor{blue}{x}$};
    \node[right] at (0.2,1) {$\textcolor{red}{y}$};
    \node[right] at (2.2,1) {$\textcolor{red}{y}$};
    \node[right] at (4.2,1) {$\textcolor{red}{y}$};
\end{tikzpicture}
\caption{\label{fig:P2} The problem $P_2=\mbox{\tt Multiverse}(P_1)$.}
\end{figure}

By Theorem~\ref{thm:multiverse} (iii), $P_1$ has an odd-solution if and only if $P_2$ has an odd-solution. 

Next, we identify the nodes $(0,0)$ and $(2,1)$ in $P_2$ and obtain another EP problem $P_3$ as follows (see Figure~\ref{fig:P3}). In drawing the problem the partial permutation $x$ is represented by black arrows and $y$ is represented by red arrows. We label the arrows, however, by the partial permutations $x$, $y$, $x^{-1}$ or $y^{-1}$ so that the labels correspond to the applications of the partial permutations in the clockwise direction of the cycle. Note that $P_2$ is a covering of $P_3$.


\begin{figure}[htbp]
\centering
\begin{tikzpicture}
  \node [circle,draw] (A)              {};
  \node [circle,draw] (B) [right=of A] {};
  \node [circle,draw] (C) [right=of B] {};
   \node [circle,draw] (D) [right =of C]             {};
  \node [circle,draw] (E) [right=of D] {};

\draw [draw = black, thick, arrows={-stealth} ]
         
	
    (C) edge  (B)    
    (A) edge [in=210, out=150,looseness=8] (A) 
    (A) edge [red] (B)
    (D) edge [red] (C)
    (D) edge (E)
    (E) edge [red, in=-30, out=-150] (A);
    
    \node[right] at (-1,0) {$x$};
    \node[right] at (1.7,0.35) {$x^{-1}$};
    \node[right] at (4.5,0.3) {$x$};
    \node[right] at (0.45,0.3) {$\textcolor{red}{y}$};
    \node[right] at (3.05,0.35) {$\textcolor{red}{y^{-1}}$};
    \node[right] at (2.5,-1.2) {$\textcolor{red}{y}$};
\end{tikzpicture}
\caption{\label{fig:P3} An odd-EP(2) problem $P_3$.}
\end{figure}

Now let $Q_3$ be the EP problem obtained from $P_3$ by applying the EP map {\tt Reroute}, with parameters $k=3$, $p_0=y$ and $x_0$ being the leftmost node of $P_3$. $Q_3$ is illustrated in Figure~\ref{fig:Q3}. By Theorem~\ref{thm:reroute} (iii), if $P_3$ has an odd-solution, then $Q_3$ has an odd-solution. It is clear that $P_2$ is a subproblem of $Q_3$. Thus if $Q_3$ has an odd-solution, then $P_2$ has an odd-solution.

\begin{figure}[htbp]
\centering
\begin{tikzpicture}
  \node [circle,draw] (A1)              {};
  \node [circle,draw] (B1) [right=of A1] {};
  \node [circle,draw] (C1) [right=of B1] {};
   \node [circle,draw] (D1) [right =of C1]             {};
  \node [circle,draw] (E1) [right=of D1] {};

     \node [circle,draw] (A2) [above=of A1, yshift=18pt]             {};
  \node [circle,draw] (B2) [right=of A2] {};
  \node [circle,draw] (C2) [right=of B2] {};
   \node [circle,draw] (D2) [right =of C2]             {};
  \node [circle,draw] (E2) [right=of D2] {};

         \node [circle,draw] (A3) [above=of A2, yshift=18pt]             {};
  \node [circle,draw] (B3) [right=of A3] {};
  \node [circle,draw] (C3) [right=of B3] {};
   \node [circle,draw] (D3) [right =of C3]             {};
  \node [circle,draw] (E3) [right=of D3] {};

 \draw [draw = black, thick, arrows={-stealth} ]
         
	
    (C1) edge  (B1)    
    (A1) edge [in=210, out=150,looseness=8] (A1) 
    (A1) edge [red] (B2)
    (D1) edge [red] (C1)
    (D1) edge (E1)
    (E1) edge [red, in=-30, out=-150] (A1);
    
    \node[right] at (-1,0) {$x$};
    \node[right] at (1.7,0.35) {$x^{-1}$};
    \node[right] at (4.5,0.3) {$x$};
    \node[right] at (0.45,0.3) {$\textcolor{red}{y}$};
    \node[right] at (3.05,0.35) {$\textcolor{red}{y^{-1}}$};
    \node[right] at (2.5,-1.2) {$\textcolor{red}{y}$};

\draw [draw = black, thick, arrows={-stealth} ]
         
	
    (C2) edge  (B2)    
    (A2) edge [in=210, out=150,looseness=8] (A2) 
    (A2) edge [red] (B3)
    (D2) edge [red] (C2)
    (D2) edge (E2)
    (E2) edge [red, in=-30, out=-150] (A2);
    
    \node[right] at (-1,2) {$x$};
    \node[right] at (1.7,2.35) {$x^{-1}$};
    \node[right] at (4.5,2.3) {$x$};
    \node[right] at (0.45,2.3) {$\textcolor{red}{y}$};
    \node[right] at (3.05,2.35) {$\textcolor{red}{y^{-1}}$};
    \node[right] at (2.5,0.8) {$\textcolor{red}{y}$};

\draw [draw = black, thick, arrows={-stealth} ]
         
	
    (C3) edge  (B3)    
    (A3) edge [in=210, out=150,looseness=8] (A3) 
    (A3) edge [red, in=240, out=210,looseness=2] (B1)
    (D3) edge [red] (C3)
    (D3) edge (E3)
    (E3) edge [red, in=-30, out=-150] (A3);
    
    \node[right] at (-1,4) {$x$};
    \node[right] at (1.7,4.35) {$x^{-1}$};
    \node[right] at (4.5,4.3) {$x$};
    \node[right] at (-2,1.5) {$\textcolor{red}{y}$};
    \node[right] at (3.05,4.35) {$\textcolor{red}{y^{-1}}$};
    \node[right] at (2.5,2.8) {$\textcolor{red}{y}$};
    
\end{tikzpicture}
\caption{\label{fig:Q3} The problem $Q_3=\mbox{\tt Reroute}(P_3)$.}
\end{figure}
To summarize, we have the following relations
$$ P_1\Leftrightarrow P_2\Leftarrow Q_3 \Leftarrow P_3,$$
where $A\Rightarrow B$ stands for ``if $A$ has an odd-solution then $B$ has an odd-solution.''

Finally, we give an odd-solution $S_3$ for $P_3$. For this, consider
$$ \Gamma=\langle s, t\colon s^3=t^7=1, st=t^2s\rangle. $$
Let $N=\langle t\rangle\cong \mathbb{Z}/7\mathbb{Z}$. Then $N\unlhd \Gamma$ and $\Gamma/N\cong\mathbb{Z}/3\mathbb{Z}$. We have that $\Gamma$ is a meta-abelian group of order $21$. It is straightforward to check that $\Gamma$ is not nilpotent. 

Let $H=\langle s\rangle$ and $X=\Gamma/H$ be the collection of left cosets of $H$. Define
$$\begin{array}{rl} x\colon & uH\mapsto suH \\ y\colon & uH\mapsto tuH\end{array} $$
Then $|X|=7$, $x, y\in \mbox{\rm Sym}(X)$, and we have
$$ x^3=y^7=1, xy=y^2x. $$
This implies that $\langle x,y\rangle\cong\Gamma$ and hence is an odd group. It is now straightforward to check that $S_3=(X, \{x,y\})$ is an odd-solution for $P_3$. This is illustrated in Figure~\ref{fig:S3}.

\begin{figure}[htbp]
\centering
\begin{tikzpicture}[scale=0.8]
  \node [circle,draw] (A)              {$H$};
  \node [circle,draw] (B) [right=of A] {\small $tH$};
   \node [circle,draw] (F) [above right=of B] {\tiny $t^2H$};
   \node [circle,draw] (C) [below right=of F] {\tiny $t^4H$};
   \node [circle,draw] (D) [right =of C]             {\tiny $t^3H$};
  \node[circle, draw](G) [above right=of D] {\tiny $t^5H$};
  \node [circle,draw] (E) [below right=of G] {\tiny $t^6H$};

\draw [draw = black, thick, arrows={-stealth} ]
         
	(B)  edge  (F)
    (C) edge  (B)
    (F) edge (C)
    (A) edge [in=210, out=150,looseness=8] (A) 
    (D) edge (E)
    (E) edge (G)
    (G) edge (D)
    (A) edge [red] (B)
    (B) edge [red, in=-150, out=60] (F)
    (F) edge [red, in=135 , out=0] (D)
    (D) edge [red] (C)
    (C) edge [red, in=180, out=45] (G)
    (G) edge [red, in=120, out=-30]  (E)
    (E) edge [red, in=-20, out=-160] (A);
    
    \node[right] at (-2,0) {$x$};
    \node[right] at (4,0.3) {$x$};
    \node[right] at (3.2,0.7) {$x$};
    \node[right] at (5.2,1.2) {$x$};
    \node[right] at (10.5,0.3) {$x$};
    \node[right] at (11.3,0.7) {$x$};
    \node[right] at (9.3,1.2) {$x$};

    \node[right] at (0.8,0.3) {$\textcolor{red}{y}$};
    \node[right] at (2.5,1.2) {$\textcolor{red}{y}$};
    \node[right] at (7.3,0.3) {$\textcolor{red}{y}$};
      \node[right] at (6.1,-1.1) {$\textcolor{red}{y}$};
    \node[right] at (6.3,2) {$\textcolor{red}{y}$};
    \node[right] at (8.4,2) {$\textcolor{red}{y}$};
    \node[right] at (12,1.4) {$\textcolor{red}{y}$};
\end{tikzpicture}
\caption{\label{fig:S3} An odd-solution $S_3$ for $P_3$.}
\end{figure}

By our discussions above, $S_3$ induces solutions for $P_1, P_2$ and $Q_3$. For $P_1$, the induced solution is $S_1$.

The EP problem $P_3$ appeared in \cite{HPSW19}, where it was shown essentially that it does not have a $p$-solution for any prime $p$.

Finally, we compute the fundamental groups of these EP problems as follows:
$$\begin{array}{rcl} \pi_1(P_1)&=&\langle a, ba^{-1}bab^{-1}\rangle, \\
\pi_1(P_3)&=&\langle x, yxy^{-1}x^{-1}y\rangle,
\end{array}
$$
and if we let $u=yxy^{-1}x^{-1}y$, then
$$\begin{array}{rcl}
\pi_1(P_2)&=&\langle x, u^{-1}xu\rangle, \\
\pi_1(Q_3)&=&\langle x, u^{-1}xu, u^{-2}xu^2, u^3\rangle.
\end{array}
$$

\subsection{Nilpotent-solutions and $p$-solutions\label{subsec:nilsolution}}

The following are standard facts in group theory. 

\begin{theorem}[{\cite[5.1.3]{RobinsonBook}}]\label{thm:pnil} Any finite $p$-group is nilpotent.
\end{theorem}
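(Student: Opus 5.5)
The statement is the standard fact that every finite $p$-group is nilpotent. The plan is to show that a nontrivial finite $p$-group has nontrivial center, and then induct on the order using the upper central series.

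\emph{Step 1 (nontrivial center).} Let $\Gamma$ be a finite $p$-group with $|\Gamma|=p^n$, $n\geq 1$. Consider the action of $\Gamma$ on itself by conjugation and write the class equation
$$ |\Gamma| = |Z(\Gamma)| + \sum_{i} |\Gamma : C_\Gamma(g_i)|, $$
where the $g_i$ range over representatives of the conjugacy classes of size greater than $1$. Each index $|\Gamma:C_\Gamma(g_i)|$ divides $p^n$ and exceeds $1$, so it is divisible by $p$. Since $p$ also divides $|\Gamma|$, it follows that $p$ divides $|Z(\Gamma)|$. Hence $Z(\Gamma)\neq 1$.

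\emph{Step 2 (induction).} We induct on $n$; the case $n\leq 1$ is trivial, since groups of order $1$ or $p$ are abelian. For $n\geq 2$, the quotient $\Gamma/Z(\Gamma)$ is a $p$-group of strictly smaller order, so by the induction hypothesis it is nilpotent. Its upper central series therefore reaches the whole group after finitely many steps. Pulling this series back along the quotient map $\Gamma\to\Gamma/Z(\Gamma)$ gives a central series of $\Gamma$ beginning with $1\leq Z(\Gamma)$. Concretely, one uses $Z_{i+1}(\Gamma)/Z(\Gamma)=Z_i(\Gamma/Z(\Gamma))$ to conclude that $Z_k(\Gamma)=\Gamma$ for some $k$. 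Thus $\Gamma$ is nilpotent.

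The only real content is Step 1, namely the divisibility argument in the class equation. The remaining work is the routine identification of the upper central series of a quotient by the center, and I do not expect any obstacle there.
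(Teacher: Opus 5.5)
Your proof is correct. The paper gives no argument of its own and simply cites Robinson's textbook (5.1.3), where the proof is essentially yours: the class equation forces $Z(\Gamma)\neq 1$ for a nontrivial finite $p$-group, so the upper central series strictly increases until it reaches $\Gamma$ (equivalently, one inducts via $\Gamma/Z(\Gamma)$).
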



The following fact follows immediately from Theorem~\ref{thm:pnil}.

\begin{proposition} Let $F$ be a finitely generated free group and let $H$ be a finitely generated subgroup of $F$. If for some prime number $p$, $H$ is closed in the pro-$p$ topology of $F$, then $H$ is closed in the pro-nilpotent topology of $F$.
\end{proposition}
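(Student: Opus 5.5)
The plan is to compare the two topologies directly: the pro-$p$ topology on $F$ is coarser than the pro-nilpotent topology. Closedness in a coarser topology passes to a finer one, so the proposition will follow at once. This uses only the description of the pro-$\mathcal{C}$ topology by a neighborhood base of $1_F$ and Theorem~\ref{thm:pnil}.

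\emph{Step 1: open sets.} Let $N\unlhd F$ with $F/N$ a finite $p$-group, so $N$ is a basic open neighborhood of $1_F$ in the pro-$p$ topology. By Theorem~\ref{thm:pnil}, $F/N$ is nilpotent, so $N$ also belongs to $\mathcal{N}_{\mathcal{C}}(F)$ for $\mathcal{C}$ the class of finite nilpotent groups. Consequently every basic pro-$p$ open set $gN$ is open in the pro-nilpotent topology. Hence every pro-$p$ open set is pro-nilpotent open.

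\emph{Step 2: closed sets.} Suppose $H$ is closed in the pro-$p$ topology. Then $F\setminus H$ is pro-$p$ open, hence pro-nilpotent open by Step 1. So $H$ is pro-nilpotent closed.

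Equivalently, for the version matching the rest of the paper: if $\gamma\notin H$, pro-$p$ closedness gives a normal $N$ of $p$-power index with $\gamma N\cap H=\varnothing$. The same $N$ has nilpotent quotient and witnesses pro-nilpotent closedness. The finite generation hypotheses play no role.

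There is no real obstacle. The only point to check is that the class of finite nilpotent groups is a formation, so that the pro-nilpotent topology is well defined in the sense of the paper. This holds because it is closed under quotients and finite subdirect products, as already listed among the examples following Definition~\ref{def:formation}.
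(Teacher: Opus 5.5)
Your proposal is correct and is the paper's argument: the paper says only that the proposition follows immediately from Theorem~\ref{thm:pnil}, which amounts to your observation that every finite $p$-group is nilpotent, so the pro-$p$ topology is coarser than the pro-nilpotent topology and pro-$p$ closedness passes to pro-nilpotent closedness. You are also right that the finite generation hypotheses play no role.
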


We also have the following corollary of the proof of Theorems~\ref{thm:LERF2EP} and \ref{thm:solutionprotopology}.

\begin{proposition}\label{prop:pronil} Let $(X, P)$ be an EP problem where $G(X, P)$ is weakly connected. If $\pi_1(X,P)$ is closed in the pro-nilpotent topology of $F(P)$ then $(X, P)$ has a nilpotent-solution.
\end{proposition}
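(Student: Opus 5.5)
We reuse the argument in the first half of the proof of Theorem~\ref{thm:solutionprotopology}~(ii), which is the second part of the proof of Theorem~\ref{thm:LERF2EP}. There the only property of the pro-$\Pi$ topology actually used is this: the relevant normal subgroups are exactly those $N\unlhd F(P)$ with $F(P)/N$ a finite $\Pi$-group, and this family is closed under finite intersections (Lemma~\ref{lem:filteredbelow}). The class of finite nilpotent groups is a formation, since finite direct products and subgroups of nilpotent groups are nilpotent. So the same construction goes through verbatim with ``$\Pi$-group'' replaced by ``nilpotent group''.

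Concretely, let $S=\pi_1(X,P;x_0)$; we may assume $G(X,P)$ is weakly connected, so every $y\in X$ is $u_y(x_0)$ for some $u_y\in F(P)$. For each of the finitely many ordered pairs $y\neq z$ in $X$, the element $u_y^{-1}u_z$ lies outside $S$. Since $S$ is pro-nilpotent closed, there is $N_{y,z}\unlhd F(P)$ with $F(P)/N_{y,z}$ finite nilpotent and $u_y^{-1}u_zN_{y,z}\cap S=\varnothing$. Let $N=\bigcap_{y\neq z}N_{y,z}$. By Lemma~\ref{lem:filteredbelow} applied to the formation of finite nilpotent groups, $F(P)/N$ is finite nilpotent, and $u_y^{-1}u_zN\cap S=\varnothing$ for all $y\neq z$.

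Next we take $X'=F(P)/NS$, the set of cosets $wNS$, with $F(P)$ acting by left multiplication. We check three things, exactly as in the proof of Theorem~\ref{thm:LERF2EP}.
\begin{enumerate}
\item[(1)] The map $\phi(y)=u_yNS$ is well defined: if $u(x_0)=v(x_0)$ then $u^{-1}v\in S$.
\item[(2)] $\phi$ is injective, by the choice of $N$.
\item[(3)] $p\cdot\phi(y)=\phi(p(y))$ whenever $y\in\dom(p)$, since $pu_y$ is a valid choice of $u_{p(y)}$.
\end{enumerate}
Hence the permutations $p'\colon wNS\mapsto pwNS$ extend the $p$ under the identification via $\phi$. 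Moreover, distinct $p\neq q$ yield distinct $p'\neq q'$ (as in that proof), so we obtain a solution with a bijection onto $P$.

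Finally, the solvent $\langle p'\colon p\in P\rangle$ is the image of $F(P)$ acting on $F(P)/NS$. This action factors through $F(P)/N$, because $N$ is normal and hence $nwNS=wNS$. The solvent is therefore a quotient of the finite nilpotent group $F(P)/N$, so it is nilpotent, and $(X',P')$ is a nilpotent-solution for $(X,P)$.

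The only point needing any care is the intersection step: we need the pro-nilpotent neighbourhood base to be closed under intersections, so that finitely many separating conditions can be realized by a single nilpotent quotient. This follows from Lemma~\ref{lem:filteredbelow}, because finite nilpotent groups form a formation (a subgroup of a product of two nilpotent groups is nilpotent). Everything else is a direct transcription of the earlier proof.
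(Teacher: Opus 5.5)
Your proposal is correct and is essentially the paper's own argument. The paper gives no separate proof and presents the proposition as a corollary of the proofs of Theorems~\ref{thm:LERF2EP} and~\ref{thm:solutionprotopology}, which is exactly the $F(P)/NS$ coset construction with $N$ a finite intersection of normal subgroups with finite nilpotent quotients, justified because finite nilpotent groups form a formation (Lemma~\ref{lem:filteredbelow}).

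One small point you inherit from that proof: if $P$ is reducible, $p\neq q$ need not give $p'\neq q'$. For $p=\{(0,1)\}$, $q=\{(1,0)\}$, the quotient $F(P)\to\mathbb{Z}/2\mathbb{Z}$ with $p,q\mapsto 1$ meets all your separation conditions yet yields $p'=q'$. You can repair this by adjoining a disjoint copy of $F(P)/N_0$ with the left regular action, where $F(P)/N_0$ is finite nilpotent and $q^{-1}p\notin N_0$ for all distinct $p,q\in P$ (free groups are residually nilpotent); the solvent remains a quotient of $F(P)/(N\cap N_0)$, hence nilpotent.
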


The following are results from Margolis--Sapir--Weil \cite{MSW01} regarding subgroups of a finitely generated free group $F$ that are dense in the pro-$p$ or the pro-nilpotent topology of $F$. 


\begin{theorem}[{\cite[Corollary 3.5]{MSW01}}]\label{thm:MSW} Let $A$ be a finite alphabet, let $H$ be a finitely generated subgroup of $F(A)$, and let $p$ be a prime number. Let $\sigma_p\colon F(A)\to (\mathbb{Z}/p\mathbb{Z})^A$ be group the homomorphism defined by letting, for any $a\in A$ and $w\in F(A)$, $\sigma_p(w)(a)$ be the mod $p$ sum of all powers of $a$ in $w$. Then $H$ is dense in the pro-$p$ topology of $F(A)$ if and only if $\sigma_p^{-1}\sigma_p(H)=F(A)$.
\end{theorem}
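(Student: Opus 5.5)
The plan follows the standard route through finite $p$-group quotients and the Frattini argument. For the forward direction, suppose $H$ is dense in the pro-$p$ topology of $F=F(A)$. The map $\sigma_p\colon F\to V=(\mathbb{Z}/p\mathbb{Z})^A$ is a homomorphism onto a finite elementary abelian $p$-group, so its kernel $K$ is open in the pro-$p$ topology. Density of $H$ means $HK=F$, since every coset $wK$ is a basic open set and so meets $H$. This is exactly $\sigma_p^{-1}\sigma_p(H)=F$.

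For the converse, assume $\sigma_p^{-1}\sigma_p(H)=F$, i.e. $\sigma_p(H)=V$. It suffices to show that $HN=F$ for every normal subgroup $N$ with $F/N$ a finite $p$-group. Fix such $N$ and put $G=F/N$, and let $\varphi\colon F\to G$ be the quotient map. Let $\Phi(G)=G^p[G,G]$ be the Frattini subgroup. The key steps are as follows.

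\begin{enumerate}
\item[(1)] Since $G$ is a finite $p$-group, $G/\Phi(G)$ is elementary abelian, and $\Phi(G)$ is the intersection of the maximal subgroups of $G$ (cite \cite{RobinsonBook}, 5.3.2 or similar). In particular, a subset whose image generates $G/\Phi(G)$ generates $G$.
\item[(2)] The composite $F\to G\to G/\Phi(G)$ is a homomorphism onto an elementary abelian $p$-group. Hence it factors through the abelianization mod $p$, which is exactly $\sigma_p$. Concretely, $\ker\sigma_p=F^p[F,F]$, and this subgroup maps into $G^p[G,G]=\Phi(G)$.
\item[(3)] Since $\sigma_p(H)=V$, we get $H\ker\sigma_p=F$. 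Therefore the image of $\varphi(H)$ in $G/\Phi(G)$ is all of $G/\Phi(G)$, that is, $\varphi(H)\Phi(G)=G$.
\item[(4)] By the non-generator property of $\Phi(G)$, it follows that $\varphi(H)=G$, i.e. $HN=F$.
\end{enumerate}

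Since this holds for every open normal $N$, every basic open set $wN$ meets $H$, so $H$ is dense. Finite generation of $H$ is not even needed in this argument.

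The main obstacle is step (2): checking that $\ker\sigma_p$ is precisely $F^p[F,F]$. This is clear because $F/F^p[F,F]\cong(\mathbb{Z}/p\mathbb{Z})^A$, with the isomorphism given by exponent sums mod $p$. The Frattini facts in (1) and (4) are quoted as standard results about finite $p$-groups.
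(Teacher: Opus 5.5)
Your proof is correct and complete. The paper gives no proof of this statement: it is quoted from \cite{MSW01} (Corollary 3.5) and used as a black box, in the discussion of $P_1$, $P_3$ and in Proposition~\ref{prop:stationary}. So there is no internal argument to compare yours against.

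What you give is the standard Frattini argument. Density of $H$ is equivalent to $\varphi(H)=G$ for every finite $p$-quotient $\varphi\colon F\to G$. The Burnside basis theorem reduces this to the single quotient $F/F^p[F,F]\cong(\mathbb{Z}/p\mathbb{Z})^A$, which is exactly $\sigma_p$.

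The hypotheses enter in two places, and you handle both:
\begin{enumerate}
\item[(i)] finiteness of $A$, which the forward direction needs so that $\ker\sigma_p$ is open;
\item[(ii)] the identification $\ker\sigma_p=F^p[F,F]$, where the containment $\ker\sigma_p\subseteq F^p[F,F]$ is the one that matters.
\end{enumerate}

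Your remark that finite generation of $H$ plays no role in this equivalence is also correct.
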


\begin{theorem}[{\cite[Corollary 4.2]{MSW01}}] \label{thm:MSW2} Let $A$ be a finite alphabet, let $H$ be a finitely generated subgoup of $F(A)$. Then the following are equivalent:
\begin{enumerate}
\item[\rm (i)] $H$ is dense in the pro-nilpotent topology of $F(A)$;
\item[\rm (ii)] for any prime $p$, $H$ is dense in the pro-$p$ topology of $F(A)$;
\item[\rm (iii)] for any prime $p$, $\sigma_p^{-1}\sigma_p(H)=F(A)$.
\end{enumerate}
\end{theorem}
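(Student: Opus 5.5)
The equivalence (ii)$\Leftrightarrow$(iii) is immediate from Theorem~\ref{thm:MSW}, applied for each prime $p$ separately. So the work is to show (i)$\Leftrightarrow$(ii). Throughout I will use the reformulation that, for a formation $\mathcal{C}$, a subgroup $H\leq F=F(A)$ is dense in the pro-$\mathcal{C}$ topology if and only if $HN=F$ for every $N\unlhd F$ with $F/N\in\mathcal{C}$. This follows because the cosets $\gamma N$ form a base, and $H$ meets every $\gamma N$ exactly when $HN=F$. Equivalently, for every epimorphism $\varphi\colon F\to G$ onto a group $G\in\mathcal{C}$, we have $\varphi(H)=G$.

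\emph{(i)$\Rightarrow$(ii).} By Theorem~\ref{thm:pnil}, every finite $p$-group is nilpotent. Hence every basic open set of the pro-$p$ topology is open in the pro-nilpotent topology, i.e.\ the pro-$p$ topology is coarser. A set dense in a finer topology is dense in any coarser one, so (ii) follows.

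\emph{(ii)$\Rightarrow$(i).} Let $\varphi\colon F\to G$ be an epimorphism onto a finite nilpotent group $G$, and set $K=\varphi(H)$. We must show $K=G$.
\begin{enumerate}
\item[(1)] Recall the standard fact that a finite nilpotent group is the direct product of its Sylow subgroups. Write $G=P_1\times\cdots\times P_r$, where $P_i$ is the unique Sylow $p_i$-subgroup.
\item[(2)] For each $i$, let $\rho_i\colon G\to P_i$ be the projection. Then $\rho_i\circ\varphi$ is an epimorphism of $F$ onto a $p_i$-group. Pro-$p_i$ density of $H$ therefore gives $\rho_i(K)=P_i$.
\item[(3)] The subgroup $K$ is itself nilpotent, so $K$ is the direct product of its Sylow subgroups $K_i$. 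Each $K_i$ is a $p_i$-subgroup of $G$, hence lies in the unique Sylow subgroup $P_i$. Consequently $K_i=K\cap P_i$ and $K=\prod_i(K\cap P_i)$.
\item[(4)] The projection $\rho_i$ kills $P_j$ for $j\neq i$ and is the identity on $P_i$. Hence $\rho_i(K)=K\cap P_i$, and by (2) this gives $K\cap P_i=P_i$.
\end{enumerate}
It follows that $K=\prod_i P_i=G$, which proves (i).

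The only step requiring care is (3)--(4), namely that a subgroup of a finite nilpotent group which surjects onto every Sylow factor must be the whole group. This relies on uniqueness of Sylow subgroups in nilpotent groups; everything else is formal.
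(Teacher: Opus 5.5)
Your proof is correct. The paper gives no proof of its own here: it quotes the statement from \cite[Corollary 4.2]{MSW01}, next to Theorem~\ref{thm:MSW}, and uses it as a black box, for instance in Proposition~\ref{prop:stationary}.

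Your argument supplies what the paper leaves to the reference. You take only the single-prime criterion (Theorem~\ref{thm:MSW}) as given, and you derive the passage between the pro-nilpotent and pro-$p$ topologies yourself, from the Sylow decomposition of a finite nilpotent group.

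Two consequences of your route are worth noting.
\begin{enumerate}
\item[(1)] Finite generation of $H$ enters only in (ii)$\Leftrightarrow$(iii), through Theorem~\ref{thm:MSW}. Your proof of (i)$\Leftrightarrow$(ii) works for an arbitrary subgroup $H\leq F(A)$.
\item[(2)] Steps (3)--(4) prove more than density. They show $K=\prod_i\rho_i(K)$ for every subgroup $K$ of $G$. With $N_i=\ker(\rho_i\circ\varphi)$, this gives $HN=\bigcap_i HN_i$. Intersecting over all such $N$, the pro-nilpotent closure of $H$ equals the intersection of its pro-$p$ closures over all primes $p$. That closure formula contains the density equivalence as a special case.
\end{enumerate}
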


Finally, we observe the following result.

\begin{theorem}\label{thm:nonil} Let $(X,P)$ be an EP problem where $G(X, P)$ is weakly connected. Suppose $\pi_1(X, P)\neq F(P)$. If $(X, P)$ has a nilpotent-solution, then $\pi_1(X, P)$ is not dense in the pro-nilpotent topology of $F(P)$.
\end{theorem}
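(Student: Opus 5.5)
We argue by contraposition through the solvent. Suppose $(X,P)$ has a nilpotent-solution $(Y,Q)$, with bijection $j\colon P\to Q$ and solvent $\Gamma=\langle Q\rangle\leq \mathrm{Sym}(Y)$, a finite nilpotent group. Let $x_0\in X$ be the base point. Define the homomorphism $\varphi\colon F(P)\to\Gamma$ by $\varphi(w)=w_\Gamma$, and let $N=\ker\varphi$. Then $F(P)/N\cong\Gamma$ is nilpotent, so $N$ is open in the pro-nilpotent topology of $F(P)$. Consider the stabilizer
$$ S'=\{w\in F(P)\colon w_\Gamma(x_0)=x_0\}. $$
This is a subgroup of $F(P)$ containing $N$, hence a union of cosets of $N$, and therefore open (and closed) in the pro-nilpotent topology.

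First, $\pi_1(X,P)\leq S'$. If $w(x_0)=x_0$ in $(X,P)$, then, since each $j(p)$ extends $p$, the computation of $w$ along $x_0$ is reproduced by $w_\Gamma$, giving $w_\Gamma(x_0)=x_0$. So if $S'\neq F(P)$, the closed set $S'$ contains $\pi_1(X,P)$ and is a proper subset of $F(P)$, and $\pi_1(X,P)$ is not dense.

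The main obstacle is showing $S'\neq F(P)$. This amounts to showing that $\Gamma$ does not fix $x_0$, i.e.\ that the orbit of $x_0$ in $Y$ is not a singleton. Since $\pi_1(X,P)\neq F(P)$, some generator $p\in P$ satisfies $p\notin\pi_1(X,P)$. If $X$ has a second point, weak connectivity gives some $u\in F(P)$ with $u(x_0)$ defined and $u(x_0)\neq x_0$. Then $u_\Gamma(x_0)=u(x_0)\neq x_0$, so $u\notin S'$.

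If $X=\{x_0\}$, every $p\in P$ is either empty or the identity on $\{x_0\}$. In this degenerate case $\pi_1(X,P)$ is generated by the letters that are loops, and the orbit of $x_0$ could be trivial. Here we argue directly instead. The subgroup $\pi_1(X,P)$ is the free factor generated by the loops, which is proper because $\pi_1(X,P)\neq F(P)$. Choose some $p$ that is not a loop, and map $F(P)\to\mathbb{Z}/2\mathbb{Z}$ by sending $p\mapsto 1$ and every other generator to $0$. The kernel is an open normal subgroup in the pro-nilpotent topology; it contains $\pi_1(X,P)$ and misses $p$. So again $\pi_1(X,P)$ is not dense.

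Combining the two cases proves the theorem. In the general case the only delicate check is that partial evaluation in $(X,P)$ agrees with evaluation in the solution, which is exactly the extension property in the definition of a solution.
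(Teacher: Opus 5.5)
Your proof is correct and follows the same route as the paper: evaluate words in the nilpotent solvent $\Gamma$, observe that the image of $\pi_1(X,P)$ fixes $x_0$, and exhibit a word whose image moves $x_0$, so that the open-and-closed subgroup $\varphi^{-1}(\mathrm{Stab}_\Gamma(x_0))$ is a proper subgroup containing $\pi_1(X,P)$ and witnesses non-density. Your separate treatment of the case $X=\{x_0\}$ is a genuine improvement. There the paper's step ``since $\pi_1(X,P)\neq F(P)$, there is $w$ with $\hat{w}(x_0)\neq x_0$'' can fail, for instance when every non-loop is extended to the identity on $\{x_0\}$, whereas your $\mathbb{Z}/2\mathbb{Z}$ quotient handles this case directly.
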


\begin{proof} Suppose $(Y, Q)$ is a nilpotent-solution for $(X, P)$, with solvent $\Gamma$. Let $p\mapsto \hat{p}$ be the bijection from $P$ to $Q$ witnessing that $(Y, Q)$ is an extension of $(X, P)$. Let $x_0$ be the base point for the computation of $\pi_1(X, P)$. Since $\pi_1(X, P)\neq F(P)$, there is $w\in F(P)$ such that $\hat{w}(x_0)\neq x_0$. Thus $w_\Gamma(x_0)\neq x_0$ and $\varphi(w)=w_\Gamma\not\in \varphi(\pi_1(X, P))$. Thus $\pi_1(X, P)$ is not dense in the pro-nilpotent topology of $F(P)$.
\end{proof}

The following diagram summarizes the known implications among the statements we considered in this subsection. In the following we assume $\pi_1(X, P)\neq F(P)$.

$$
\begin{array}{ccc}
\mbox{$\exists p$ $\pi_1(X, P)$ is closed in the } & &  \mbox{$\pi_1(X, P)$ is closed in the } \\
\mbox{pro-$p$ topology of $F(P)$} & \Longrightarrow & \mbox{pro-nilpotent topology of $F(P)$} \\
\displaystyle\left\Updownarrow\vphantom{\sum}\right. & & \displaystyle\left\Downarrow\vphantom{\sum}\right. \\
\mbox{$\exists p$ $(X, P)$ has a $p$-solution} & \Longrightarrow & \mbox{$(X, P)$ has a nilpotent-solution} \\
\displaystyle\left\Downarrow\vphantom{\sum}\right. & & \displaystyle\left\Downarrow\vphantom{\sum}\right. \\
\mbox{$\exists p$ $\pi_1(X, P)$ is not dense} & \Longleftrightarrow & \mbox{$\pi_1(X, P)$ is not dense in the } \\
\mbox{ in the pro-$p$ toplogy of $F(P)$} & & \mbox{pro-nilpotent topology of $F(P)$} \\ \\
\end{array}
$$

We do not know the answers to the following questions.

\begin{problem}\label{prob:nil} If an EP problem $(X, P)$ has a nilpotent-solution, does it follow that $\pi_1(X, P)$ is closed in the pro-nilpotent topology of $F(P)$?
\end{problem}

In particular, the following problem is open.

\begin{problem}\label{prob:ope} If an EP problem $(X, P)$ has a nilpotent-solution, does it follow that an one-edge extension of $(X, P)$ also has a nilpotent-solution?
\end{problem}

By our proof of Theorem~\ref{thm:solutionprotopology}, if Problem~\ref{prob:ope} has a positive answer, then so does Problem~\ref{prob:nil}.

Next we discuss some examples. Recall that
$$ \begin{array}{rcl}
\pi_1(P_1)\!\!\!&=&\!\!\! \langle a, ba^{-1}bab^{-1}\rangle \\
\pi_1(P_3)\!\!\!&=&\!\!\! \langle x, yxy^{-1}x^{-1}y\rangle  
\end{array}
$$
from the examples in Subsection~\ref{subsec:case}. It is easy to see from applying Theorem~\ref{thm:MSW} that they are dense in the pro-$p$ topology of $F$ for any prime $p$, and by Theorem~\ref{thm:MSW2} they are dense in the pro-nilpotent topology of $F$. Since they are not $F$, it follows from Theorem~\ref{thm:nonil} that they do not have nilpotent-solutions, nor $p$-solutions for any prime $p$. 

For the problem $P_2$ in Subsection~\ref{subsec:case}, recall 
$$ \pi_1(P_2)=\langle x, u^{-1}xu\rangle $$
for some $u$. By Theorem~\ref{thm:MSW}, it is not dense in the pro-$p$ topology of $F$. However, since $P_2$ is obtained from applying the EP map {\tt Multiverse} to $P_1$, we have by Theorem~\ref{thm:multiverse} (ii) that $P_2$ does not have a nilpotent solution. In fact, if $P_2$ has a solution with solvent $\Lambda$, then $P_1$ has a solution with solvent $\Gamma$, where $\Gamma$ is isomorphic to a subgroup of $\Lambda$. If $\Lambda$ were nilpotent, then so would be $\Gamma$, contradicting our conclusion above that $P_1$ does not have nilpotent-solutions. 

For the problem $Q_3$ in Subsection~\ref{subsec:case}, recall
$$ \pi_1(Q_3)=\langle x, u^{-1}xu, u^{-2}xu^2, u^3\rangle $$
for some $u$. Since $Q_3$ is an extension of $P_2$,  we conclude that $Q_3$ does not have nilpotent-solutions.

In summary, all examples in Subsection~\ref{subsec:case} do not have nilpotent-solutions and their fundamental groups are not closed in the pro-nilpotent topology.

Before closing this section, we note the following fact.

\begin{proposition} Let $\Pi$ be a set of prime numbers. Then $\pi_1(P_1)$ is quo-$\Pi$.
\end{proposition}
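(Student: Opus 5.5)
The plan is to verify directly that the EP problem $P_1=(\{0,1,2\},\{a,b\})$ is quo-$q$ for \emph{every} prime $q$. By Theorem~\ref{thm:solutionprotopology}~(i), this is the same as saying that $\pi_1(P_1)$ is a quo-$\Pi$ subgroup of $F(a,b)$ for every set $\Pi$ of primes. So fix a prime $q$, a word $w\in F(a,b)$ and a point $x\in X$ with $w^q(x)=x$. The goal is to show $w(x)=x$.

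\textbf{Step 1: reduce to $q\in\{2,3\}$.} Since $w^q(x)$ is defined, all of $x, w(x),\dots,w^{q-1}(x)$ are defined. Because $w$ is injective on its domain, these points form a single $w$-cycle, whose length $d$ divides $q$. Thus $d=1$, which is what we want, or $d=q$ with the $q$ points pairwise distinct. Since $|X|=3$, the case $d=q$ forces $q\in\{2,3\}$. It therefore remains to rule out two situations: some $w$ swapping two distinct points, and some $w$ cyclically permuting $0,1,2$.

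\textbf{Step 2: rule out a $3$-cycle.} If $w$ permutes $0,1,2$ cyclically, then every letter of $w$ (reading right to left) must be defined on all three images of the current triple. Now $a(2)$, $a^{-1}(1)$, $b(0)$ and $b^{-1}(1)$ are all undefined. Hence no letter is defined on all of $X$, so $w$ must be the empty word, which acts as the identity, a contradiction.

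\textbf{Step 3: rule out a swap.} This is exactly the tournament-type condition used in the proof of Theorem~\ref{thm:EPPAtoOEP}. For each ordered pair $(y_0,y_1)$ of distinct points, I compute the closure of $\{(y_0,y_1)\}$ under the letters $a^{\pm1},b^{\pm1}$, applied coordinatewise whenever both coordinates are defined. I then check that this closure never contains $(y_1,y_0)$.
\begin{itemize}
\item From $(0,2)$: only $b^{-1}$ applies, giving $(1,2)$. From $(1,2)$: only $b$ applies, giving back $(0,2)$. So the closure is $\{(0,2),(1,2)\}$. This also covers the starting pair $(1,2)$, whose closure is the same set and does not contain $(2,1)$.
\item From $(0,1)$: only $b^{-1}$ is defined on both coordinates, and $b^{-1}(1)$ is undefined, so nothing applies and the closure is $\{(0,1)\}$.
\item The reversed pairs are handled by applying the same closures coordinatewise swapped.
\end{itemize}
In no case does a pair reach its reverse, so no $w$ swaps two points.

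Hence $d=1$, so $w(x)=x$. The only real obstacle is making sure the finite pair check in Step 3 is complete, and this is a routine enumeration.
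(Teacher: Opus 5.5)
Your strategy works, but Step~3 as written is incorrect and needs repair. Since $a^{-1}$ sends $0\mapsto 0$ and $2\mapsto 1$, we have $\dom(a^{-1})=\{0,2\}$. So from $(0,2)$ the letter $a^{-1}$ also applies and yields $(0,1)$. From $(0,1)$ the letter $a$ applies (its domain is $\{0,1\}$) and yields $(0,2)$. Your sentence about $(0,1)$ also contradicts itself, since $b^{-1}$ is not defined at $1$.

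The correct closure of each of $(0,1)$, $(0,2)$, $(1,2)$ is the full set $\{(0,1),(0,2),(1,2)\}$. The only moves are $(0,1)\xrightarrow{a}(0,2)$, $(0,2)\xrightarrow{a^{-1}}(0,1)$, $(0,2)\xrightarrow{b^{-1}}(1,2)$ and $(1,2)\xrightarrow{b}(0,2)$. This set still contains no reversed pair, so the conclusion of Step~3 survives once the enumeration is corrected.

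In fact the invariant set is just the strict order $0<1<2$. Both $a$ and $b$ are strictly increasing on their domains, hence so are $a^{-1}$, $b^{-1}$ and every word $w$. If $w(x)>x$, then inductively $x<w(x)<w^2(x)<\cdots$ as far as the iterates are defined, so $w^n(x)\neq x$ for all $n\geq 1$; the case $w(x)<x$ is symmetric. This single observation replaces Steps~1--3 and gives the conclusion for every exponent $n\geq1$, not only primes.

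Compared with the paper: the paper fixes the base point $0$ and uses the generators $a$ and $ba^{-1}bab^{-1}$ of $\pi_1(P_1)$. These force $w^q$, when it is not a power of $a$, to end in a suffix $ba^{-1}b^{\alpha}ab^{-1}a^{\beta}$. The paper then infers the possible endings of $w$ and rules out $w(0)=1$ and $w(0)=2$ by checking that the next letter cannot be applied. Your argument needs neither the generators of $\pi_1(P_1)$ nor any word combinatorics. The bound $d\mid q$, $d\leq |X|=3$ reduces everything to $q\in\{2,3\}$, and the two remaining cases are finite checks on a $3$-point set. These checks are valid at every base point and carry over verbatim to any small EP problem, whereas the paper's suffix analysis is more ad hoc.

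One small point: membership $\gamma^q\in\pi_1(P_1)$ is tested on the reduced word of $\gamma^q$, while your Step~1 uses genuine iterates of the partial map. Writing $\gamma=w_0^{-1}uw_0$ with $u$ cyclically reduced gives $\gamma^q=w_0^{-1}u^qw_0$, so $u^q(w_0(0))$ is a genuine $q$-fold iterate. You delegate this to Theorem~\ref{thm:solutionprotopology}~(i), which is acceptable.
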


\begin{proof} In the definition of $P_1$ (see Figure~\ref{fig:P1}) denote the vertices as 
$$x_0=0,\ x_1=1, \ x_2=2. $$ 
Since $G(P_1)$ is weakly connected, it suffices to show that for any $w\in F(P_1)$ and any prime $q>1$, if $w^q(x_0)=x_0$, then $w(x_0)=x_0$. 

Since $w^q\in \pi_1(P_1)$, we have that $w^q$ can be written as a concatenation of powers of $a$ and powers of $ba^{-1}bab^{-1}$. In particular, $w^q$ contains a suffix of the form $z=ba^{-1}b^{\alpha}ab^{-1}a^{\beta}$ where $\beta\in \mathbb{Z}$ and $\alpha\in \mathbb{Z}\setminus\{0\}$. It follows that either $w$ also contains $z$ as a suffix or $w$ is a suffix of $z$. Without loss of generality, we may assume $w$ is not a power of $a$. By inspection, we also have that $w$ is not of the form $b^{-1}a^{\beta}$.

By our assumption, $w(x_0)$ is defined. Assume toward a contradiction that $w(x_0)\neq x_0$. Then $w(s_0)\in \{x_1, x_2\}$. First assume $w(x_0)=x_1$. By our assumption, $w(x_1)$ is defined. This implies that $\beta=1$. Note that $(b^{-1}a)(x_1)=x_2$. We now get a contradiction since the next letter to apply in $w$ is $a$, but $a(x_2)$ is undefined. Next assume $w(x_0)=x_2$. Then either $\beta=-1$ or $\beta=0$. In the first case $w$ contains $b^{-1}a^{-1}$ as a suffix; however, $(b^{-1}a^{-1})(x_2)=b^{-1}(x_1)$ is undefined, which is a contradiction. In the second case $w$ contains $ab^{-1}$ as a suffix; however, $(ab^{-1})(x_2)=a(x_2)$ is undefined. Thus we must have $w(x_0)=x_0$ as required.
\end{proof}

\begin{corollary} For any prime $p$, the $p$-extension property and $p$-LERF do not hold.
\end{corollary}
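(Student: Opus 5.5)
The plan is to combine the preceding proposition with the non-density argument for $P_1$, and then use the equivalences of Section 3. Fix a prime $p$ and let $\Pi=\{p\}$.

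First, $P_1$ is quo-$\{p\}$. The preceding proposition shows that $\pi_1(P_1)$ is quo-$\Pi$ for every set $\Pi$ of primes, in particular for $\Pi=\{p\}$. Since $G(P_1)$ is weakly connected, Theorem~\ref{thm:solutionprotopology}~(i) then gives that the EP problem $P_1$ itself is quo-$\{p\}$.

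Second, $P_1$ has no $p$-solution. I will recompute $\sigma_p$ on the generators of $\pi_1(P_1)=\langle a, ba^{-1}bab^{-1}\rangle$. We have $\sigma_p(a)=(1,0)$, and the exponent sums of $ba^{-1}bab^{-1}$ are $0$ in $a$ and $1$ in $b$, so $\sigma_p(ba^{-1}bab^{-1})=(0,1)$. Hence $\sigma_p(\pi_1(P_1))=(\mathbb{Z}/p\mathbb{Z})^2$ and $\sigma_p^{-1}\sigma_p(\pi_1(P_1))=F(a,b)$. By Theorem~\ref{thm:MSW}, $\pi_1(P_1)$ is dense in the pro-$p$ topology; as $p$ is arbitrary, Theorem~\ref{thm:MSW2} makes it dense in the pro-nilpotent topology. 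It is a proper subgroup, since for instance $b(0)$ is undefined, so $b\notin\pi_1(P_1)$. Theorem~\ref{thm:nonil} then rules out a nilpotent-solution. Because every finite $p$-group is nilpotent (Theorem~\ref{thm:pnil}), $P_1$ has no $p$-solution.

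It follows that $P_1$ is a quo-$\{p\}$ EP problem without a $\{p\}$-solution, so the $p$-EP fails. For $p$-LERF, note that by Theorem~\ref{thm:solutionprotopology}, $\pi_1(P_1)$ is a finitely generated quo-$p$ subgroup of $F(a,b)$ that is not closed in the pro-$p$ topology; Theorem~\ref{thm:LERF2EP} gives the same conclusion by contraposition. So $F(a,b)$ is not $p$-LERF.

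The only step needing care is the quo-$\{p\}$ property, including when $p=2$. That is supplied by the preceding proposition, which holds for every prime $q$, so I expect no real obstacle. The rest is bookkeeping of earlier results.
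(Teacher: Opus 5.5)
Your proposal is correct and follows the paper's own argument. You get quo-$p$-ness of $\pi_1(P_1)$ from the preceding proposition, and density in the pro-$p$ (hence pro-nilpotent) topology via Margolis--Sapir--Weil, which together with Theorem~\ref{thm:nonil} excludes $p$-solutions; Theorem~\ref{thm:solutionprotopology} then converts this into the failure of both the $p$-EP and $p$-LERF, and you have made explicit the bookkeeping (the $\sigma_p$ computation, properness of $\pi_1(P_1)$, the passage between problem and subgroup) that the paper leaves to ``the discussions before it.''
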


\begin{proof} As demonstrated in the preceding proposition and the discussions before it, $\pi_1(P_1)$ is quo-$p$ for any $p$ but it is not closed in the pro-$p$ topology of $F(P_1)$. 
\end{proof}

By Theorem~\ref{thm:multiverse} (i) we also have that $P_2$ is quo-$\Pi$. Since $P_2$ does not have a $p$-solution for any prime $p$, $P_2$ is also a counterexample for the $p$-extension property.

\section{Bicyclic EP Problems}

Starting from this section, we study EP problems whose fundamental groups have exactly two generators. In fact we concentrate on cases which are even more restrictive. Throughout the rest of this section let $\Pi$ be a set of prime numbers.

\subsection{Bicyclic EP$(2)$ problems} 

\begin{definition} Let $A$ be a finite alphabet and let $F(A)$ be the free group generated by $A$. Define a homomorphism $\sigma\colon F(A)\to \mathbb{Z}^A$ by letting, for any $a\in A$ and $w\in F(A)$, $\sigma(w)(a)$ be the sum of all powers of $a$ in $w$. 
\end{definition}

\begin{theorem}\label{thm:kcyclic} Let $(X, P)$ be an EP problem and let
$$ (Y, Q)=\mbox{\tt Multiverse}(X, P). $$
Then $\pi_1(X, P)$ and $\pi_1(Y, Q)$ have the same number of generators.
\end{theorem}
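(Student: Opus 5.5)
The approach is to compute ranks of fundamental groups via Euler characteristics of the associated multidigraphs. For a finite weakly connected multidigraph $G$, $\pi_1(G,v)$ is free of rank $|E(G)|-|V(G)|+1$. To see this, choose a spanning tree $T$. Each edge outside $T$ yields one free generator: the circuit running through $T$ to the edge, across it, and back through $T$. This follows from the Stallings framework recalled above. Moreover, the map from $\pi_1(G(X,P),x_0)$ to $F(P)$ sending a circuit to its label word is injective. Indeed, $G(X,P)\to$ (one-vertex graph with edge set $P$) is an immersion, since each $p$ is a partial permutation, so Lemma~\ref{lem:im2inj} applies. Its image is exactly $\pi_1(X,P;x_0)$ as defined in Subsection~\ref{subsec:fundamentalgroup}. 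So the minimal number of generators of $\pi_1(X,P)$ is $|E|-|V|+1$, computed on $G(X,P)$, because a free group of rank $r$ needs exactly $r$ generators (abelianize). The same holds for $(Y,Q)$.

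The proof then reduces to counting. We assume $G(X,P)$ is weakly connected, as the paper does when speaking of fundamental groups. Write $n=|X|$, $P=\{p_1,\dots,p_k\}$, and $E_X=\sum_i|\dom(p_i)|$. In ${\tt Multiverse}(X,P)$ we have $|Y|=kn$. The $a$-edges number $\sum_i|\dom(p_i)|=E_X$, and the $b$-edges number $(k-1)n$. Hence
$$|E(Y)|-|Y|+1=E_X+(k-1)n-kn+1=E_X-n+1,$$
which is the rank for $(X,P)$.

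It remains to check that $G(Y,Q)$ is weakly connected, so that the formula applies. Each fibre $X\times\{i\}$ is joined to $X\times\{i+1\}$ by $b$-edges. Within the layer $X\times\{i\}$ only $p_i$ acts, so the layers may be disconnected individually. However, any $p_i$-edge from $x$ to $x'$ in $X$ connects $(x,i)$ to $(x',i)$, and the $b$-edges connect $(x,i)$ to $(x,1)$. So $(x,1)$ and $(x',1)$ lie in the same component whenever $x,x'$ are adjacent in $G(X,P)$. Weak connectivity of $G(X,P)$ then gives that all of $X\times\{1\}$, and hence all of $Y$, lies in one component.

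The main obstacle is being careful about what ``number of generators'' means. I would state explicitly that it is the rank of the free group. I would then justify the Euler characteristic formula briefly, by collapsing a spanning tree, and note that this rank is the minimal generating number. If weak connectivity were not assumed, the same count applied componentwise still matches, since components of $G(X,P)$ correspond bijectively to components of $G(Y,Q)$ by the argument above.
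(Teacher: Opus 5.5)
Your argument is correct, but it takes a different route from the paper's.

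The paper never computes ranks. It defines the homomorphism $\varphi\colon F(P)\to F(Q)$ by $\varphi(p_i)=b^{-i+1}ab^{i-1}$ and observes that $w(x_0)=x_0$ if and only if $\varphi(w)(x_0,1)=(x_0,1)$. It then shows that every $u\in\pi_1(Y,Q)$ has $b$-exponent sum $0$. Since the corresponding walk stays within the levels $1,\dots,k$, $u$ can be regrouped as a product of the words $b^{-i+1}a^{\pm1}b^{i-1}$, so $\pi_1(Y,Q)\subseteq\varphi(F(P))$. Hence $\varphi$, which is injective, restricts to an isomorphism $\pi_1(X,P)\cong\pi_1(Y,Q)$.

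Your Euler characteristic count, $E_X+(k-1)n-kn+1=E_X-n+1$, is shorter and purely combinatorial. It does, however, rest on three further ingredients:
\begin{itemize}
\item the spanning-tree rank formula;
\item the identification of $\pi_1(X,P;x_0)\subseteq F(P)$ with the graph-theoretic fundamental group, via the immersion and Lemma~\ref{lem:im2inj};
\item a separate check that $G(Y,Q)$ is weakly connected.
\end{itemize}
You supply all three correctly. Your route yields only that the two groups are free of the same rank, hence abstractly isomorphic.

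The paper's route gives more: it identifies $\pi_1(Y,Q)$ concretely as the image of $\pi_1(X,P)$ under an explicit substitution. That concrete form is what the paper uses later. In Proposition~\ref{prop:awbasic}(ii), the {\tt Multiverse} of an $\langle a,w\rangle$-problem is recognized as an $\langle a,w^*\rangle$-problem, and $\pi_1(P_2)$ is read off from $\pi_1(P_1)$ in the same way.
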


\begin{proof} Combining Definition~\ref{def:multiverse} and the proofs of Theorems~\ref{thm:EP2} and \ref{thm:multiverse}, we get the following correspondence between $\pi_1(X, P)$ and $\pi_1(Y, Q)$. Suppose $P=\{p_1,\dots, p_k\}$ and $Q=\{a, b\}$. Let $x_0\in X$ and $y_0=(x_0,1)$ be the base points of the computations of $\pi_1(X, P)$ and $\pi_1(Y, Q)$, respectively. Define a homomorphism $\varphi\colon F(P)\to F(Q)$ by letting, for any $1\leq i\leq k$,
$$ \varphi(p_i)=b^{-i+1}ab^{i-1}. $$
Then we have that $w\in \pi_1(X, P)$ if and only if $\varphi(w)\in \pi_1(Y, Q)$. Moreover, note that if $u\in \pi_1(Y, Q)$ then $\sigma(u)(b)=0$ and $u$ can be written as a concatenation of elements of $\{\varphi(p)\colon p\in P\}$. Thus $\pi_1(Y, Q)\subseteq \varphi(F(P))$. Thus $\varphi$ is an isomorphism between $\pi_1(X, P)$ and $\pi_1(Y, Q)$. In particular, $\pi_1(X, P)$ and $\pi_1(Y, Q)$ have the same number of generators.
\end{proof}

\begin{definition} For any $k\geq 0$, we call an EP problem $(X, P)$ {\em $k$-cyclic} if $\pi_1(X, P)$ has $k$ generators. When $k=2$ we call $(X, P)$ {\em bicyclic}.
\end{definition}

Corollary~\ref{cor:trivial} and Theorem~\ref{thm:cyclic} established the $\Pi$-EP for $k$-cyclic problems where $k=0$ and $1$. Theorem~\ref{thm:kcyclic} implies that the $\Pi$-EP for $k$-cyclic problems is equivalent to $\Pi$-EP(2) for $k$-cyclic problems. 

In the rest of this paper, we will often focus on bicyclic EP(2) problems. That is, unless otherwise specified, we will typically consider $(X, P)$ where $P=\{a, b\}$, $G(X, P)$ is weakly connected, and $\pi_1(X, P)=\{u, v\}$ for some $u, v\in F(P)$. Note that every vertex has degree at most $4$ in $G^*(X, P)$. In view of Corollary~\ref{cor:fundgp}, without loss of generality, we also assume that $G^*(X, P)$ does not contain vertices of degree $1$; in particular, every edge of $G(X, P)$ is a part of a cycle. 

We may further classify all bicyclic EP(2) problems by the geometry of their corresponding undirected multigraphs into three classes as shown in Figure~\ref{fig:BEP2}.

\begin{figure}[htbp]
\centering
\begin{tikzpicture}
  	\coordinate (A)  at (0,0);
  	\coordinate (B) at (2,0);
 	\coordinate (C) at (5.5,0);
 	\coordinate (D) at (8,0);
 	\coordinate (E) at (10,0);
 
  	\fill (A) circle (2pt);
	\fill (B) circle (1pt);
	\fill (C) circle (2pt);
	\fill (D) circle (2pt);
	\fill (E) circle (1pt);

\draw (-0.5,0) circle (0.5);
\draw  (A) -- (B) ;
\draw (2.5,0) circle (0.5);
\draw (5.5,0.6) circle (0.6);
\draw (5.5, -0.8) circle (0.8);
\draw (D) -- (E);
\draw plot [smooth, tension=1.8] coordinates {(8,0) (9,1.6) (10,0)};
\draw plot [smooth, tension=1.8] coordinates {(8,0) (9,-1.6) (10,0)};

    \node[right] at (0.3,-2.5) {barbell};
    \node[right] at (4.5,-2.5) {figure-eight};
    \node[right] at (8.45,-2.5) {theta};
\end{tikzpicture}
\caption{\label{fig:BEP2} A coarse classification of bicyclic EP(2) problems.}
\end{figure}
This classification is inspired by some discussions about voltage graphs in \cite{GTBook}. In Figure~\ref{fig:BEP2}, the undrawn vertices all have degree $2$. A ``barbell'' problem contains two cycles that are disjoint and connected by a path; a ``figure-eight'' problem contains two cycles which share a common vertex; a ``theta'' problem contains two cycles which share a common subpath. The highlighted vertices are the base points for the computations of the fundamental groups. 

\begin{definition} Let $u, v, u_1, v_1, u_2, v_2\in F(a, b)$. 
\begin{enumerate}
\item[(1)] We call a bicyclic EP problem $(X, P)$ an {\em $\langle u, v\rangle$-problem} if $\pi_1(X, P)=\langle u, v\rangle$.
\item[(2)] We use $\langle u_1, v_1\rangle\rightsquigarrow_{\Pi}\langle u_2, v_2\rangle$ to denote the statement that 
\begin{quote}
if an $\langle u_1, v_1\rangle$-problem has a $\Pi$-solution, then an $\langle u_2, v_2\rangle$-problem has a $\Pi$-solution.
\end{quote}
\item[(3)] We write $\langle u_1, v_1\rangle\leftrightsquigarrow_{\Pi} \langle u_2, v_2\rangle$ if both $\langle u_1, v_1\rangle\rightsquigarrow_{\Pi} \langle u_2, v_2\rangle$ and $\langle u_2, v_2\rangle\rightsquigarrow_{\Pi} \langle u_1, v_1\rangle$.
\end{enumerate}
\end{definition}

We will consider a subclass of bicyclic EP(2) problems, namely $\langle a, w\rangle$-problems for irreducible words $w\in F(a, b)$ which starts and ends with $b$ or $b^{-1}$. We use the following notation.

\begin{definition} We let $\Sigma(b, b^{-1})$ denote the set of all irreducible words $w\in F(a, b)$ which starts with $b$ and ends with $b^{-1}$. Similarly define $\Sigma(b^{-1}, b)$, $\Sigma(b, b)$ and $\Sigma(b^{-1}, b^{-1})$. Let $\Sigma$ be the set of all irreducible words $w\in F(a, b)$ which starts and ends with $b$ or $b^{-1}$, i.e., 
$$ \Sigma=\Sigma(b,b)\cup \Sigma(b^{-1}, b)\cup \Sigma(b, b^{-1})\cup \Sigma(b^{-1}, b^{-1}). $$
\end{definition}

If $w\in \Sigma(b, b)\cup \Sigma(b^{-1}, b^{-1})$, then an $\langle a, w\rangle$-problem is a figure-eight problem; they are in fact some of the simplest figure-eight problems. The problem $P_3$ in Subsection~\ref{subsec:case} is an example. The problem $P_2$ in Subsection~\ref{subsec:case} is an example of the simplest barbell problem, which is an $\langle a, w^{-1}aw\rangle$-problem for $w\in \Sigma$. 

We will also use the following notation throughout the rest of the paper.

\begin{definition}\label{def:*} For $u\in \Sigma$, let $\phi_u\colon F(a, b)\to F(a, b)$ be the homomorphism given by $\phi_u(a)=a$ and $\phi_u(b)=u$. Let ${}^*=\phi_{b^{-1}ab}$.
\end{definition}

We have the following basic facts.

\begin{proposition}\label{prop:awbasic} Let $w\in \Sigma$. Then the following hold:
\begin{enumerate}
\item[\rm (i)] $\langle a, w\rangle\leftrightsquigarrow_{\Pi}\langle a, w^{-1}\rangle$.
\item[\rm (ii)] $\langle a, w\rangle\leftrightsquigarrow_{\Pi}\langle a, w^*\rangle$.
\item[\rm (iii)] For any $u\in \Sigma$, $\langle a, \phi_u(w)\rangle\rightsquigarrow_{\Pi} \langle a, w\rangle$. Moreover, if an $\langle a, \phi_u(w)\rangle$-problem has a solution with solvent $\Lambda$, then an $\langle a, w\rangle$-problem has a solution with solvent $\Gamma$, where $\Gamma$ is a subgroup of $\Lambda$.
\item[\rm (iv)] If $w\in \Sigma(b, b)\cup \Sigma(b^{-1}, b^{-1})$, then $ \langle a, w\rangle \rightsquigarrow_{\Pi} \langle a, w^{-1}aw\rangle$.
\item[\rm (v)] For any $\Pi$-number $k$, we have $\langle a, w\rangle\rightsquigarrow_{\Pi}\langle a^k, w\rangle$.
\item[\rm (vi)] For any $\Pi$-numbers $k, \ell$, we have $\langle a,w^{-1}aw\rangle\rightsquigarrow_{\Pi}\langle a^k, w^{-1}a^\ell w\rangle$.
\item[\rm (vii)] $\langle a, waw^{-1}\rangle\leftrightsquigarrow_{\Pi}\langle a, w^{-1}aw\rangle$.
\end{enumerate}
\end{proposition}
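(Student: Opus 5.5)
The general strategy is to use Corollary~\ref{cor:fundgp} and Theorem~\ref{thm:solutionprotopology}~(ii). Whether a problem has a $\Pi$-solution depends only on its fundamental group, up to an automorphism of $F(a,b)$ induced by a bijection of generators. It also depends only on whether that group is closed in the pro-$\Pi$ topology. So for each item I will either exhibit a transformation of groups preserving closedness, or a concrete construction on solutions (via {\tt Multiverse}-type or {\tt Reroute}-type maps).

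\emph{Items (i), (ii) and (vii).} For (i), note $\langle a,w\rangle=\langle a,w^{-1}\rangle$ as subgroups, so the claim is immediate from Corollary~\ref{cor:fundgp}. For (ii), the map ${}^*$ is the restriction of the inner automorphism $g\mapsto b^{-1}gb$ composed suitably. Indeed $\phi_{b^{-1}ab}$ sends $a\mapsto a$ and $b\mapsto b^{-1}ab$. This is not an automorphism, so I will instead argue directly: $\langle a,w^*\rangle$ is the image of $\langle a,w\rangle$ under the embedding $\phi_{b^{-1}ab}$. I will realize it as in Theorem~\ref{thm:kcyclic}, where the problem for $w^*$ looks like a {\tt Multiverse}-style subdivision of the one for $w$. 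Concretely, each $b$-edge is replaced by a $b^{-1}ab$ path through new vertices. One direction then follows like (iii), and the converse by building a $\Pi$-solution for the subdivided problem from one for the original through a {\tt Reroute}/{\tt Multiverse}$(\cdot;m)$ construction. For (vii), conjugation by $w$ is an inner automorphism of $F(a,b)$, hence a homeomorphism in every pro-$\Pi$ topology. It carries $\langle a,w^{-1}aw\rangle$ to $\langle waw^{-1},a\rangle$, so closedness is preserved.

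\emph{Item (iii).} Given a solution $(Y,Q)$ with $Q=\{\tilde a,\tilde b\}$ and solvent $\Lambda$ for an $\langle a,\phi_u(w)\rangle$-problem, I set $\tilde b'=u_\Lambda$ and keep $\tilde a$. The group $\langle\tilde a,u_\Lambda\rangle$ is a subgroup of $\Lambda$. Reading $w$ along $\tilde a,\tilde b'$ from the base point is the same as reading $\phi_u(w)$ along $\tilde a,\tilde b$, so the cycle for $w$ embeds via the base point orbit. Equivalently, $\phi_u^{-1}(H)$ is closed whenever $H$ is, since $\phi_u$ is continuous. We need $\phi_u^{-1}(\langle a,\phi_u(w)\rangle)=\langle a,w\rangle$, and this uses $u\in\Sigma$ to make $\phi_u$ injective with the right preimage. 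This injectivity/preimage check is where care is needed.

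\emph{Items (iv), (v) and (vi).} For (iv), $w\in\Sigma(b,b)\cup\Sigma(b^{-1},b^{-1})$ gives a figure-eight problem. Unfolding it with a two-sheeted construction in the style of $P_3\to P_2$ in Subsection~\ref{subsec:case} relates it to the barbell. More precisely, the $\langle a,w^{-1}aw\rangle$ barbell covers or embeds into a {\tt Reroute} of the figure-eight problem. I will apply Theorem~\ref{thm:reroute}~(iii) with $k$ a $\Pi$-number and then take a subproblem, exactly as in the chain $P_2\Leftarrow Q_3\Leftarrow P_3$. Items (v) and (vi) are direct {\tt Reroute} applications at an $a$-edge of each $a$-loop with parameters $k$ and $\ell$. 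By Theorem~\ref{thm:reroute}~(iii) these yield $\Pi$-solutions in which the $a$-loops become $a$-cycles of length $k$ (respectively $\ell$), and the target problem is a subproblem. The main obstacle is (iv) and the converse in (ii), where I must identify precisely the embedding of the target problem into the rerouted one.
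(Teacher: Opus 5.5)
Your outline follows the paper item by item:
\begin{itemize}
\item (i) and (vii) come from equality, respectively conjugacy, of the subgroups together with Theorem~\ref{thm:solutionprotopology}(ii);
\item (ii) comes from {\tt Multiverse};
\item (iii) keeps $\tilde a$ and replaces $\tilde b$ by $u_\Lambda$;
\item (iv)--(vi) use {\tt Reroute} at the base point (and at $w(x_0)$), followed by passing to a subproblem.
\end{itemize}
Your ``take a subproblem'' is more accurate than the paper's wording, since the rerouted problem has larger fundamental group and only contains the target. In (ii), literally inserting fresh vertices for each $b$-edge does not give partial permutations: a vertex with an incoming and an outgoing $b$-edge would get two outgoing $b$-edges. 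You need the identifications {\tt Multiverse} makes.

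For the first clause of (iii) you add a genuinely different argument, and it is correct.
\begin{itemize}
\item $\phi_u$ is continuous for every pro-$\Pi$ topology, since $F/\phi_u^{-1}(N)$ embeds in $F/N$.
\item $\phi_u$ is injective. Since $u$ begins with $b^{\pm1}$, $u\notin\langle a\rangle$, so $a,u$ do not commute and $\langle a,u\rangle$ is free of rank $2$. The surjection $\phi_u\colon F(a,b)\to\langle a,u\rangle$ is then an isomorphism by Hopficity.
\end{itemize}
So $\phi_u^{-1}(\langle a,\phi_u(w)\rangle)=\langle a,w\rangle$ is closed. This route is more robust than the construction. It is also what makes ``one direction of (ii) follows like (iii)'' valid, since $u=b^{-1}ab$ is the kind of $u$ for which the construction can break.

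The step that fails is ``the cycle for $w$ embeds via the base point orbit''. For the \emph{Moreover} clause (arbitrary, e.g.\ nilpotent, solvents) no topological shortcut is available, so this step carries the whole weight. Reading $w$ with $\tilde b'=u_\Lambda$ visits the points $\phi_u(g)_\Lambda(y_0)$ for right segments $g$ of $w$. When $\phi_u(w)$ is not reduced as written, some of these points need not be vertices of the given problem, and there the solution is unconstrained. This happens when $u\in\Sigma(b,b^{-1})\cup\Sigma(b^{-1},b)$ and $w$ contains $b^{\pm n}$ with $n\geq 2$.

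Here is a concrete failure.
\begin{itemize}
\item Take $u=b^{-1}ab$ and $w=ba^2b^2$, so $\phi_u(w)=b^{-1}aba^2b^{-1}a^2b$.
\item Its problem lives on $y_0,v_1,\dots,v_7$, with $a\colon y_0\mapsto y_0$, $v_1\mapsto v_2\mapsto v_3$, $v_4\mapsto v_5\mapsto v_6$, $v_7\mapsto v_1$ and $b\colon y_0\mapsto v_1$, $v_4\mapsto v_3$, $v_6\mapsto v_7$.
\item Set $b(v_5)=v_2$ and complete $a,b$ to permutations of the eight points arbitrarily. This is a solution, and $y_0$ is its only $\tilde a$-fixed point.
\item Reading $w$ with $\tilde b'=u_\Lambda$ from $y_0$ gives $y_0\to v_5\to v_4\to v_5\to v_6\to y_0$, which repeats $v_5$.
\end{itemize}
So no $\langle a,w\rangle$-problem embeds in $(X,\{\tilde a,\tilde b'\})$. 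The paper's ``It is then clear'' has exactly the same gap.

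The construction does work when all the points $\phi_u(g)(y_0)$ are vertices of the given problem. Two such cases:
\begin{itemize}
\item $\phi_u(w)$ is reduced as written;
\item the problem is the full {\tt Multiverse} problem, whose edges $(x,1)\mapsto(x,2)$ pin these points down; this is why Theorem~\ref{thm:multiverse}(ii) is sound.
\end{itemize}
In those cases a coincidence would put some $\phi_u(g_2^{-1}g_1)$ with $g_2^{-1}g_1\notin\langle a,w\rangle$ into $\phi_u(\langle a,w\rangle)$, contradicting injectivity of $\phi_u$. In general the \emph{Moreover} clause needs an additional argument.
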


\begin{proof} (i) holds because $\langle a, w\rangle=\langle a, w^{-1}\rangle$ as subgroups of $F(a,b)$. 

(ii) is obtained by applying the EP map {\tt Multiverse} to an $\langle a, w\rangle$-problem. The resulting EP problem is an $\langle a, w^*\rangle$-problem. 

For (iii),  consider a $\Pi$-solution $(X, P)$, with solvent $\Lambda$, of an $\langle a, \phi_u(w)\rangle$-problem. Define an EP problem $(X, \{\tilde{a}, \tilde{b}\})$ by $\tilde{a}(x)=a(x)$ and $\tilde{b}(x)=u_\Lambda(x)$. Then $(X, \{\tilde{a}, \tilde{b}\})$ is still a solution. If $\Gamma$ is the solvent of $(X, \{\tilde{a}, \tilde{b}\})$, then $\Gamma$ is a subgroup of $\Lambda$, hence it is still a $\Pi$-group. It is then clear that $(X, \{\tilde{a}, \tilde{b}\})$ is a $\Pi$-solution for an $\langle a, w\rangle$-problem.

For (iv), consider an $\langle a, w\rangle$-problem $(X, P)$ where $w\in \Sigma(b,b)\cup \Sigma(b^{-1},b^{-1})$. Let $x_0$ be the base point of the computation of $\pi_1(X, P)$. Let $k\geq 2$ be a $\Pi$-number. Then ${\tt Reroute}(X, P; k, b, x_0)$ is an $\langle a, w^{-1}aw\rangle$-problem. 

For (v), consider an $\langle a, w\rangle$-problem $(X, P)$. Let $x_0$ be the base point of the computation of $\pi_1(X, P)$. Then ${\tt Reroute}(X, P; k, a, x_0)$ is an $\langle a^k, w\rangle$-problem. 

For (vi), consider an $\langle a, w^{-1}aw\rangle$-problem $(X, P)$. Let $x_0$ be the base point of the computation of $\pi_1(X, P)$. Let $x_1=w(x_0)$. Let $(Y, Q)={\tt Reroute}(X, P; k, a, x_0)$. Then $(Y, Q)$ is an $\langle a^k, w^{-1}aw\rangle$-problem. Furthermore, let $(Z, R)={\tt Reroute}(Y, Q; \ell, a, x_1)$. Then $(Z, R)$ is an $\langle a^k, w^{-1}a^{\ell}w\rangle$-problem.

Finally, for (vii), just note that $\langle a, waw^{-1}\rangle$ is a conjugate of $\langle a, w^{-1}aw\rangle$ in $F(a, b)$. Thus they are both closed or both not closed in the pro-$\Pi$ topology of $F(a, b)$.
\end{proof}

\subsection{Wandering problems}

For the rest of the paper we focus on $\langle a, w^{-1}aw\rangle$-problems for $w\in \Sigma$. We classify all $\langle a, w^{-1}aw\rangle$ as $\kappa$-weakly wandering problems (to be defined in full generality in Subsection~\ref{subsec:2ww}) for $\kappa\geq 1$. In this and the next subsections we treat the wandering problems and (1-)weakly wandering problems, and in the next section we treat $2$-weakly wandering problems. The difference between 1-weakly wandering problems and $2$-weakly wandering ones is that the former admit $\Pi$-solutions for any $\Pi$.

In this subsection we consider the wandering problems, which form a special subclass of weakly wandering problems, to illustrate the technique which is to be reused repeatedly in more general proofs.

We first note that all $\langle a, w^{-1}aw$-problems are quo-$\Pi$ for any $\Pi$.

\begin{proposition} Let $\Pi$ be a set of prime numbers. Then for any $w\in \Sigma$, $\langle a, w^{-1}aw\rangle$ is a quo-$\Pi$ subgroup of $F(a, b)$.
\end{proposition}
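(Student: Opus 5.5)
The plan is to show directly that the subgroup $H=\langle a, w^{-1}aw\rangle$ is quo-$\Pi$ in the strongest possible sense: for every $\gamma\in F(a,b)$ and every integer $n\geq 1$, $\gamma^n\in H$ implies $\gamma\in H$. Since this argument never uses primality or any property of $\Pi$, it settles the statement for every $\Pi$ at once. By Theorem~\ref{thm:solutionprotopology}~(i), this is the same as proving that an $\langle a, w^{-1}aw\rangle$-problem $(X,P)$ is quo-$\Pi$. We may take this to be the barbell problem consisting of an $a$-loop at $x_0$, an $a$-loop at $x_1$, and the reduced path labelled $w$ joining them. This is legitimate because $w\in\Sigma$ begins and ends with $b^{\pm1}$, so the graph is folded.

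First, we reduce to cyclically reduced words. Write $\gamma=u^{-1}cu$ with $c$ cyclically reduced. If $\gamma^n(x_0)=x_0$, then the reduced walk reading $\gamma^n=u^{-1}c^nu$ from $x_0$ is a closed reduced walk in the graph. Since the graph is an immersion, reading $u$ from $x_0$ must stay inside the graph and land at some vertex $y$. From $y$, the cyclically reduced word $c^n$ must read as a closed walk.

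Second, we classify the closed reduced walks from a vertex $y$ that read a cyclically reduced word, which is the main step. In a barbell graph every such circuit winds around the loops and travels along the bar. A cyclically reduced closed walk is a cyclic sequence of loop traversals and full bar crossings. The key point is that its label, read as a cyclic word, has the form $c=\prod a^{k_i}(\text{bar})^{\pm1}$. Because the only cycles are the two $a$-loops, the graph without the bar is a forest together with those loops. Hence any cyclically reduced closed walk either lies entirely in one loop, in which case $c=a^k$ and $y$ is $x_0$ or $x_1$, or it uses the bar. In the latter case it must cross the bar an even number of times and turn around only at the loops, so $c$ is a cyclic conjugate of a product of the form $a^{k_1}w^{-1}a^{k_2}w\cdots$.

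Third, we extract the $n$-th root. If $c^n$ is readable as a closed walk from $y$, then by the classification, $c^n$ is either a power of $a$ read at a loop vertex, or its cyclic walk passes through a loop vertex. In the first case $c$ itself is a power of $a$ read at that same vertex, so $c(y)=y$. In the second case, $c^n$ is periodic, and the closed walk of $c^n$ is determined by its starting vertex, because the graph is immersed and deterministic. After reading one copy of $c$ the walk is at a vertex $y'$, and $c^{n-1}$ read from $y'$ returns to $y$. The cleanest way to conclude $y'=y$ is through the periodicity. The walk of $c^n$ crosses the bar a total number of times equal to $n$ times the number of $b$-syllable blocks in $c$. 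Each crossing flips the endpoint, and between crossings the walk stays at one end. After one period $c$, the parity of crossings is the same in every period, so $y'=y$ exactly when that parity is even. If $c$ had odd parity, reading $c^n$ with $n$ odd would not return, while for $n$ even we still need $c$ itself to close. I expect this parity step to be the main obstacle. One must argue that the structure forces an even number of bar crossings in each copy of $c$, using the fact that a single bar crossing that starts and ends at loop vertices means $c$ contains $w^{\pm1}$ flanked by $a$-powers. Closing after $n$ periods with the same position pattern then forces $y'=y$. Once $c(y)=y$ is established, we get $\gamma(x_0)=u^{-1}cu(x_0)=x_0$, so $\gamma\in H$.
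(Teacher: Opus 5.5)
You have the same setup as the paper: pass to the barbell problem, write the test element as $u^{-1}cu$ with $c$ cyclically reduced, and reduce to showing that $c^n(y)=y$ forces $c(y)=y$ at $y=u(x_0)$. But that implication is the whole content of the proposition, and your third step does not prove it. The labels along the walk of $c^n$ from $y$ are periodic with period $h=|c|$. What you must show is that the vertex sequence $y_0=y,y_1,y_2,\dots$ is also $h$-periodic, i.e.\ $y_h=y_0$. The parity sketch does not establish this:
\begin{itemize}
\item The claim that the number of bar crossings is $n$ times the number of $b$-syllable blocks of $c$ is false. One crossing reads all of $w$, which has several $b$-syllables and may contain $a$'s, and how much of the bar a copy of $c$ traverses depends on where that copy starts.
\item ``Between crossings the walk stays at one end'' fails when $y$ or $c(y)$ is an interior bar vertex, and nothing excludes this. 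Even after rotating $c$ so that $y$ is a loop vertex, $c(y)$ could a priori lie inside the bar.
\item ``The parity of crossings is the same in every period'' presupposes the vertex periodicity you are trying to prove.
\item As you note yourself, even granting all this, even $n$ is left open. Yet $n=2$ is exactly the case needed when $\Pi$ is the set of odd primes.
\end{itemize}

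The missing idea, which is what the paper uses, is a discrete intermediate-value argument. Label the bar vertices $x_0,\dots,x_k$ in order, put $\ell(x_i)=i$, and set $g(t)=\ell(y_{t+h})-\ell(y_t)$.
\begin{itemize}
\item The vertex sequence has period $nh$, so $g$ sums to $0$ over a full period.
\item Each step changes the level by at most $1$, so $|g(t+1)-g(t)|\le 2$.
\item If $g$ never vanished, it would therefore jump from $1$ to $-1$ (or vice versa) at some $t$. This forces $\ell(y_{t+1})=\ell(y_t)+1$, $y_{t+h}=y_{t+1}$ and $y_{t+h+1}=y_t$.
\item The letters at positions $t$ and $t+h$ coincide, so one letter would map $y_t\mapsto y_{t+1}$ and also $y_{t+1}\mapsto y_t$. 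This is impossible, because the unique edge between consecutive bar vertices carries one label and not its inverse.
\item Hence $g(t)=0$ for some $t$, so $y_{t+h}=y_t$ by injectivity of $\ell$.
\item If $v$ is the word formed by the first $t$ letters, then $v(y_0)=y_t$ and $v(y_h)=y_{t+h}$. Injectivity of the partial permutation $v$ gives $y_h=y_0$.
\end{itemize}
This treats every $n\ge 2$, prime or not, uniformly. It needs no case analysis on where the copies of $c$ begin and end, which is exactly what your approach could not control.
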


\begin{proof} Let $(X, P)$ be an EP problem with $\pi_1(X, P)=\langle a, w^{-1}aw\rangle$. More specifically, suppose $w$ can be written in reduced form 
$$ w=c_k\cdots c_1 $$
where $c_1, c_k\in \{b, b^{-1}\}$ and $c_2, \dots, c_{k-1}\in \{a, a^{-1}, b, b^{-1}\}$. Let 
$$X=\{x_0, x_1, \dots, x_k\}$$ and $P=\{\tilde{a}, \tilde{b}\}$ where
$$ \tilde{a}=\{(x_0, x_0), (x_k, x_k)\}\cup \big\{(x_{i-1}, x_{i})\colon c_i=a\big\}\cup\big\{(x_i, x_{i-1})\colon c_i=a^{-1}\big\} $$
and
$$ \tilde{b}=\big\{(x_{i-1}, x_i)\colon c_i=b\big\}\cup\{(x_i, x_{i-1})\colon c_i=b^{-1}\big\}. $$
We define a {\em level function} $\ell\colon X\to \{0, \dots, k\}$ by letting $\ell(x_i)=i$ for all $0\leq i\leq k$.

Now write $w$ as $w=w_0^{-1}uw_0$ where there is no cancellation in $uu$ (i.e. $uu$ is in reduced form). Let $h=|u|$. Assume $q>1$ is any prime number such that $w^q(x_0)=x_0$. Then for some $0\leq i_0\leq k$ we have $u^q(x_{i_0})=x_{i_0}$ (in fact $x_{i_0}=w_0(x_0)$). We show that $u(x_{i_0})=x_{i_0}$. From this it follows that $w(x_0)=x_0$. 

Note that our assumption implies that for any $N$, $u^N(x_{i_0})$ is defined. By writing out the results of applying each suffix of $u^N$ to $x_{i_0}$, we get an infinite sequence in $X$
$$ y_0=x_{i_0}, y_1, y_2, \dots $$
Our assumption implies that $y_{qh}=y_0$, and thus the sequence has period $qh$. Note that for any $n\in\mathbb{N}$, $\ell(y_n)-\ell(y_{n+1})\in\{-1,0,1\}$. 

Define a function $g\colon \mathbb{N}\to\{0,\dots, k\}$ by $g(n)=\ell(y_{n+h})-\ell(y_n)$. We claim that $g(n)=0$ for some $n\in\mathbb{N}$. Toward a contradiction, assume $g(n)\neq 0$ for all $n\in\mathbb{N}$. We observe that for any $n\in\mathbb{N}$, 
$$ \begin{array}{rcl} g(n+1)-g(n)&=&\ell(y_{n+h+1})-\ell(y_{n+1})-\ell(y_{n+h})+\ell(y_n) \\
& \leq &|\ell(y_{n+h+1})-\ell(y_{n+h})|+|\ell(y_{n+1})-\ell(y_n)| \leq 2.
\end{array}$$
Also observe that for any $n\in\mathbb{N}$, we have
$$ g(n)+g(n+h)+\cdots+g(n+(q-1)h)=\ell(y_{qh+n})-\ell(y_n)=0. $$
Thus 
$$ \displaystyle\sum_{n=0}^{qh-1}g(n)=\displaystyle\sum_{j=0}^{h-1}(g(j)+g(j+h)+\cdots+g(j+(q-1)h))=0. $$
It follows that there is $0\leq n\leq qh-1$ such that $g(n)g(n+1)<0$, and since $|g(n+1)-g(n)|\leq 2$, we have $(g(n), g(n+1))=(1,-1)$ or $(-1,1)$. Without loss of generality, assume $g(n)=1$ and $g(n+1)=-1$ (the argument for the other case is similar). 
Then $\ell(y_{n+h})-\ell(y_n)=1$ and $\ell(y_{n+h+1})-\ell(y_{n+1})=-1$. Since $|\ell(y_{n+h})-\ell(y_{n+h+1})|\leq 1$, we must have $\ell(y_{n+1})=\ell(y_n)+1$. This means that $\ell(y_{n+h+1})=\ell(y_{n+1})-1=\ell(y_n)$ and $\ell(y_{n+h})=\ell(y_n)+1=\ell(y_{n+1})$. Thus $y_{n+h+1}=y_n$ and $y_{n+h}=y_{n+1}$. Let $c\in\{a, a^{-1}, b, b^{-1}\}$ be the letter such that $c(y_n)=y_{n+1}$. By our definition of the sequence $(y_n)$, we get that $c(y_{n+h})=y_{n+h+1}$, and thus $c(y_{n+1})=y_n$. This contradicts our definition of $(X, P)$.

Thus we have shown that there is $n\in\mathbb{N}$ such that $g(n)=0$, which implies that $y_{n+h}=y_n$. Let $v\in F(a, b)$ be the word such that $v(y_0)=y_n$. Then by the definition of the sequence $(y_n)$, we have $v(y_h)=y_{n+h}$. Since $v$ is a partial permutation, we conclude that $y_h=y_0$, or $u(x_{i_0})=x_{i_0}$ as required.
\end{proof}

\begin{definition} Let $w\in \Sigma$. Assume
$$ w=b^{\beta_k}a^{\alpha_{k-1}}b^{\beta_{k-1}}\cdots a^{\alpha_1}b^{\beta_1}, $$
where $k\geq 1$ and $\alpha_1,\dots, \alpha_{k-1}, \beta_1,\beta_1, \dots, \beta_k\in \mathbb{Z}\setminus\{0\}$. 
\begin{enumerate}
\item[(1)] The {\em $a$-power sequence} of $w$ is 
$$ (\alpha_1, \dots, \alpha_{k-1}); $$
The {\em $b$-power sequence} of $w$ is 
$$ (\beta_1, \dots, \beta_k). $$
\item[(2)] The {\em level sequence} of $w$ is $(\ell_0,\dots, \ell_k)$ where
$$ \ell_0=0, \ell_{i}=\ell_{i-1}+\beta_i \mbox{ for all $1\leq i\leq k$.} $$
\item[(3)] We say that a sequence $(n_0, \dots, n_k)$ is {\em wandering} if for any $0<i<k$, $n_0\neq n_i$ and $n_k\neq n_i$, or equivalently $\{n_1, \dots, n_{k-1}\}\cap \{n_0, n_k\}=\varnothing$. We say $w$ is {\em wandering} if the level sequence of $w$ is wandering. If $w$ is wandering, we call an $\langle a, w^{-1}aw\rangle$-problem a {\em wandering problem}.
\item[(4)] We say that a sequence $(n_0, \dots, n_k)$ is {\em returning} if $k$ is even and for all even $0\leq i\leq k$, $n_i=0$ and for all odd $0\leq i\leq k$, $n_i=1$. We say $w$ is {\em returning} if the level sequence of $w$ is returning. If $w$ is returning, we call an $\langle a,w^{-1}aw\rangle$-problem a {\em returning problem}.
\end{enumerate}
\end{definition}

The only word in $F(a, b)$ that is both wandering and returning has the level sequence $(0,1,0)$, and therefore is of the form $b^{-1}a^\alpha b$ for some $\alpha\neq 0$. Recall from Definition~\ref{def:*} that ${}^*\colon F(a, b)\to F(a, b)$ is the homomorphism given by $a^*=a$ and $b^*=b^{-1}ab$. Then for any $w\in \Sigma$, $w^*\in \Sigma(b^{-1}, b)$ is returning. For any $w\in \Sigma$, note that 
$$ (w^{-1}aw)^*=(w^*)^{-1}aw^*. $$
By Proposition~\ref{prop:awbasic} (ii), any $\langle a, w^{-1}aw\rangle$-problem has a $\Pi$-solution if and only if any $\langle a, (w^*)^{-1}aw^*\rangle$-problem, which is a returning problem, has a $\Pi$-solution.

\begin{theorem}\label{thm:wandering} Let $\Pi$ be a set of prime numbers. Any wandering problem has a $\Pi$-solution. I.e., for any wandering $w\in \Sigma$, an $\langle a, w^{-1}aw\rangle$-problem has a $\Pi$-solution. Equivalently, for any wandering $w\in \Sigma$, $\langle a, w^{-1}aw\rangle$ is closed in the pro-$\Pi$ topology of $F(a,b)$.
\end{theorem}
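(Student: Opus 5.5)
The approach is to build a $\Pi$-solution directly as a ``level $\times$ fiber'' action, in which $b$ shifts the level and $a$ acts inside a fiber by a permutation that depends on the level. The fiber permutations are obtained from Corollary~\ref{cor:trivial}. By Corollary~\ref{cor:fundgp} it suffices to treat the standard $\langle a,w^{-1}aw\rangle$-problem $(X,\{\tilde a,\tilde b\})$. This is the path $x_0,\dots,x_K$ spelled by $w=b^{\beta_k}a^{\alpha_{k-1}}\cdots a^{\alpha_1}b^{\beta_1}$, with $a$-loops at $x_0$ and $x_K$. Let $(\ell_0,\dots,\ell_k)$ be its level sequence. Wandering means that no $a$-run of $w$ occurs at level $0$ or at level $\ell_k$.

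Step 1 (fiber problem). Let $L=\{\ell_1,\dots,\ell_{k-1}\}$ be the set of levels at which $a$-runs occur. Introduce an alphabet $\{s_j\colon j\in L\}$ and build an EP problem $(F_0,S)$ as follows. It is a path that starts at a point $f_0$. For $i=1,\dots,k-1$ in turn, it appends $|\alpha_i|$ fresh points, joined by $s_{\ell_i}$-edges oriented according to the sign of $\alpha_i$. All points are distinct, so $G(F_0,S)$ is a tree and $\pi_1$ is trivial. Corollary~\ref{cor:trivial} then gives a $\Pi$-solution $(F,\{\sigma_j\}_{j\in L})$ with $\Pi$-group solvent $\Delta$. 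For $j\notin L$ set $\sigma_j=\mathrm{id}$. Since no $a$-run occurs at levels $0$ and $\ell_k$, this is consistent with the required loops at the two endpoints.

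Step 2 (total solution). Choose a $\Pi$-number $m$ larger than $2\max_i|\ell_i|+2$, and identify levels with $\mathbb{Z}/m$. Put $Y=(\mathbb{Z}/m)\times F$, with
$$\hat b(j,f)=(j+1,f),\qquad \hat a(j,f)=(j,\sigma_j(f)).$$
The group $\langle \hat a,\hat b\rangle$ sits inside $\Delta^{\mathbb{Z}/m}\rtimes\mathbb{Z}/m$, because conjugating $\hat a$ by powers of $\hat b$ gives level-wise elements of $\Delta$. Hence it is a $\Pi$-group.

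Step 3 (embedding). Map the path vertices into $Y$ by following $w$ from $(0,f_0)$. A $b$-letter changes the level by $\pm1$, and an $a$-letter moves the fiber coordinate along the fresh points of Step 1. The $a$-loops at both endpoints are respected because $\sigma_0$ and $\sigma_{\ell_k}$ are the identity. The $a$- and $b$-edges of $X$ are sent to edges of $Y$ by construction, so it only remains to check injectivity.

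The main obstacle is this injectivity check. Consecutive $b$-runs are separated by $a$-runs, which change the fiber coordinate to a point never used before. So two path vertices with the same fiber coordinate lie in the same maximal $b$-run. A $b$-run is monotone in level, and $m$ exceeds the span of all levels, so such vertices have distinct levels. There is one boundary case to handle carefully: a vertex where an $a$-run begins carries the fiber coordinate of the preceding segment. This is fine because fiber coordinates are the same only within one segment and a segment contains only one $b$-run. If this bookkeeping works, $(Y,\{\hat a,\hat b\})$ is a $\Pi$-solution for the problem. The equivalent closedness statement then follows from Theorem~\ref{thm:solutionprotopology} (ii).
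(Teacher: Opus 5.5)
Your proposal is correct and follows the paper's proof essentially step for step. The paper builds the same tree-shaped fiber problem, with one partial permutation per level in $\{\ell_1,\dots,\ell_{k-1}\}$, and solves it via Corollary~\ref{cor:trivial}; it then takes $X'\times\{c,\dots,d\}$ with $\tilde b$ a cyclic level shift and $\tilde a$ acting by the fiber solution at levels $\ell_i$ and trivially elsewhere. The only differences are that the paper shows the solvent is a $\Pi$-group by checking the quo-$\Pi$ condition directly and applying Lemma~\ref{lem:fullPi}, whereas you embed it in $\Delta\wr\mathbb{Z}/m$, and that you spell out the injectivity of the embedded path, which the paper calls clear.
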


\begin{proof} Let 
$$w=b^{\beta_k}a^{\alpha_{k-1}}b^{\beta_{k-1}}\cdots a^{\alpha_1}b^{\beta_1} $$
with $k\geq 1$ and $\beta_1,\dots, \beta_k, \alpha_1, \dots, \alpha_{k-1}\in \mathbb{Z}\setminus\{0\}$. Assume $w$ is wandering, that is, the level sequence $(\ell_0,\dots, \ell_k)$ of $w$ is wandering. Let 
$$ L^+=\max\{\ell_i\colon 0\leq i\leq k\} \mbox{ and } L^-=\min\{\ell_i\colon 0\leq i\leq k\}. $$
Let $m\geq L^+-L^-+1$ be a $\Pi$-number. We use a variation of the EP map {\tt Multiverse} as follows. 

Let $X$ be a set with $|\alpha_1|+\cdots+|\alpha_{k-1}|+1$ many elements. We name the elements of $X$ as follows. $X$ contains a subset $Y=\{y_1,\dots, y_k\}$ and we can write $X$ as a union
$$ X=X_1\cup \cdots \cup X_{k-1}. $$
For each $1\leq i\leq k-1$, 
$$X_i=\{x^i_0,\dots, x^i_{|\alpha_i|}\}. $$
In addition, we have for each $1\leq i\leq k-1$,
$$ y_i=x^i_0 \mbox{ and } y_{i+1}=x^i_{|\alpha_i|}. $$
This finishes naming all the elements of $X$. Thus all elements of $X$ can be enumerated as
$$\begin{array}{rl} y_1=x^1_0, x^1_1,\dots, &\!\!\!\!x^1_{|\alpha_1|}=y_2=x^2_0, x^2_1,\dots, x^2_{|\alpha_2|}=y_3=x^3_0,\cdots \\ \\
 \cdots, &\!\!\!\!x^{k-2}_{|\alpha_{k-2}|}=y_{k-1}=x^{k-1}_0, x^{k-1}_1,\dots,  x^{k-1}_{|\alpha_{k-1}|}=y_k. 
\end{array}$$
We define $k-1$ many partial permutations $p_1,\dots, p_{k-1}$ on $X$ as follows. For each $1\leq i\leq k-1$, let
$$ p_i=\left\{\begin{array}{ll}\left\{ (x^i_j, x^i_{j+1})\colon 0\leq j<|\alpha_i|\right\}, & \mbox{ if $\alpha_i>0$,} \\ \\
\left\{ (x^i_{j+1}, x^i_j)\colon 0\leq j<|\alpha_i|\right\}, & \mbox{ if $\alpha_i<0$.} 
\end{array}\right.
$$
Let 
$$P=\left\{p_{i_1}\cup \cdots \cup p_{i_\lambda}\colon 1\leq i_1<\cdots<i_\lambda\leq k-1, \ell_{i_1}=\cdots=\ell_{i_\lambda}\right\}. $$
 Then $(X, P)$ is an EP problem with $G(X, P)$ weakly connected. Moreover, $\pi_1(X, P)$ is trivial. By Corollary~\ref{cor:trivial}, $(X,P)$ has a $\Pi$-solution $(X', P')$. Let $p\mapsto p'$ be the bijection from $P$ onto $P'$. For notational simplicity, we use $p'_i$ to denote the extension of $p_i$ in $P'$, with the understanding that if $\ell_{i_1}=\ell_{i_2}$, then $p'_{i_1}=p'_{i_2}$.

Next we define an EP problem $(Z, R)$ similar to $\mbox{\tt Multiverse}(X', P';m)$. Let $c\leq L^-$ and $d\geq L^+$ be such that $d-c+1=m$. Let $$Z=X'\times \{c, c+1,\dots, d-1, d\}$$ and $R=\{\tilde{a}, \tilde{b}\}$, where
$$ \tilde{b}(x, t)=\left\{\begin{array}{ll} (x, t+1), & \mbox{ if $c\leq t<d$,} \\
(x, c), & \mbox{ if $t=d$;}\end{array}\right.
$$
and
$$ \tilde{a}(x, t)=\left\{\begin{array}{ll} (p'_i(x), t), & \mbox{ if $t=\ell_i$ for $1\leq i\leq k-1$,} \\
(x, t), & \mbox{ otherwise.}
\end{array}\right.
$$
Then $(Z,R)$ is a solution. We verify that $R$ is quo-$\Pi$. 

Let $q\not\in\Pi$, $u\in F(R)$ and $z=(x, t)\in Z$ be such that $u^q(z)=z$. Suppose 
$$ u=\tilde{b}^{\gamma_n}\tilde{a}^{\eta_n}\cdots \tilde{b}^{\gamma_1}\tilde{a}^{\eta_1}\tilde{b}^{\gamma_0} $$
where $\gamma_0, \gamma_n\in\mathbb{Z}$ and $\eta_1, \dots, \eta_n, \gamma_1,\dots, \gamma_{n-1}\in \mathbb{Z}\setminus\{0\}$. We have that $q(\gamma_0+\dots+\gamma_n)\equiv 0\pmod{m}$. It follows that $\gamma_0+\dots+\gamma_n\equiv 0\pmod{m}$. Let 
$$\begin{array}{rcl} s_0&=&(t+\gamma_0-c\!\!\!\!\mod m)+c, \\
s_1&=&(s_0+\gamma_1-c\!\!\!\!\mod m)+c,  \\
&&\cdots\cdots \\
s_{n-1}&=&(s_{n-2}+\gamma_{n-1}-c\!\!\!\!\mod m)+c. 
\end{array}$$
Let 
$$ A=\big\{ h\in\{0,\dots, n-1\}\colon s_h\in\{\ell_1,\dots, \ell_{k-1}\}\big\}. $$
Let $h_0,\dots,h_r$ enumerate all elements of $A$ in increasing order. For each $h\in A$, let $1\leq i_h\leq k-1$ be such that $s_h=\ell_{i_h}$. Then we have that
$$ \big({(p'_{i_{h_r}})}^{\eta_{h_r}}\cdots {(p'_{i_{h_1}})}^{\eta_{h_1}}{(p'_{i_{h_0}})}^{\eta_{h_0}}\big)^q(x)=x. $$
Since $q\not\in\Pi$ and $P'$ is quo-$\Pi$, we have
$$ {(p'_{i_{h_r}})}^{\eta_{h_r}}\cdots {(p'_{i_{h_1}})}^{\eta_{h_1}}{(p'_{i_{h_0}})}^{\eta_{h_0}}(x)=x $$
and $u(z)=z$. This shows that $R$ is quo-$\Pi$.

Hence $(Z,R)$ is a $\Pi$-solution. To finish the proof, we only need to show that an $\langle a, w^{-1}aw\rangle$-problem is a subproblem of $(Z,R)$. For this, we consider the path in $G(Z,R)$ consisting of the following vertices:
$$ (y_1, \ell_0)=(y_1, 0), (y_0, \mbox{\rm sgn}(\beta_1)), \dots, (y_1, \beta_1)=(x^1_0, \ell_1), \dots, (x^1_{|\alpha_1}, \ell_1)=(y_2, \ell_1), $$
$$ \cdots, (y_2, \ell_2)=(x^2_0, \ell_2),  \cdots\cdots, (y_{k-1}, \ell_{k-1})=(x^{k-1}_0, \ell_{k-1}), $$
$$ \cdots, (x^{k-1}_{|\alpha_{k-1}|}, \ell_{k-1})=(y_k, \ell_{k-1}), \dots, (y_k, \ell_k). $$
The path consists of $k$ many subpaths with labels $\tilde{b}$ or $\tilde{b}^{-1}$, namely from $(y_i, \ell_{i-1})$ to $(y_i, \ell_i)$ for $1\leq i\leq k$, and $k-1$ many subpaths with labels $\tilde{a}$ or $\tilde{a}^{-1}$, namely from $(x^i_0, \ell_i)$ to $(x^i_{|\alpha_i|}, \ell_i)$ for $1\leq i\leq k-1$. It is clear that the path corresponds to the word $w$, and for the two endpoints $z=(y_1, \ell_0)$ and $z=(y_k, \ell_k)$, we have $\tilde{a}(z)=z$. Thus we have found an $\langle a, w^{-1}aw\rangle$-problem $(Y,Q)$ which is a subproblem of $(Z,R)$, and for which $(Z,R)$ is a $\Pi$-solution.
\end{proof}

The $\langle a, w^{-1}aw\rangle$-subproblem in the above proof, which is a barbell problem,  together with a part of its $\Pi$-solution, are illustrated in Figure~\ref{fig:wandering}. For simplicity, Figure~\ref{fig:wandering} only illustrates the situation where the level sequence of $w$ is increasing. The proof works for the general case as long as $\{\ell_1,\dots, \ell_{k-1}\}\cap \{\ell_0, \ell_k\}=\varnothing$.

\begin{figure}[htbp]
\centering
\begin{tikzpicture}

\draw[dashed] (0,0) -- (8,0);
\draw[dashed] (0,1) -- (8,1);
\draw[dashed] (0,2) -- (8,2);
\draw[dashed] (0,5) -- (8,5);
\draw[dashed] (0,6) -- (8, 6);

\node[right] at (-1,0) {$\ell_0$};
\node[right] at (-1, 1) {$\ell_1$};
\node[right] at (-1,2) {$\ell_2$};
\node[right] at (-1,5) {$\ell_{k-1}$};
\node[right] at (-1,6) {$\ell_k$};

\node[right] at (0.7,-0.3) {$y_1$};
\node[right] at (1.75,-0.3) {$y_2$};
\node[right] at (3,-0.3) {$y_3$};
\node[right] at (4.9, -0.3) {$y_{k-1}$};
\node[right] at (6.7, -0.3) {$y_k$};

\fill (0.5,0) circle (1.5pt);
\fill (0.5,1) circle (1.5pt);
\fill (2,1) circle (1.5pt);
\fill (2,2) circle (1.5pt);
\fill (3.3,2) circle (1.5pt);
\fill (5.3,5) circle (1.5pt);
\fill (7,5) circle (1.5pt);
\fill (7,6) circle (1.5pt);

\draw[thick] (0.5,0) -- (0.5,1) -- (2,1) -- (2,2) --(3.3,2) --(3.3,3);
\draw[thick] (5.3,4) -- (5.3,5) -- (7,5) -- (7,6);

\draw (0.5,-0.25) circle (0.25);
\draw (7,6.25) circle (0.25);

\draw[dashed] (3.9,3.3) -- (4.7,3.7);

\end{tikzpicture}
\caption{\label{fig:wandering} A wandering problem and its solution.}
\end{figure}

\begin{corollary}\label{cor:wandering} Let $w\in\Sigma$ be a returning word of the form
$$ w=b^{-1}a^{\alpha_{2k+1}}b\cdots b^{-1}a^{\alpha_3}ba^{\alpha_2}b^{-1}a^{\alpha_1}b. $$
If the sequence
$$ (0, \alpha_1, \alpha_1+\alpha_3, \cdots, \alpha_1+\alpha_3+\cdots+\alpha_{2k+1}) $$
is wandering, then an $\langle a, w^{-1}aw\rangle$-problem has a $\Pi$-solution.
\end{corollary}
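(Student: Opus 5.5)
The plan is to reduce the statement to Theorem~\ref{thm:wandering} by inverting the map ${}^*$. I will exhibit a word $v\in\Sigma$ with $v^*=w$, up to the trivial equivalences of Proposition~\ref{prop:awbasic}. I will then check that the level sequence of $v$ is exactly
$$(0,\alpha_1,\alpha_1+\alpha_3,\dots,\alpha_1+\alpha_3+\cdots+\alpha_{2k+1}).$$
Once this holds, $v$ is wandering, so an $\langle a,v^{-1}av\rangle$-problem has a $\Pi$-solution by Theorem~\ref{thm:wandering}. Since $(v^{-1}av)^*=(v^*)^{-1}av^*=w^{-1}aw$, Proposition~\ref{prop:awbasic}~(ii) (via {\tt Multiverse}) then transfers the $\Pi$-solution to an $\langle a,w^{-1}aw\rangle$-problem.

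To construct $v$, recall $b^*=b^{-1}ab$, so $(b^{\beta})^*=b^{-1}a^{\beta}b$, while $a^*=a$. I therefore set
$$v=b^{\alpha_{2k+1}}a^{\alpha_{2k}}b^{\alpha_{2k-1}}\cdots a^{\alpha_2}b^{\alpha_1}.$$
Applying ${}^*$ gives
$$b^{-1}a^{\alpha_{2k+1}}b\,a^{\alpha_{2k}}\,b^{-1}a^{\alpha_{2k-1}}b\cdots a^{\alpha_2}\,b^{-1}a^{\alpha_1}b,$$
which is $w$ provided the displayed form of $w$ is read with $a^{\alpha_{2i}}$ sitting between consecutive $b\cdots b^{-1}$ blocks, as in the tail $ba^{\alpha_2}b^{-1}a^{\alpha_1}b$. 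Since $w$ is returning and irreducible, all $\alpha_j\neq 0$, so $v$ is reduced and lies in $\Sigma$. By definition its $b$-power sequence is $(\alpha_1,\alpha_3,\dots,\alpha_{2k+1})$, so its level sequence is the partial-sum sequence above.

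The main obstacle is bookkeeping at the ends. Writing $w$ as a product of the factors $b^{-1}a^{\alpha_{2i+1}}b$ separated by $a^{\alpha_{2i}}$ must exactly match the statement's form; this depends on $w$ starting with $b^{-1}$ and ending with $b$, which is why $w\in\Sigma(b^{-1},b)$ is returning. One should also confirm that no cancellation occurs in $v$, so that its level sequence is defined as in the definition. The case $k=0$, where $w=b^{-1}a^{\alpha_1}b$, gives $v=b^{\alpha_1}$ with level sequence $(0,\alpha_1)$; this is trivially wandering and is covered by the same argument.
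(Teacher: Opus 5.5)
Your proposal is correct and follows the paper's own proof: the paper likewise sets $u=b^{\alpha_{2k+1}}a^{\alpha_{2k}}\cdots a^{\alpha_2}b^{\alpha_1}$, observes that $u^*=w$ and that the given sequence is the level sequence of $u$, and concludes by Theorem~\ref{thm:wandering} together with Proposition~\ref{prop:awbasic}~(ii). Your hedge ``up to the trivial equivalences'' is unnecessary, since $v^*=w$ holds exactly.
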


\begin{proof} Let ${}^*\colon F(a, b)\to F(a, b)$ be the homomorphism given by $a^*=a$ and $b^*=b^{-1}ab$. Then $w=u^*$ where 
$$ u=b^{\alpha_{2k+1}}a^{\alpha_{2k}}\cdots b^{\alpha_3}a^{\alpha_2}b^{\alpha_1}. $$
The sequence 
$$ (0, \alpha_1, \alpha_1+\alpha_3, \cdots, \alpha_1+\alpha_3+\cdots+\alpha_{2k+1}) $$
is the level sequence of $u$. Now the corollary follows from Theorem~\ref{thm:wandering} and Proposition~\ref{prop:awbasic} (ii). 
\end{proof}

\subsection{Base problems and basic walks\label{subsec:baseproblem}}
Throughout this subsection let $\Pi$ be a set of prime numbers. We would like to define some generalizations of the notion of wandering word/problem. To set up the definitions, as well as to fix some notation for use in the rest of the paper, we first analyze the proof of Theorem~\ref{thm:wandering}.

In the proof of Theorem~\ref{thm:wandering} we considered a wandering word of the form 
\begin{equation}\label{eqn:w} w=b^{\beta_k}a^{\alpha_{k-1}}\cdots a^{\alpha_1}b^{\beta_1} \end{equation}
for $\alpha_1,\dots, \alpha_{k-1}, \beta_1,\dots, \beta_k\in\mathbb{Z}\setminus\{0\}$. 
First we defined an EP problem $(X, P)$ with trivial fundamenatal group. A more intuitive description of $G(X, P)$ is the following. $G(X, P)$ is a labelled multidigraph which is simply a labelled directed path of length 
$$ |\alpha_1|+\cdots+|\alpha_{k-1}|, $$
where $\alpha_1, \dots, \alpha_{k-1}$ are exponents of $a$ from the expression (\ref{eqn:w}). 
In this path, the first $|\alpha_1|$ many edges are labelled by $\ell_1$, the next $|\alpha_2|$ many edges are labelled by $\ell_2$, etc. where $(\ell_0, \ell_1, \dots, \ell_k)$ is the level sequence of $w$. The directions of the edges are determined by the signs of $\alpha_1$, $\alpha_2$, etc. Thus $|P|=|\{\ell_1,\dots, \ell_{k-1}\}|$. 

Next, using a variation of the EP map {\tt Multiverse} we obtained an $\langle a, w^{-1}aw\rangle$-problem $(Y, Q)$ and a $\Pi$-solution $(Z, R)$ for $(Y, Q)$. We call this $(Y,Q)$ the {\em base problem} for $w$. For a wandering word $w$, the base problem is an $\langle a, w^{-1}aw\rangle$-problem, namely, the two endpoints are fixed points of $a$ and between the two endpoints there is a directed path corresponding to $w$. The proof of Thereom~\ref{thm:wandering} shows that in the wandering case the base problem always has a $\Pi$-solution.

Now, with the same proof of Theorem~\ref{thm:wandering}, we generalize this to arbitrary $w\in \Sigma$ of the form (\ref{eqn:w}). Given $w\in\Sigma$, we define an EP problem $(X_w, P_w)$ as follows. Let $(\ell_0, \dots, \ell_k)$ be the level sequence of $w$. Let
$$ L=\big\{1\leq \lambda\leq k-1\colon \ell_\lambda\in \{\ell_0, \ell_k\}\big\}. $$
Let $\lambda_1<\cdots<\lambda_m$ enumerate the elements of $L$. Let $G_0=G(X_0, P_0)$ be a labelled multidigraph which is a labelled directed path of length
$$ |\alpha_1|+\cdots+|\alpha_{k-1}|$$
as defined above. In particular, it consists of subpaths $\pi_1,\dots, \pi_{k-1}$, where each $\pi_i$ is of length $|\alpha_i|$, with each edge in $\pi_i$ lablled by $\ell_i$ and the direction of the edge determined by the sign of $\alpha_i$. Denote the two endpoints of $G_0$ as $y_0$ and $y_k$ as before. Now obtain a labelled multidigraph $G_1$ by removing the subpaths $\pi_{\lambda_1}, \dots, \pi_{\lambda_m}$ and identifying the two endpoints of each of them. Next perform Stallings folding maps to $G_1$ (provided that only edges with the same label can be folded) until no more foldings are possible. The resulting labelled multidigraph is $G(X_w, P_w)$ for some EP problem $(X_w, P_w)$. Note that $|P_w|=|\{\ell_1,\dots, \ell_{k-1}\}\setminus\{\ell_0, \ell_k\}|$.

It is clear that $G(X_w, P_w)$ is a graph-theoretic tree and thus the fundamental group of $(X_w, P_w)$ is trivial. Thus, just as in the proof of Theorem~\ref{thm:wandering}, we may find a $\Pi$-solutoin $(X'_w, P'_w)$ for $(X_w, P_w)$, and use the same variation of ${\tt Multiverse}(X'_w, P'_w; M)$ (where $M$ is a sufficiently large $\Pi$-number) to obtain a $\Pi$-solution $(Z_w, R_w)$. We also identify a subproblem $(Y_w, Q_w)$ of $(Z_w, R_w)$ in the same fashion as in the proof of Theorem~\ref{thm:wandering}. That is, $(Y_w, Q_w)$ is obtained by applying the word $w$ to $(y_0,0)$. This results in a walk (rather than a directed path in the wandering case) between two fixed points of $a$, namely $(y_0,0)$ and $(y_k,\ell_k)$.
We call this walk the {\em basic walk} for $w$, and denote it by $W_w$. 

We can represent this walk $W_w$ symbolically as follows. Removing the subwords $a^{\alpha_{\lambda_1}}, \dots, a^{\alpha_{\lambda_m}}$ from $w$ results in $m+1$ many subwords $\tilde{w}_1, \dots, \tilde{w}_{m+1}$. That is,
$$ w=\tilde{w}_{m+1}a^{\lambda_m}\tilde{w}_m\cdots \tilde{w}_2a^{\lambda_1}\tilde{w}_1. $$
Consider the word
$$ \tilde{w}=\tilde{w}_{m+1}\tilde{w}_m\cdots \tilde{w}_1. $$
That is,
\begin{equation}\label{eqn:tildew} \tilde{w}=(b^{\beta_k}\cdots a^{\alpha_{\lambda_m+1}}b^{\beta_{\lambda_m+1}})b^{\beta_{\lambda_m}}a^{\alpha_{\lambda_m-1}}\cdots a^{\alpha_{\lambda_1+1}}(b^{\beta_{\lambda_1+1}}b^{\beta_{\lambda_1}}a^{\alpha_{\lambda_1-1}}\cdots a^{\alpha_1}b^{\beta_1}). \end{equation}
Applying the word $\tilde{w}_1$ to $(y_0, 0)$ results in a directed path $\nu_0$. Note that $\tilde{w}_1$ is wandering, and by our construction, both endpoints of $\nu_0$ are fixed points of $a$. Next apply the word $\tilde{w}_2$ to the ending vertex of $\nu_0$ to obtain a directed path $\nu_1$, and repeat in the same fashion until the word $\tilde{w}_{m+1}$ has been used. This procedure results in $m+1$ many directed paths $\nu_0,\dots, \nu_{m}$, with the ending vertex of $\nu_i$ the same as the starting vertex of $\nu_{i+1}$ for all $0\leq i\leq m$. Note that by our construction, both endpoints of each $\nu_i$ are fixed points of $a$, and so the application of $w$ to $(y_0,0)$ and that of $\tilde{w}$ result in the same walk $W_w$. The fixed points of $a$ located on the basic walk $W_w$, together with two endpoints of $W_w$ (the {\em starting} and {\em ending} vertices), are called {\em special}. We denote the set of all special vertices in $W_w$ by $S_w$.

Let $(Y_w, Q_w)$ be the EP problem so that $Y_w$ consists of the same vertices as $W_w$ and $G(Y_w, Q_w)$ is the union of the directed walk $W_w$ with $a$-labelled loops at all special vertices of $W_w$. We call $(Y_w, Q_w)$ the {\em base problem} for $w$. Note that, it follows from the above proof that, the definitions of the basic walk $W_w$ and the base problem $(Y_w, Q_w)$ do not depend on $\Pi$. In other words, if $\Pi\neq \Pi'$ are distinct sets of prime numbers, then the basic walks and the base problems defined with $\Pi$ and $\Pi'$ as parameters are isomorphic to each other.

The fundamental group of $(Y_w, Q_w)$ has $m+2$ many generators. Nevertheless, our construction guarantees the following basic fact.

\begin{proposition} For any $w\in \Sigma$, the base problem $(Y_w, Q_w)$ has a $\Pi$-solution.
\end{proposition}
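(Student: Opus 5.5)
The plan is to follow the proof of Theorem~\ref{thm:wandering} essentially verbatim, with $(X_w,P_w)$ in place of $(X,P)$. The base problem $(Y_w,Q_w)$ was defined as a subproblem of the solution $(Z_w,R_w)$, so the only content is to check two things: that $(Z_w,R_w)$ really is a $\Pi$-solution, and that $(Y_w,Q_w)$ really sits inside it.

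\textbf{Step 1: a $\Pi$-solution on the tree.} By construction $G(X_w,P_w)$ is a graph-theoretic tree: it comes from a path by contracting subpaths and folding, and neither operation creates a cycle. It is also weakly connected, so $\pi_1(X_w,P_w)$ is trivial. Corollary~\ref{cor:trivial} then gives a $\Pi$-solution $(X'_w,P'_w)$. Its elements are indexed by the levels in $\{\ell_1,\dots,\ell_{k-1}\}\setminus\{\ell_0,\ell_k\}$; write $p'_\ell$ for the permutation attached to level $\ell$.

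\textbf{Step 2: build $(Z_w,R_w)$ by the leveled Multiverse.} Choose a $\Pi$-number $M\ge L^+-L^-+1$ and an interval of levels $c\le L^-$, $d\ge L^+$ with $d-c+1=M$. Set $Z_w=X'_w\times\{c,\dots,d\}$.
\begin{itemize}
\item $\tilde b$ shifts the level cyclically mod $M$.
\item $\tilde a$ acts as $p'_\ell$ on level $\ell$ when $\ell\in\{\ell_1,\dots,\ell_{k-1}\}\setminus\{\ell_0,\ell_k\}$, and as the identity on all other levels, in particular on levels $\ell_0$ and $\ell_k$.
\end{itemize}
Both maps are full permutations, so $(Z_w,R_w)$ is a solution. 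For quo-$\Pi$ I repeat the argument of Theorem~\ref{thm:wandering}. If $u^q(z)=z$ with $q\notin\Pi$, then the total $b$-exponent of $u$ vanishes mod $M$, because $\gcd(q,M)=1$. The action of $u^q$ on the first coordinate is then the $q$-th power of a single word in $P'_w$. Since $P'_w$ is quo-$\Pi$ by Lemma~\ref{lem:fullPi}, we get $u(z)=z$. Lemma~\ref{lem:fullPi} then shows the solvent is a $\Pi$-group.

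\textbf{Step 3: embed $(Y_w,Q_w)$.} Start at $(y_0,0)$ and apply $w$, reading it as $\tilde w_{m+1}\cdots\tilde w_1$. Each $b$-block moves vertically. The $a$-blocks $a^{\alpha_i}$ with $i\notin L$ travel along the edges of $G(X_w,P_w)$ labelled $\ell_i$ at level $\ell_i$. The removed blocks $a^{\alpha_{\lambda_j}}$ occur at levels $\ell_0$ or $\ell_k$, where $\tilde a$ is the identity, so they act trivially. This is consistent with the identification of the endpoints of $\pi_{\lambda_j}$ in $G_1$.

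So every letter of $w$ is defined along this walk, and the walk is $W_w$. The special vertices are the points at level $\ell_0$ or $\ell_k$ where an $a$-block was removed, together with the two endpoints, and each of these is a fixed point of $\tilde a$. Therefore $G(Y_w,Q_w)$, which is the walk plus $a$-loops at the special vertices, is a subgraph of $G(Z_w,R_w)$. The inclusion of vertices together with $a\mapsto\tilde a$, $b\mapsto\tilde b$ is an embedding, so $(Z_w,R_w)$ is a $\Pi$-solution for $(Y_w,Q_w)$.

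\textbf{Main obstacle.} I expect the delicate point to be Step 3. The folding in the construction of $G(X_w,P_w)$ means distinct positions of $w$ may land on the same vertex, and one must check that $W_w$ is consistent, i.e.\ well-defined as a walk. This is resolved by defining $W_w$ and $(Y_w,Q_w)$ as exactly the image of the walk in $(Z_w,R_w)$. That image is automatically a set of partial permutations, being a restriction of full permutations, so no inconsistency can arise.
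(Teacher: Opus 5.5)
Your proposal is correct and is essentially the paper's own argument. The paper also notes that $G(X_w,P_w)$ is a tree, obtains a $\Pi$-solution $(X'_w,P'_w)$ from Corollary~\ref{cor:trivial}, and applies the same leveled variant of {\tt Multiverse} from the proof of Theorem~\ref{thm:wandering} (with $\tilde a$ trivial on levels $\ell_0,\ell_k$) to get the $\Pi$-solution $(Z_w,R_w)$; it then defines $(Y_w,Q_w)$ as the subproblem traced by applying $w$ to $(y_0,0)$, which is why it states the proposition as a direct consequence of the construction.
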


To recap the notation, the basic walk $W_w$ consists of $m+1$ many directed paths $\nu_0,\dots, \nu_{m}$. For each $0\leq i\leq m$, the special vertices occurring in $\nu_i$ are exactly its endpoints. Thus if we enumerate the elements of $S_w$ in the order that they are visited along the basic walk $W_w$, we have exactly
$$ x_0, x_1, \dots, x_m, x_{m+1}, $$
where $x_0$ is the starting vertex $(y_0,0)$, $x_{m+1}$ is the ending vertex $(y_k,\ell_k)$, and for each $0\leq i\leq m$, $x_{i}$ and $x_{i+1}$ are the two endpoints of $\pi_i$.

\begin{definition} For $w\in \Sigma$, the base problem for $w$ is {\em nondegenerate} if in the enumeration of the special vertices along the basic walk for $w$
$$ x_0, x_1, \dots, x_m, x_{m+1}, $$
we have $x_i\neq x_{i+1}$ for all $0\leq i\leq m$. In this case we also say $w$ is {\em nondegenerate}; otherwise $w$ is {\em degenerate}.
\end{definition}

The following observation shows that, with regard to the existence of $\Pi$-solutions, it is without loss of generality to concentrate on nondegenerate words.

\begin{lemma} For each $w\in \Sigma$, there is a nondegenerate $w^*\in \Sigma$ such that $\langle a, w^{-1}aw\rangle\leftrightsquigarrow_\Pi\big\langle a, (w^*)^{-1}aw^*\big\rangle$.
\end{lemma}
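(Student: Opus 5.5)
We argue by induction on the length $|w|$. If $w$ is nondegenerate, take $w^*=w$. So suppose $w$ is degenerate. Then for some $0\leq i\leq m$ the two special vertices $x_i$ and $x_{i+1}$ coincide in the base problem $(Y_w,Q_w)$. By construction, the directed path $\nu_i$ is traced by the subword $\tilde w_{i+1}$. Hence the subword $u$ of $w$ that $\nu_i$ reads is a closed walk at a fixed point of $a$ inside the $\Pi$-solution $(Z_w,R_w)$. More precisely, $\nu_i$ is obtained by applying $\tilde w_{i+1}$. That word is wandering, with its intermediate levels different from its end levels. The degeneracy means that $\tilde w_{i+1}$ returns to the same level and the same $X'_w$-coordinate.

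The plan is to show that such a $u$ can be excised from $w$ without changing the $\Pi$-solvability class. The tools are Corollary~\ref{cor:fundgp} and the observation that only the fundamental group of the $\langle a,w^{-1}aw\rangle$-problem matters.

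The concrete step is this. Write $w=w_2\,u\,w_1$, where $w_1$ brings the base point $x_0$ to $x_i$ and $u$ is the portion between $x_i$ and $x_{i+1}$. I would first show that $u$ lies in the subgroup generated by $a$ and conjugates of $a$ by the prefixes involved. Failing that, I would show that $u$ acts trivially in every $\Pi$-solution of any $\langle a,w^{-1}aw\rangle$-problem, by applying the folding argument of Lemma~\ref{lem:folds} to $G(X,P)$. Either way, deleting $u$ should yield a word $w'=w_2w_1$, reduced and if necessary conjugated or inverted. For $w'$ I would aim to prove
$$\langle a, w^{-1}aw\rangle\leftrightsquigarrow_\Pi\langle a, w'^{-1}aw'\rangle .$$

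One direction uses Proposition~\ref{prop:awbasic}(iii). Take a substitution $\phi_v$ fixing $a$, so that an $\langle a,w'^{-1}aw'\rangle$-problem arises from an $\langle a,w^{-1}aw\rangle$-problem. Alternatively, embed one problem as a subproblem of a one-path extension of the other, via Lemma~\ref{lem:singlepathextension}. The other direction is the same argument read backwards, since both problems admit the same solutions.

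Because $|w'|<|w|$ and $w'\in\Sigma$ (after cyclic reduction, using (i) and (vii) of Proposition~\ref{prop:awbasic} to normalize the starting and ending letters), induction finishes the proof. If $w'$ collapses to a word with trivial or cyclic fundamental group, Corollary~\ref{cor:trivial} and Theorem~\ref{thm:cyclic} handle it, as both sides then have $\Pi$-solutions.

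The main obstacle is the middle step: showing that a degenerate segment genuinely contributes nothing to $\pi_1$ up to the equivalence $\leftrightsquigarrow_\Pi$. The base problem has $m+2$ generators, while the target problem has only two, so one must check that an identification $x_i=x_{i+1}$ forced in every solution can be realized at the level of the two-generator problem. I would try first to realize it by a folding of $G(X,P)$ that preserves the fundamental group of the $\langle a,w^{-1}aw\rangle$-problem, and then verify with Corollary~\ref{cor:fundgp}.
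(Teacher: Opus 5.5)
Your proposal has a genuine gap. The excision step, which you yourself flag as the main obstacle, carries the whole argument, and none of the routes you suggest for it can work.

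The coincidence $x_i=x_{i+1}$ is a property of one particular $\Pi$-solution $(Z_w,R_w)$ of the base problem $(Y_w,Q_w)$. That problem has $m+2$ generators, and the coincidence says nothing about an $\langle a,w^{-1}aw\rangle$-problem. By Corollary~\ref{cor:fundgp} you may take the latter to be the barbell: the path read by $w$ from $x_0$, with $a$-loops at its two ends. In it, $w_1(x_0)$ and $uw_1(x_0)$ are distinct vertices of that path whenever $u$ is nonempty. Every solution is an extension with an injective vertex map, so $u$ moves $w_1(x_0)$ in \emph{every} solution. Hence the claim that $u$ acts trivially in all $\Pi$-solutions is false.

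Nor does anything in the paper relate $\langle a,w^{-1}aw\rangle$ to $\langle a,w'^{-1}aw'\rangle$ for $w'=w_2w_1$. These are different subgroups of $F(a,b)$ (their Stallings graphs have different sizes), so Corollary~\ref{cor:fundgp} does not apply. Proposition~\ref{prop:awbasic}(iii) gives only the one direction $\langle a,\phi_v(w')\rangle\rightsquigarrow_\Pi\langle a,w'\rangle$, and it needs $w$ to be a substitution instance of $w'$, which excision does not produce. Lemma~\ref{lem:singlepathextension} only adds hanging paths, which leave $\pi_1$ unchanged. Your sentence ``both problems admit the same solutions'' is exactly what would have to be proved. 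Finally, your induction may terminate at a word outside $\Sigma$, which would not give the nondegenerate $w^*\in\Sigma$ the statement asks for.

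The missing idea is that $w^*$ in the statement is the image of $w$ under the homomorphism ${}^*=\phi_{b^{-1}ab}$ of Definition~\ref{def:*}; no induction is needed. Applying {\tt Multiverse} to an $\langle a,w^{-1}aw\rangle$-problem yields an $\langle a,(w^*)^{-1}aw^*\rangle$-problem, by Theorem~\ref{thm:kcyclic} and $(w^{-1}aw)^*=(w^*)^{-1}aw^*$. So Proposition~\ref{prop:awbasic}(ii), i.e.\ Theorem~\ref{thm:multiverse}(iii), gives the equivalence $\leftrightsquigarrow_\Pi$ directly.

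If $w=b^{\beta_k}a^{\alpha_{k-1}}\cdots a^{\alpha_1}b^{\beta_1}$, then $w^*=b^{-1}a^{\beta_k}b\,a^{\alpha_{k-1}}\,b^{-1}a^{\beta_{k-1}}b\cdots a^{\alpha_1}\,b^{-1}a^{\beta_1}b\in\Sigma(b^{-1},b)$ is returning, with level sequence $(0,1,0,\dots,1,0)$. Hence the removed $a$-blocks are exactly the level-$0$ ones. The basic walk is the concatenation of the level-$1$ blocks $a^{\beta_1},\dots,a^{\beta_k}$, all carrying a single label and folding onto one line. Consecutive special vertices therefore differ by $\beta_{j}\neq 0$ steps along that line, so $w^*$ is nondegenerate.
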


\begin{proof} Let ${}^*\colon F(a,b)$ be the homomorphism given by $a^*=a$ and $b^*=b^{-1}ab$. Then by Proposition~\ref{prop:awbasic} (ii), $\langle a, w^{-1}aw\rangle\leftrightsquigarrow_\Pi\big\langle a, (w^*)^{-1}aw^*\big\rangle$. $w^*$ is a returning word of the form
$$ b^{-1}a^{\alpha_{2k+1}}ba^{\alpha_{2k}}\cdots a^{\alpha_2}b^{-1}a^{\alpha_1}b $$
where $\alpha_1, \dots, \alpha_{2k}\in \mathbb{Z}\setminus\{0\}$. 
Its level sequence is $(0,1,0,1,\dots, 0,1,0)$. Hence the basic walk consists of a concatenation of $\pi_1, \pi_3, \dots, \pi_{2k+1}$. Since the length of $\pi_{2i+1}$ for each $0\leq i\leq k$ is $|\alpha_{2i+1}|\neq 0$, we know that $w^*$ is nondegenerate.
\end{proof}

It is easy to see that if $w$ is wandering and degenerate, then $w\in \langle b\rangle$. From now on we tacitly assume that all  $w\in \Sigma$ we consider are nondegenerate.

We may classify the nondegenerate $\langle a, w^{-1}aw\rangle$-problems according to the behavior of their basic walks as follows.

\begin{definition} Let $w\in \Sigma$ be nondegenerate and let $\kappa\geq 1$ be an integer. 
\begin{enumerate}
\item[(1)] For each special vertex $v\in S_w$, the number of times $v$ is visited on the basic walk $W_w$ is called the {\em visiting frequency} of $v$ and is denoted by $\phi_w(v)$.
\item[(2)] We say that $w$ is {\em $\kappa$-weakly wandering} if the visiting frequency of the ending vertex of the basic walk $W_w$ is exactly $\kappa$. If $w$ is $\kappa$-weakly wandering, we say that an $\langle a, w^{-1}aw\rangle$-problem is {\em $\kappa$-weakly wandering}.
\item[(3)] $w$ is {\em weakly wandering} if it is $1$-weakly wandering. If $w$ is weakly wandering, we say that an $\langle a, w^{-1}aw\rangle$-problem is {\em weakly wandering}.
\end{enumerate}
\end{definition}

It is clear that if $w$ is wandering and nondegenerate then it is weakly wandering. Also note that $w$ is wandering if and only if $w^{-1}$ is wandering, but weakly wandering is not closed under taking inverses.

\subsection{Weakly wandering problems\label{subsec:weakwandering}} Throughout this subsection let $\Pi$ be a set of prime numbers. Our main theorem of this subsection is the following.

\begin{theorem}\label{thm:weakwandering} If $w\in \Sigma$ is weakly wandering, then $\langle a, w^{-1}aw\rangle$ is closed in the pro-$\Pi$ topology of $F(a, b)$. 
\end{theorem}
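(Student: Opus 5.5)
By Theorem~\ref{thm:solutionprotopology}~(ii), it is enough to produce a $\Pi$-solution for some $\langle a, w^{-1}aw\rangle$-problem. Corollary~\ref{cor:fundgp} then says the particular problem does not matter. I will start from the base problem $(Y_w,Q_w)$, which by the Proposition of Subsection~\ref{subsec:baseproblem} has a $\Pi$-solution $(Z_w,R_w)$. The basic walk $W_w$ in it runs from $x_0$ to $x_{m+1}$, and both endpoints are fixed points of $\tilde a$. The obstruction is that $\pi_1(Y_w,Q_w)$ has $m+2$ generators: the walk may revisit special vertices $x_1,\dots,x_m$, creating extra $a$-fixed points and extra cycles. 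What I want is an $\langle a,w^{-1}aw\rangle$-subproblem of some $\Pi$-solution, i.e.\ a copy of $W_w$ in which only the two endpoints carry $a$-loops and the interior follows $w$ faithfully.

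\textbf{Key idea.} Weak wandering says the ending vertex $x_{m+1}$ is visited exactly once along $W_w$. So I can modify the solution locally around $x_{m+1}$ without disturbing the earlier portion of the walk. Concretely, I will apply {\tt Reroute} to $(Z_w,R_w)$ with parameters $k$ (a $\Pi$-number $\geq 2$), $p_0=\tilde b^{\pm1}$, and $x_0$ the vertex where the last $b$-segment of $\nu_m$ enters $x_{m+1}$. By Theorem~\ref{thm:reroute}~(ii) the result is again a $\Pi$-solution, and it covers $(Z_w,R_w)$.

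\textbf{Lifting the walk.} I lift $W_w$ starting at level $0$ of the rerouted problem. Since the last edge into $x_{m+1}$ is traversed only once, the lifted endpoint lands at $(x_{m+1},1)$, while all earlier visits stay at level $0$. The lifted $x_{m+1}$ is still $a$-fixed, because $\hat a$ acts levelwise. The earlier special vertices are split from it, but their mutual coincidences persist. To remove those, I iterate: process the special vertices in reverse order along $W_w$, and for each repeated visit reroute on an edge used exactly once between consecutive visits. That such an edge exists is the step to verify.

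\textbf{Alternative I would try first.} It may be cleaner to use the fact that only the two endpoints need $a$-loops. Consider the subproblem of the $\Pi$-solution consisting of the lifted walk together with $a$-loops at its two ends only. Its fundamental group is $\langle a,w^{-1}aw\rangle$ precisely when the lifted walk's only closed sub-circuits are those forced by $a$-loops at the ends, and its endpoints are distinct and visited once. The interior coincidences (returns to the start, or interior vertices repeated) are harmless as long as they do not create extra reduced circuits, i.e.\ as long as the graph traced by $w$ from $x_0$ has the right shape. One can always first take a one-path extension by Lemma~\ref{lem:singlepathextension}. So the remaining obstacle, which I expect to be the heart of the proof, is to show that after rerouting at the unique final visit, any return of the lifted walk to the lifted start, and any interior coincidence, is caused by a genuine cycle in $W_w$. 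Such a cycle can be broken by a further {\tt Reroute} on an edge it traverses an odd number of (or a number coprime to $k$) times. This uses a parity/level-counting argument of the kind in the quo-$\Pi$ proof. Finally, Theorem~\ref{thm:solutionprotopology}~(ii) converts the resulting $\Pi$-solution into closedness of $\langle a,w^{-1}aw\rangle$ in the pro-$\Pi$ topology.
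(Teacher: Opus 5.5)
There is a genuine gap. Your reduction to exhibiting a $\Pi$-solution of an $\langle a,w^{-1}aw\rangle$-problem is fine, but you have misidentified the obstruction, and the move you build on cannot remove it.

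\textbf{What the obstruction actually is.} In $(Z_w,R_w)$ every interior special vertex $x_1,\dots,x_m$ is $\tilde a$-fixed, and the block $a^{\alpha_{\lambda_j}}$ of $w$ is read exactly at $x_j$. So the walk of $w$ stays put there, winding around an $a$-loop. This is a problem even when $x_0,\dots,x_{m+1}$ are pairwise distinct. The walk is then not injective, so the barbell does not embed. Moreover, any subproblem in which $w$ can be read must contain those interior loops, since the walk uses them; you cannot keep ``$a$-loops at the two ends only''. These loops contribute extra elements $u^{-1}au$, where $u$ is the part of $w$ read before reaching $x_j$.

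\textbf{Why your rerouting does not help.} Rerouting on a $\tilde b$-edge cannot touch these loops. The map $\hat a$ acts levelwise, so every lift of an $\tilde a$-fixed vertex stays $\tilde a$-fixed. The fact you use to keep $x_{m+1}$ fixed applies equally to $x_1,\dots,x_m$. The reroute at the last edge into $x_{m+1}$ is also unnecessary, since weak wandering already gives $x_{m+1}\notin\{x_0,\dots,x_m\}$. Finally, your iteration over ``an edge used exactly once between consecutive visits'' has no candidates other than $a$-loops. The graph traced by $W_w$, apart from its $a$-loops, is a tree, and a closed subwalk crosses each tree edge equally often in each direction.

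\textbf{The missing idea: unroll the interior $a$-loops themselves.}
\begin{itemize}
\item The paper applies \texttt{Reroute}$(\,\cdot\,;M,\tilde a,z_j)$ successively at the $\tilde a$-fixed special vertices $z_j$ met along the walk, with a $\Pi$-number $M>2|w|$. Then $a^{\alpha_{\lambda_j}}$ carries the walk along the new $M$-cycle to $z_j^*$ on a fresh sheet, from which $\nu_j$ continues.
\item If the walk later reaches a vertex whose loop was already unrolled, it moves $\alpha_{\lambda_j}$ steps along the existing cycle in the coordinate $\eta$. Here $\eta$ is the least index with $\mathrm{proj}_{[0,\eta-1]}(z_j)=\mathrm{proj}_{[0,\eta-1]}(z_\eta)$.
\item Injectivity of the whole walk is proved by trees $W_\rho$ that contain it. Each $W_\rho$ consists of copies of $W_{\rho-1}$ on every sheet except level $|w|$, together with the new cycle minus that one vertex; the bound $M>2|w|$ is what makes this work.
\item Weak wandering is used only at the end, to see that $z_{m+1}$ is never rerouted and so remains $\tilde a$-fixed.
\end{itemize}
Your remark about rerouting an edge traversed a number of times coprime to $k$ only separates the two ends of a single cycle. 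It does not give injectivity of the full walk, nor control how successive reroutes interact. That is exactly the content of the proof, and it is what you left as ``the remaining obstacle''.
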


\begin{corollary} Let $w\in \Sigma$. Suppose either $w$ or $w^{-1}$ is weakly wandering. Then $\langle a, w^{-1}aw\rangle$ is closed in the pro-$\Pi$ topology of $F(a, b)$.
\end{corollary}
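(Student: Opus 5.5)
The corollary should follow from Theorem~\ref{thm:weakwandering} by a symmetry argument, so the only real step is to pass from $w^{-1}$ back to $w$. If $w$ itself is weakly wandering, Theorem~\ref{thm:weakwandering} gives the conclusion directly. So assume $w^{-1}$ is weakly wandering.

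First check that $w^{-1}\in\Sigma$. Inverting a reduced word keeps it reduced. It also turns a word that starts and ends with $b^{\pm1}$ into another such word, since $\Sigma(b^{\epsilon},b^{\delta})^{-1}=\Sigma(b^{-\delta},b^{-\epsilon})$. Hence Theorem~\ref{thm:weakwandering} applies to $w^{-1}$, and $\langle a, waw^{-1}\rangle=\langle a,(w^{-1})^{-1}aw^{-1}\rangle$ is closed in the pro-$\Pi$ topology of $F(a,b)$.

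Next, note that $\langle a, w^{-1}aw\rangle = w^{-1}\langle a, waw^{-1}\rangle w$. Indeed, conjugating by $w$ sends $a\mapsto w^{-1}aw$ and $waw^{-1}\mapsto a$. The pro-$\Pi$ topology makes $F(a,b)$ a topological group, so the map $g\mapsto w^{-1}gw$ is a homeomorphism of $F(a,b)$. It therefore carries closed sets to closed sets, and $\langle a,w^{-1}aw\rangle$ is closed. This is exactly the observation used in Proposition~\ref{prop:awbasic}(vii). Alternatively, one can cite (vii) together with Theorem~\ref{thm:solutionprotopology}(ii) to transfer the existence of a $\Pi$-solution.

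The only thing to be careful about is the membership $w^{-1}\in\Sigma$, together with the tacit nondegeneracy convention. If needed, nondegeneracy can be arranged via the preceding lemma, and the weakly wandering hypothesis on $w^{-1}$ already presupposes that $w^{-1}$ is nondegenerate. I expect no other obstacle.
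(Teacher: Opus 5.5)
Your proposal is correct and matches the paper's (implicit) argument. You apply Theorem~\ref{thm:weakwandering} to whichever of $w, w^{-1}$ is weakly wandering, and in the $w^{-1}$ case you transfer closedness from $\langle a, waw^{-1}\rangle$ to its conjugate $\langle a, w^{-1}aw\rangle$, which is exactly the observation behind Proposition~\ref{prop:awbasic}(vii), with the checks $w^{-1}\in\Sigma$ and built-in nondegeneracy as the only bookkeeping needed.
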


The rest of this subsection is devoted to a proof of Theorem~\ref{thm:weakwandering}.

Suppose $w\in \Sigma$ is weakly wandering. We use all notation introduced in the preceding section. In particular, let $W_w$ be the basic walk for $w$. Enumerate all special vertices visited along $W_w$ as $x_0, x_1, \dots, x_{m+1}$. Let $(Y_w, Q_w)$ be the base problem for $w$ and let $(Z_w, R_w)$, where $R_w=\{\tilde{a}, \tilde{b}\}$, be a $\Pi$-solution for $(Y_w, Q_w)$. 

Since $w$ is weakly wandering, we have $x_i\neq x_{i+1}$ for $0\leq i\leq m$ and $x_{m+1}\not\in\{x_0, x_1,\dots, x_m\}$.
If $m=0$, then $w$ is wandering and the theorem holds. Thus we assume $m>0$. Let $M>2|w|$ be a $\Pi$-number.

To obtain a solution $(Z, R)$ for an $\langle a, w^{-1}aw\rangle$-problem, we are going to apply the EP map {\tt Reroute} recursively to define a sequence of EP problems $(Z_j, R_j)$, $R_j=\{\tilde{a}_j, \tilde{b}_j\}$, for $0\leq j\leq m$. To begin, let $(Z_0, R_0)=(Z_w, R_w)$. Thus $\tilde{a}_0=\tilde{a}$ and $\tilde{b}_0=\tilde{b}$. For $j\geq 1$, we will define $Z_j$ to be of the form
$$ Z_0\times T_1\times \cdots \times T_j $$
where for each $1\leq\eta\leq j$, $T_\eta$ is either $\{0, \dots, M-1\}$ or $\{0\}$. We identify it with the set $Z_j\times \{0\}^{m-j}$, which is a subset of $Z_0\times \{0, \dots, M-1\}^m$, and which is in turn identified with a set of the form
$$ Z_0\times \{0, \dots, M-1\}^r $$
for some $1\leq r\leq m$ (by removing the coordinates whose corresponding spaces are $\{0\}$). 
Eventually, we let $(Z, R)=(Z_m, R_m)$. For each $1\leq j\leq m$, let $$\mbox{\rm proj}_j\colon Z_0\times \{0, \dots, M-1\}^m\to \{0, \dots, M-1\}$$ be the projection to the $j$-th coordinate. Let $$\mbox{\rm proj}_0\colon Z_0\times \{0, \dots, M-1\}^m\to Z_0$$ be the projection to the $0$-th coordinate. For $1\leq j\leq m$, let
$$ \mbox{\rm proj}_{[0,j]}\colon Z_0\times \{0, \dots, M-1\}^m\to Z_0\times \{0, \dots, M-1\}^j $$
be the map $(\mbox{\rm proj}_0, \dots, \mbox{\rm proj}_{j})$.

In order to define $(Z_j, R_j)$, we need to keep track of the special vertices and their copies in each $Z_j$. For this, let 
$$ S_0=\{x_0, x_1, \dots, x_m, x_{m+1}\}. $$
This is the set of special vertices of $(Z_0, R_0)$. Since $(Z_0, R_0)$ is a solution for $(Y_w, Q_w)$, we have $\tilde{a}_0(x)=x$ for all $x\in S_0$. We also let $S_0^a=S_0$. $S_0^a$ is the set of all $\tilde{a}_0$-fixed special vertices in $S_0$. 

Now let 
$$ (Z_1, R_1)=\mbox{\tt Reroute}(Z_0, R_0; M, \tilde{a}_0, x_1). $$
Thus $Z_1=Z_0\times T_1$ where $T_1=\{0, \dots, M-1\}$. We identify $Z_0$ with $Z_0\times\{0\}$, which is a subset of $Z_1$. 
Let 
$$ S_1=\{(x_j, i)\in Z_1\colon 0\leq j\leq m+1, 0\leq i\leq M-1\}\subseteq Z_1. $$
Then $S_1$ is the set of all copies of special vertices of $Z_0$ in $Z_1$; we refer to the elements of $S_1$ as the {\em special} vertices of $(Z_1, R_1)$. Let 
$$ S_1^a=\{z\in Z_1\colon \tilde{a}_1(z)=z\} $$
be the set of all $\tilde{a}_1$-fixed special vertices in $S_1$. Then we observe that
$$ S_1^a=S_1\setminus \{(x_1,i)\colon 0\leq i\leq M-1\}. $$

Let $z_0=(x_0,0)$ and $z_1=(x_1,0)$ be elements of $Z_1$. Then the path $\nu_0$ in $G(Z_0,R_0)$ from $x_0$ to $x_1$ becomes a path in $G(Z_1, R_1)$ from $z_0$ to $z_1$ under the identification of $Z_0$ with $Z_0\times\{0\}$. We continue to denote this path by $\nu_0$. Let 
\begin{equation}\label{eqn:z1*} z_1^*=(x_1,\alpha_{\lambda_1})=\tilde{a}_1^{\alpha_{\lambda_1}}(z_1) \end{equation}
and
\begin{equation}\label{eqn:z2} z_2=(x_2, \alpha_{\lambda_1})=\tilde{b}_1^{\beta_{\lambda_2}}\tilde{a}_1^{\alpha_{\lambda_2-1}}\cdots\tilde{a}_1^{\alpha_{\lambda_1+1}}\tilde{b}_1^{\beta_{\lambda_1+1}}(z_1^*). \end{equation}
Expression (\ref{eqn:z1*}) gives rise to a path $\mu_1$ in $G(Z_1, R_1)$ from $z_1$ to $z_1^*$, with edges labelled by $\tilde{a}_1$, and the direction of the edges determined by the sign of $\alpha_{\lambda_1}$. Similarly, expression (\ref{eqn:z2}) gives rise to a path in $G(Z_1, R_1)$ from $z_1^*$ to $z_2$. This path is entirely contained in $Z_0\times \{\alpha_{\lambda_1}\}\subseteq Z_1$, and is a copy of the path $\nu_1$ (viewed as a path from $x_1$ to $x_2$ in $Z_0\times\{0\}\subseteq Z_1$); we continue to denote it by $\nu_1$. Note that in $G(Z_1, R_1)$ the concatenation of the paths
$$ z_0\stackrel{\nu_0}{\longrightarrow} z_1\stackrel{\mu_1}{\longrightarrow}z_1^*\stackrel{\nu_1}{\longrightarrow}z_2 $$
is still a path in $G(Z_1, R_1)$ and contains no special vertices of $S_1^a$ except the endpoints $z_0=(x_0, 0)$ and $z_2=(x_2, \alpha_{\lambda_1})$. In particular $z_0\neq z_2$ and both of them are special vertices in $S_1^a$. This concatenated path will have copies in all $(Z_j, R_j)$ to be defined below, and thus we will denote these copies still by the same notation.

In general, assume we have defined $(Z_{j-1}, R_{j-1})$ and vertices $$z_0, z_1, z_1^*, z_2, z_2^*, \dots, z_{j-1}^*, z_{j}$$ for some $2\leq j\leq m$ and a concatenated path
\begin{equation}\label{eqn:cp} z_0\stackrel{\nu_0}{\longrightarrow} z_1\stackrel{\mu_1}{\longrightarrow}z_1^*\stackrel{\nu_1}{\longrightarrow}z_2\stackrel{\mu_2}{\longrightarrow}{z_2^*}\stackrel{\nu_2}{\longrightarrow}\cdots\stackrel{\mu_{j-1}}{\longrightarrow}z_{j-1}^*\stackrel{\nu_{j-1}}{\longrightarrow}z_{j} 
\end{equation}
in $G(Z_{j-1}, R_{j-1})$ such that for any $0\leq \eta\leq j$, 
$$z_{\eta}=(y_\eta, \varepsilon_{\eta,1},\dots, \varepsilon_{\eta, j-1})\in Z_{j-1}$$ for some $\varepsilon_{\eta,1}, \dots, \varepsilon_{\eta, j-1}\in \{0,\dots, M-1\}$. We have also defined the set of {\em special} vertices of $(Z_{j-1}, R_{j-1})$ to be 
$$ S_{j-1}=S_{j-2}^a\times \{0, \dots, M-1\} $$
and let
$$ S_{j-1}^a=\{z\in S_{j-1}\colon \tilde{a}_{j-1}(z)=z\} $$
be the set of all $\tilde{a}_{j-1}$-fixed special vertices of $S_{j-1}$. We assume further that the concatenated path in (\ref{eqn:cp}) contains no special vertices in $S_{j-1}^a$ except the endpoints $z_0$ and $z_j$. As for $z_0$, which is now represented as $(x_0, 0,\dots, 0)$ as an element of $Z_{j-1}$, we have that $z_0$ is indeed in $S_{j-1}^a$. Note in particular we assume $z_0\neq z_j$.

We consider two cases.

Case 1: $z_j\in S_{j-1}^a$. This case is similar to the construction of $(Z_1, R_1)$. In this case, we let
$$ (Z_j, R_j)=\mbox{\tt Reroute}(Z_{j-1}, R_{j-1}; M, \tilde{a}_{j-1}, z_j). $$
Let $S_j=S_{j-1}\times\{0, \dots, M-1\}$ and $S_j^a=\{z\in S_j\colon \tilde{a}_j(z)=z\}$. Then
$$ S_j^a=S_j\setminus \{(x_j, \varepsilon_{j,1}, \dots, \varepsilon_{j,j-1}, i)\colon 0\leq i\leq M-1\}. $$
Since $z_0\in S_{j-1}^a$ and $z_0\neq z_j$, we have that $z_0\in S_j$ and $z_0\not\in S_j\setminus S_j^a$, hence $z_0\in S_j^a$.

We define $z_j^*\in Z_j$ by
\begin{equation}\label{eqn:zj} z_j^*=\tilde{a}_j^{\alpha_{\lambda_j}}(z_j)=(x_j, \varepsilon_{j,1}, \cdots, \varepsilon_{j, j-1}, \alpha_{\lambda_j}) \end{equation}
and define $z_{j+1}\in Z_{j}$ by
\begin{equation}\label{eqn:zj*} z_{j+1}=(x_{j+1}, \varepsilon_{j,1}, \cdots, \varepsilon_{j,j-1}, \alpha_{\lambda_j})=\tilde{b}_j^{\beta_{\lambda_{j+1}}}\tilde{a}_j^{\alpha_{\lambda_{j+1}-1}}\cdots\tilde{a}_j^{\alpha_{\lambda_{j}+1}}\tilde{b}_j^{\beta_{\lambda_{j}+1}}(z_j^*). \end{equation}
Expression (\ref{eqn:zj}) gives a path in $G(Z_j, R_j)$ from $z_j$ to $z_j^*$, which we denote as $\mu_j$. Expression (\ref{eqn:zj*}) give a path in $G(Z_j, R_j)$ from $z_j^*$ to $z_{j+1}$. This path is entirely contained in 
$$Z_0\times  \{\varepsilon_{j,1}\}\times \cdots\times\{ \varepsilon_{j,j-1}\}\times\{\alpha_{\lambda_j}\}\subseteq Z_{j}$$
and is a copy of the path $\nu_j$ from $x_j$ to $x_{j+1}$ in $Z_0$. We continue to denote it by $\nu_j$. Thus we have defined the concatenated path
$$ z_j\stackrel{\mu_j}{\longrightarrow} z_j^*\stackrel{\nu_j}{\longrightarrow} z_{j+1} $$
in $G(Z_j, R_j)$. Together with the path in (\ref{eqn:cp}), this gives a concatenated path from $z_0$ to $z_{j+1}$ in $G(Z_j, R_j)$. 

Since $\alpha_{\lambda_j}\neq 0$, we have $z_0\neq z_{j+1}$. The other inductive properties are easy to verify. This finishes the inductive definitions in Case 1.

Case 2: $z_j\not\in S_{j-1}^a$. In this case, we let $(Z_j, R_j)=(Z_{j-1}, R_{j-1})$. Formally, we let $Z_j=Z_{j-1}\times\{0\}$, i.e., $T_j=\{0\}$. Let $S_j=S_{j-1}\times \{0\}$ and $S_j^a=\{z\in S_j\colon \tilde{a}_j(z)=z\}=S_{j-1}^a\times\{0\}$.

Since $\mbox{\rm proj}_0(z_j)=x_j\in S_0$, we must have $x_j\in \{x_0, \dots, x_{j-2}\}$. Moreover, we claim that $j$ is not the least positive index such that $x_j=x_0$. To see this claim, note by induction that if $x_1, \dots, x_{j-1}\neq x_0$ ($x_1\neq x_0$ is always true; therefore such $j$ must be at least $2$), then for any $0\leq \eta\leq j-1$ and for any $z\in S_\eta^a$, $\mbox{\rm proj}_0(z)\neq x_0$. Thus if $j$ was the least positive index such that $x_j=x_0$, we would have $z_j\in S_{j-1}^a$, contradicting our case assumption. Thus the claim is proved. With the claim, we have $x_j\in \{x_1, \dots, x_{j-2}\}$.

A similar argument gives that there is $1\leq \eta\leq j-2$ such that 
\begin{equation}\label{eqn:criterion}\mbox{\rm proj}_{[0,\eta-1]}(z_j)=\mbox{\rm proj}_{[0,\eta-1]}(z_\eta). \end{equation} Let $0\leq \eta\leq j-2$ be the least such. Then we must have $z_\eta\in S_{\eta-1}^a$ and $(Z_\eta, R_\eta)$ is constructed from $(Z_{\eta-1}, R_{\eta-1})$ as in Case 1. In particular, for any $z\in Z_\eta$ with $\mbox{\rm proj}_{[0,\eta-1]}(z)=\mbox{\rm proj}_{[0,\eta-1]}(z_\eta)$ and for any $\tau\in \mathbb{Z}$, we have 
$$\mbox{\rm proj}_\eta\big(\tilde{a}_\eta^\tau(z)\big)=\mbox{\rm proj}_\eta(z)+\tau\pmod{M}.$$It follows that for $z\in Z_j$ with $\mbox{\rm proj}_{[0,\eta-1]}(z)=\mbox{\rm proj}_{[0, \eta-1]}(z_\eta)$ and for any $\tau\in \mathbb{Z}$, 
$$\mbox{\rm proj}_\eta\big(\tilde{a}_j^\tau(z)\big)=\mbox{\rm proj}_\eta(z)+\tau\pmod{M}.$$

Now define $z_j^*\in Z_j$ by
\begin{equation}\label{eqn:zj*2} z_j^*=\tilde{a}_j^{\alpha_{\lambda_j}}(z_j)=(x_j, \varepsilon_{j,1}, \cdots, \varepsilon'_{j,\eta}, \cdots, \varepsilon_{j,j-1}, 0) \end{equation}
where
$$\varepsilon'_{j, \eta}=\varepsilon_{j, \eta}+\alpha_{\lambda_j}\pmod{M}. $$
Note that because of our choice of $M$, we have $\varepsilon'_{j, \eta}\neq |w|$. The expression (\ref{eqn:zj*2}) gives a path $\mu_j$ in $G(Z_j, R_j)$. We also define
\begin{equation}\label{eqn:zj+1} \begin{array}{rcl}
z_{j+1}&=&(x_{j+1}, \varepsilon_{j+1,1}, \dots,  \varepsilon_{j+1,\eta}, \cdots, \varepsilon_{j+1, j-1},\varepsilon_{j+1,j}) \\ \\
&=&(x_{j+1}, \varepsilon_{j,1}, \dots,  \varepsilon'_{j,\eta}, \cdots, \varepsilon_{j, j-1},0) \\ \\
&=&\tilde{b}_j^{\beta_{\lambda_{j+1}}}\tilde{a}_j^{\alpha_{\lambda_{j+1}-1}}\cdots\tilde{a}_j^{\alpha_{\lambda_{j}+1}}\tilde{b}_j^{\beta_{\lambda_{j}+1}}(z_j^*). 
\end{array}\end{equation}
Then the expression (\ref{eqn:zj+1}) gives a path $\nu_j$ from $z_j^*$ to $z_{j+1}$, which is entirely contained in the copy
$$ Z_0\times \{\varepsilon_{j,1}\}\times \cdots\times \{\varepsilon'_{j,\eta}\}\times  \cdots\times\{ \varepsilon_{j, j-1}\}\times \{0\}\subseteq Z_j $$
and is a copy of the path $\nu_j$ from $x_j$ to $x_{j+1}$ in $Z_0$. Concatenating the newly defined path
$$ z_j\stackrel{\mu_j}{\longrightarrow} z_j^*\stackrel{\nu_j}{\longrightarrow} z_{j+1} $$
with the path in (\ref{eqn:cp}), we obtain a walk in $G(Z_j, R_j)$ from $z_0$ to $z_{j+1}$. 

We verify that this walk is indeed a path, namely, the vertices on this walk are distinct. For this we define a sequence of subgraphs. Let $W_0=W_w\subseteq G(Z_m, R_m)$ be the basic walk, which is a graph-theoretic tree. For $1\leq \rho\leq j$, we inductively define $W_\rho\subseteq G(Z_\rho, R_\rho)$ by considering two cases.  If $G(Z_{\rho}, R_{\rho})$ is constructed from $G(Z_{\rho-1}, R_{\rho-1})$ as in Case 2, we let $W_\rho$ be 
$$ W_{\rho-1}\times\{0\}\subseteq G(Z_{\rho}, R_\rho), $$
which is now a subtree of $G(Z_{\rho}, R_{\rho})$. If $G(Z_{\rho}, R_{\rho})$ is constructed from $G(Z_{\rho-1}, R_{\rho-1})$ as in Case 1, we let $W_\rho$ be the union of following subgraphs of $G(Z_\rho, R_\rho)$:
\begin{enumerate}
\item[(a)] for each $0\leq i\leq M-1$ such that $i\neq |w|$, the copy of $W_{\rho-1}$ in $Z_{\rho-1}\times \{i\}$, namely $W_{\rho-1}\times\{i\}$;
\item[(b)] the path obtained from the cycle $\{z_\rho\}\times \{0, \dots, M-1\}$ by removing the vertex $(z_\rho, |w|)$.
\end{enumerate}
By induction, $W_{\rho-1}$ is a tree. It follows that $W_\rho$ is also a tree. This finishes the definition of trees $W_0, W_1, \dots, W_j$. 

Now by induction, it is easy to see that the path in (\ref{eqn:cp}) is entirely contained in $W_j$. Moreover, the newly defined path 
$$ z_j\stackrel{\mu_j}{\longrightarrow} z_j^*\stackrel{\nu_j}{\longrightarrow} z_{j+1} $$
is also entirely contained in $W_j$. Thus, to see that the concatenated walk is a path, it suffices to verify that on this walk, the predecessor of $z_j$ and the successor of $z_j$ are distinct. This is now clear, since they have distinct projections to the $\eta$-th coordinate. 

We have thus finished the definitions in Case 2 and verified that the indutive properties continue to hold. 

Finally, let $(Z, R)=(Z_m, R_m)$ and note that we have obtained a path in $G(Z, R)$ from $z_0$ to $z_{m+1}$. Since 
$x_{m+1}\not\in \{x_0, x_1, \dots, x_m\}$, we have that $z_{m+1}\in S_m^a$. This means $\tilde{a}_m(z_{m+1})=z_{m+1}$. Since $(Z, R)$ is obtained from $(Z_0, R_0)$ by repeated applications of the EP map {\tt Reroute}, we get that $(Z, R)$ is a $\Pi$-solution. Now the path from $z_0$ to $z_{m+1}$, together with the facts $\tilde{a}_m(z_0)=z_0$ and $\tilde{a}_m(z_{m+1})=z_{m+1}$, gives a subproblem of $(Z, R)$ which is an $\langle a, w^{-1}aw\rangle$-problem. We have thus shown that an $\langle a, w^{-1}aw\rangle$-problem has a $\Pi$-solution. The proof of Theorem~\ref{thm:weakwandering} is complete.

\section{EP Problems without Nilpotent-Solutions}

All EP problems we considered in the preceding section have $\Pi$-solutions for any $\Pi$. In particular, they have $p$-solutions for any prime $p$, and therefore they have nilpotent-solutions. The $\kappa$-weakly wandering problems no longer have this property when $\kappa\geq 2$. In this subsection we focus on $2$-weakly wandering problems. We completely characterize the $2$-weakly wandering problems which have nilpotent-solutions. For those which do not have nilpotent-solutions, we show that they have odd-solutions. To deal with general $\kappa$-weakly wandering problems, we introduce a new EP map with parameters.

\subsection{Wreath products of EP problems}

We first recall the notion of wreath product between permutation groups (see \cite[Section 1.10]{RobinsonBook}).

\begin{definition}\label{def:wr} Let $X, Y$ be finite sets and $\Gamma\leq \mbox{\rm Sym}(X)$, $\Lambda\leq \mbox{\rm Sym}(Y)$. The {\em wreath product} of $\Gamma$ and $\Lambda$, denoted $\Gamma\wr\Lambda$, as a permutation group on $Z=X\times Y$, is defined as follows. The set $\Gamma\wr\Lambda$ is $\Gamma^Y\times \Lambda$; given $g=((\gamma_i)_{i\in Y}, \lambda)\in \Gamma^Y\times\Lambda$ and $(x, y)\in Z=X\times Y$, let
$$ g(x, y)=(\gamma_{\lambda(y)}(x), \lambda(y)). $$
The product of two elements of $\Gamma\wr\Lambda$ is the composition of them as permutations on $Z$.
\end{definition}

\begin{lemma}\label{lem:basicwr} If $\Gamma$ and $\Lambda$ are $\Pi$-groups of permutations, then so is their wreath product $\Gamma\wr\Lambda$.
\end{lemma}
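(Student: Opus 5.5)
We compute the order of $\Gamma\wr\Lambda$ directly. By Definition~\ref{def:wr}, the set underlying $\Gamma\wr\Lambda$ is $\Gamma^Y\times\Lambda$, so once we know that distinct pairs give distinct permutations of $Z=X\times Y$, we get
$$ |\Gamma\wr\Lambda| = |\Gamma|^{|Y|}\cdot|\Lambda|. $$
Since $|\Gamma|$ and $|\Lambda|$ are $\Pi$-numbers, every prime factor of this product lies in $\Pi$, and hence $\Gamma\wr\Lambda$ is a $\Pi$-group.

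The work lies in checking that the formula of Definition~\ref{def:wr} really defines a group of order $|\Gamma|^{|Y|}|\Lambda|$.

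\emph{Injectivity.} Suppose $((\gamma_i),\lambda)$ and $((\gamma'_i),\lambda')$ induce the same permutation of $Z$. Comparing second coordinates gives $\lambda=\lambda'$. Comparing first coordinates at each fibre $X\times\{y\}$ gives $\gamma_{\lambda(y)}=\gamma'_{\lambda(y)}$. Since $\lambda$ is surjective onto $Y$, this yields $\gamma_i=\gamma'_i$ for all $i\in Y$.

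\emph{Closure.} We show the set $W$ of induced permutations is closed under composition and inverses. Composing $g=((\gamma_i),\lambda)$ after $h=((\delta_i),\mu)$ sends
$$ (x,y)\mapsto\big(\gamma_{\lambda\mu(y)}\delta_{\mu(y)}(x),\ \lambda\mu(y)\big). $$
This is again of the required form: take the pair $((\epsilon_i),\lambda\mu)$ with $\epsilon_{\lambda\mu(y)}=\gamma_{\lambda\mu(y)}\delta_{\mu(y)}$, which is well defined because $\lambda\mu$ is a bijection. For inverses, $W$ is a finite subset of $\mathrm{Sym}(Z)$ closed under composition, so it is automatically a subgroup.

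The group is therefore exactly $W$, with $|W|=|\Gamma|^{|Y|}|\Lambda|$, and the lemma follows.

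An alternative route avoids counting products. Project $W$ onto $\Lambda$ via the second coordinate; this is a homomorphism, by the composition formula above. Its kernel is isomorphic to $\Gamma^Y$, which is a $\Pi$-group because the $\Pi$-groups form a formation closed under finite direct products. The quotient is $\Lambda$, also a $\Pi$-group, so $|W|=|\Gamma^Y|\cdot|\Lambda|$ is a $\Pi$-number.

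The only mildly delicate point is the indexing convention in $g(x,y)=(\gamma_{\lambda(y)}(x),\lambda(y))$, which affects the composition formula but not the order count. There is no real obstacle.
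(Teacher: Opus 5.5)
Your proposal is correct and follows the paper's argument, which consists solely of the observation that $|\Gamma|^{|Y|}\cdot|\Lambda|$ is a $\Pi$-number whenever $|\Gamma|$ and $|\Lambda|$ are. Your injectivity and closure checks just make explicit what the paper takes for granted in Definition~\ref{def:wr}, namely that this set of permutations is a group of exactly that order.
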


\begin{proof} If $|\Gamma|$ and $|\Lambda|$ are $\Pi$-numbers, then so is $|\Gamma|^{|Y|}\cdot|\Lambda|$.
\end{proof}

\begin{definition}\label{def:wreath} Let $(X, P)$, $P=\{a_X,b_X\}$, and $(Y, Q)$, $Q=\{a_Y, b_Y\}$, be EP(2) problems. Let $A, B\subseteq Y$ be disjoint subsets. Define the {\em wreath product} $$\mbox{\tt Wreath}(X, P; Y, Q; A, B)$$ to be the EP(2) problem $(Z, R)$, where
$$ Z= X\times Y $$
and
$R=\{\tilde{a}, \tilde{b}\}$, where
$$ \tilde{a}(x, y)=\left\{\begin{array}{ll} (a_X(x), a_Y(y)), & \mbox{ if $y\in A$, } \\ (b_X(x), a_Y(y)), & \mbox{ if $y\in B$,} \\ (x, a_Y(y)), & \mbox{ otherwise.} \end{array}\right. $$
and
$$ \tilde{b}(x,y)=(x, b_Y(y)). $$
\end{definition}

\begin{proposition}\label{prop:wrps} Let $(X, P)$, $P=\{a_X, b_X\}$, and $(Y, Q)$, $Q=\{a_Y, b_Y\}$, be $\Pi$-solutions. Let $A, B\subseteq Y$ be disjoint subsets. Then $\mbox{\tt Wreath}(X, P; Y, Q; A, B)$ is a $\Pi$-solution.
\end{proposition}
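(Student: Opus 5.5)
We need to show two things: that $(Z,R)$ is a solution, i.e.\ $\tilde a,\tilde b$ are full permutations of $Z=X\times Y$, and that the solvent $\langle \tilde a,\tilde b\rangle$ is a $\Pi$-group. The plan is to embed the solvent into the wreath product $\Gamma\wr\Lambda$ of Definition~\ref{def:wr}, where $\Gamma=\langle a_X,b_X\rangle\le\mathrm{Sym}(X)$ and $\Lambda=\langle a_Y,b_Y\rangle\le\mathrm{Sym}(Y)$ are the solvents of the two given $\Pi$-solutions, and then invoke Lemma~\ref{lem:basicwr}.

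First, check that $\tilde a$ and $\tilde b$ are permutations. Since $b_Y\in\mathrm{Sym}(Y)$, the map $\tilde b$ is clearly a bijection of $Z$, with inverse $(x,y)\mapsto(x,b_Y^{-1}(y))$. For $\tilde a$, define for each $y\in Y$ the permutation $\gamma_y\in\Gamma$ by $\gamma_y=a_X$ if $y\in A$, $\gamma_y=b_X$ if $y\in B$, and $\gamma_y=\mathrm{id}_X$ otherwise. This is well defined because $A\cap B=\varnothing$. Then $\tilde a(x,y)=(\gamma_y(x),a_Y(y))$. This map is a bijection: its inverse is $(x,y')\mapsto(\gamma_{a_Y^{-1}(y')}^{-1}(x),a_Y^{-1}(y'))$. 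Hence $(Z,R)$ is a solution.

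Next, match these to elements of $\Gamma\wr\Lambda$. In Definition~\ref{def:wr} an element $((\delta_i)_{i\in Y},\lambda)$ acts by $(x,y)\mapsto(\delta_{\lambda(y)}(x),\lambda(y))$, so the $X$-coordinate is indexed by the image point. Accordingly, set $\delta_{i}=\gamma_{a_Y^{-1}(i)}$ and $\lambda=a_Y$. Then $((\delta_i),a_Y)$ acts exactly as $\tilde a$. Similarly, $\tilde b$ corresponds to $((\mathrm{id}_X)_{i\in Y},b_Y)$. Both lie in $\Gamma^Y\times\Lambda$, since each $\delta_i\in\{a_X,b_X,\mathrm{id}_X\}\subseteq\Gamma$.

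Finally, the wreath product is defined as the set of these permutations of $Z$ under composition, so it is a group of permutations of $Z$; one should note it is closed under composition, which is the standard wreath-product computation and routine. Therefore $\langle\tilde a,\tilde b\rangle\le\Gamma\wr\Lambda$. By Lemma~\ref{lem:basicwr}, $|\Gamma\wr\Lambda|=|\Gamma|^{|Y|}|\Lambda|$ is a $\Pi$-number. By Lagrange's theorem the solvent of $(Z,R)$ is a $\Pi$-group, so $(Z,R)$ is a $\Pi$-solution.

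The only delicate point is the indexing convention ($\delta_{\lambda(y)}$ versus $\delta_y$), and the reindexing $\delta_i=\gamma_{a_Y^{-1}(i)}$ handles it.
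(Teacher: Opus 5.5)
Your proposal is correct and is essentially the paper's proof: you realize $\tilde a$ and $\tilde b$ as elements of $\Gamma\wr\Lambda$ and conclude via Lemma~\ref{lem:basicwr}. Your reindexing $\delta_i=\gamma_{a_Y^{-1}(i)}$ is exactly the paper's choice of $a_X$ on $a_Y(A)$ and $b_X$ on $a_Y(B)$, and your explicit bijectivity and closure checks fill in details the paper leaves as easy.
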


\begin{proof} Using the notation of Definition~\ref{def:wreath}, it is easy to verify that if $a_X, b_X\in \mbox{\rm Sym}(X)$ and $a_Y, b_Y\in \mbox{\rm Sym}(Y)$, then $\tilde{a}, \tilde{b}\in \mbox{\rm Sym}(X\times Y)$. Thus if $(X, P)$ and $(Y, Q)$ are solutions, then so is the wreath product. 

Suppose the solvent of $(X, P)$ is $\Gamma$ and the solvent of $(Y, Q)$ is $\Lambda$. Note that
$$ \tilde{a}=\big((\gamma_y)_{y\in Y}, \lambda\big)\in \Gamma\wr\Lambda, $$
where $\lambda=a_Y$ and 
$$ \gamma_y=\left\{\begin{array}{ll} a_X, & \mbox{ if $y\in a_Y(A)$, } \\ b_X, & \mbox{ if $y\in a_Y(B)$,} \\ \mbox{\rm id}, & \mbox{ otherwise.} \end{array}\right. $$
Similarly, $\tilde{b}\in \Gamma\wr\Lambda$. Hence the solvent of the wreath product is $\langle \tilde{a}, \tilde{b}\rangle$, which is a subgroup of $\Gamma\wr\Lambda$. By Lemma~\ref{lem:basicwr}, $\langle \tilde{a}, \tilde{b}\rangle\leq \mbox{\rm Sym}(X\times Y)$ is a $\Pi$-group. Hence $\mbox{\tt Wreath}(X, P; Y, Q; A, B)$ is a $\Pi$-solution.
\end{proof}

\begin{definition} Let $\Sigma(a, b)$ denote the set of all irreducible words $w\in F(a, b)$ which starts with $a$ and ends with $b$. Similarly define $\Sigma(a^{-1}, b)$, $\Sigma(a, b^{-1})$ and $\Sigma(a^{-1}, b^{-1})$. Let 
$$ \Sigma^a=\Sigma(a, b)\cup \Sigma(a^{-1}, b)\cup \Sigma(a, b^{-1})\cup \Sigma(a^{-1}, b^{-1}). $$
\end{definition}

Recall from Definition~\ref{def:*} that for $u\in \Sigma$, $\phi_u\colon F(a, b)\to F(a, b)$ is the homomorphism given by $\phi_u(a)=a$ and $\phi_u(b)=u$. Also ${}^*=\phi_{b^{-1}ab}$.

\begin{proposition}\label{prop:stationary} For any $w\in \Sigma^a$, $\langle a, w^{-1}bw\rangle$ is dense in the pro-nilpotent topology of $F(a, b)$. Consequently, for any $u\in \Sigma$, an $\big\langle a, \phi_u\big((w^{-1}bw)^*\big)\big\rangle$-problem does not have nilpotent-solutions. In particular, $\big\langle a, \phi_u\big((w^{-1}bw)^*\big)\big\rangle$ is not closed in the pro-nilpotent topology of $F(a, b)$. 
\end{proposition}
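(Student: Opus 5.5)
The plan is to prove density by applying Theorem~\ref{thm:MSW2}. That reduces it to checking, for every prime $p$, that $\sigma_p^{-1}\sigma_p(H)=F(a,b)$, where $H=\langle a, w^{-1}bw\rangle$. Since $\sigma_p$ is abelianization mod $p$, we have $\sigma_p(a)=(1,0)$ and $\sigma_p(w^{-1}bw)=\sigma_p(b)=(0,1)$, because conjugation is invisible after abelianizing. So $\sigma_p(H)=(\mathbb{Z}/p\mathbb{Z})^{\{a,b\}}$ and $\sigma_p^{-1}\sigma_p(H)=F(a,b)$. Condition (iii) of Theorem~\ref{thm:MSW2} holds for all $p$, so $H$ is dense in the pro-nilpotent topology. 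Note that $w\in\Sigma^a$ is only needed to make $H$ a proper subgroup of the right shape; density itself uses nothing about $w$.

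For the consequence, set $v=(w^{-1}bw)^*$. The idea is to transfer non-existence of nilpotent-solutions backwards through the EP maps.

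\textbf{Step 1 (the base problem).} Take an $\langle a, w^{-1}bw\rangle$-problem $(X,P)$ with weakly connected graph, realized as an $a$-loop at the base point together with a cycle reading $w^{-1}bw$. We need $\pi_1(X,P)\neq F(a,b)$. Since $w$ starts with $a^{\pm1}$ and ends with $b^{\pm1}$, the word $w^{-1}bw$ is reduced and begins and ends with $b^{\pm 1}$. Hence $\langle a,w^{-1}bw\rangle$ is free on these two generators with no folding, so the Stallings graph is exactly this figure-eight with at least two vertices and is not a bouquet covering. Therefore $b\notin H$. By Theorem~\ref{thm:nonil}, density then shows that $(X,P)$ has no nilpotent-solution.

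\textbf{Step 2 (transfer through the maps).} Proposition~\ref{prop:awbasic}~(ii) is proved via {\tt Multiverse}, so by Theorem~\ref{thm:multiverse}~(ii) any solution of an $\langle a, v\rangle$-problem with solvent $\Lambda$ yields a solution of an $\langle a,w^{-1}bw\rangle$-problem whose solvent embeds in $\Lambda$. Similarly, Proposition~\ref{prop:awbasic}~(iii) passes from an $\langle a,\phi_u(v)\rangle$-problem to an $\langle a,v\rangle$-problem, again with the solvent a subgroup. Subgroups of nilpotent groups are nilpotent, so a nilpotent-solution for an $\langle a,\phi_u(v)\rangle$-problem would produce one for the $\langle a,w^{-1}bw\rangle$-problem, contradicting Step 1.

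One formal point needs care. Proposition~\ref{prop:awbasic} is stated for $w\in\Sigma$, but $w^{-1}bw\in\Sigma$ holds here, and $v\in\Sigma$ since ${}^*$ sends $b$ to $b^{-1}ab$. I expect this to be a matter of checking that $\phi_u(v)$ is reduced and lies in $\Sigma$, which requires $u\in\Sigma$. If cancellation occurs, the same subgroup is used anyway, so Corollary~\ref{cor:fundgp} absorbs it.

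Finally, non-closedness follows from Proposition~\ref{prop:pronil}: if $\langle a,\phi_u(v)\rangle$ were closed in the pro-nilpotent topology, the problem would have a nilpotent-solution, which Step 2 rules out. The main obstacle is Step 1's verification that $H$ is proper, together with the bookkeeping that the EP-map reductions preserve the solvent up to subgroups. Both seem routine.
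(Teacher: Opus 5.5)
Your density computation, your use of Theorem~\ref{thm:nonil}, closure of nilpotency under subgroups, and your final appeal to Proposition~\ref{prop:pronil} all match the paper. The gap is in the ${}^*$ step of your Step~2, which does not go through as written.

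Theorem~\ref{thm:multiverse}~(ii) is about one specific problem: if $\texttt{Multiverse}(X,P)$ has a solution with solvent $\Lambda$, then $(X,P)$ has one with solvent embedding in $\Lambda$. What Proposition~\ref{prop:awbasic}~(iii) hands you, however, is a nilpotent-solution for \emph{some} $\langle a,v\rangle$-problem. This is the problem traced out by $a$ and $v$ after $\tilde b$ is replaced by $u_\Lambda$, and it need not be of the form $\texttt{Multiverse}(X,P)$. For example, suppose $w$ contains $b^{\beta}$ with $|\beta|\ge 2$, and $x$ is an interior vertex of the corresponding $b$-segment of the path reading $w$ in $X$. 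Then $(x,1)$ carries only its $b$-edge in $\texttt{Multiverse}(X,P)$. It is a pendant vertex off the walk of $v$, and nothing makes it embed injectively into the solution you have.

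Transferring solutions between problems with the same fundamental group is Corollary~\ref{cor:fundgp}. That result is proved only for $\Pi$-solutions, through Theorem~\ref{thm:solutionprotopology}. For nilpotent-solutions it is exactly what Problems~\ref{prob:nil} and~\ref{prob:ope} leave open, so your remark that Corollary~\ref{cor:fundgp} ``absorbs'' discrepancies is unavailable here. (In fact no cancellation occurs in $\phi_u(v)$, since every $u^{\pm1}$ block sits between nonzero powers of $a$.)

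The repair is the paper's argument. Since ${}^*=\phi_{b^{-1}ab}$ with $b^{-1}ab\in\Sigma$, apply Proposition~\ref{prop:awbasic}~(iii) a second time. Its construction replaces $\tilde b$ by $\tilde b^{-1}\tilde a\tilde b$ inside the solution you already have. This works for whatever problem that solution solves, and again keeps the solvent a subgroup.

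There is also a smaller error in Step~1. Let $b^{\epsilon}$ be the last letter of $w$. Then $w^{-1}bw$ begins with $b^{-\epsilon}$ and ends with $b^{\epsilon}$, so it is a conjugate of $b$ that is not cyclically reduced. The figure-eight with a cycle reading $w^{-1}bw$ is therefore not an EP problem, because two $b^{\epsilon}$-edges leave the base point. It folds to a barbell: an $a$-loop at $x_0$, a path reading $w$ from $x_0$ to $w(x_0)$, and a $b$-loop at $w(x_0)$ (this is $P_1$ when $w=ab^{-1}$). Your conclusion $b\notin\langle a,w^{-1}bw\rangle$ survives, since besides the $a$-loop the only edge at $x_0$ is a single $b^{\epsilon}$-edge to another vertex. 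Moreover, Theorem~\ref{thm:nonil} applies to every weakly connected $\langle a,w^{-1}bw\rangle$-problem, so no particular realization is needed.
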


\begin{proof} For any prime $p$, let $\sigma_p\colon F(a, b)\to (\mathbb{Z}/p\mathbb{Z})^2$ be the group homomorphism defined in the statement of Theorem~\ref{thm:MSW}. Then $\sigma_p(a)=(1,0)$ and $\sigma_p(w^{-1}bw)=(0,1)$. It follows that $\sigma_p^{-1}\sigma_p(\langle a, w^{-1}bw\rangle)=F(a, b)$. Now it follows from Theorems~\ref{thm:MSW2} that $\langle a, w^{-1}bw\rangle$ is dense in the pro-nilpotent topology of $F(a, b)$, and from Theorem \ref{thm:nonil} we know that an $\langle a, w^{-1}bw\rangle$-problem does not have any nilpotent-solutions.

Note that the homomorphism ${}^*$ is a special case of the homomorphism $\phi_v$ for $v=b^{-1}ab$. By applying Proposition~\ref{prop:awbasic} (iii) twice, we get that if an $\big\langle a, \phi_u\big((w^{-1}bw)^*\big)\big\rangle$-problem has a nilpotent-solution, then so does an $\langle a, w^{-1}bw\rangle$-problem. Hence an $\big\langle a, \phi_u\big((w^{-1}bw)^*\big)\big\rangle$-problem does not have nilpotent-solutions. 

The last clause of the proposition follows from Proposition~\ref{prop:pronil}.
\end{proof}

The problem $P_1$ in Subsection~\ref{subsec:case} is an example of an $\langle a, w^{-1}bw\rangle$-problem, where $w=ab^{-1}$. The problem $P_2$ in that subsection is the corresponding $\langle a, (w^{-1}bw)^*\rangle$-problem.

\begin{proposition}\label{prop:wrsolution} Let $w\in \Sigma^a$ and let $(X, P)$ be an $\langle a, w^{-1}bw\rangle$-problem. Let $u\in \Sigma$ and let $(Y, Q)$ be an $\langle a, u^{-1}au\rangle$-problem. Let $y_0$ be the base point of the computation of $\pi_1(Y, Q)$. Then 
$$\pi_1\big(\mbox{\tt Wreath}(X,P; Y, Q; \{y_0\}, \{u(y_0)\})\big)=
\big\langle a, \phi_u\big((w^{-1}bw)^*\big)\big\rangle. $$
\end{proposition}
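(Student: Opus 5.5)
The plan is to compute $\pi_1$ of $(Z,R)=\mbox{\tt Wreath}(X,P;Y,Q;\{y_0\},\{u(y_0)\})$ directly at the base point $(x_0,y_0)$, where $x_0$ is the base point of $(X,P)$. We use that the projection $Z\to Y$ is a homomorphism of labelled graphs, so the $Y$-coordinate of any walk is governed by $(Y,Q)$ alone. If $G(Z,R)$ is not weakly connected, we pass to the weak component of $(x_0,y_0)$; this does not change the fundamental group at that base point.

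Step 1 (the $Y$-coordinate). Write $y_1=u(y_0)$. By the definition of {\tt Wreath}, a word $v\in F(a,b)$ is defined at $(x_0,y_0)$ only if $v$ is defined at $y_0$ in $(Y,Q)$, and then the $Y$-coordinate of $v(x_0,y_0)$ is $v(y_0)$. Hence every $v\in\pi_1(Z,R;(x_0,y_0))$ lies in $\pi_1(Y,Q;y_0)=\langle a,c\rangle$, where $c=u^{-1}au$. Since $u\in\Sigma$ starts and ends with $b^{\pm1}$, the words $a$ and $c$ form a free basis of $\langle a,c\rangle$. Because $\pi_1$ is a subgroup, it suffices to decide, for each $v\in\langle a,c\rangle$, when the $X$-coordinate returns to $x_0$.

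Step 2 (the $X$-coordinate). Write $v$ as a product of letters $a^{\pm1}$ and $c^{\pm1}$. Its walk in $Y$ is then a concatenation of $a$-loops at $y_0$ and traversals of the $u$-path from $y_0$ to $y_1$, followed by the $a$-loop at $y_1$, and back along the same path. We track how $\tilde a$ and $\tilde b$ act on the first coordinate.
\begin{enumerate}
\item[(i)] $\tilde b$ never changes the $X$-coordinate.
\item[(ii)] An $\tilde a^{\pm1}$-step taken at $y_0$ applies $a_X^{\pm1}$, and one taken at $y_1$ applies $b_X^{\pm1}$. This uses that $a_Y$ fixes $y_0$ and $y_1$, so for the inverse step, too, the source lies in $A$, respectively $B$.
\item[(iii)] Every other $\tilde a$-step acts trivially on the $X$-coordinate.
\end{enumerate}
For (iii), we check that the interior $a$-edges of the $u$-path never start or end at $y_0$ or $y_1$. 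Indeed, $a_Y$ is a partial permutation fixing $y_0$ and $y_1$, so the only $a$-edges at these vertices are the loops. Consequently the letter $a$ acts on the $X$-coordinate as $a_X$, and the letter $c$ acts as $b_X$.

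Step 3 (identifying the group). Let $\psi\colon\langle a,c\rangle\to F(a,b)$ be the isomorphism with $a\mapsto a$ and $c\mapsto b$. Step 2 shows that $v\in\langle a,c\rangle$ fixes $(x_0,y_0)$ if and only if $\psi(v)$ fixes $x_0$ in $(X,P)$, that is, $\psi(v)\in\langle a,w^{-1}bw\rangle$. Therefore
$$\pi_1(Z,R)=\psi^{-1}\big(\langle a,w^{-1}bw\rangle\big).$$
The inverse $\psi^{-1}$ is the restriction of the endomorphism of $F(a,b)$ with $a\mapsto a$ and $b\mapsto u^{-1}au$. This endomorphism is exactly $\phi_u\circ{}^*$, since ${}^*$ sends $b$ to $b^{-1}ab$ and $\phi_u$ then sends this to $u^{-1}au$, while both maps fix $a$. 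Hence
$$\pi_1(Z,R)=\big\langle a,\phi_u\big((w^{-1}bw)^*\big)\big\rangle.$$

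The main obstacle is the bookkeeping in Step 2: the second coordinate of $\tilde a$ must be updated exactly at the visits to $y_0$ and $y_1$, and nowhere along the interior of the $u$-path. The required facts are that $u$ begins and ends with $b^{\pm1}$ and that $y_0$ and $y_1$ are $a_Y$-fixed. The remaining steps are formal.
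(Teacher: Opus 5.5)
Your argument is correct, and it takes a different route from the paper's. The paper argues geometrically. It records the level structure of $G(Z,R)$: a copy of the $a_X$-edges over $y_0$, a copy of the $b_X$-edges over $y_n=u(y_0)$, and vertical $\tilde u$-paths. It then discards the part of $Y$ off the $u$-path. For $\supseteq$ it exhibits an explicit path from $(x_0,y_0)$ to $(x_m,y_n)$ labelled $\phi_u\big((w^{-1}bw)^*\big)$, with $\tilde a$-loops at both ends. For $\subseteq$ it argues that any other cycle would produce an extra cycle in $G(X,P)$, contradicting that $\pi_1(X,P)$ is $2$-generated.

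You instead project to $Y$ to land in $\langle a,u^{-1}au\rangle$, and show that the $X$-coordinate evolves by $\psi(v)$. This gives both inclusions at once as $\pi_1(Z,R)=\psi^{-1}(\pi_1(X,P))=(\phi_u\circ{}^*)(\pi_1(X,P))$. Your route uses neither the shape of $G(X,P)$ nor $w\in\Sigma^a$, so it proves the identity for every weakly connected $(X,P)$. It is also insensitive to trees hanging off $Y$. It replaces the cycle-exclusion step, which the paper treats only briefly, by an exact equivalence. What the paper's route adds is the concrete barbell picture of Figure~\ref{fig:wreath}, with $\tilde a$-fixed endpoints $(x_0,y_0)$ and $(x_m,y_n)$.

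Two details deserve a sentence each in your write-up.

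First, your justification of (iii) shows only that non-loop $a$-edges avoid $y_0$ and $y_1$. You also need that the walk of $u$ from $y_0$ does not revisit $y_0$ or $y_1$ at an interior time, since otherwise it could use an $a$-loop there. This holds because a proper nonempty suffix of $u$ would then fix $y_0$, or a proper nonempty prefix would fix $y_1$. That word would lie in $\langle a,u^{-1}au\rangle$ or $\langle a,uau^{-1}\rangle$, whose elements involving $b$ have length at least $2|u|+1$.

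Second, consider the normal form $a^{k_0}u^{-1}a^{n_1}u\,a^{k_1}\cdots u^{-1}a^{n_r}u\,a^{k_r}$, with all $n_i\neq 0$ and interior $k_i\neq 0$. It is already reduced in $F(a,b)$ because $u$ begins and ends with $b^{\pm1}$. So the walk you describe for $v$ is exactly the walk of its reduced word, and definedness at $(x_0,y_0)$ is unaffected.
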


\begin{proof} Let $(Z, R)=\mbox{\tt Wreath}(X,P; Y, Q; \{y_0\}, \{u(y_0)\})$. Let $y_0, y_1, \dots, y_n=u(y_0)$ enumerate all the vertices in $G(Y, Q)$ along the path from $y_0$ to $y_n$ given by the expression $y_n=u(y_0)$. Let $x_0, x_1, \dots, x_m=w(x_0)$ enumerate all the vertices in $G(X, P)$ along the path from $x_0$ to $x_m$ given by the expression $x_m=w(x_0)$. 

From the definition of $(Z, R)$ we get the following facts:
\begin{enumerate}
\item[(i)] $\tilde{a}(x_0, y_0)=(x_0, y_0)$ and $\tilde{a}(x_m, y_n)=(x_m, y_n)$, and these are the only vertices $z\in Z$ satisfying $\tilde{a}(z)=z$;
\item[(ii)] $G(Z, R)$ restricted to $X\times \{y_0\}$, denoted $G^{y_0}$, is a copy of $G(X, P)$ restricted to only edges labelled by $a_X$; Moreover, every edge in $G^{y_0}$ is labelled by $\tilde{a}$;
\item[(iii)] $G(Z, R)$ restricted to $X\times \{y_n\}$, denoted by $G^{y_n}$, is a copy of $G(X, P)$ restricted to only edges labelled by $b_X$; Moreover, every edge in $G^{y_n}$ is labelled by $\tilde{a}$;
\item[(iv)] for any $x\in X$, 
$$ (x, y_0), (x, y_1), \dots, (x, y_n) $$
is a path in $(Z, R)$ from $(x, y_0)$ to $(x, y_n)$ labelled by $\tilde{u}$;
\item[(v)] for any $x\in X$, $G(Z, R)$ restricted to $\{x\}\times (Y\setminus\{y_0, y_n\})$, denoted by $G_x$, is a copy of $G(Y, Q)$ restricted to $Y\setminus \{y_0, y_n\}$; Moreover, there are no edges between $G_x$ and $G(Z, R)\setminus G_x$ except $\{(x, y_0), (x, y_1)\}$ and $\{(x, y_{n-1}), (x, y_n)\}$.
\end{enumerate}
It follows from (v) that the computation of $\pi_1(Z, R)$ does not depend on $G(Y, Q)\setminus\{y_0, y_1, \dots, y_n\}$, so we may assume without loss of generality that $Y=\{y_0, y_1, \dots, y_n\}$. Now from (i)--(iv), we obtain a path in $G(Z, R)$ restricted to $\{x_0, x_1,\dots, x_m\}\times Y$ from $(x_0, y_0)$ to $(x_m, y_n)$ labelled by 
$$\phi_u\big((w^{-1}bw)^*\big)$$ as follows.

\begin{figure}[htbp]
\centering
\begin{tikzpicture}

\draw[dashed] (0,0) -- (10.5,0);
\draw[dashed] (0,4) -- (10.5,4);


\node[right] at (-0.9,0.3) {$(x_0, y_0)$};
\node[right] at (0,2) {$\tilde{u}$};
\node[right] at (10, 2) {$\tilde{u}$};
\node[right] at (-0.95,3.7) {$(x_0,y_n)$};
\node[right] at (1.5,4.3) {$\tilde{a}^{\beta_1}$};
\node[right] at (3.2, -0.3) {$\tilde{a}^{\alpha_1}$};
\node[right] at (4.5, 2) {$\cdots\cdots$};
\node[right] at (10, 0.3) {$(x_m, y_0)$};
\node[right] at (10, 3.7) {$(x_m, y_n)$};
\node[right] at (6.7, 4.3) {$\tilde{a}^{\beta_k}$};
\node[right] at (8.7, -0.3) {$\tilde{a}^{\alpha_k}$};

\fill (0.5,0) circle (1.5pt);
\fill (0.5,4) circle (1.5pt);
\fill (3,4) circle (1.5pt);
\fill (3,0) circle (1.5pt);
\fill (4,0) circle (1.5pt);
\fill (4,4) circle (1.5pt);
\fill (6,0) circle (1.5pt);
\fill (6,4) circle (1.5pt);
\fill (8,4) circle (1.5pt);
\fill (8,0) circle (1.5pt);
\fill (10,0) circle (1.5pt);
\fill (10,4) circle (1.5pt);
\fill (2,0) circle (1.5pt);
\fill (2,4) circle (1.5pt);
\fill (7,4) circle (1.5pt);
\fill (7,0) circle (1.5pt);
\fill (9,0) circle (1.5pt);
\fill (9,4) circle (1.5pt);

\draw[thick] (0.5,0) -- (0.5,4) -- (3,4) -- (3,0) --(4,0) --(4,4);
\draw[thick] (6,0) -- (6,4) -- (8,4) -- (8,0) -- (10,0) -- (10,4);
\draw[thick] (2,0) -- (2,4);
\draw[thick] (7,0) -- (7,4);
\draw[thick] (9,0) -- (9,4);

\draw (0.5,-0.25) circle (0.25);
\draw (10,4.25) circle (0.25);


\end{tikzpicture}
\caption{\label{fig:wreath} The wreath product of an $\langle a, w^{-1}bw\rangle$-problem with an $\langle a, u^{-1}au\rangle$-problem.}
\end{figure}

Assume
$$ w=a^{\alpha_k}b^{\beta_k}\cdots a^{\alpha_1}b^{\beta_1}. $$
Then 
$$ \phi_u(w^*)=a^{\alpha_k} u^{-1} a^{\beta_k}u\cdots a^{\alpha_1} u^{-1}a^{\beta_1}u. $$
Since 
$$ \phi_u\big((w^{-1}bw)^*\big)=\big(u\phi_u(w^*)\big)^{-1}a\big(u\phi_u(w^*)\big), $$
the path from $(x_0, y_0)$ labelled successively by
$$ \tilde{u}, \tilde{a}^{\beta_1}, \tilde{u}^{-1}, \tilde{a}^{\alpha_1}, \tilde{u},\dots, \tilde{a}^{\beta_k}, \tilde{u}^{-1}, \tilde{a}^{\alpha_k}, \tilde{u} $$
is a subgraph of $G(Z, R)$ restricted to $\{x_0, x_1, \dots, x_m\}\times Y$ with endpoints $(x_0, y_0)$ and $(x_m, y_n)$. In Figure~\ref{fig:wreath} we illustrate the graph $G(Z, R)$ restricted to $\{x_0, x_1,\dots, x_m\}\times Y$, in particular this path from $(x_0, y_0)$ to $(x_m, y_n)$.

We check that there are no other cycles in $G(Z, R)$, and thus $\pi_1(Z, R)=\big\langle a, \phi_u\big((w^{-1}bw)^*\big)\big\rangle$. By (ii)--(v), every cycle in $G(Z, R)$ other than the demonstrated loops at $(x_0, y_0)$ and $(x_m, y_n)$ must be of the form 
$$ (x, y_0)\stackrel{\mu}{\longrightarrow}(x',y_0)\stackrel{\nu}{\longrightarrow}(x', y_n)\stackrel{\mu'}{\longrightarrow}(x, y_n)\stackrel{\nu'}{\longrightarrow}(x, y_0), $$
where $\mu$ is a path entirely contained in $X\times\{y_0\}$, $\mu'$ is a path entirely contained in $X\times \{y_n\}$, $\nu$ is the path 
$$ (x', y_0), (x', y_1), \dots, (x', y_n), $$
and $\nu'$ is the path
$$ (x, y_n), (x, y_{n-1}, \dots, (x, y_1), (x, y_0). $$
This means that there is a path in $G(X,P)$ from $x$ to $x'$ labelled entirely by $b_X$, and a path in $G(X,P)$ from $x'$ to $x$ labelled entirely by $a_X$. In particular, they give a cycle in $G(X,P)$ which is disjoint from the loops at $x_0$ and $x_m$. This is a contradiction to the assumption that $\pi_1(X, P)$ is generated by two elements.
\end{proof}

\begin{theorem}\label{thm:stationary} Let $w\in \Sigma^a$ and $u\in \Sigma$. Let $\Pi$ be a set of prime numbers. Suppose an $\langle a, u^{-1}au\rangle$-problem has a $\Pi$-solution. Then
$$\langle a, w^{-1}bw\rangle \leftrightsquigarrow_{\Pi} \big\langle a, \phi_u\big((w^{-1}bw)^*\big)\big\rangle.$$
\end{theorem}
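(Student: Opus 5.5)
The proof splits into the two directions of $\leftrightsquigarrow_\Pi$, and each direction uses a different earlier tool.

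The direction $\big\langle a, \phi_u\big((w^{-1}bw)^*\big)\big\rangle \rightsquigarrow_\Pi \langle a, w^{-1}bw\rangle$ does not need the hypothesis on $u$. It follows, as in the proof of Proposition~\ref{prop:stationary}, by applying Proposition~\ref{prop:awbasic}~(iii) twice. First apply it with $\phi_u$: a $\Pi$-solution of an $\langle a,\phi_u(v)\rangle$-problem, with $v=(w^{-1}bw)^*$, yields a $\Pi$-solution of an $\langle a, v\rangle$-problem. Then apply it with ${}^*=\phi_{b^{-1}ab}$ to pass from $\langle a,(w^{-1}bw)^*\rangle$ to $\langle a, w^{-1}bw\rangle$. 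In both steps the solvent only shrinks to a subgroup, so it remains a $\Pi$-group. The one point to check is that Proposition~\ref{prop:awbasic}~(iii) is stated for $\langle a, w\rangle$ with $w\in\Sigma$, whereas here the second generator is $w^{-1}bw$ with $w\in\Sigma^a$. The proof of (iii) never uses the form of the second generator: it only replaces $\tilde b$ by $u_\Lambda$, which keeps the same points fixed by the relevant words. So it applies verbatim.

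For the converse, assume an $\langle a, w^{-1}bw\rangle$-problem $(X,P)$ has a $\Pi$-solution $(X',P')$. By hypothesis, an $\langle a,u^{-1}au\rangle$-problem $(Y,Q)$ with base point $y_0$ has a $\Pi$-solution $(Y',Q')$. Form
\[
(Z',R')=\mbox{\tt Wreath}(X',P';Y',Q';\{y_0\},\{u(y_0)\}).
\]
The sets $\{y_0\}$ and $\{u(y_0)\}$ are disjoint, since $u\notin\pi_1(Y,Q)$; this holds because $\pi_1(Y,Q)=\langle a,u^{-1}au\rangle$ has zero $b$-exponent-sum-compatible structure and does not contain $u\in\Sigma$. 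By Proposition~\ref{prop:wrps}, $(Z',R')$ is a $\Pi$-solution. Next, $\mbox{\tt Wreath}(X,P;Y,Q;\{y_0\},\{u(y_0)\})$ is a subproblem of $(Z',R')$: since $X\subseteq X'$, $Y\subseteq Y'$ and the defining formulas agree on $X\times Y$ wherever the problem's maps are defined, the inclusion $X\times Y\subseteq X'\times Y'$ together with $\tilde a\mapsto\tilde a'$, $\tilde b\mapsto\tilde b'$ is an embedding. By Proposition~\ref{prop:wrsolution}, the fundamental group of this subproblem, at the base point $(x_0,y_0)$ and restricted to the weakly connected component containing it, is $\big\langle a,\phi_u\big((w^{-1}bw)^*\big)\big\rangle$. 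Hence that component is an $\big\langle a,\phi_u\big((w^{-1}bw)^*\big)\big\rangle$-problem with a $\Pi$-solution, namely $(Z',R')$. Corollary~\ref{cor:fundgp} then transfers this to every problem with the same fundamental group, which is what $\rightsquigarrow_\Pi$ requires.

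I expect the main obstacle to be the bookkeeping in the subproblem step. The wreath product of the partial problems may be disconnected, or may carry extra tree-like pieces. One therefore has to restrict to the component of $(x_0,y_0)$, check that its fundamental group is exactly the group computed in Proposition~\ref{prop:wrsolution} and not only contained in it, and apply Corollary~\ref{cor:fundgp} at that stage rather than to the whole product. Having a subproblem in $(Z',R')$ whose fundamental group is exactly the desired group suffices, because a $\Pi$-solution of a problem is also a $\Pi$-solution of each of its subproblems.
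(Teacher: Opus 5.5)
Your proposal is correct and matches the paper's proof. The forward direction goes through $\mbox{\tt Wreath}(X',P';Y',Q';\{y_0\},\{u(y_0)\})$, using Propositions~\ref{prop:wrps} and~\ref{prop:wrsolution} together with the subproblem observation. The converse uses two applications of Proposition~\ref{prop:awbasic}~(iii), and these apply as stated, so your scope worry is unnecessary: $w^{-1}bw\in\Sigma$ and $(w^{-1}bw)^*\in\Sigma(b^{-1},b)$ already.

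One small repair is needed. A $b$-exponent-sum argument cannot give $u(y_0)\neq y_0$ when $\sigma(u)(b)=0$, e.g.\ $u=bab^{-1}$. Argue instead that in the folded (Stallings) graph of $\langle a,u^{-1}au\rangle$ the word $u$ traces a reduced path from the base vertex to the other $a$-loop. Hence $u\notin\langle a,u^{-1}au\rangle$.
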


\begin{proof} Let $(X, P)$ be an $\langle a, w^{-1}bw\rangle$-problem and $(Y, Q)$ be an $\langle a, u^{-1}au\rangle$-problem. Let $(X', P')$ be a $\Pi$-solution for $(X, P)$ and $(Y', Q')$ be a $\Pi$-solution for $(Y, Q)$. Let $y_0$ be the base point for the computation of $\pi_1(Y, Q)$. Then $y_0, u(y_0)\in Y'$. Let $(Z, R)=\mbox{\tt Wreath}(X', P'; Y', Q'; \{y_0\}, \{u(y_0)\})$ and $(Z', R')=\mbox{\tt Wreath}(X', P'; Y', Q'; \{y_0\}, \{u(y_0)\})$. Then $(Z, R)$ is subproblem of $(Z', R')$. By Proposition~\ref{prop:wrsolution}, $(Z, R)$ is an $\big\langle a, \phi_u\big((w^{-1}bw)^*\big)\big\rangle$-problem. By Proposition~\ref{prop:wrps}, $(Z', R')$ is a $\Pi$-solution. Thus we have
$$ \langle a, w^{-1}bw\rangle \rightsquigarrow_{\Pi} \big\langle a, \phi_u\big((w^{-1}bw)^*\big)\big\rangle.$$
The converse follows by applying Proposition~\ref{prop:awbasic} (iii) twice to $\phi_u\big((w^{-1}bw)^*\big)$.
\end{proof}

\subsection{$2$-weakly wandering problems\label{subsec:2ww}}

By our Theorem~\ref{thm:weakwandering}, every $1$-weakly wandering problem has a $\Pi$-solution for any $\Pi$, and in particular they have nilpotent solutions. In this subsection we consider $2$-weakly wandering problems. 

\begin{definition} A word $w\in \Sigma$ is {\em stationary} if there exist $u\in \Sigma$ and $v\in \Sigma^a$ such that $w=\phi_u(bv^*)=u\phi_u(v^*)$. If $w$ is stationary, then we say that an $\langle a, w^{-1}aw\rangle$-problem is {\em stationary}.
\end{definition}

By Theorem~\ref{thm:stationary}, every stationary problem has no nilpotent-solutions. Our main theorem of this subsection is the following.

\begin{theorem}\label{thm:2ww} For a  $2$-weakly wandering $w\in\Sigma$, the following are equivalent:
\begin{enumerate}
\item[(i)] $w$ is not stationary.
\item[(ii)] For any $\Pi$, an $\langle a, w^{-1}aw\rangle$-problem has a $\Pi$-solution.
\item[(iii)] For some prime $p$, an $\langle a, w^{-1}aw\rangle$-problem has a $p$-solution.
\item[(iv)] An $\langle a, w^{-1}aw\rangle$-problem has a nilpotent-solution.
\item[(v)] For any $\Pi$, $\langle a, w^{-1}aw\rangle$ is closed in the pro-$\Pi$ topology of $F(a, b)$.
\item[(vi)] For some prime $p$, $\langle a, w^{-1}aw\rangle$ is closed in the pro-$p$ topology of $F(a, b)$.
\end{enumerate}
\end{theorem}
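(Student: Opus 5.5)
The plan is to establish the cycle (ii)$\Rightarrow$(iii)$\Rightarrow$(iv)$\Rightarrow$(i)$\Rightarrow$(ii), and then to attach (v) and (vi) through Theorem~\ref{thm:solutionprotopology}~(ii).

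\textbf{Routine implications.} The implication (ii)$\Rightarrow$(iii) is immediate: take $\Pi=\{p\}$. For (iii)$\Rightarrow$(iv), note that a $p$-group is nilpotent by Theorem~\ref{thm:pnil}. For (iv)$\Rightarrow$(i), argue by contrapositive. If $w$ is stationary, write $w=\phi_u(bv^*)$ with $u\in\Sigma$ and $v\in\Sigma^a$. Since $\phi_u$ and ${}^*$ fix $a$, we have $\langle a, w^{-1}aw\rangle=\big\langle a,\phi_u\big((v^{-1}bv)^*\big)\big\rangle$, because $(v^{-1}bv)^*=(bv^*)^{-1}a(bv^*)$. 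Proposition~\ref{prop:stationary} then shows there is no nilpotent-solution.

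For the remaining attachments: (ii)$\Leftrightarrow$(v) and (iii)$\Leftrightarrow$(vi) hold by Theorem~\ref{thm:solutionprotopology}~(ii) applied to the $\langle a,w^{-1}aw\rangle$-problem, which is quo-$\Pi$ by the proposition at the start of the wandering subsection.

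\textbf{Main step: (i)$\Rightarrow$(ii).} Let $W_w$ be the basic walk with special vertices $x_0,\dots,x_{m+1}$. Since $w$ is $2$-weakly wandering, the end vertex $x_{m+1}$ equals exactly one earlier $x_r$ with $1\le r\le m$; the case $r=0$ would make the two $a$-loops coincide. I would rerun the recursive {\tt Reroute} construction from the proof of Theorem~\ref{thm:weakwandering} verbatim. It produces a $\Pi$-solution $(Z_m,R_m)$ together with a path $z_0\to z_{m+1}$ realizing $w$, with all the inductive invariants intact. The only thing that can fail is the last step: $z_{m+1}$ must be an $\tilde a_m$-fixed special vertex distinct from $z_0$.

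Because $x_{m+1}=x_r$, the vertex $z_{m+1}$ lies over $x_r$. If $x_r$ was rerouted at stage $r$ (Case 1), then $z_{m+1}$ is fixed by $\tilde a_m$ exactly when its stage-$r$ coordinate avoids the rerouted fiber. So I would track the coordinate $\varepsilon_{m+1,\eta}$ at the relevant index $\eta$ (the least $\eta$ with $\mathrm{proj}_{[0,\eta-1]}(z_{m+1})=\mathrm{proj}_{[0,\eta-1]}(z_\eta)$). This coordinate is an accumulated sum of the $a$-exponents $\alpha_{\lambda_j}$ over those visits $j$ that land in the same copy. When it is nonzero, $z_{m+1}$ is already fixed by $\tilde a_m$; if not, one further reroute at a fresh coordinate could be used to separate $z_{m+1}$. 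In either case we get an $\langle a,w^{-1}aw\rangle$-subproblem of a $\Pi$-solution.

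\textbf{Expected obstacle.} The bad case is when the relevant accumulated exponent vanishes, or, more precisely, when the combinatorics forces the end vertex onto a non-fixed copy no matter how $M$ and the reroutes are chosen. I expect this to happen exactly when the segment of $w$ between the two visits of $x_r$ has the form $\phi_u(bv^*)$: the portion from $x_r$ back to $x_r$ then behaves like the ``$b$-conjugate'' $v^{-1}bv$ pattern, and its $a$-sum vanishes at each level.

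The hardest part will be proving this combinatorial dichotomy: non-stationary implies that the accumulated displacement at the repeated vertex is nonzero modulo $M$, for $M$ large. I would try first to decompose $w$ at the two visits of $x_{m+1}$ into $u\cdot(\text{middle})$ and identify the middle as $\phi_u(v^*)$ precisely when the displacement cancels. If the cancellation cannot be avoided by choosing $M>2|w|$, I would add an extra {\tt Wreath} layer via Proposition~\ref{prop:wrps}, using an auxiliary $\langle a,u^{-1}au\rangle$-solution that is available by Theorem~\ref{thm:weakwandering}.
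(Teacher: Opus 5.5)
Your routine implications ((ii)$\Rightarrow$(iii)$\Rightarrow$(iv)$\Rightarrow$(i), (ii)$\Leftrightarrow$(v), (iii)$\Leftrightarrow$(vi)) are correct and agree with the paper, including the identity $w^{-1}aw=\phi_u\big((v^{-1}bv)^*\big)$.

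The gap is (i)$\Rightarrow$(ii). You do not prove it, and the mechanism you sketch does not work.

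\begin{itemize}
\item When the recursion of Theorem~\ref{thm:weakwandering} reroutes at $z_r$, the \emph{whole} $\tilde a$-cycle $\{z_r\}\times T_r$ stops being fixed, together with all its copies at later stages. So $z_{m+1}$, which lies over $x_r$, is moved by $\tilde a$ whenever $\mathrm{proj}_{[0,r-1]}(z_{m+1})=\mathrm{proj}_{[0,r-1]}(z_r)$. This holds whatever its stage-$r$ coordinate, or the coordinate $\eta$ that $\tilde a$ shifts, happens to be. Hence a nonzero accumulated exponent in that coordinate does not make $z_{m+1}$ fixed.
\item Concretely, take the non-stationary word $w=b^{-1}a^{-1}ba^{\alpha_4}b^{-1}aba^{\alpha_2}b^{-1}ab$. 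Here $m=2$ and $x_3=x_1\neq x_2$. The unmodified recursion ends at $z_3=(x_1,\alpha_2,\alpha_4)$, which $\tilde a$ moves, even though $\eta=1$ and $\alpha_2\not\equiv 0$.
\item A further {\tt Reroute} applied to $(Z_m,R_m)$ cannot repair this. {\tt Reroute} produces a covering, so the lift of $w$ from a lift of $z_0$ ends over $z_{m+1}$ and is still moved by $\tilde a$.
\item Separately, your exclusion of $r=0$ is wrong. The case $x_{m+1}=x_0$ does occur, e.g.\ for $w=b^{-1}a^{-1}ba^{\alpha}b^{-1}ab$, and it is exactly the case where the unmodified construction already succeeds. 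It is $r=m$ that nondegeneracy rules out.
\end{itemize}

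What is missing is the paper's idea of installing the separating coordinate \emph{first}.

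\begin{itemize}
\item Let $x_{m^*}=x_{m+1}$ with $0<m^*<m$. The return walk $\mu$ from $y_{m^*}$ to $y_{m+1}$ lies in a tree, so some intermediate special vertex $y'=y_{m'}$ is visited exactly once by $\mu$.
\item If $y'\neq y_0$, apply ${\tt Reroute}(Y,Q;M,\tilde a,y')$ \emph{before} running the recursion. This new coordinate changes exactly once along $\mu$, by $\alpha_{\lambda_{m'}}$. So $z_{m+1}$ is separated from $z_{m^*}$ in a coordinate older than all later reroutes, and it remains fixed.
\item The restriction $y'\neq y_0$ is forced, because rerouting at $y_0$ destroys the fixed point $z_0$.
\item When only $y'=y_0$ is available, write $w=u_3a^{\alpha_{\lambda_{m'}}}u_2a^{\alpha_{\lambda_{m^*}}}u_1$. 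Stationarity is then exactly $u_1=u_3=u_2^{-1}$. So the obstruction is this retracing, not an exponent cancellation modulo $M$; and it is $w$ itself, not the segment between the two visits, that has the form $\phi_u(bv^*)$.
\item Otherwise, run the weakly wandering construction successively for $u_1,u_2,u_3$, keeping $z_{m^*}$ and $z_{m'}$ fixed. Only at the end reroute at $z_{m^*}$ and $z_{m'}$, in an order determined by which of $u_3\neq u_2^{-1}$ or $u_1\neq u_2^{-1}$ holds.
\end{itemize}
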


\begin{proof} The equivalence of (ii) with (v) and (iii) with (vi) are given by Theorem~\ref{thm:solutionprotopology}. The implication (ii)$\Rightarrow$(iii)  is obvious. The implication (iii)$\Rightarrow$(iv) is given by Theorem~\ref{thm:pnil}. The implication (iv)$\Rightarrow$(i) is given by Proposition~\ref{prop:stationary}. All of these implications do not use the $2$-weakly wandering assumptions. It remains to show (i)$\Rightarrow$(ii) for a $2$-weakly wandering $w$.

Let $w\in \Sigma$ be a $2$-weakly wandering word of the form (\ref{eqn:w}) and let
$$ x_0, x_1, \dots, x_m, x_{m+1} $$
be the enumeration of special vertices along the basic walk for $w$ in $G(X_w, P_w)$, where $m\geq 1$. Let $0\leq m^*<m$ be the unique index such that $x_{m^*}=x_{m+1}$. 

We construct $\Pi$-solutions using either the same construction as in the proof of Theorem~\ref{thm:weakwandering} or a modification of that construction.  For this we first recall some details of that proof and perform an analysis of its parameters. 

Recall from that proof that we started with noting that $G(X_w, P_w)$ is a tree and obtained a $\Pi$-solution $(X', P')$ for $(X_w, P_w)$. Then we defined a $\Pi$-solution $(Y, Q)$ by a construction similar to the {\tt Multiverse} construction. In $G(Y, Q)$ we obtained a walk whose special vertices are enumerated as
$$ y_0, y_1, \dots, y_{m+1}. $$
Next, we repeatedly used {\tt Reroute} to inductively define a sequence of $\Pi$-solutions $(Y_j, Q_j)$, with $(Y_0, Q_0)=(Y, Q)$, for $0\leq j\leq m$. Each $Y_j$ is of the form
$$ Y\times T_1\times\cdots \times T_j$$
where for each $1\leq \eta\leq j$, $T_\eta$ is either $\{0, \dots, M-1\}$ or $\{0\}$, and it is further identified with 
$$ Y\times T_1\times \cdots\times T_j\times\{0\}\times\cdots \times\{0\} $$
which is a subspace of $Y_m$. Along with the construction of $(Y_j, Q_j)$, we also constructed a path
\begin{equation}\label{eqn:zm} z_0\stackrel{\nu_0}{\longrightarrow} z_1\stackrel{\mu_0}{\longrightarrow} z_1^*\stackrel{\nu_1}{\longrightarrow} z_2\stackrel{\mu_2}{\longrightarrow}\cdots\stackrel{\nu_{m-1}}{\longrightarrow}z_m\stackrel{\mu_m}{\longrightarrow}z_m^*\stackrel{\nu_m}{\longrightarrow}z_{m+1}. 
\end{equation}
We eventually let $(Z, R)=(Y_m, Q_m)$. Let $R=\{\tilde{a}, \tilde{b}\}$ for simplicity of notation. We have that $\tilde{a}(z_0)=z_0$ and, under the $1$-weakly wandering assumption, we also have $\tilde{a}(z_{m+1})=z_{m+1}$. Therefore the path given by (\ref{eqn:zm}) witnesses that $(Z, R)$ is a $\Pi$-solution for an $\langle a, w^{-1}aw\rangle$-problem.

At each stage of the construction, we maintained sets $S_j$ and $S_j^a$. $S_j=S^a_{j-1}\times T_j$ is the set of all special vertices inherited from the previous stage, and $S_j^a$ is the set of all $\tilde{a}$-fixed special vertices of $(Y_j, Q_j)$. If $z_{j+1}\in S_j^a$, then {\tt Reroute} is applied to obtain a new EP problem $(Y_{j+1}, Q_{j+1})$, and in this process some elements of $S_j^a$ (specifically, $\{z_{j+1}\}\times \{0, \dots, M-1\}$) lost their status as $\tilde{a}$-fixed special vertices. If $z_{j+1}\not\in S_j^a$, then the path goes on in $(Y_j, Q_j)$ and the EP problem $(Y_{j+1}, Q_{j+1})$ is essentially the same as $(Y_j, Q_j)$, and therefore not new. Throughout this whole process, each element $z_j$ is represented by a stable sequence of coordinates
\begin{equation}\label{eqn:zjcoordinates} (y_j, \varepsilon_{j,1}, \dots, \varepsilon_{j, m}) \end{equation}
where for all $j\leq \eta\leq m$, $\varepsilon_{j, m}=0$. In fact, for $1\leq \eta\leq j-1$, each $\varepsilon_{j, \eta}$ is the sum of some elements of the sequence
$$ \alpha_{\lambda_1}, \dots, \alpha_{\lambda_m}. $$
The coordinates of $z_j$ in (\ref{eqn:zjcoordinates}) can be computed as in (\ref{eqn:zj*}) and (\ref{eqn:zj+1}). The condition to apply (\ref{eqn:zj+1}) is given by (\ref{eqn:criterion}), which is based purely on the coordinates of $z_\eta$ for $1\leq \eta<j$.

With this analysis, we now consider a $2$-weakly wandering $w$. We consider two cases.

Case 1: $x_{m+1}=x_0$, i.e., $m^*=0$. In this case we claim that the above construction works, and therefore an $\langle a, w^{-1}aw\rangle$-problem has a $\Pi$-solution for any $\Pi$. 

Note that we have $z_0\in S_m^a$ as before. We only need to verify that $z_{m+1}\in S_m^a$. For this, note that our assumption implies $x_0=x_{m+1}\not\in\{x_1, \dots, x_m\}$, and therefore $y_0=y_{m+1}\not\in \{y_1, \dots, y_m\}$. Since $\mbox{\rm proj}_0(z_{m+1})=y_{m+1}\neq y_j=\mbox{\rm proj}_0(z_j)$ for any $1\leq j\leq m$, we easily see that $z_{m+1}\in S_m^a$. This means that $\tilde{a}(z_{m+1})=z_{m+1}$, and thus the path given by (\ref{eqn:zm}) is an $\langle a, w^{-1}aw\rangle$-problem.

Case 2: $0<m^*<m$. In this case we modify the above construction as follows. 

Consider the part of the basic walk in $G(X', P')$ from $x_{m^*}$ to $x_{m+1}$. The corresponding walk $\mu$ in $G(Y, Q)$ visits the following special vertices in sequence
$$ y_{m^*}=y_{m+1}, \  y_{m^*+1}, \  \dots\ , \ y_m, \ y_{m+1}. $$
Note that $y_{m^*}=y_{m+1}\not\in \{y_{m^*+1},\dots, y_m\}$. Since $\mu$ is contained entirely in a subtree of $G(Y, Q)$, there exists a vertex in $\{y_{m^*+1},\dots, y_m\}$ which is visited exactly once by $\mu$. 

Now we assume
\begin{equation}\label{NS} \tag{NS} \begin{array}{l} \mbox{there is a vertex $y'\in\{y_{m^*+1}, \dots, y_m\}$ which is visited} \\
\mbox{exactly once by $\mu$, and $y'\neq y_0$.}\end{array}
\end{equation}
Under this assumption we construct a $\Pi$-solution. Suppose $m'$ is the unique index such that $y'=y_{m'}$. Let $(Y, Q)$ be as defined in the above proof. Instead of letting $(Y_0, Q_0)=(Y, Q)$, we let
$$ (Y_0, Q_0)=\mbox{\tt Reroute}(Y, Q; M, \tilde{a}, y'). $$
Then $Y_0=Y\times T_0=Y\times \{0, \dots, M-1\}$. Next we inductively define $(Y_j, Q_j)$ for $1\leq j\leq m$ as before. Eventually let $(Z, R)=(Y_m, Q_m)$. Let
$$ \mbox{\rm proj}_{-1}\colon Y\times T_0\times T_1\times \cdots \times T_m\to Y $$
and $$\mbox{\rm proj}_0\colon Y\times T_0\times T_1\times \cdots \times T_m\to T_0 $$
be the projection maps. We also define the path of the form (\ref{eqn:zm}) from $z_0=(y_0, 0, \dots, 0)$ to $z_{m+1}$ in $(Z, R)$ as before. Note that each $z_j$ is now represented by a sequence of coordinates of the form
\begin{equation}\label{eqn:zjcoordinatesnew} (y_j, \varepsilon^*_{j,0}, \varepsilon^*_{j, 1},\dots, \varepsilon^*_{j, m}).
\end{equation}
Suppose 
$$  z_{m^*}=(y_{m+1}, \varepsilon^*_{m^*,0}, \varepsilon^*_{m^*, 1},\dots, \varepsilon^*_{m^*, m}). $$
Then
$$ z_{m+1}=(y_{m+1}, \varepsilon^*_{m^*,0}+\alpha_{\lambda_{m'}}, \varepsilon^*_{m, 1}, \dots, \varepsilon^*_{m, m}) $$
by our assumption (NS). Since $z_{m^*}$ and $z_{m+1}$ are the only special vertices $z'$ on the basic walk with $\mbox{\rm proj}_{-1}(z')=y_{m+1}$, from 
$$ \mbox{\rm proj}_0(z_{m^*})=\varepsilon^*_{m^*, 0}\neq \varepsilon^*_{m^*,0}+\alpha_{\lambda_{m'}}=\mbox{\rm proj}_0(z_{m+1}), $$
we know that $z_{m+1}\in S_m^a$. This again shows that the path given by (\ref{eqn:zm}) is an $\langle a, w^{-1}aw\rangle$-problem, and $(Z, R)$ is a $\Pi$-solution for it.

Next we assume that (NS) fails. Then the unique 
$$y'=y_{m'}\in\{y_{m^*+1}, \dots, y_m\}$$ visted exactly once by $\mu$ is $y_0$. Now the walk in $G(Y, Q)$ corresponding to the basic walk, which we denote by $\nu$, can be divided into three parts: $\nu_1$ from $y_0$ to $y_{m^*}=y_{m+1}$, $\nu_2$ from $y_{m^*}$ to $y'=y_0=y_{m'}$, and $\nu_3$ from $y'=y_0=y_{m'}$ to $y_{m+1}$. We have that $y_0$ does not occur in $\nu_2$ and $\nu_3$ other than as an endpoint, and $y_{m+1}$ does not occur in any of $\nu_1$, $\nu_2$ and $\nu_3$ other than as an endpoint. Each of these walks corresponds to a subword of $w$, which we denote by $u_1$, $u_2$ and $u_3$, respectively. Note that 
$$ w=u_3\, a^{\alpha_{\lambda_{m'}}}\, u_2\, a^{\alpha_{\lambda_{m^*}}}\,  u_1. $$
If $u_1=u_3=u_2^{-1}$, then $w=\phi_{u_1}(bv^*)$ for 
$$ v=a^{\alpha_{\lambda_{m'}}} b^{\alpha_{\lambda_{m^*}}}, $$
and hence $w$ is stationary. We show below that if $u_1\neq u_2^{-1}$ or $u_3\neq u_2^{-1}$, then an $\langle a, w^{-1}aw\rangle$-problem has a $\Pi$-solution for any $\Pi$. This will complete the proof of (i)$\Rightarrow$(ii). 

The desired $\Pi$-solution will be constructed in 5 steps. We will denote the result of each step of the construction by $(Z_i, R_i)$ for $i=1,\dots, 5$. To construct $(Z_1, R_1)$, we note that $u_1$ is weakly wandering. Starting from the above $(Y, Q)$, we follow the construction in the proof of Theorem~\ref{thm:weakwandering} for $u_1$. Let $(Z_1, R_1)$ be the result of this construction. In other words, 
$$ Z_1=Y\times T_1\times\cdots \times T_{m^*-1}. $$
In $G(Z_1, R_1)$, we have a path $\rho_1$ from $z_0$ to $z_{m^*}$ so that the labels of the edges along the path give exactly $u_1$. We also have a subtree $W_1$ of $G(Z_1, R_1)$ as defined in the proof of Theorem~\ref{thm:weakwandering}. Note that $\rho_1$ is a subgraph of $W_1$, and the projection of $W_1$ to $Y$ is exactly the subgraph of $G(Y,Q)$ traversed by $\nu_1$. 

To construction $(Z_2, R_2)$, we use $(Z_1, R_1)$ as the starting problem in place of $(Y, Q)$, use $z_{m^*}$ as the starting vertex, and follow the construction of the proof of Theorem~\ref{thm:weakwandering} for $u_2$. Let $(Z_2, R_2)$ be the result of this construction. Note that we have
$$ Z_2=Z_1\times T_{m^*+1}\times \cdots \times T_{m'-1}. $$
But we may still speak of $\mbox{\rm proj}_i(Z_2)$ for $i=0,1,\dots, m^*-1$; in particular, $\mbox{\rm proj}_0*(Z_2)=Y$. We observe that if $u_2=u_1^{-1}$, then in this construction we have $T_j=\{0\}$ for all $m^*+1\leq j\leq m'-1$, and the path from $z_{m^*}$ to $z_{m'}$ is exactly the inverse of the path $\rho_1$; in particular $z_{m'}=z_0$ (again, we identify $z_0\in Z_1$ with $(z_0, 0, \dots, 0)\in Z_2$). If, on the other hand, $u_2\neq u_1^{-1}$, then we obtain in $G(Z_2, R_2)$ a path $\rho_2$ from $z_{m^*}$ to $z_{m'}$, where $\rho_2$ is not the inverse of $\rho_1$. Moreover, $z_{m'}\neq z_0$ but $\mbox{\rm proj}_0(z_{m'})=y_0$. It follows that in $G(Z_2, R_2)$, we have $\tilde{a}(z_0)=z_0$, $\tilde{a}(z_{m^*})=z_{m^*}$, and $\tilde{a}(z_{m'})=z_{m'}$.

Next, to construct $(Z_3, R_3)$, we use $(Z_2, R_2)$ as the starting problem in place of $(Y, Q)$, use $z_{m'}$ as the starting vertex, and follow the construction of the proof of Theorem~\ref{thm:weakwandering} for $u_3$. Let $(Z_3, R_3)$ be the result of this construction. Then 
$$ Z_3=Z_2\times T_{m'+1}\times\cdots\times T_{m}. $$
We obtain a path $\rho_3$ from $z_{m'}$ to $z_{m+1}$ in $G(Z_3, R_3)$. Note that in $G(Z_3, R_3)$, all of $z_0$, $z_{m^*}$, $z_{m'}$ and $z_{m+1}$ are $\tilde{a}$-fixed points. If $u_3=u_2^{-1}$, then $\rho_3$ is exactly the inverse of $\rho_2$, and in particular $z_{m+1}=z_{m^*}$. Otherwise, if $u_3\neq u_2^{-1}$, then $\rho_3$ is not exactly the inverse of $\rho_2$, and $z_{m+1}\neq z_{m^*}$.

Now suppose $u_3\neq u_2^{-1}$. Then we let
$$ (Z_4, R_4)=\mbox{\tt Reroute}(Z_3, R_3; M, \tilde{a}, z_{m^*}) $$
and 
$$ (Z_5, R_5)=\mbox{\tt Reroute}(Z_4, R_4; M, \tilde{a}, z_{m'}). $$
Let $\hat{z}_0=(z_0, 0,0)\in Z_5$ and $\hat{z}_{m^*}=(z_{m^*}, 0, 0)\in Z_5$. Then the path $\rho_1$ in $G(Z_3, R_3)$ gives a path $\hat{\rho}_1$ from $\hat{z}_0$ to $\hat{z}_{m^*}$ in $G(Z_5, R_5)$. Let 
$$ \hat{z}_{m^*}^*=(z_{m^*}, \alpha_{\lambda_{m^*}}, 0) $$
and let $\gamma$ be the path from $\hat{z}_{m^*}$ to $\hat{z}_{m^*}^*$. Let
$$ \hat{z}_{m'}=(z_{m'}, \alpha_{\lambda_{m^*}}, 0) $$
and let $\hat{\rho}_2$ be the path from $\hat{z}_{m^*}^*$ to $\hat{z}_{m'}$ in $G(Z_5, R_5)$ as a copy of $\rho_2$. Let $$\hat{z}_{m'}^*=(z_{m'},\alpha_{\lambda_{m^*}}, \alpha_{\lambda_{m'}})$$
and let $\gamma'$ be the path from $\hat{z}_{m'}$ to $\hat{z}_{m'}^*$. Finally, let
$$ \hat{z}_{m+1}=(z_{m+1}, \alpha_{\lambda_{m^*}}, \alpha_{\lambda_{m'}})$$
and let $\hat{\rho_3}$ be the path from $\hat{z}_{m'}^*$ to $\hat{z}_{m+1}$ in $G(Z_5, R_5)$ as a copy of $\rho_3$. Then
$$ \hat{z}_0\stackrel{\hat{\rho}_1}{\longrightarrow} \hat{z}_{m^*}\stackrel{\gamma}{\longrightarrow} \hat{z}_{m^*}^* \stackrel{\hat{\rho}_2}{\longrightarrow} \hat{z}_{m'} \stackrel{\gamma'}{\longrightarrow} \hat{z}_{m'}^*\stackrel{\hat{\rho}_3}{\longrightarrow} \hat{z}_{m+1} $$
is a path correspondent to $w$. In $G(Z_5, R_5)$ we have $\tilde{a}(\hat{z}_{m+1})=\hat{z}_{m+1}$ because $z_{m+1}\neq z_{m^*}$ in $G(Z_3, R_3)$. We conclude that $(Z_5, R_5)$ is a $\Pi$-solution for an $\langle a, w^{-1}aw\rangle$-problem.

Finally suppose $u_1\neq u_2^{-1}$ but $u_3=u_2^{-1}$. In this case we let
$$ (Z_4, R_4)=\mbox{\tt Reroute}(Z_3, R_3; M, \tilde{a}, z_{m'}) $$
and 
$$ (Z_5, R_5)=\mbox{\tt Reroute}(Z_4, R_4; M, \tilde{a}, z_{m^*}). $$
In this case we may construct  a path given by $w^{-1}$ similarly to the above case. The argument is clear with considering the inverses of the paths. By a similar argument, we conclude that $(Z_5, R_5)$ is a $\Pi$-solution for an $\langle a, waw^{-1}\rangle$-problem. 

This completes the definition of $(Z_5, R_5)$ in all cases, and finishes the proof of the theorem.
\end{proof}

\subsection{Solutions for $1$-strongly stationary problems\label{subsec:1ss}}

Our objective is to show that all $2$-weakly wandering problems have odd-solutions. By Theorem~\ref{thm:2ww}, the non-stationary $2$-weakly wandering problems have $\Pi$-solutions for any $\Pi$, and in particular they have $p$-solutions for all primes $p$. If we choose $p$ to be an odd prime, the $p$-solutions are odd-solutions. Thus the problem reduces to stationary ones. By Proposition~\ref{prop:stationary}, they do not have nilpotent-solutions. Thus we need to consider new constructions.

\begin{definition} A word $w\in \Sigma$ is {\em strongly stationary} if there is a $v\in \Sigma^a$ such that $w=bv^*$. In other words, $w$ is of the form
$$ w=ba^{\alpha_k}b^{-1}a^{\beta_k}b\cdots a^{\alpha_1}b^{-1}a^{\beta_1}b, $$
whereas $v$ is of the form
$$ v=a^{\alpha_k}b^{\beta_k}\cdots a^{\alpha_1}b^{\beta_1}. $$
When $w$ has the above form we also say $w$ is {\em $k$-strongly stationary}.
\end{definition}

A strongly stationary $w\in \Sigma$ is $k$-strongly stationary for some $k\geq 1$. It is also clear that $w$ is stationary if and only if there are $u\in \Sigma$ and strongly stationary $w_0\in\Sigma$ such that $w=\phi_u(w_0)$.  By Theorem~\ref{thm:stationary}, under the assumption that an $\langle a, u^{-1}au\rangle$-problem has an odd-solution, if $w=\phi_u(w_0)$, then an $\langle a, w^{-1}bw\rangle$-problem has an odd-solution if and only if an $\langle a, w_0^{-1}aw_0\rangle$-problem has an odd-solution. 
When $w$ is $2$-weakly wandering and $w=\phi_u(w_0)$ for strongly stationary $w_0$, then $u$ is weakly wandering and so the above assumption holds. Our problem reduces further to strongly stationary ones.

Moreover, when $w$ is $2$-weakly wandering and $w=\phi_u(w_0)$ for strongly stationary $w_0$, then $w_0$ is $1$-strongly stationary and $w_0$ is of the form
$$ w_0=ba^{\alpha_1}b^{-1}a^{\beta_1}b. $$

Our main theorem of this subsection is the following.

\begin{theorem}\label{thm:1ss} For any $1$-strongly stationary word $w\in \Sigma$, there is an odd-solution to an $\langle a, w^{-1}aw\rangle$-problem.
\end{theorem}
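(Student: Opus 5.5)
Since $w=ba^{\alpha_1}b^{-1}a^{\beta_1}b=\phi_b\big((v^{-1}bv)^*\big)$ up to the identification in Proposition~\ref{prop:stationary} with $v=a^{\alpha_1}b^{\beta_1}$, the plan is to reduce to an $\langle a, v^{-1}bv\rangle$-problem and then solve that problem directly. Theorem~\ref{thm:stationary} with $u=b$ gives $\langle a, v^{-1}bv\rangle\leftrightsquigarrow_{\Pi}\langle a, w^{-1}aw\rangle$, because an $\langle a, b^{-1}ab\rangle$-problem is wandering and hence has a $\Pi$-solution by Theorem~\ref{thm:wandering}. We take $\Pi$ to be the odd primes. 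It therefore suffices to exhibit an odd-solution of an $\langle a, a^{-\alpha}\,\cdot\rangle$-type problem whose fundamental group is $\langle a, b^{-\beta}a^{-\alpha}ba^{\alpha}b^{\beta}\rangle$ with $\alpha=\alpha_1$ and $\beta=\beta_1$ nonzero.

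Next I simplify the exponents. Replacing $v$ by the conjugate form and using Proposition~\ref{prop:awbasic}(v),(vi)-style {\tt Reroute} tricks, together with the symmetries $a\leftrightarrow a^{-1}$ and $b\leftrightarrow b^{-1}$, I reduce to the case $\alpha,\beta>0$. The concrete problem is a path in which $x_0$ is an $a$-fixed point, followed by $\beta$ edges labelled $b$ and then $\alpha$ edges labelled $a$ to a vertex $x$. At $x$ there is a $b$-loop, and we then return by $a^{-\alpha}$ and $b^{-\beta}$ along a parallel path. This is exactly the shape of $P_1$ in Subsection~\ref{subsec:case}, where $\alpha=\beta=1$ (up to relabelling). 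So the target is to generalize the $F_{21}$ construction of Figure~\ref{fig:S3} to arbitrary $\alpha,\beta$.

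The construction follows the case study. Choose an odd prime $q$ and an odd $r$ with $r\mid q-1$, and let $\Gamma=\mathbb{Z}/r\ltimes \mathbb{Z}/q$ act on $\mathbb{Z}/q$ by affine maps. The generators are $s\colon t\mapsto \lambda t$ with $\lambda$ of odd order $r$, and $y\colon t\mapsto t+1$. Since $|\Gamma|=rq$ is odd, this is an odd group. Set $\tilde a=s^{i}$ and $\tilde b=s^{j}y^{c}$, or more generally affine maps $t\mapsto \lambda^{i}t+c$, with the parameters still to be chosen.

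The remaining verification is that the required walk embeds injectively with the prescribed fixed points. The point $0$ is fixed by $\tilde a$. Applying $b^{\beta}$ and then $a^{\alpha}$ sends $0$ to some point $P$, and the $b$-loop condition requires $\tilde b(P)=P$. This is one polynomial equation in $\lambda$ and $c$ over $\mathbb{F}_q$. We also need all of the roughly $2(\alpha+\beta)$ intermediate points to be distinct, and the two branches must not collapse; these are finitely many polynomial non-equalities. The fundamental-group condition is preserved because any extra coincidence in a solution is permitted; only the embedding needs to hold. Equivalently, by Corollary~\ref{cor:fundgp}, I could first pass to the figure-eight version, as in the passage from $P_1$ to $P_3$. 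This would use {\tt Multiverse} and a covering by {\tt Reroute}, as in the chain $P_1\Leftrightarrow P_2\Leftarrow Q_3\Leftarrow P_3$. The target would then be an $\langle x, yx^{\beta}y^{-1}x^{-\alpha}y\rangle$-type problem, solved by $x\mapsto$ multiplication by $\lambda$ and $y\mapsto$ translation. That is a single cycle condition $\lambda^{\beta}\cdots$ in $\mathbb{F}_q$.

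The main obstacle is choosing $q$ and $\lambda$ so that the equality holds while all the non-coincidences hold. I would first try making the loop condition a linear equation in $c$ for fixed $\lambda\neq 1$ of odd order, solving it for $c$. The non-coincidences then become nonvanishing of finitely many nonzero rational functions of $\lambda$. By Dirichlet, we take $q\equiv 1 \pmod r$ for a large odd prime $r$, giving many choices of $\lambda$ of order $r$ with $r$ exceeding the number of bad roots. A Chebotarev/degree-counting argument should then exclude all bad $\lambda$.
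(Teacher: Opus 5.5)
Your reductions are sound. Theorem~\ref{thm:stationary} with $u=b$ applies, since an $\langle a,b^{-1}ab\rangle$-problem is wandering, and it turns the task into an $\langle a,v^{-1}bv\rangle$-problem with $v=a^{\alpha}b^{\beta}$. The automorphisms $a\mapsto a^{-1}$, $b\mapsto b^{-1}$ of $F(a,b)$ let you assume $\alpha,\beta>0$. Only an embedding of the barbell is needed: an $a$-loop at $x_0$, the path $b^{\beta}$ then $a^{\alpha}$, and a $b$-loop at the end (the circuit retraces this path; there is no separate return branch). An affine odd-solution on $\mathbb{F}_q$ would be a legitimate alternative to the paper's route.

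The step you treat as routine fails. Write $\tilde a(t)=\lambda t$ and $\tilde b(t)=\mu t+c$; your $s^{i}$ and $s^{j}y^{c}$ have this form. You need $\mu\neq 1$, since a translation with a fixed point is the identity. Then $\tilde b^{\beta}(0)=c(\mu^{\beta}-1)/(\mu-1)$ and $P=\lambda^{\alpha}\tilde b^{\beta}(0)$. Moreover $\tilde b(P)=P$ iff $(1-\mu)P=c$ iff $c\bigl(\lambda^{\alpha}(1-\mu^{\beta})-1\bigr)=0$. This is homogeneous in $c$: dilations fixing $0$ commute with $\tilde a$ and only rescale $c$. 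Since $c=0$ collapses the walk, the loop condition is $\lambda^{\alpha}(1-\mu^{\beta})=1$, a closed condition on the multipliers alone. Once $\lambda$ of order $r$ is fixed, there is nothing left to solve for. A Dirichlet/degree-counting argument that discards finitely many bad values cannot produce a $\lambda$ satisfying an equation.

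What is missing is an arithmetic existence statement: a prime $q$ and $\lambda,\mu\in\mathbb{F}_q^\times$ of odd order with $\lambda^{\alpha}(1-\mu^{\beta})=1$. This is exactly the role of Lemma~\ref{lem:4q}. After arranging signs, the paper takes the integer $4$ and a prime $q\equiv 3\pmod 4$ dividing $\Delta(4)\equiv 3\pmod 4$. Then $4=2^{2}$ is a square mod $q$, so its order divides the odd number $(q-1)/2$. In your normalization, $\lambda=\theta^{-1}$, $\mu=\theta$ gives $\theta^{\alpha}+\theta^{\beta}=1$, and the same trick applies to $4^{\alpha}+4^{\beta}-1$.

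Once $\theta$ is pinned down this way, though, its order is no longer under your control, so genericity cannot secure the non-coincidences. For $\alpha=\beta=4$, the only such prime dividing $4^{4}+4^{4}-1=511=7\cdot 73$ is $q=7$. There $4$ has order $3<4$ and $4+4\equiv 1$, so the $a$- and $b$-segments wrap around and meet, and the walk does not embed in the affine action. The paper avoids most coincidences by working in a modified regular representation via {\tt Pin}. Away from $\langle a_\Gamma\rangle$, vertices of the walk then coincide only when a subword of $w$ is trivial in $\Gamma$. It then applies {\tt Reroute} to the short $a$-cycles the walk traverses. Your proposal needs a comparable mechanism in place of the genericity argument.
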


By the above discussion, we see that this theorem has the following immediate corollaries.

\begin{corollary} Any $2$-weakly wandering problem has an odd-solution.
\end{corollary}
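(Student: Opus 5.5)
The plan is to split according to whether $w$ is stationary, and in the stationary case to reduce, through the chain of equivalences already established, to Theorem~\ref{thm:1ss}. Throughout, take $\Pi$ to be the set of all odd primes, so that a $\Pi$-solution is exactly an odd-solution.

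\emph{Non-stationary case.} Suppose the $2$-weakly wandering word $w\in\Sigma$ is not stationary. Then (i)$\Rightarrow$(ii) of Theorem~\ref{thm:2ww} shows that an $\langle a, w^{-1}aw\rangle$-problem has a $\Pi$-solution for every $\Pi$. In particular, taking $\Pi=\{3\}$ gives a $3$-solution, which is an odd-solution.

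\emph{Stationary case.} Here $w=\phi_u(w_0)$ with $u\in\Sigma$ and $w_0$ strongly stationary. By the discussion preceding Theorem~\ref{thm:1ss}, the $2$-weakly wandering hypothesis forces two facts. First, $u$ is weakly wandering. Second, $w_0=ba^{\alpha_1}b^{-1}a^{\beta_1}b$ is $1$-strongly stationary, so $w_0=bv^*$ with $v=a^{\alpha_1}b^{\beta_1}\in\Sigma^a$. I would then run the following chain.
\begin{enumerate}
\item[(1)] Rewrite the conjugate. Since $b^*=b^{-1}ab$, we get
\[
w_0^{-1}aw_0=(v^*)^{-1}b^{-1}ab\,v^*=(v^{-1}bv)^*.
\]
Because $\phi_u(a)=a$, applying $\phi_u$ gives $\phi_u\big((v^{-1}bv)^*\big)=w^{-1}aw$.
\item[(2)] By Theorem~\ref{thm:1ss}, an $\langle a, w_0^{-1}aw_0\rangle=\langle a,(v^{-1}bv)^*\rangle$-problem has an odd-solution.
\item[(3)] By Proposition~\ref{prop:awbasic}~(ii), via {\tt Multiverse}, it follows that an $\langle a, v^{-1}bv\rangle$-problem has an odd-solution.
\item[(4)] Since $u$ is weakly wandering, Theorem~\ref{thm:weakwandering} together with Theorem~\ref{thm:solutionprotopology}~(ii) gives an odd-solution for an $\langle a,u^{-1}au\rangle$-problem.
\item[(5)] The hypothesis of Theorem~\ref{thm:stationary} now holds with $\Pi$ the odd primes. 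That theorem gives $\langle a, v^{-1}bv\rangle\leftrightsquigarrow_\Pi\big\langle a,\phi_u\big((v^{-1}bv)^*\big)\big\rangle=\langle a,w^{-1}aw\rangle$, so an $\langle a,w^{-1}aw\rangle$-problem has an odd-solution.
\end{enumerate}

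The only substantive point beyond bookkeeping is justifying the structural claim that a stationary $2$-weakly wandering $w=\phi_u(w_0)$ has $u$ weakly wandering and $w_0$ $1$-strongly stationary. I would argue this from the decomposition $w=u_3a^{\alpha_{\lambda_{m'}}}u_2a^{\alpha_{\lambda_{m^*}}}u_1$ with $u_1=u_3=u_2^{-1}$, obtained in the proof of Theorem~\ref{thm:2ww}. In that decomposition $u_1$ is weakly wandering, as already noted there, so one takes $u=u_1$. Each extra block $u\,a\,u^{-1}$ in a $k$-strongly stationary $w_0$ with $k\geq2$ would add a further visit to the ending special vertex, so the visiting frequency $2$ forces $k=1$. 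Once this is granted, the rest is a direct concatenation of the cited results.
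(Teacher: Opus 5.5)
Your proposal is correct and follows the paper's own reduction. The non-stationary case goes through Theorem~\ref{thm:2ww}, and the stationary case through Theorem~\ref{thm:weakwandering} (to supply the hypothesis on $u$), Theorem~\ref{thm:stationary} and Theorem~\ref{thm:1ss}; your appeal to Proposition~\ref{prop:awbasic}~(ii) just makes explicit the passage between $\langle a,(v^{-1}bv)^*\rangle$ and $\langle a,v^{-1}bv\rangle$ that the paper leaves implicit.

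The structural fact that $u$ is weakly wandering and $w_0$ is $1$-strongly stationary is only asserted in the paper. Your sketch needs one extra line: the $u_1,u_2,u_3$ decomposition exists only in the last subcase of the proof of Theorem~\ref{thm:2ww}, so you must say why a stationary $w$ lands there. The reason is that every other subcase of that proof produces a $p$-solution, contradicting Proposition~\ref{prop:stationary}. Once you take $u=u_1$, that decomposition already exhibits $w=\phi_{u_1}(ba^{\alpha}b^{-1}a^{\beta}b)$, so your separate visit-counting argument for $k=1$ is unnecessary.
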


\begin{corollary} Let $w\in \Sigma$. If $w$ or $w^{-1}$ is $2$-weakly wandering, then an $\langle a, w^{-1}aw\rangle$-problem has an odd-solution.
\end{corollary}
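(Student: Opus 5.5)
By the discussion preceding the statement, $w=bv^*$ with $v=a^{\alpha_1}b^{\beta_1}\in\Sigma^a$, and
$$(v^{-1}bv)^*=(v^*)^{-1}b^{-1}ab\,v^*=w^{-1}aw .$$
Since $\phi_b$ is the identity, applying Theorem~\ref{thm:stationary} with $u=b$ reduces the problem. Here an $\langle a,b^{-1}ab\rangle$-problem is wandering, so it has an odd-solution by Theorem~\ref{thm:wandering}. Hence it suffices to find an odd-solution for an $\langle a,v^{-1}bv\rangle$-problem.

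Such a problem consists of a vertex $x_0$ with an $a$-loop, followed by the walk for $v$ applied to $x_0$. This walk is $|\beta_1|$ $b$-edges and then $|\alpha_1|$ $a$-edges, ending at a vertex $x_m$ carrying a $b$-loop. The problem $P_1$ of Subsection~\ref{subsec:case} is the case $\alpha_1=1$, $\beta_1=-1$. Its solution $F_{21}$ suggests using affine groups.

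\textbf{Step 1 (affine model).} Let $q$ be a prime and $H\le\mathbb{F}_q^*$ a subgroup of odd order. Let $\Gamma\le \mathrm{AGL}(1,q)$ act on $\mathbb{F}_q$ by
$$\tilde a(x)=\lambda x,\qquad \tilde b(x)=\mu(x-p)+p,$$
with $\lambda,\mu\in H$ and $p\ne0$. Then $\Gamma\le \mathbb{F}_q\rtimes H$ has odd order $q|H|$ when $q$ is odd. Take $x_0=0$, which is fixed by $\tilde a$, and $x_m=p$, which is fixed by $\tilde b$. We have $\tilde b^{\beta_1}(0)=p(1-\mu^{\beta_1})$, so the walk for $v$ from $0$ ends at $p$ exactly when
$$\lambda^{\alpha_1}(1-\mu^{\beta_1})=1.$$

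\textbf{Step 2 (choice of $q$).} By Dirichlet, choose a prime $q$ with $q\equiv 3\pmod 4$ and $q\not\equiv1\pmod r$ for every odd prime $r\mid\alpha_1\beta_1$. Let $H$ be the subgroup of squares, which has odd order $(q-1)/2$ coprime to $\alpha_1\beta_1$. Then $x\mapsto x^{\alpha_1}$ and $x\mapsto x^{\beta_1}$ are bijections of $H$. So it suffices to find $y\in H$, $y\ne1$, with $1-y\in H$. We then set $\mu=y^{1/\beta_1}$ and $\lambda=(1-y)^{-1/\alpha_1}$, which lies in $H$. The number of $y$ with $y$ and $1-y$ both nonzero squares is $(q-3)/4+O(1)$ by the standard count for consecutive quadratic residues.

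\textbf{Step 3 (genericity, the main obstacle).} The hard part is showing that the vertices are pairwise distinct for some such $y$. These vertices are $0$, $\tilde b^{j}(0)$ for $0<j\le|\beta_1|$, and $\tilde a^{i}(\tilde b^{\beta_1}(0))$ for $0<i\le|\alpha_1|$, the last being $p$. Distinctness makes the walk a path, so the $\langle a,v^{-1}bv\rangle$-problem, including its two loops, is a subproblem of $(\mathbb{F}_q,\{\tilde a,\tilde b\})$. Each coincidence is a nontrivial algebraic equation in $(\lambda,\mu)$ on the curve $\lambda^{\alpha_1}(1-\mu^{\beta_1})=1$. Examples are $\mu^j=1$, $\lambda^i=1$, and $\lambda^i(1-\mu^{\beta_1})=1-\mu^{j}$.

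Parametrize by $y$. Each such condition cuts out at most $C(\alpha_1,\beta_1)$ values of $y$, independent of $q$. One must check that no condition holds identically: for instance, $1-y=1-\mu^{j}$ with $j\ne\beta_1$ is a nontrivial relation. For $q$ large, the roughly $q/4$ admissible values of $y$ therefore exceed the bad ones, and a good $y$ exists. This yields an odd-solution, and Theorem~\ref{thm:stationary} transfers it to the $\langle a,w^{-1}aw\rangle$-problem.
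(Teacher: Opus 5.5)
Your Steps 1--3 give a correct, and genuinely different, proof of the core case. However, the reduction in your first sentence is misstated and incomplete.

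\textbf{Repairing the reduction.} A $2$-weakly wandering $w$ is not in general of the form $bv^*$. The discussion before the statement gives two cases.
\begin{enumerate}
\item[(1)] $w$ is non-stationary. Then Theorem~\ref{thm:2ww} already gives a $\Pi$-solution with $\Pi$ the set of odd primes.
\item[(2)] $w=\phi_u(bv^*)$ with $v=a^{\alpha_1}b^{\beta_1}$ and $u$ weakly wandering. Here apply Theorem~\ref{thm:stationary} with this $u$ instead of $u=b$; its hypothesis holds by Theorem~\ref{thm:weakwandering}. Since $\phi_u\big((v^{-1}bv)^*\big)=\phi_u(bv^*)^{-1}a\,\phi_u(bv^*)=w^{-1}aw$, a single application transfers your odd-solution of the $\langle a,v^{-1}bv\rangle$-problem.
\end{enumerate}
You also never address the alternative hypothesis that $w^{-1}$ is $2$-weakly wandering. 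That case needs Proposition~\ref{prop:awbasic}(vii): $\langle a,waw^{-1}\rangle$ is conjugate to $\langle a,w^{-1}aw\rangle$.

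\textbf{Step 3.} Parametrize by $\mu\in H$ rather than by $y$; then $y=\mu^{\beta_1}$ runs bijectively over $H$. The exponent $1/\beta_1$ is taken modulo $(q-1)/2$, so your conditions have no degree in $y$ that is bounded independently of $q$. Raise each coincidence to the power $\alpha_1$ and use $\lambda^{\alpha_1}=(1-\mu^{\beta_1})^{-1}$. This eliminates $\lambda$ and leaves a Laurent polynomial equation in $\mu$ of degree bounded in terms of $\alpha_1,\beta_1$.

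For example, take $\epsilon,\delta$ the signs of $\beta_1,\alpha_1$, and $0<j<|\beta_1|$, $0<i<|\alpha_1|$. The cross coincidence $\tilde b^{\epsilon j}(0)=\tilde a^{\delta i}\tilde b^{\beta_1}(0)$ becomes $(1-\mu^{\epsilon j})^{\alpha_1}=(1-\mu^{\beta_1})^{\alpha_1-\delta i}$. This is not an identity: at a primitive $|\beta_1|$-th root of unity the left side is a unit, while the right side has a zero or a pole.

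\textbf{Comparison with the paper.} With these repairs, your route differs from the paper's in the core step. The paper proves Theorem~\ref{thm:1ss} for $w_0=ba^\alpha b^{-1}a^\beta b$ directly, in four stages:
\begin{enumerate}
\item[(a)] it rescales to $\alpha+\beta>0>\alpha$;
\item[(b)] it finds $\Gamma=\langle a_H,b_H\rangle$, with $a_H$ diagonal and $b_H$ unipotent lower-triangular over $\mathbb{F}_q$, where $q\equiv 3\pmod 4$ is a prime factor of $\Delta(4)$, so that $w_\Gamma\in\langle a_\Gamma\rangle$;
\item[(c)] it realizes the walk of $w_0$ in the Cayley graph via \texttt{Pin}, whose solvent embeds in $\langle a_\Gamma\rangle\wr\Gamma$;
\item[(d)] it uses \texttt{Reroute} to open the cycles created by $a^\alpha,a^\beta$ when $a_\Gamma$ has small order.
\end{enumerate}

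You instead solve the smaller $\langle a,v^{-1}bv\rangle$-problem inside the natural action of an odd-order affine group on the $q$ points of $\mathbb{F}_q$. For $P_1$ with $q=7$ and $y=4$, your recipe reproduces exactly the solution $S_1$ of Figure~\ref{fig:P1solution}. The wreath product is then left to Theorem~\ref{thm:stationary}.

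Your approach avoids rescaling, \texttt{Pin} and \texttt{Reroute}, and yields very small solutions. The price is Dirichlet's theorem plus a counting argument to make the walk injective: exactly $(q-3)/4$ admissible $y$ against $O_{\alpha_1,\beta_1}(1)$ degenerate ones. By contrast, the paper's choice of $q$ is completely elementary, and its degeneracies are removed combinatorially.
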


The rest of this subsection is devoted to a proof of Theorem~\ref{thm:1ss}.

\subsubsection{The strategy of the proof}

Consider a new technique we call {\em rescaling}. For any word $w\in \Sigma$ and $m,n\in\mathbb{Z}\setminus\{0\}$, let $w^{(m,n)}$ be the word obtained from $w$ by replacing all $a$ by $a^m$ and all $b$ by $b^n$. The following lemma shows that rescaling a word creates a harder problem; if we can solve the harder problem, then the original problem can be solved.

\begin{lemma}\label{lem:rescaling} For any set $\Pi$ of prime numbers, $m, n\in\mathbb{Z}\setminus\{0\}$, $w\in \Sigma$ and $u\in \Sigma^a$, the following hold:
\begin{enumerate}
\item[\rm (i)] $\big\langle a, w^{(m,n)}\big\rangle\rightsquigarrow_\Pi\langle a, w\rangle$;
\item[\rm (ii)] $\big\langle a, \big(w^{(m,n)}\big)^{-1}aw^{(m,n)}\big\rangle\rightsquigarrow_{\Pi}\langle a, w^{-1}aw\rangle$;
\item[\rm (iii)] $\big\langle a, \big(u^{(m,n)}\big)^{-1}bu^{(m,n)}\big\rangle\rightsquigarrow_{\Pi}\langle a, u^{-1}bu\rangle$.
\end{enumerate}
\end{lemma}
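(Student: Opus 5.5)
The plan is to argue exactly as in Proposition~\ref{prop:awbasic}~(iii): take a $\Pi$-solution of the rescaled problem and redefine the generators as suitable powers. Concretely, let $(X, \{\tilde a, \tilde b\})$ be a $\Pi$-solution, with solvent $\Lambda$, for a $\langle a, w^{(m,n)}\rangle$-problem. Define a new pair of full permutations of the same set $X$ by
$$ a' = \tilde a^{m}, \qquad b' = \tilde b^{n}. $$
Then $(X, \{a', b'\})$ is still a solution, and its solvent $\langle a', b'\rangle$ is a subgroup of $\Lambda$. Hence it is a $\Pi$-group.

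Next I check that $(X,\{a',b'\})$ is a solution for an $\langle a, w\rangle$-problem. Let $x_0$ be the base point, so that $\tilde a(x_0)=x_0$ and $w^{(m,n)}_\Lambda(x_0)=x_0$. First, $a'(x_0)=\tilde a^m(x_0)=x_0$. Second, for every word $v$ we have $v_{\langle a',b'\rangle}=(v^{(m,n)})_\Lambda$, since substituting $a\mapsto a^m$ and $b\mapsto b^n$ is a homomorphism of $F(a,b)$. Therefore $w_{\langle a',b'\rangle}(x_0)=x_0$.

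The $\langle a, w\rangle$-problem is then realized as the subproblem of $(X,\{a',b'\})$ traced out by the walk of $w$ starting at $x_0$ in $G(X,\{a',b'\})$, together with the $a'$-loop at $x_0$. As in the proof of Proposition~\ref{prop:awbasic}~(iii), Corollary~\ref{cor:fundgp} reduces this to identifying the fundamental group with $\langle a, w\rangle$, provided the traced walk gives a genuine circuit, i.e.\ the walk does not fold onto itself.

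This folding point is the only real obstacle. If the walk does fold, I would appeal to Theorem~\ref{thm:solutionprotopology}~(ii) instead. Under the endomorphism $\psi\colon a\mapsto a^m,\ b\mapsto b^n$, the $\Pi$-solution yields a finite $\Pi$-quotient $\varphi\colon F(a,b)\to\Lambda$. The composite $\varphi\circ\psi$ is again a homomorphism onto a subgroup of $\Lambda$, hence onto a $\Pi$-group. Pulling back the point stabilizer of $x_0$ under $\varphi\circ\psi$ gives a pro-$\Pi$-open subgroup containing $\langle a,w\rangle$. For any given element $\gamma$ not in $\langle a,w\rangle$, the solution should have been chosen to witness the corresponding separation, which is what the proof of Theorem~\ref{thm:solutionprotopology} provides.

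To make this rigorous, I would use the fact that $\psi$ is injective and maps $\langle a,w\rangle$ onto $\langle a^m, w^{(m,n)}\rangle$. Closedness of $\langle a, w^{(m,n)}\rangle$ in the pro-$\Pi$ topology, combined with $\gamma\notin\langle a,w\rangle$ giving $\psi(\gamma)\notin\langle a, w^{(m,n)}\rangle$, should pull back to closedness of $\langle a,w\rangle$. The step I expect to be delicate is verifying that $\psi(\gamma)\notin\langle a, w^{(m,n)}\rangle$; I would check it using normal forms, since $\psi(F)\cap\langle a, w^{(m,n)}\rangle=\psi(\langle a,w\rangle)$. Whenever that intersection equality holds, closedness pulls back along $\psi$.

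Parts (ii) and (iii) follow verbatim. In each case the generating set of the rescaled problem is the image under $\psi$ of the original one:
$$ \bigl(w^{(m,n)}\bigr)^{-1} a\, w^{(m,n)} \ \text{corresponds to}\ w^{-1}a^m w, \qquad \bigl(u^{(m,n)}\bigr)^{-1} b\, u^{(m,n)} \ \text{corresponds to}\ u^{-1}b^n u. $$
The same power-substitution, with the same intersection check, handles both.
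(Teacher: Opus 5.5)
Your opening paragraphs already reproduce the paper's whole proof. The paper replaces $\tilde a,\tilde b$ by $\tilde a^m,\tilde b^n$, notes that the new solvent lies in $\Lambda$, and asserts that this is a solution for the original problem.

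The folding obstacle you anticipate does not arise, because by Corollary~\ref{cor:fundgp} you may choose which rescaled problem to solve. Take the graph of the original problem, replace every $a$-edge by a path spelling $a^m$ and every $b$-edge by a path spelling $b^n$. Then shrink the resulting $a^m$-cycle at the base point back to an $a$-loop. In (ii) do the same at the far endpoint; in (iii) shrink the $b^n$-cycle at the far endpoint to a $b$-loop. This is still an EP problem, and its fundamental group is $\langle a,w^{(m,n)}\rangle$ (resp.\ the other two subgroups). The original vertices sit inside it, so the original problem is a subproblem of $(X,\{\tilde a^m,\tilde b^n\})$ for any $\Pi$-solution $(X,\{\tilde a,\tilde b\})$ of it, with nothing further to check.

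Your fallback is a genuinely different and valid route. Put $H=\langle a,w\rangle$ and $H'=\langle a,w^{(m,n)}\rangle$. The map $\psi$ is continuous for the pro-$\Pi$ topology, since for $N\unlhd F$ with $F/N$ a $\Pi$-group, $F/\psi^{-1}(N)$ embeds in $F/N$. So $\psi^{-1}(H')=H$ together with closedness of $H'$ gives closedness of $H$, and Theorem~\ref{thm:solutionprotopology}(ii) converts this back into a $\Pi$-solution.

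However, the identity $\psi(F)\cap H'=\psi(H)$ carries the entire content of this route, and you only assert it. It does hold, by a syllable count:
\begin{enumerate}
\item[(a)] Write an element of $H'=\langle a,W\rangle$, with $W=w^{(m,n)}$, as $a^{k_0}W^{j_1}a^{k_1}\cdots W^{j_r}a^{k_r}$ in the free basis $\{a,W\}$.
\item[(b)] Since $w\in\Sigma$, every nonzero power of $W$ reduces to a word beginning and ending with a $b$-letter. Hence each $a^{k_i}$ is a full syllable of the reduced word.
\item[(c)] Reduced forms of elements of $\psi(F)=\langle a^m,b^n\rangle$ have all $a$-exponents divisible by $m$. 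So membership in $\psi(F)$ forces $m\mid k_i$ for all $i$, which puts the element in $\psi(H)$.
\end{enumerate}
The same argument works in (ii) with the blocks $W^{-1}a^{j}W$. In (iii) use the blocks $U^{-1}b^{j}U$ with $U=u^{(m,n)}$; here $u\in\Sigma^a$ makes $b^j$ and the $a^{k_i}$ full syllables.

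Two clean-ups are needed. Drop the paragraph about the solution ``having been chosen'' to separate $\gamma$. Also fix the last display: the rescaled generators are not images of the original ones. What holds is $\psi(H)=\langle a^m,W^{-1}a^mW\rangle\subseteq H'$ (resp.\ $\langle a^m,U^{-1}b^nU\rangle$).

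On the trade-off: the direct substitution keeps the new solvent inside $\Lambda$, so it transfers solutions for any pseudovariety, e.g.\ nilpotent-solutions. Your route avoids all graph bookkeeping. But it depends on the equivalence in Theorem~\ref{thm:solutionprotopology}(ii), so it only works where that equivalence is known, i.e.\ for $\Pi$-solutions.
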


\begin{proof} For each of these instances, if we have a $\Pi$-solution $(X, P)$ with $P=\{\tilde{a}, \tilde{b}\}$ for the rescaled problem, with solvent $\Lambda$, then $(X, Q)$ where $Q=\{\tilde{c}, \tilde{d}\}$, $\tilde{c}=\tilde{a}^m$ and $\tilde{d}=\tilde{b}^n$, is a solution for the original problem. Since $\langle \tilde{c}, \tilde{d}\rangle\leq \langle \tilde{a}, \tilde{b}\rangle=\Lambda$, the original problem has a $\Pi$-solution.
\end{proof}

Suppose
\begin{equation}\label{eqn:1ss} w=ba^{\alpha}b^{-1}a^{\beta}b \end{equation}
for $\alpha, \beta\in \mathbb{Z}\setminus\{0\}$. Without loss of generality, we may assume $\alpha+\beta>0>\alpha$. To see this, let $\Pi$ be the set of all odd primes. Then by Theorem~\ref{thm:stationary}, we have
$$\langle a, w^{-1}aw\rangle\leftrightsquigarrow_\Pi\big\langle a, (a^{\alpha}b^{\beta})^{-1}b(a^{\alpha}b^{\beta})\big\rangle. $$ 
Now choose $n, m\in \mathbb{Z}$ so that $n\beta>0$, $m\alpha<0$ and $n\beta+m\alpha>0$. By Lemma~\ref{lem:rescaling} (iii), 
\begin{equation} \big\langle a, \big(a^{m\alpha}b^{n\beta})^{-1}b(a^{m\alpha}b^{n\beta})\big\rangle\rightsquigarrow_\Pi\big\langle a, (a^{\alpha}b^{\beta})^{-1}b(a^{\alpha}b^{\beta})\big\rangle. \end{equation}
Again by Theorem~\ref{thm:stationary}, for $w'=ba^{n\alpha}b^{-1}a^{m\beta}b$, we have
\begin{equation}\langle a, w'^{-1}aw'\rangle\leftrightsquigarrow_\Pi\big\langle a, (a^{m\alpha}b^{n\beta})^{-1}b(a^{m\alpha}b^{n\beta})\big\rangle.
\end{equation}
Thus by transtivity, we have
$$ \langle a, w'^{-1}aw'\rangle\rightsquigarrow_\Pi \langle a, w^{-1}aw\rangle. $$

Our strategy for a proof of Theorem~\ref{thm:1ss} is the following. We will first try and find an odd-group with two generators $\Gamma=\langle a_\Gamma, b_\Gamma\rangle$ such that $b_\Gamma\not\in \langle a_\Gamma\rangle$ but $w_\Gamma\in \langle a_\Gamma\rangle$. Then, in the Cayley graph of $\Gamma$, starting from the identity $e_\Gamma$, by following the word $w$ we obtain a walk $\mu$ in the graph from $e_\Gamma$ to $w_\Gamma$. Using (\ref{eqn:1ss}) one can observe the following properties of $\mu$:
\begin{enumerate}
\item[(a)] None of the vertices of $\mu$ other than the endpoints are in $\langle a_\Gamma\rangle$;
\item[(b)] Suppose $\beta$ is less than the order of $a$ in $\Gamma$. Then the walk $\mu$ is a path if $w_\Gamma\neq e_\Gamma$, and is a cycle if $w_\Gamma=e_\Gamma$.
\end{enumerate}
Here (b) corresponds to the algebraic fact that no proper nonempty subword of $w$ except possibly $a^\gamma$, where $\gamma$ is the order of $a$ in $\Gamma$, is evaluated to be identity in $\Gamma$. 

In the second part of the proof we consider the EP problem $(X, P)$ where $X=\Gamma$ and $P=\{\tilde{a}, \tilde{b}\}$, where
$$ \tilde{a}(g)=\left\{\begin{array}{ll} g, & \mbox{ if $g\in \langle a_\Gamma\rangle$, }\\
a_\Gamma g, & \mbox{ otherwise;}
\end{array}\right. $$
and 
$$ \tilde{b}(g)=b_\Gamma g. $$
We will show that $(X, P)$ is an odd-solution. It is easy to see that the walk $\mu$ is a subproblem of $(X,P)$. 

In the third part of the proof we obtain an odd-solution for an $\langle a, w^{-1}aw\rangle$-problem by further applications of EP maps.

\subsubsection{The construction of an odd-group}

We now start to construct the group $\Gamma$ assuming $\alpha+\beta>0>\alpha$. In our construction, we will find a suitable prime number $q$ with $q\equiv 3 \pmod 4$. For now, suppose such a $q$ has been chosen. 
Let $\mathbb{F}_q$ be the field with $q$ many elements. Let $\mathbb{F}_q^\times$ be the multiplicative group of nonzero elements of $\mathbb{F}_q$. Consider the following subgroup of $SL_2(\mathbb{F}_q)=SL(2,q)$:
$$ H_q=\left\{\ \left[\begin{array}{cc} x & 0 \\ z & y\end{array}\right] \colon x,y\in \mathbb{F}_q^\times, z\in \mathbb{F}_q\right\}. $$

For the rest of the computations, we let
\begin{equation}\label{eqn:abH} a_H=\left[\begin{array}{cc} x& 0 \\ z & y\end{array}\right] \mbox{ and } b_H=\left[\begin{array}{cc} r & 0 \\ t & s\end{array}\right]. \end{equation}

The following lemmas come from direct computations.

\begin{lemma}\label{lem:anH} For any $n\in\mathbb{Z}$, 
$$ a_H^n=\left[\begin{array}{cc} x^n & 0 \\ \\ zA_n(x,y) & y^n\end{array}\right] $$
for some Laurent polynomial $A_n(x,y)$ in $x, y$.
\end{lemma}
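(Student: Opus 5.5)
The plan is induction on $n$, with the cases $n\geq 0$ and $n<0$ handled separately; it is a direct matrix computation in the lower triangular group $H_q$. For $n=0$ the identity matrix has the required form with $A_0=0$, and for $n=1$ we take $A_1=1$. The inductive step uses
$$ a_H^{n+1}=a_H^n a_H=\left[\begin{array}{cc} x^n & 0 \\ zA_n(x,y) & y^n\end{array}\right]\left[\begin{array}{cc} x& 0 \\ z & y\end{array}\right]=\left[\begin{array}{cc} x^{n+1} & 0 \\ z\big(xA_n(x,y)+y^n\big) & y^{n+1}\end{array}\right]. $$
This gives the recursion $A_{n+1}=xA_n+y^n$, so by induction $A_n(x,y)=\sum_{i=0}^{n-1}x^iy^{n-1-i}$ for $n\geq 1$. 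This is a polynomial, and in particular a Laurent polynomial.

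For negative exponents, first compute the inverse directly:
$$ a_H^{-1}=\left[\begin{array}{cc} x^{-1} & 0 \\ -zx^{-1}y^{-1} & y^{-1}\end{array}\right]. $$
This matrix has the same shape as $a_H$, with $x,y,z$ replaced by $x^{-1},y^{-1},-zx^{-1}y^{-1}$. Applying the positive case to it gives
$$ a_H^{-n}=\left[\begin{array}{cc} x^{-n} & 0 \\ -zx^{-1}y^{-1}A_n(x^{-1},y^{-1}) & y^{-n}\end{array}\right], $$
so we set $A_{-n}(x,y)=-x^{-1}y^{-1}A_n(x^{-1},y^{-1})$, which is a Laurent polynomial in $x,y$.

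Alternatively, one checks that the recursion $A_{n+1}=xA_n+y^n$ holds for all $n\in\mathbb{Z}$ and runs it backwards from $A_0=0$. That route needs only that $x$ is invertible, which holds since $x\in\mathbb{F}_q^\times$.

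There is no real obstacle. The only point needing care is that the entries lie in the field $\mathbb{F}_q$ with $x,y\neq 0$, so the negative powers make sense; the lemma is a statement about formal Laurent polynomials evaluated in $\mathbb{F}_q$. The closed forms, for instance $A_n=(x^n-y^n)/(x-y)$ when $x\neq y$, are recorded because they will likely be needed in the later computations.
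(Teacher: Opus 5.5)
Your proof is correct: the recursion $A_{n+1}=xA_n+y^n$, the closed form $A_n=\sum_{i=0}^{n-1}x^iy^{n-1-i}$ for $n\geq 1$, and the treatment of negative exponents through $a_H^{-1}$ (or by running the recursion backwards from $A_0=0$) all check out. The paper gives no argument beyond saying the lemma follows from direct computation, and your induction is that computation written out.
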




\begin{lemma}\label{lem:wH} Let $w\in \Sigma$ be as in (\ref{eqn:1ss}). Then
$$ w_H=\left[\begin{array}{cc} rx^{\alpha+\beta} & 0 \\ \\ W(x,y,z,r,s,t) & sy^{\alpha+\beta} \end{array}\right] $$
where
$$ W(x,y,z,r,s,t)= t(x^{\alpha+\beta}+y^{\alpha+\beta}-x^\beta y^{\alpha})+z\big(ry^\alpha A_\beta(x,y)+sx^\beta A_\alpha(x,y)\big). $$
\end{lemma}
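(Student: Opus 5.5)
The approach is a direct matrix computation. We multiply out $w_H=b_Ha_H^{\alpha}b_H^{-1}a_H^{\beta}b_H$ using Lemma~\ref{lem:anH} for $a_H^\alpha$ and $a_H^\beta$. Everything takes place in the group of lower triangular matrices, and there the diagonal entries multiply independently of the off-diagonal ones. So the diagonal of $w_H$ is the product of the diagonals: in the upper-left corner $r\cdot x^\alpha\cdot r^{-1}\cdot x^\beta\cdot r=rx^{\alpha+\beta}$, and similarly $sy^{\alpha+\beta}$ in the lower-right corner. This gives the diagonal part of the claim at once. The only real work is the $(2,1)$ entry.

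For the lower-left entry we use the product rule for lower triangular $2\times 2$ matrices: if $M_i=\left[\begin{array}{cc} d_i&0\\ c_i& e_i\end{array}\right]$, then the $(2,1)$ entry of $M_1M_2$ is $c_1d_2+e_1c_2$. Iterating over the five factors, the $(2,1)$ entry of $M_1\cdots M_5$ is
$$\sum_i e_1\cdots e_{i-1}\,c_i\,d_{i+1}\cdots d_5.$$
We apply this with the factors
$$M_1=b_H,\quad M_2=a_H^\alpha,\quad M_3=b_H^{-1},\quad M_4=a_H^\beta,\quad M_5=b_H.$$
Their entries are
\begin{align*}
&(d_1,c_1,e_1)=(r,t,s),\qquad (d_2,c_2,e_2)=(x^\alpha,zA_\alpha,y^\alpha),\\
&(d_3,c_3,e_3)=(r^{-1},-t/(rs),s^{-1}),\qquad (d_4,c_4,e_4)=(x^\beta,zA_\beta,y^\beta),\\
&(d_5,c_5,e_5)=(r,t,s).
\end{align*}

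Next we collect the terms. The three $t$-terms come from $i=1,3,5$:
\begin{itemize}
\item $i=1$: $t\cdot x^\alpha r^{-1}x^\beta r=tx^{\alpha+\beta}$;
\item $i=3$: $sy^\alpha\cdot(-t/(rs))\cdot x^\beta r=-tx^\beta y^\alpha$;
\item $i=5$: $sy^\alpha s^{-1}y^\beta\cdot t=ty^{\alpha+\beta}$.
\end{itemize}
Together these give $t(x^{\alpha+\beta}+y^{\alpha+\beta}-x^\beta y^\alpha)$. The two $z$-terms come from $i=2,4$:
\begin{itemize}
\item $i=2$: $s\cdot zA_\alpha\cdot r^{-1}x^\beta r=szx^\beta A_\alpha$;
\item $i=4$: $sy^\alpha s^{-1}\cdot zA_\beta\cdot r=rzy^\alpha A_\beta$.
\end{itemize}
Summing the $t$-terms and the $z$-terms gives exactly $W(x,y,z,r,s,t)$.

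The main risk is bookkeeping: keeping the order of multiplication straight, and computing the $(2,1)$ entry of $b_H^{-1}$ correctly as $-t/(rs)$. The sign and the cancellation of $r,s$ in the $i=3$ term are what produce $-tx^\beta y^\alpha$. Note also that $rs=1$ in $SL_2$, which could simplify that entry, but the stated formula does not need this.
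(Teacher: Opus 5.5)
Your proposal is correct and is exactly the direct matrix computation the paper has in mind; the paper only says the lemma follows from direct computation. I checked all five terms of your $(2,1)$-entry expansion, including the $-t/(rs)$ entry of $b_H^{-1}$ and the two diagonal products, and they give the stated matrix with $W$ exactly. One harmless aside: drop the remark that $rs=1$ in $SL_2$. The matrices in $H_q$ as defined, for instance the later $a_H=\mathrm{diag}(v,1)$ with $v\neq 1$, do not have determinant $1$. Your computation holds for arbitrary invertible lower triangular matrices and never uses that fact.
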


Let
\begin{equation} \Delta(v)=v^{\beta}-v^{\alpha+\beta}-1. \end{equation}
We note the following elementary number theoretic fact.

\begin{lemma}\label{lem:4q} There exist a prime $q\equiv 3\pmod 4$ and $v\in \mathbb{F}_q^\times$ such that $q>3$, $v$ is not the identity of $\mathbb{F}_q^\times$, the order of $v$ in $\mathbb{F}_q^\times$ is odd, and $\Delta(v)\equiv 0\pmod q$.
\end{lemma}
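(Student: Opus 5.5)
The plan is to choose $q$ so that the root $v$ is a square in $\mathbb{F}_q^\times$. The point is this: when $q\equiv 3\pmod 4$, the subgroup of squares in $\mathbb{F}_q^\times$ has order $(q-1)/2$, which is odd. So every nonzero square mod $q$ has odd order. It therefore suffices to find a prime $q\equiv 3\pmod 4$ with $q>3$ and some $u\in\mathbb{F}_q^\times$ such that $\Delta(u^2)\equiv 0\pmod q$ and $u^2\neq 1$; then we take $v=u^2$.

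\emph{Step 1: $\Delta$ is an honest polynomial with a real root $v>1$.} Under the standing normalization $\alpha+\beta>0>\alpha$, both exponents $\beta$ and $\alpha+\beta$ are positive, and $\beta>\alpha+\beta$. Hence $\Delta(v)=v^\beta-v^{\alpha+\beta}-1\in\mathbb{Z}[v]$ has degree $\beta$ and leading coefficient $1$. We have $\Delta(0)=-1$, $\Delta(1)=-1$, and $\Delta(v)\to+\infty$ as $v\to+\infty$. By the intermediate value theorem there is a real root $v_0>1$. Put $\theta=\sqrt{v_0}\in\mathbb{R}$. Then $\theta$ is a real root of $g(u)=\Delta(u^2)$.

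\emph{Step 2: Chebotarev with complex conjugation.} Let $L$ be the Galois closure over $\mathbb{Q}$ of $\mathbb{Q}(\theta,i)$, and embed $L$ in $\mathbb{C}$. Complex conjugation $c$ restricts to an element of $\mathrm{Gal}(L/\mathbb{Q})$. It fixes $\theta$ because $\theta$ is real, and it sends $i\mapsto -i$. By the Chebotarev density theorem there are infinitely many primes $q$, unramified in $L$, whose Frobenius class contains $c$. Take such a $q$ that also avoids the finitely many primes $2$, $3$ and those dividing the discriminant of the minimal polynomial $h$ of $\theta$. Choose a prime $\mathfrak{q}\mid q$ of $L$ with $\mathrm{Frob}_{\mathfrak{q}}=c$. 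Since $c$ moves $i$, the prime $q$ is inert in $\mathbb{Q}(i)$, so $q\equiv 3\pmod 4$. Since $c$ fixes $\theta$, the residue of $\theta$ modulo $\mathfrak{q}$ lies in $\mathbb{F}_q$. This gives $u\in\mathbb{F}_q$ with $h(u)=0$, and hence $g(u)=\Delta(u^2)=0$ in $\mathbb{F}_q$. (Here $h$ is monic in $\mathbb{Z}[u]$ because $g$ is monic, so $\theta$ is an algebraic integer and reduction makes sense.)

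\emph{Step 3: check the side conditions.} Set $v=u^2$. Then $\Delta(v)\equiv 0\pmod q$. Also $v\neq 0$, because $\Delta(0)=-1\not\equiv 0$, and $v\neq 1$, because $\Delta(1)=-1\not\equiv 0$. So $v$ is a nonidentity element of the odd-order subgroup of squares, and its order is odd. We also have $q>3$ and $q\equiv 3\pmod 4$.

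The main obstacle is Step 2: forcing simultaneously a root mod $q$ and the congruence $q\equiv 3\pmod 4$. A naive argument (Schur's theorem that $g$ has roots modulo infinitely many primes) does not control the residue of $q$ mod $4$, and $\mathbb{Q}(i)$ might even lie inside $\mathbb{Q}(\theta)$. The real root found in Step 1 resolves this, since it makes complex conjugation an element of the Galois group that fixes $\theta$ and moves $i$. If one prefers to avoid Chebotarev, a fallback is to search directly for a small explicit prime for given $\alpha,\beta$, but the density argument gives the statement uniformly.
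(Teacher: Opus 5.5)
Your proof is correct. Because $\beta>\alpha+\beta>0$, the polynomial $\Delta(u^2)$ is monic in $\mathbb{Z}[u]$, so $\theta=\sqrt{v_0}$ is an algebraic integer. Complex conjugation fixes the real number $\theta$ and moves $i$. For $\mathrm{Frob}_{\mathfrak{q}}=c$, the congruence $\theta^q\equiv\theta\pmod{\mathfrak{q}}$ puts the residue of $\theta$ in $\mathbb{F}_q$, with $q$ inert in $\mathbb{Q}(i)$. The side conditions then follow from $\Delta(0)=\Delta(1)=-1$.

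You share the paper's key idea: make $v$ a square, so that it lies in the subgroup of squares of odd order $(q-1)/2$. The paper, however, gets both the root and the congruence $q\equiv 3\pmod 4$ without any machinery, by evaluating at the integer $v=4=2^2$. Since $\beta\ge 2$ and $\alpha+\beta\ge 1$, $\Delta(4)\ge 3\cdot 4^{\alpha+\beta}-1>0$, $\Delta(4)\equiv -1\pmod 4$, and $\Delta(4)\equiv 1-1-1\equiv -1\pmod 3$. A positive integer $\equiv 3\pmod 4$ cannot be a product of primes $\equiv 1\pmod 4$, so $\Delta(4)$ has a prime factor $q\equiv 3\pmod 4$, and $q\neq 3$. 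Then $\Delta(4)\equiv 0\pmod q$, $4\not\equiv 0,1\pmod q$, and $4$ is a square mod $q$.

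So the obstacle you flagged, controlling $q \bmod 4$ while forcing a root, disappears once you pick an integer point whose value is $\equiv 3\pmod 4$. This is exactly the explicit search you mentioned as a fallback, and it works uniformly in $\alpha,\beta$. Your Chebotarev argument buys a positive-density set of admissible primes and a clean conceptual reason the congruence can be arranged. But it invokes a deep theorem where the statement needs only one prime, which the paper produces in two elementary lines.
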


\begin{proof} Let $v=4$. Then $\Delta(4)\equiv 3\pmod 4$. It follows that  $\Delta(4)$ must have a prime factor $q\equiv 3\pmod 4$. We have $q>3$ since $\Delta(4)\equiv -1\not\equiv 0\pmod 3$. By Fermat's little theorem, $2^{q-1}\equiv 1\pmod q$. Hence $4^{\frac{q-1}{2}}\equiv 1\pmod q$. Since $q\equiv 3\pmod 4$, $(q-1)/2$ is odd. 
\end{proof}

Let $q$ and $v$ be given by Lemma~\ref{lem:4q}. Let 
\begin{equation} a_H=\left[\begin{array}{cc} v& 0 \\ 0 & 1\end{array}\right] \mbox{ and } b_H=\left[\begin{array}{cc} 1 & 0 \\ 1 & 1\end{array}\right]. \end{equation}
That is, we have $x=v$, $y=1$, $z=0$ and $r=s=t=1$. Then
$$ x^{\alpha+\beta}+y^{\alpha+\beta}-x^\alpha y^\beta=-y^{-(\alpha+\beta)}\Delta\left(\frac{x}{y}\right)=0$$
in $\mathbb{F}_q$. By Lemmas~\ref{lem:anH} and \ref{lem:wH}, we have that $W(x,y,z,r,s,t)=0$ and
$$ w_H=\left[\begin{array}{cc} v^{\alpha+\beta} & 0 \\ 0 & 1\end{array}\right]=a_H^{\alpha+\beta}\in \langle a_H\rangle. $$
Since $v\neq 1$ in $\mathbb{F}_q^\times$, it is straightforward to check that $a_H$ and $b_H$ do not commute. Thus if we let $\Gamma=\langle a_H, b_H\rangle$, then $\Gamma$ is a nonabelian group with $w_\Gamma\in \langle a_\Gamma\rangle$. 

Finally, we check that $\Gamma$ is an odd-group. Consider the group homomorphism
$$ \varphi\colon H_q\to \mathbb{F}_q^\times\times \mathbb{F}_q^\times $$
defined by
$$ \varphi\Big( \left[\begin{array}{cc} x & 0 \\ z & y\end{array}\right]\Big)=(x, y). $$
Note that $|\mbox{\rm ker}(\varphi)|=q$. Thus for any subgroup $G\leq H_q$, if $\varphi(G)$ is an odd-group, then so is $G$.  Note that $\varphi(\Gamma)=\langle (v, 1)\rangle\cong \langle v\rangle \leq \mathbb{F}_q^\times$. Since $v$ has odd order in $\mathbb{F}_q^\times$, we thus conclude that $\Gamma$ is an odd-group. This finishes the first part of the proof. 

We remark that when $\beta=1$ and $\alpha=-1$, we have $q=7$ and the group $\Gamma$ obtained from this construction is the Frobenius group of order $21$. For the problem $P_2$ in Subsection~\ref{subsec:case}, the word is $w=bab^{-1}a^{-1}b$, which is $1$-strongly stationary. It happens that $w_\Gamma=e_\Gamma$ in this case. This motivates the definition of the problems $P_3$ and $Q_3$ in Subsection~\ref{subsec:case}. More generally, whenever $\alpha+\beta=0$, our construction yields a nonabelian odd-group $\Gamma$ where $w_\Gamma=e_\Gamma$. In this case, we obtain an odd-solution for an $\langle a, w\rangle$-problem.

\subsubsection{The EP map {\tt Pin}}

Now we proceed to the second part of the proof. For this we give a more general definition.

\begin{definition} \label{def:Pin} Let $\Gamma$ be a finite group. Suppose $H_1$ and $H_2$ are nontrivial subgroups of $\Gamma$ such that
$H_1\cap H_2=\{e_\Gamma\}$ and $\Gamma=\langle H_1, H_2\rangle$. Let $S_1\subseteq H_1\setminus \{e_\Gamma\}$ be a generating set of $H_1$ and $S_2\subseteq H_2\setminus\{e_\Gamma\}$ be a generating set of $H_2$. We define an EP problem
$$ (X, P)={\tt Pin} (\Gamma, S_1, S_2) $$
by letting $X=\Gamma$, $P=\big\{ \tilde{h}\colon h\in S_1\cup S_2\big\}$ where
$$ \tilde{h}(g)=\left\{\begin{array}{ll} g, & \mbox{ if $h\in S_1$ and $g\in H_1$,} \\
hg, & \mbox{ otherwise.}
\end{array}\right.
$$
\end{definition}

Our proof of this step will be complete with the following.

\begin{proposition} Let $\Pi$ be a set of primes. Let $\Gamma, H_1, H_2, S_1, S_2$ be as in Definition~\ref{def:Pin}. If $\Gamma$ is a $\Pi$-group, then $\mbox{\tt Pin}(\Gamma, S_1, S_2)$ is a $\Pi$-solution.
\end{proposition}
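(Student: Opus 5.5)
The plan has two parts: first check that every $\tilde h$ is a full permutation of $X=\Gamma$, then show that the group $\langle P\rangle\le \mbox{\rm Sym}(\Gamma)$ is a $\Pi$-group. For $h\in S_2$, $\tilde h$ is left multiplication by $h$, which is a bijection. For $h\in S_1$, $\tilde h$ is the identity on $H_1$ and left multiplication by $h$ on $\Gamma\setminus H_1$. Since $h\in H_1$, left multiplication by $h$ maps $H_1$ onto $H_1$ and hence $\Gamma\setminus H_1$ onto $\Gamma\setminus H_1$. So $\tilde h$ is a bijection, and $(X,P)$ is a solution.

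For the group order, I will realize the solvent inside an iterated wreath-type product of $\Pi$-groups. Write each $\tilde h$ as $\lambda_h\circ\delta_h$, where $\lambda_h$ is left multiplication by $h$ and $\delta_h$ is a correction permutation. For $h\in S_2$, take $\delta_h=\mathrm{id}$. For $h\in S_1$, take $\delta_h$ to be the identity off $H_1$ and left multiplication by $h^{-1}$ on $H_1$. This factorization does not immediately give a bound on $|\langle P\rangle|$.

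A cleaner route uses the right cosets $H_1g$. Each $\tilde h$ maps right cosets of $H_1$ to right cosets of $H_1$. On $\Gamma\setminus H_1$ it acts as $g\mapsto hg$, and on $H_1$ itself it fixes the coset. The permutation induced on $H_1\backslash\Gamma$ is therefore $H_1g\mapsto H_1 hg$, which coincides with the genuine left action of $h$ for every coset except $H_1$. For $h\in S_1$ the genuine action also fixes $H_1$, so the induced map agrees with $\lambda_h$ on all cosets. Hence $\langle P\rangle$ maps homomorphically onto the permutation group on right cosets induced by $\Gamma$ acting by left multiplication, and this image is a quotient of $\Gamma$, hence a $\Pi$-group.

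It remains to show that the kernel $K$ of this map is a $\Pi$-group. An element of $K$ preserves every coset $H_1g$. The coset $H_1$ is pointwise fixed by every $\tilde h$ with $h\in S_1$, while $h\in S_2$ moves it, since $H_1\cap H_2=\{e_\Gamma\}$ and $h\neq e_\Gamma$. On each coset $H_1g$ the kernel element acts by a composition of left multiplications by elements of $H_1$ together with some identity steps. Identifying $H_1g$ with $H_1$ via $g$, this is left multiplication by an element of $H_1$. So $K$ embeds in $\prod_{\text{cosets}}H_1$, a $\Pi$-group because $H_1\le\Gamma$. Consequently $\langle P\rangle$ is an extension of a $\Pi$-group by a $\Pi$-group, so its order is a $\Pi$-number. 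The same bookkeeping exhibits $\langle P\rangle$ as a subgroup of $H_1\wr(\text{image})$, which is a $\Pi$-group by Lemma~\ref{lem:basicwr}.

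The main obstacle is the kernel step. The concern is that on a coset $H_1g$, a word may alternate between steps that act as left multiplication by $h\in S_1$ and steps that act as the identity, depending on whether the current point lies in $H_1$. This is settled by the observation that on a single coset $H_1g'$, each $\tilde h$ with $h\in S_1$ acts uniformly: either the whole coset is $H_1$, where it is the identity, or the coset avoids $H_1$, where it is left multiplication by $h$. With that, the action of any word on a coset is left multiplication by a product of elements of $H_1$, composed with the transport between cosets induced by $S_2$-steps. These transports are right-coset bijections of the form $x\mapsto hx$, and they commute with the identification of each coset with $H_1$. I expect verifying this uniformity carefully to be the only nontrivial point. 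Lemma~\ref{lem:fullPi} then shows that $(X,P)$ is quo-$\Pi$, which completes the proof that it is a $\Pi$-solution.
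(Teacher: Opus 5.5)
The step that fails is your claim that each $\tilde h$ maps right cosets of $H_1$ to right cosets of $H_1$.

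\begin{itemize}
\item For $h\in S_1$ it is true: left multiplication by an element of $H_1$ even preserves each right coset $H_1g$.
\item For $h\in S_2$ it is false. Here $\tilde h$ is left multiplication by $h$, and $h(H_1g)=(hH_1h^{-1})\,hg$ is a right coset of $hH_1h^{-1}$. This is a right coset of $H_1$ only when $h$ normalizes $H_1$.
\end{itemize}

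Equivalently, your formula $H_1g\mapsto H_1hg$ is not well defined. Replacing $g$ by $\eta g$ with $\eta\in H_1$ changes $H_1hg$ to $H_1h\eta g$, and these agree only if $h\eta h^{-1}\in H_1$. There is no ``genuine left action'' of $\Gamma$ on $H_1\backslash\Gamma$; the natural action there is by right multiplication.

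In the case that matters, $H_1=\langle a_\Gamma\rangle$ is not normal in $\Gamma$; for instance, it is a non-normal subgroup of order $3$ in the Frobenius group of order $21$. So the right-coset partition is not $\langle P\rangle$-invariant, and there is no homomorphism $\langle P\rangle\to\mathrm{Sym}(H_1\backslash\Gamma)$. Your kernel and transport bookkeeping (``right-coset bijections of the form $x\mapsto hx$'') therefore has nothing to rest on.

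The repair is to use left cosets $gH_1$, which is what the paper does.
\begin{itemize}
\item Left multiplication by any $h$ sends $gH_1$ to $hgH_1$. For $h\in S_1$ the block $H_1=e_\Gamma H_1$ is fixed, since $hH_1=H_1$.
\item Your uniformity observation goes through unchanged: for $h\in S_1$, $\tilde h$ is either the identity on a whole block or left multiplication by $h$ on it.
\item Choose coset representatives $y$ with $y_0=e_\Gamma$, and write elements as $y\eta$ with $\eta\in H_1$. The transport $yH_1\to \bar h(y)H_1$ becomes $\eta\mapsto \eta_y\eta$ with $\eta_y=\bar h(y)^{-1}hy\in H_1$. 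When $h\in S_1$, this is replaced by the identity at $y_0$.
\end{itemize}
This exhibits $\langle P\rangle$ inside $H_1\wr\bar\Gamma$, where $\bar\Gamma$ is the image of $\Gamma$ acting on $\Gamma/H_1$. Lemma~\ref{lem:basicwr} then finishes. This is precisely the paper's argument. Your use of the image $\bar\Gamma$ rather than $\Gamma$ itself is, if anything, slightly more careful, since the coset action need not be faithful.
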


\begin{proof} It is clear that $\mbox{\tt Pin} (\Gamma, S_1, S_2)$ is a solution. Let $\Lambda$ be the resolvent of $\mbox{\tt Pin} (\Gamma, S_1, S_2)$. We show that $\Lambda$ is a subgroup of a wreath product $H_1\wr \Gamma$. 

To define the wreath product, we let $X=H_1$ and let $$Y=\{y_0=e_\Gamma, y_1, \dots, y_n\}$$ be a complete set of left coset representatives of $\Gamma/H_1$. For each $\eta\in H_1$, let $\hat{\eta}\in \mbox{\rm Sym}(X)$ be defined by 
$$ \hat{\eta}(g)=\eta g. $$
In this way, we identify $H_1$ as a subgroup of $\mbox{\rm Sym}(X)$.  For each $\gamma\in \Gamma$, let $\bar{\gamma}\in \mbox{\rm Sym}(Y)$ be defined by
$$ \bar{\gamma}(y_i)=y_j \iff \gamma y_iH_1=y_j H_1. $$
In this way, we identify $\Gamma$ as a subgroup of $\mbox{\rm Sym}(Y)$. These identifications allow us to define the wreath product $H_1\wr \Gamma$ as in Definition~\ref{def:wr}.

Now we identify $\Gamma$ with $X\times Y$ via the bijection $\varphi\colon \Gamma\to X\times Y$ defined by
$$ \varphi^{-1}(\eta, y_i)=y_i\eta. $$
Then the wreath product $H_1\wr \Gamma$ can be identified with a subgroup of $\mbox{\rm Sym}(\Gamma)$. 

Let $h\in S_2$. Then $h\not\in H_1$ and $\tilde{h}(g)=hg$ for all $g\in \Gamma$. Define $\Phi(h)\in H_1\wr \Gamma$ by 
$$ \Phi(h)=\big((k_y)_{y\in Y}, \bar{h}\big)\in H_1^Y\times \Gamma $$
where, for all $y\in Y$ and $g\in H_1$,
$$ k_y(g)=\bar{h}(y)^{-1}hyg=\hat{\eta}_y(g)  $$
for $\eta_y=\bar{h}(y)^{-1}hy\in H_1$. 
Then it is straightforward to check that $\varphi\circ \tilde{h}=\Phi(h)\circ \varphi$, in other words, $\tilde{h}\in P$ can be identified with $\Phi(h)\in H_1\wr \Gamma$.

Let $h\in S_1$. Then $h\in H_1$, $\tilde{h}(g)=g$ if $g\in H_1$ and $\tilde{h}(g)=hg$ if $g\in \Gamma\setminus H_1$. In this case we define $\Phi(h)\in H_1\wr \Gamma$ by
$$ \Phi(h)=\big((k_y)_{y\in Y}, \bar{h}\big)\in H_1^Y\times \Gamma $$
where, for all $y\in Y$ and $g\in H_1$, 
$$ k_y=\left\{\begin{array}{ll} \mbox{\rm id}, & \mbox{ if $y=y_0=e_\Gamma\in Y$,} \\ \hat{\eta}_y, & \mbox{ otherwise}, \end{array}\right. $$
where $\hat{\eta}_y$ is defined as above. It is again the case that $\varphi\circ \tilde{h}=\Phi(h)\circ \varphi$, or $\tilde{h}\in P$ can be identified with $\Phi(h)\in H_1\wr\Gamma$.  

We have shown that $\Lambda$ is a subgroup of $H_1\wr \Gamma$. Since $\Gamma$ is a $\Pi$-group, it follows that $H_1$ is a $\Pi$-group and, from Lemma~\ref{lem:basicwr}, that $\Lambda$ is a $\Pi$-group. 
\end{proof}

\subsubsection{Further adjustments of the solution}

In this third part of the proof of Theorem~\ref{thm:1ss} we consider the order of $a$ in $\Gamma$, denoted as $\gamma$. 

If $\beta<\gamma$, then we have two cases. In case $w_\Gamma\neq e_\Gamma$, then $\mu$ is a path representing an $\langle a, w^{-1}aw\rangle$-problem, and we have already obtained an odd-solution $(X, P)$ for it. On the other hand, in case $w_\Gamma=e_\Gamma$, then $\mu$ represents an $\langle a, w\rangle$-problem, and thus $(X, P)$ is an odd-solution for an $\langle a, w\rangle$-problem. Let $\Pi$ be the set of all odd primes. Then $(X, P)$ is a $\Pi$-solution for an $\langle a, w\rangle$-problem. By Proposition~\ref{prop:awbasic} (iv), we obtain a $\Pi$-solution for an $\langle a, w^{-1}aw\rangle$-problem. This is an odd-solution. 

Now suppose $\beta\geq\gamma$. First, assume $\gamma\leq |\alpha|<\beta$. In this case the walk $\mu$ contains two cycles of length $\gamma$, corresponding to the subword $a^\alpha$ and $a^\beta$ respectively, and are denoted as $C_\alpha$ and $C_\beta$. These cycles are illustrated in Figure~\ref{fig:1ss1}. Let $x_0$ be the base point for the computation of the fundamental group. 

\begin{figure}[htbp]
\centering
\begin{tikzpicture}
  	\coordinate (A)  at (0,0);
  	\coordinate (B) at (0.5,0);
 	\coordinate (C) at (2.5,0);
 	\coordinate (D) at (3,0);
	\coordinate (E) at (5,0);
	\coordinate (F) at (5.5,0);

  	\fill (A) circle (2pt);
	\fill (B) circle (1pt);
	\fill (C) circle (1pt);
	\fill (D) circle (1pt);
	\fill (E) circle (1pt);
	\fill (F) circle (1pt);

\draw (-0.5,0) circle (0.5);
\draw (A) -- (B);
\draw (1.5,0) circle (1);
\draw (C) -- (D);
\draw (4,0) circle (1);

\draw (E) -- (F);
\draw (6, 0) circle (0.5);

    \node[right] at (1.16,-0.7) {$C_\beta$};
    \node[right] at (3.65,-0.7) {$C_\alpha$};
\node[right] at (-0.6,0) {$x_0$};
  
\end{tikzpicture}
\caption{\label{fig:1ss1} The walk $\mu$ when $\gamma\leq |\alpha|<\beta$.}
\end{figure}

Let $K>\beta/\gamma$ be an integer. Pick a vertex $y_0$ on $C_\beta$, and consider
$$ (Y, Q)=\mbox{\tt Reroute}(X, P; K, a, y_0). $$
Let $\mu_Y$ be the lift of $\mu$ in $(Y, Q)$, that is, $\mu_Y$ is the walk starting from $x_0$ obtained by following the word $w$. In $G(Y, Q)$, the cycle $C_\beta$ from $G(X, P)$ becomes a cycle of length $K\gamma>\beta$. Hence the part of the walk $\mu_Y$ corresponding to $C_\beta$ is no longer a cycle but a path. However, the part of the walk $\mu_Y$ corresponding to $C_\alpha$ continues to be a cycle. Thus we pick a vertex $z_0$ on it and form
$$ (Z, R)=\mbox{\tt Reroute}(Y, Q; K, a, z_0). $$
By a similar argument, and because $|\alpha|<\beta$, this cylce also becomes a path in $G(Z, R)$. Let $\mu_Z$ be the lift of $\mu_Y$ in $(Z, R)$. Then $\mu_Z$ represents an $\langle a, w^{-1}aw\rangle$-problem. Since $(Y, Q)$ and $(Z, R)$ are odd-solutions, we obtain an odd-solution for an $\langle a, w^{-1}aw\rangle$-problem. 

Finally, assume $|\alpha|<\gamma\leq \beta$. This is similar to the above case except that we do not have a cycle $C_\alpha$ in $\mu$. In this case we construct $(Y, Q)$ similarly and $(Y, Q)$ is already an odd-solution to an $\langle a, w^{-1}aw\rangle$-problem. Thus our proof of Theorem~\ref{thm:1ss} is complete.

\bigskip

{\sc Statement of AI use.} All research results in this paper were obtained without any use of AI. AI was also not used in the preparation of this paper.

\end{document}